\definecolor{darkspringgreen}{rgb}{0.09, 0.45, 0.27}
\definecolor{dartmouthgreen}{rgb}{0.05, 0.5, 0.06}
\newtheorem*{conjecture*}{Conjecture}
\newtheorem{theorem}{Theorem}[subsection]
\newtheorem*{thm*}{Theorem}
\newtheorem{defi}{Definition}[subsection]
\newtheorem*{claim*}{Claim}
\newtheorem{lemma}{Lemma}[subsection]
\newtheorem{prop}{Proposition}[subsection]
\newtheorem{cor}{Corollary}[subsection]
\newcommand{\Gr}{\operatorname{\Gamma}}
\newcommand{\parti}{\operatorname{\Pi_{\mathsf{gr}}}}
\newcommand{\Path}{\operatorname{\mathsf{P}}}
\newcommand{\K}{\operatorname{\mathsf{K}}}
\newcommand{\Cyc}{\operatorname{\mathsf{C}}}
\newcommand{\St}{\operatorname{\mathsf{St}}}
\DeclareMathOperator{\Ver}{Vert}
\DeclareMathOperator{\Edge}{Edge}
\DeclareMathOperator{\Leav}{Leav}
\DeclareMathOperator{\In}{In}
\newcommand{\Aut}{\operatorname{\mathsf{Aut}}}
\DeclareMathOperator{\Id}{Id}
\DeclareMathOperator{\Hom}{Hom}
\newcommand{\GrCol}{\operatorname{\mathsf{GrCol}}}
\newcommand{\Y}{\operatorname{\overline{\mathsf{Y}}}}
\newcommand{\M}{\operatorname{\mathcal{M}}}
\newcommand{\bM}{\operatorname{\overline{\mathcal{M}}}}
\newcommand{\Pro}{\operatorname{\mathbb{P}}}
\DeclareMathOperator{\Conf}{Conf}
\newcommand{\D}{\operatorname{\mathcal{D}}}
\newcommand{\Disc}{\operatorname{\mathbb{D}}}
\newcommand{\La}{\operatorname{\mathcal{L}}}
\DeclareMathOperator{\rk}{rk}
\newcommand{\G}{\operatorname{\mathcal{G}}}
\newcommand{\N}{\operatorname{\mathcal{N}}}
\newcommand{\B}{\operatorname{\mathcal{B}}}
\newcommand{\A}{\operatorname{\mathcal{A}}}
\newcommand{\Orb}{\operatorname{\mathcal{O}}}
\newcommand{\F}{\operatorname{\mathcal{F}}}
\newcommand{\Q}{\operatorname{\mathcal{Q}}}
\newcommand{\I}{\operatorname{\mathcal{I}}}
\newcommand{\E}{\operatorname{\mathcal{E}}}
\newcommand{\R}{\operatorname{\mathcal{R}}}
\newcommand{\Ho}{\operatorname{\mathcal{H}}}
\newcommand{\T}{\operatorname{\mathbb{T}}}
\newcommand{\Pop}{\operatorname{\mathcal{P}}}
\newcommand{\C}{\operatorname{\mathcal{C}}}
\DeclareMathOperator{\Tw}{Tw}
\DeclareMathOperator{\Der}{Der}
\DeclareMathOperator{\Coder}{Coder}
\newcommand\cyrillic[1]{{\fontencoding{OT2}\fontfamily{wncyr}\selectfont #1}}
\newcommand\mathcyr[1]{\text{\cyrillic{#1}}}
\newcommand\Sha{\textnormal{\mathcyr{Sh}}}
\newcommand{\forget}{\operatorname{\mathrm{f}}}
\newcommand{\LT}{\operatorname{\mathsf{LT}}}
\newcommand{\grpermlex}{\operatorname{\mathsf{graphpermlex}}}
\DeclareMathOperator{\ind}{ind}
\DeclareMathOperator{\res}{res}
\DeclareMathOperator{\Pa}{Path}
\newcommand{\Com}{\operatorname{\mathsf{gcCom}}}
\DeclareMathOperator{\End}{End}
\newcommand{\Susp}{\operatorname{\mathcal{S}}}
\newcommand{\Lie}{\operatorname{\mathsf{gcLie}}}
\newcommand{\Ass}{\operatorname{\mathsf{gcAss}}}
\newcommand{\OS}{\operatorname{\mathsf{OS}}}
\newcommand{\Gerst}{\operatorname{\mathsf{gcGerst}}}
\newcommand{\Tree}{\operatorname{\mathsf{Tree}}}
\newcommand{\ST}{\operatorname{\mathsf{ST}}}
\newcommand{\RST}{\operatorname{\mathsf{RST}}}
\title{
A generalization of operads based on subgraph contractions
}
\author{Denis Lyskov}
\address{National Research University Higher School of Economics, 20 Myasnitskaya street, Moscow 101000, Russia}
\email{ddl2001@yandex.ru}
\date{}
\begin{document}
\begin{abstract}
    We introduce a 
    generalization of the notion of operad that we call a \textit{contractad}, whose set of operations is indexed by connected graphs and whose composition rules are numbered by contractions of connected subgraphs.
    
    We show that many classical operads, such as the operad of commutative algebras, Lie algebras, associative algebras, pre-Lie algebras, the little disks operad, and the operad of moduli spaces of stable curves $\bM_{0,n+1}$ admit generalizations to contractads. 
    We explain that standard tools like Koszul duality and the machinery of Gr\"obner bases can be easily generalized to contractads. 
    We verify the Koszul property of the commutative, Lie, associative, and Gerstenhaber contractads.
\end{abstract}
\maketitle
\setcounter{tocdepth}{1}
\tableofcontents

\section{Introduction}

In a recent paper~\cite{dotsenko2024reconnectads}, the author, alongside Dotsenko and Keilthy, introduced a natural extension of the notion of operad to a construction defined on graphs, with composition defined in terms of the reconnected complement of a graph with respect to a connected subgraph. Another essential combinatorial construction in the study of finite simple graphs is that of contraction: collapsing a connected subgraph to a single vertex. Many graph invariants make use of contraction in their construction, and it is a powerful tool for proving results recursively. As such, it seemed natural to consider a graphical operad structure with components indexed by connected finite simple graphs, inputs labeled by vertices, and composition defined in terms of contraction. We call this structure a \textit{contractad} and find many natural examples of contractads in areas across mathematics.

Let us briefly define contractads, for details see Section~\ref{subsec:contractads}. A contractad with values in a symmetric monoidal category $\C$ is a contravariant functor 
\[\Pop\colon \mathsf{CGr}^{\mathrm{op}}\rightarrow \C\]
from the groupoid of connected finite simple graphs equipped with a collection of infinitesimal compositions defined as follows. For each connected graph $\Gr$ and collection of vertices $G$ inducing a connected subgraph $\Gr|_G$, we have a map
\[
\circ^{\Gr}_G\colon \Pop(\Gr/G)\otimes \Pop(\Gr|_G) \rightarrow \Pop(\Gr),
\] 
where $\Gr/G$ is the graph obtained from $\Gr$ by contracting $G$ to a single vertex. The collection of such maps satisfies natural associativity and equivariance conditions, for details see Definition~\ref{partdefi}.

We show that a contractad naturally generalizes the existing operad-like structures (Section~\ref{subsec:operads}). For example, the restriction of a contractad to the family of complete graphs recovers the structure of a symmetric operad. Similarly, the restriction to the family of path graphs recovers the structure of a non-symmetric operad (with an additional $\mathbb{Z}_2$-action on each component).

In this article, we discuss several equivalent definitions of contractad (Section~\ref{subsec:contractads}, \ref{subsec:admissibletrees}), and establish the necessary algebraic theory in order to introduce a bar-cobar construction (Section~\ref{subsec:bar}), Koszul duality (Section~\ref{subsec:koszul}), and Gr{\"o}bner bases (Section~\ref{sec:grobner}). We also introduce many examples arising from algebraic, geometric, topological, and combinatorial structures, which are used throughout to illustrate the theory we develop. These examples naturally generalize a number of familiar operads. We list some of them here.\\

The first example of a contractad comes naturally from algebraic topology. In Section~\ref{subsec:disks}, for $n\geq 1$, we introduce the \textit{contractad of little $n$-disks} $\D_n$. Each component of this contractad $\D_n(\Gr)$ consists of configurations of $n$-dimensional disks in the unit disk labeled by the vertex set of a graph, such that interiors of disks corresponding to adjacent vertices do not intersect. We obtain the structure of a contractad on $\D_n$ by substitution of disk configurations. By restricting to complete graphs, we recover the usual symmetric little disks operad~\cite{cohen1976homology}.

By taking the homology of this contractad, we obtain the contractad in $\mathbb{Z}$-modules with many results about the corresponding operad holding in the contractad setting~\cite{getzler1994operads}. 
\begin{thm*}[Theorem~\ref{thm:ass}, Theorem~\ref{thm:en}]
For $n\geq1$, the homology contractad of the little $n$-disks contractad $H_{\bullet}(\D_n)$ is quadratic and Koszul.
\end{thm*}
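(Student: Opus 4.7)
The plan follows the classical Koszulness proof of the $e_n$-operad, translated to contractads via the Gröbner bases machinery of Section~\ref{sec:grobner}. The first step is to compute the homology. Shrinking each disk to its center gives a deformation retraction $\D_n(\Gr) \simeq \Conf_\Gr(\mathbb{R}^n)$, where
\[
\Conf_\Gr(\mathbb{R}^n) := \{(x_v)_v \in (\mathbb{R}^n)^{V(\Gr)} : x_u \neq x_v \text{ whenever } uv \in E(\Gr)\}
\]
is the complement of the graphic diagonal arrangement. For $n \geq 2$, the Arnold--Orlik--Solomon argument applied to this arrangement yields a graded-commutative algebra generated by classes $\omega_{uv}$ of degree $n-1$, one per edge of $\Gr$, modulo $\omega_{uv}^2 = 0$ and the three-term relation $\omega_{uv}\omega_{vw} + \omega_{vw}\omega_{uw} + \omega_{uw}\omega_{uv} = 0$ for every triangle $\{u,v,w\} \subseteq V(\Gr)$. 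For $n=1$, $\D_1(\Gr)$ is a disjoint union of convex cells indexed by acyclic orientations of $\Gr$, and $H_\bullet$ is concentrated in degree zero.

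Next, I would identify $H_\bullet(\D_n)$ as a quadratic contractad. The disk-substitution composition descends to homology, and restriction to complete graphs recovers the classical $e_n$-operad. Combined with the presentation above, this shows that $H_\bullet(\D_n)$ is generated by the binary piece on $P_2$ (a single degree $n-1$ class), with relations supported on the three-vertex connected graphs $P_3$ (associativity-/pre-Lie-type) and $K_3$ (Arnold/Jacobi-type), yielding the required quadratic presentation. For $n=1$ this is the associative contractad $\Ass$ of Theorem~\ref{thm:ass}; for $n \geq 2$ it is the graphic (appropriately shifted) Gerstenhaber contractad.

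For Koszulness, I would exhibit a quadratic Gröbner basis using the framework of Section~\ref{sec:grobner}. Fix a monomial order on admissible tree monomials (the free contractad basis) for which each $P_3$-relation and each $K_3$-relation has a distinguished leading term, for instance a pre-Lie-style order in which the Arnold relation has the single leading monomial $\omega_{uv}\omega_{vw}$. The claim is that the quadratic relations already form a Gröbner basis; by the contractad structure, verifying this reduces to confluence for finitely many critical pairs on connected graphs with at most four vertices, obtained by gluing two leading $P_3$- or $K_3$-shapes along a common vertex or edge.

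The main obstacle is this confluence check and, preceding it, the correct choice of monomial order on tree monomials indexed by nested connected subgraphs: the order must be compatible with graph contractions rather than mere tree insertions, and one must verify that no exotic critical pairs appear beyond those already handled in the classical $e_n$-computation. Should a direct Gröbner approach prove unwieldy, a numerical alternative is to compute the Koszul dual contractad (a graphic shifted Lie contractad) and verify the generating-function identity $f_{\Pop^!}(-f_{\Pop}(x)) = x$ graph-by-graph, which reduces via Stanley's identity relating the chromatic polynomial to acyclic orientations to a known combinatorial identity for graphic arrangements.
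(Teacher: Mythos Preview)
Your strategy---compute $H_\bullet(\D_n)$ via the graphic arrangement complement, write down a quadratic presentation, then exhibit a quadratic Gr\"obner basis---is exactly the paper's approach. However, there is a genuine gap in your identification of generators that would derail the argument.

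For $n\geq 2$, the component $H_\bullet(\D_n)(\Path_2)\cong H_\bullet(S^{n-1})$ is two-dimensional, with a class $m$ in degree $0$ and a class $b$ (or $c_n$) in degree $n-1$; see Definition~\ref{gerst} and Theorem~\ref{thm:en}. Your proposal asserts a \emph{single} degree $n-1$ generator, which is not enough: the $m$-generator is needed to produce, for example, the fundamental class in $H_0(\D_n(\Gr))$ for any $\Gr$ with more than two vertices. With only the bracket, you would be describing (a suspension of) $\Lie$, not $\Gerst$. Consequently your list of relations is incomplete: besides the Arnold/Jacobi relation for $b$ on $\K_3$ and an associativity-type relation on $\Path_3$, one needs the associativity of $m$ and, crucially, the \emph{Leibniz/distributive} relations between $m$ and $b$ on both $\Path_3$ and $\K_3$ (Lemma~\ref{relD2}). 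These mixed relations are what make the Gr\"obner basis computation nontrivial.

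This in turn affects your Gr\"obner step. A ``pre-Lie-style'' order on a single generator does not suffice; the paper requires a carefully designed order on two-letter monomials in $\{m,b\}$ (the ``quantum'' order of Lemma~\ref{quantumorder}) so that the leading terms split as $\LT(\R_{\Com})\cup\LT(\R_{\Lie})\cup\{b\circ m\}$, forcing normal monomials into the shape $(m_{\Gr/I};b^{(I_1)},\ldots,b^{(I_k)})$. The dimension count then matches the Orlik--Solomon Hilbert series (Proposition~\ref{discring}), which is exactly $\sum_I\prod_G|\mu(\Gr|_G)|$---your chromatic/acyclic-orientation count for $n=1$ is the same number via Zaslavsky's theorem. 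Finally, your fallback ``numerical alternative'' is not a proof of Koszulness: the generating-function identity is a necessary consequence of Koszulness, not a sufficient criterion, so it cannot replace the Gr\"obner argument.
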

\noindent We additionally determine an explicit presentation for this homology contractad, allowing us to view it as a graphical generalization of $e_n$-operad~\cite{getzler1994operads}.

Similar to the little disks contractad, there is a natural notion of \textit{graphical configuration space}. Such spaces have been well studied, for example~\cite{eastwood2007euler, baranovsky2012graph, wiltshire2018configuration}. The classical little disks operad provide efficient tools for studying rational homotopy types of the configuration spaces of manifolds~\cite{campos2016model}. We expect that the little disks contractad would play the same role for graphical configuration spaces, enriching and enhancing the study of such spaces.\\

The second example of a contractad comes from algebraic geometry.  In Section~\ref{subsec:wonderful}, we introduce the \textit{Wonderful contractad} $\bM$ that generalizes the Deligne-Mumford operad of stable pointed curves of genus zero~\cite{getzler1995operads}. Specifically, we extend the construction of $\bM_{0,n}$ via wonderful compactifications, introduced by De Concini and Processi~\cite{de1995wonderful}, by considering the wonderful compactifications associated with the so-called \textit{graphical building sets}(page~\pageref{grapharrang}), with a contractad structure arising naturally from the combinatorics of building sets. We obtain a contractad whose restriction to complete graphs recovers the symmetric operad structure on the collection $\{\bM_{0,n}\}$, whose restriction to path graphs recovers the non-symmetric brick operad~\cite{dotsenko2019toric}, and whose restriction to stellar graphs recovers the twisted associative algebra of Losev-Manin moduli spaces $\overline{L}_{n}$~\cite{losev2000new}. It is worth mentioning that this contractad may be viewed as a special case of a general operad structure on wonderful compactifications developed by Coron~\cite{coron2022matroids} for the full generality of geometric lattices and their building sets. We expect the homology of this contractad to define a graphical generalization of the hypercommutative operad~\cite{getzler1995operads}, and the non-commutative hypercommutative operad~ \cite{dotsenko2019toric}. However, we leave the full exploration of this structure for a future article.\\

The third example of a contractad comes naturally from the combinatorics of graphs and spanning trees.  In Section~\ref{subsec:spantrees}, we introduce the \textit{contractad of rooted spanning trees} $\RST^{\vee}$. Each component of this contractad $\RST^{\vee}(\Gr)$ is generated by rooted spanning trees of the underlying graph, and a contractad structure comes from certain gluings of rooted trees. This defines a graphical counterpart of the rooted tree operad, first described by Chapoton and Livernet in~\cite{chapoton2001pre}. In this article, the authors proved that this operad is isomorphic to the operad $\mathsf{preLie}$ of pre-Lie algebras. We formulate and prove a similar result for $\RST^{\vee}$.  Moreover, we show that the property of $\mathsf{preLie}$ to be Koszul does not remain true for its graphical analogue.

\begin{thm*}[Proposition~\ref{rst}, Proposition~\ref{prop:nonkoszul}]
The contractad of rooted spanning trees $\RST^{\vee}$ is quadratic but not Koszul.
\end{thm*}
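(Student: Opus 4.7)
I would start by identifying the generators of $\RST^{\vee}$ as the component $\RST^{\vee}(K_2)$, which is two-dimensional, spanned by the two rooted spanning trees of the edge graph $K_2$ (one for each choice of root). Since any rooted spanning tree of a connected graph can be built inductively by attaching edges one vertex at a time via the contractad compositions, these two generators are sufficient, so the natural map from the free contractad they generate is surjective onto $\RST^{\vee}$. To verify quadraticity, I would compute the three-vertex components: there are exactly two isomorphism types of connected simple graphs on three vertices, the path $P_3$ and the triangle $K_3$, and I would compare the dimensions of $\RST^{\vee}(P_3)$ and $\RST^{\vee}(K_3)$ with those predicted by a quadratic presentation. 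On $P_3$ the quadratic relations should reduce to the classical pre-Lie identity; on $K_3$ one expects an additional relation tying together the three edge-compositions, which I would identify explicitly by equating two different orders of attaching edges. A dimension count, via the enumeration of rooted spanning trees, then confirms that the quadratic ideal generated by these relations matches the kernel of the presentation map.

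\textbf{Non-Koszulness.} Here I would apply the numerical Koszul criterion for contractads (the graphical analogue of the functional equation relating the generating series of a quadratic operad and its dual) developed in Subsection~\ref{subsec:koszul}. The first step is to describe the Koszul dual $(\RST^{\vee})^{!}$: dualizing the generators is immediate (again a two-dimensional space), and the orthogonal complement of the relations on $P_3$ and $K_3$ determines the dual contractad. The second step is to test the Koszul identity on a suitable small connected graph. Since classical pre-Lie/Perm duality already implies Koszulness on complete graphs, the obstruction must come from graphs with intermediate connectivity; the natural first candidates are the four-cycle $C_4$ and the diamond $K_4 - e$. On these I would tabulate $\dim \RST^{\vee}(\Gr)$ and the relevant dimension of $(\RST^{\vee})^{!}(\Gr)$ by enumeration, and exhibit a concrete failure of the Koszul numerical identity.

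\textbf{Main obstacle.} The hard part is locating this counterexample. The terms of the Koszul functional equation involve sums over nested sequences of connected subgraphs of $\Gr$, and both $\dim \RST^{\vee}(\Gr)$ and $\dim (\RST^{\vee})^{!}(\Gr)$ grow quickly, so the key tactical question is to pin down the smallest graph on which the identity already fails and keep the enumeration tractable. I expect the minimal witness to live on four vertices: triangles alone only produce relations already absorbed by the quadratic ideal, but the interaction between a triangle and a pendant vertex should produce a genuine obstruction that survives in the bar homology. Once the right graph is identified, the verification itself is a finite, if tedious, dimension check.
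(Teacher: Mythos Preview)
Your non-Koszulness strategy is essentially the paper's: compute the Euler characteristic of the Koszul complex $\RST^{\text{!`}}\circ^{\tau}\RST^{\vee}$ on a small graph and show it is nonzero. The paper does exactly this on $\Cyc_4$ (so your first candidate $C_4$ is the right one, not the triangle-plus-pendant you lean toward at the end), and uses one shortcut you do not mention: since $(\RST^{\vee})^{!}$ arises from a set-theoretical contractad, every component of $\RST^{\text{!`}}$ has strictly positive dimension, so one only has to see that $\chi(\RST^{\text{!`}}\circ\RST^{\vee}(\Cyc_4))=-\dim\RST^{\text{!`}}(\Cyc_4)$ without ever computing that last dimension.

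Your quadraticity argument, however, has a real gap. Computing the three-vertex components only tells you which quadratic relations hold; it does \emph{not} show that the surjection from the quadratic quotient onto $\RST^{\vee}$ is injective in higher weight. A dimension check in weight~$3$ would have to be carried out on every connected graph, which is infinite, and there is no Gr\"obner-basis shortcut available here (indeed the contractad is not Koszul). The paper instead follows Chapoton--Livernet: it defines a $\star$-product on rooted subtrees, proves the pre-Lie-type identity $(T_1\star T_2)\star T_3 - T_1\star(T_2\star T_3)=(T_1\star T_3)\star T_2 - T_1\star(T_3\star T_2)$, and then constructs an explicit section $\iota\colon\RST^{\vee}\to\Pop$ by recursion on the $B(v,T_1,\dots,T_k)$ decomposition of a rooted spanning tree, using that identity to show $\iota$ is well defined independent of the ordering of the $T_i$. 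This section is what forces injectivity in all components simultaneously. Also, your description of the $\Path_3$ relations as ``the classical pre-Lie identity'' is off: on $\Path_3$ the relations are two associativity-type identities together with the vanishing $\mu^{(12)}\circ^{\Path_3}_{\{1,2\}}\mu=0$ (reflecting that leaves $1$ and $3$ are not adjacent), while the pre-Lie relation lives on $\K_3$.
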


Finally, we introduce graphical analogues of the operads encoding classical algebras, such as the operads $\mathsf{Com}$ of commutative algebras, $\mathsf{Lie}$ of Lie algebras, $\mathsf{Ass}$ of associative algebras, and $\mathsf{Gerst}$ of Gerstenhaber algebras. Moreover, we show that these graphical replacements exhibit most algebraic properties that their classical analogues possess.
\begin{thm*}
    The contractads $\Com,\Lie,\Ass,$ and $\Gerst$ are quadratic and Koszul.
\end{thm*}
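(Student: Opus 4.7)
The plan is to use the Gr\"obner basis machinery of Section~\ref{sec:grobner} for $\Com$, $\Lie$, and $\Ass$, and then handle $\Gerst$ by reducing to these two via a distributive-law argument.

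First, I would fix explicit quadratic presentations. For $\Com$, the generators are symmetric binary products $\mu_e$ indexed by connected 2-vertex graphs (edges), with associativity-type relations on every connected 3-vertex graph -- the path $\Path_3$ and the triangle $\Cyc_3$. For $\Lie$, the generators are antisymmetric brackets per edge, with Jacobi-type relations on the same 3-vertex shapes. For $\Ass$, the generators are binary products per edge (with the $\mathbb{Z}_2$-action on inputs) with associativity-like relations on each connected 3-vertex subgraph. In each case quadraticity is immediate from the presentation once one confirms, by comparing the natural spanning sets against a candidate basis of the correct size (a single element for $\Com$, spanning trees for $\Lie$, graph-compatible orderings for $\Ass$), that no higher-arity relations are forced.

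Second, to establish Koszulness I would exhibit a quadratic Gr\"obner basis. Using the order $\grpermlex$ on admissible trees from Subsection~\ref{subsec:admissibletrees}, each quadratic relation has a distinguished leading monomial; the task is then to verify that every S-composition of two quadratic relations reduces to zero modulo the quadratic ideal. By the PBW criterion for contractads, a quadratic Gr\"obner basis implies Koszulness. For $\Gerst$, I would proceed differently: following the classical approach, I would realize $\Gerst$ as the contractad built from $\Com$ and $\Lie$ via a distributive law $\Lie\circ\Com\to\Com\circ\Lie$ that encodes edge-by-edge Leibniz rules, and then invoke the criterion that confluence of a distributive law together with Koszulness of both halves implies Koszulness of the composite.

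The main obstacle is confluence on connected 4-vertex subgraphs. In the classical operadic setting there is essentially one combinatorial shape to check per S-composition; in the contractad setting, the ambient data is a connected 4-vertex graph, and there are six isomorphism types to examine (the path, the star, the triangle-plus-pendant, the 4-cycle, the 4-cycle-with-chord, and $\K_4$). The delicate case is the chordless 4-cycle, since this is precisely where Koszulness of $\RST^{\vee}$ fails by the earlier result; the substantive content of the theorem is therefore to show that the defining relations of $\Com$, $\Lie$, $\Ass$, and $\Gerst$ do absorb the ambiguities on 4-cycles (and more generally chordless cycles), in contrast to the rooted-spanning-tree contractad.
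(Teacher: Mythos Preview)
Your strategy is sound, with one correction: $\Lie(\Gr)$ is not based by spanning trees; its dimension is the M\"obius number $|\mu_{\parti(\Gr)}(\hat 0,\hat 1)|$ (e.g.\ $n-1$ for $\Cyc_n$, not $n$, and $(n-1)!$ for $\K_n$, not $n^{n-2}$). The correct normal monomials are left-combs with a graph-adjacency constraint on consecutive leaves; see the discussion after Corollary~\ref{liegrob}.

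The paper's route differs from yours chiefly for $\Ass$ and $\Gerst$. Rather than reducing S-polynomials on the 38 ordered four-vertex graphs, and rather than a separate distributive-law criterion for $\Gerst$, the paper treats $\Ass$ and $\Gerst$ \emph{uniformly}: it rewrites $\Ass$ in generators $m=\nu+\nu^{(12)}$, $b=\nu-\nu^{(12)}$, so that both contractads are presented on $\{m,b\}$ with closely related relations, and then introduces a single \emph{quantum monomial order} (Lemma~\ref{quantumorder}) under which the leading-term ideal for both is exactly $\LT(\R_{\Com})\cup\LT(\R_{\Lie})\cup\{b\circ^{\Gr}_e m\}$. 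The normal monomials are then manifestly the elements of $\Com\circ\Lie$, and rather than checking confluence directly the paper counts them against $\dim H_\bullet(\D_n(\Gr))$, which is known from the characteristic polynomial of the graphic arrangement, together with surjectivity of $\Ass\to H_0(\D_1)$ (trivial) and $\Gerst\to H_\bullet(\D_2)$ (an nbc-set pairing argument in Appendix~\ref{subsec:lemma}). Your distributive-law plan for $\Gerst$ is morally the same decomposition --- the rewriting $b\circ m\rightsquigarrow m\circ b$ is precisely the extra leading term, and the statement that $\Com\circ\Lie$ has the right size is exactly the PBW condition --- and a direct confluence check on four-vertex graphs would work; the paper's packaging buys the simultaneous identification of $H_\bullet(\D_1)$ and $H_\bullet(\D_2)$, at the cost of the surjectivity lemma, while your route avoids that lemma but yields no geometric statement. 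For $\Com$ the paper also records an independent topological proof (Theorem~\ref{thm:comkoszul}) via Cohen--Macaulayness of $\parti(\Gr)$, from which $\Lie$ follows by Koszul duality without any Gr\"obner computation.
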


\subsection*{Organisation}  In Section~\ref{sec:contractads}, we give several equivalent definitions of a contractad.  In Section~\ref{sec:examples}, we give the first examples of contractads arising in algebra, topology, geometry, and combinatorics. In Section~\ref{sec:koszul}, we develop a Koszul duality theory of contractads and give examples of (non)Koszul contractads. In Section~\ref{sec:grobner}, we develop a theory of Gr\"obner basis for contractads. In Section~\ref{sec:discs}, we study the homology contractads of the little $n$-disks contractads.

\section{Contractads}\label{sec:contractads}
In this section, we introduce the notion of contractad. First, we introduce the necessary definitions and constructions around graphs. Next, we provide several definitions of contractads. At the end, we explain the relations of contractads to classical algebraic structures.  We refer the reader to Section~\ref{sec:examples} for examples. 
\subsection{Graphs, partitions, and contractions}
In this paper, we define a \textit{graph} as a finite undirected graph $\Gr=(V_{\Gr},E_{\Gr})$ without loops and  multiple edges, where $V_{\Gr}$ is a set of vertices, and $E_{\Gr}$ is a set of edges. Two vertices $v,w \in V_{\Gr}$ are \textit{adjacent} if they form an edge in the graph. A graph is \textit{connected} if there is at least one path connecting any two vertices. Let us consider some particular examples of graphs:
\begin{itemize}
\label{typesofgraphs}
     \item the path graph $\Path_n$ on the vertex set $\{1,\cdots, n\}$ with edges $\{(i,i+1)| 1\leq i \leq n-1 \}$,    \item the complete graph $\K_n$ on the vertex set $\{1,\cdots, n\}$ and the edges $\{(i,j)|i\neq j\}$,
    \item the cycle graph $\Cyc_n$ on the vertex set $\{1,\cdots, n\}$ with edges $\{(i,i+1)| 1 \leq i \leq n-1\}\cup \{(n,1)\}$,
    \item the stellar graph $\St_n$ on the vertex set $\{0,1,\cdots, n\}$ with edges $\{(0,i)| 1\leq i \leq n \}$. The vertex ``$0$'' adjacent to all vertices is called the ``core''.
\end{itemize}
For a graph $\Gr$ and a subset of vertices $S$,  the \textit{induced subgraph} is the graph $\Gr|_S$ with vertex set $S$ and edges coming from the original graph.
\begin{defi}
\begin{enumerate}
    \item A tube of a graph $\Gr$ is a non-empty subset $G$ of vertices such that the induced subgraph $\Gr|_G$ is connected.  If the tube consists of one vertex, we call it trivial.
    \item A \textit{partition of a graph} $\Gr$ is a partition of the vertex set whose blocks are tubes. We denote by $\parti(\Gr)$ the set of partitions of the graph $\Gr$.
\end{enumerate}
\end{defi}
\noindent The partition set $\parti(\Gr)$ admits a partial order by \textit{refinement}. More explicitly, for a pair of partitions $I,J$, we have $I<J$ if each block from the left partition is contained in some other block from the right. This partially ordered set (poset) has a maximal partition $\hat{1}=\{V_1,V_2,\cdots,V_k\}$ that is made up of connected components and a minimal one $\hat{0}=\{\{v\}\}_{v \in V_{\Gr}}$ made up of one-vertex blocks.
\begin{figure}[ht]
\def\gra{
\begin{tikzpicture}
    %graph
    \draw (0,0)--(0.87,0.5)--(0.87,-0.5)--cycle;
   \draw (-0.7,0)--(0,0);
   \fill (0,0) circle (2pt);
   \fill (-0.7,0) circle (2pt);
    \fill (0.87,0.5) circle (2pt);
    \fill (0.87,-0.5) circle (2pt);
    %partition
\end{tikzpicture}}
\def\top{
\begin{tikzpicture}
    %graph
    \draw (0,0)--(0.87,0.5)--(0.87,-0.5)--cycle;
    \draw (-0.7,0)--(0,0);
    \fill (0,0) circle (2pt);
    \fill (-0.7,0) circle (2pt);
    \fill (0.87,0.5) circle (2pt);
    \fill (0.87,-0.5) circle (2pt);
    %partition
    \draw[dashed] (-1.2,0)[rounded corners=15pt]--(1.1,0.8)--(1.1,-0.8)--cycle;
\end{tikzpicture}}
\def\midl{
\begin{tikzpicture}
    %graph
    \draw (0,0)--(0.87,0.5)--(0.87,-0.5)--cycle;
   \draw (-0.7,0)--(0,0);
   \fill (0,0) circle (2pt);
   \fill (-0.7,0) circle (2pt);
    \fill (0.87,0.5) circle (2pt);
    \fill (0.87,-0.5) circle (2pt);
    %partition
    \draw[dashed] (0.87,-0.5) circle (5pt);
    \draw[dashed] (-0.9,-0.2)[rounded corners=5pt]--(0.1,-0.2)--(1.1,0.4)--(0.9,0.7)--(0,0.2)--(-0.9,0.2)--cycle;
\end{tikzpicture}}
\def\midc{
\begin{tikzpicture}
    %graph
    \draw (0,0)--(0.87,0.5)--(0.87,-0.5)--cycle;
   \draw (-0.7,0)--(0,0);
   \fill (0,0) circle (2pt);
   \fill (-0.7,0) circle (2pt);
    \fill (0.87,0.5) circle (2pt);
    \fill (0.87,-0.5) circle (2pt);
    %partition
    \draw[dashed] (0.87,0.5) circle (5pt);
    \draw[dashed] (-0.9,0.2)[rounded corners=5pt]--(0.1,0.2)--(1.1,-0.4)--(0.9,-0.7)--(0,-0.2)--(-0.9,-0.2)--cycle;
\end{tikzpicture}}
\def\midr{
\begin{tikzpicture}
    %graph
    \draw (0,0)--(0.87,0.5)--(0.87,-0.5)--cycle;
   \draw (-0.7,0)--(0,0);
   \fill (0,0) circle (2pt);
   \fill (-0.7,0) circle (2pt);
    \fill (0.87,0.5) circle (2pt);
    \fill (0.87,-0.5) circle (2pt);
    %partition
    \draw[dashed] (-0.7,0) circle (5pt);
    \draw[dashed] (-0.3,0)[rounded corners=10pt]--(1.05,0.8)--(1.05,-0.8)--cycle;
\end{tikzpicture}}
\def\lmidl{
\begin{tikzpicture}
    %graph
    \draw (0,0)--(0.87,0.5)--(0.87,-0.5)--cycle;
   \draw (-0.7,0)--(0,0);
   \fill (0,0) circle (2pt);
   \fill (-0.7,0) circle (2pt);
    \fill (0.87,0.5) circle (2pt);
    \fill (0.87,-0.5) circle (2pt);
    %partition
    \draw[dashed] (0.87,-0.5) circle (5pt);
    \draw[dashed] (0.87,0.5) circle (5pt);
    \draw[dashed, rounded corners=5pt] (-0.9,-0.2) rectangle ++(1.1,0.4);
\end{tikzpicture}}
\def\lmidr{
\begin{tikzpicture}
    %graph
    \draw (0,0)--(0.87,0.5)--(0.87,-0.5)--cycle;
   \draw (-0.7,0)--(0,0);
   \fill (0,0) circle (2pt);
   \fill (-0.7,0) circle (2pt);
    \fill (0.87,0.5) circle (2pt);
    \fill (0.87,-0.5) circle (2pt);
    %partition
    \draw[dashed] (-0.7,0) circle (5pt);
    \draw[dashed] (0.87,-0.5) circle (5pt);
    \draw[dashed, rounded corners=5pt,rotate=30] (-0.2,-0.2) rectangle ++(1.4,0.4);
\end{tikzpicture}}
\def\rmidl{
\begin{tikzpicture}
    %graph
    \draw (0,0)--(0.87,0.5)--(0.87,-0.5)--cycle;
   \draw (-0.7,0)--(0,0);
   \fill (0,0) circle (2pt);
   \fill (-0.7,0) circle (2pt);
    \fill (0.87,0.5) circle (2pt);
    \fill (0.87,-0.5) circle (2pt);
    %partition
    \draw[dashed] (-0.7,0) circle (5pt);
    \draw[dashed] (0.87,0.5) circle (5pt);
    \draw[dashed, rounded corners=5pt,rotate=-30] (-0.2,-0.2) rectangle ++(1.4,0.4);
\end{tikzpicture}}
\def\rmidr{
\begin{tikzpicture}
    %graph
    \draw (0,0)--(0.87,0.5)--(0.87,-0.5)--cycle;
   \draw (-0.7,0)--(0,0);
   \fill (0,0) circle (2pt);
   \fill (-0.7,0) circle (2pt);
    \fill (0.87,0.5) circle (2pt);
    \fill (0.87,-0.5) circle (2pt);
    %partition
    \draw[dashed] (-0.7,0) circle (5pt);
    \draw[dashed] (0,0) circle (5pt);
    \draw[dashed, rounded corners=5pt] (0.67,-0.7) rectangle ++(0.4,1.4);
\end{tikzpicture}}
\def\bot{
\begin{tikzpicture}
    %graph
    \draw (0,0)--(0.87,0.5)--(0.87,-0.5)--cycle;
   \draw (-0.7,0)--(0,0);
   \fill (0,0) circle (2pt);
   \fill (-0.7,0) circle (2pt);
    \fill (0.87,0.5) circle (2pt);
    \fill (0.87,-0.5) circle (2pt);
    %partition
    \draw[dashed] (-0.7,0) circle (5pt);
    \draw[dashed] (0,0) circle (5pt);
    \draw[dashed] (0.87,-0.5) circle (5pt);
    \draw[dashed] (0.87,0.5) circle (5pt);
\end{tikzpicture}}
    \centering
    \begin{tikzpicture}
        %elements
        \node (l1) at (0,0) {\top};
        \node (l21) at (-3,-2.5) {\midl};
        \node (l22) at (0,-2.5) {\midc};
        \node (l23) at (3,-2.5) {\midr};
        \node (l31) at (-4.5,-5) {\lmidl};
        \node (l32) at (-1.5,-5) {\lmidr};
        \node (l33) at (1.5,-5) {\rmidl};
        \node (l34) at (4.5,-5) {\rmidr};
        \node (l4) at (0,-7.5) {\bot};
        %arrows
        \draw (l1)--(l21);
        \draw (l1)--(l22);
        \draw (l1)--(l23);
        \draw (l21)--(l31);
        \draw (l21)--(l32);
        \draw (l22)--(l31);
        \draw (l22)--(l33);
        \draw (l23)--(l32);
        \draw (l23)--(l33);
        \draw (l23)--(l34);
        \draw (l31)--(l4);
        \draw (l32)--(l4);
        \draw (l33)--(l4);
        \draw (l34)--(l4);
    \end{tikzpicture}
    \caption{Hasse diagram for partition poset of the graph. Partitions increase from bottom to top.}
\end{figure}
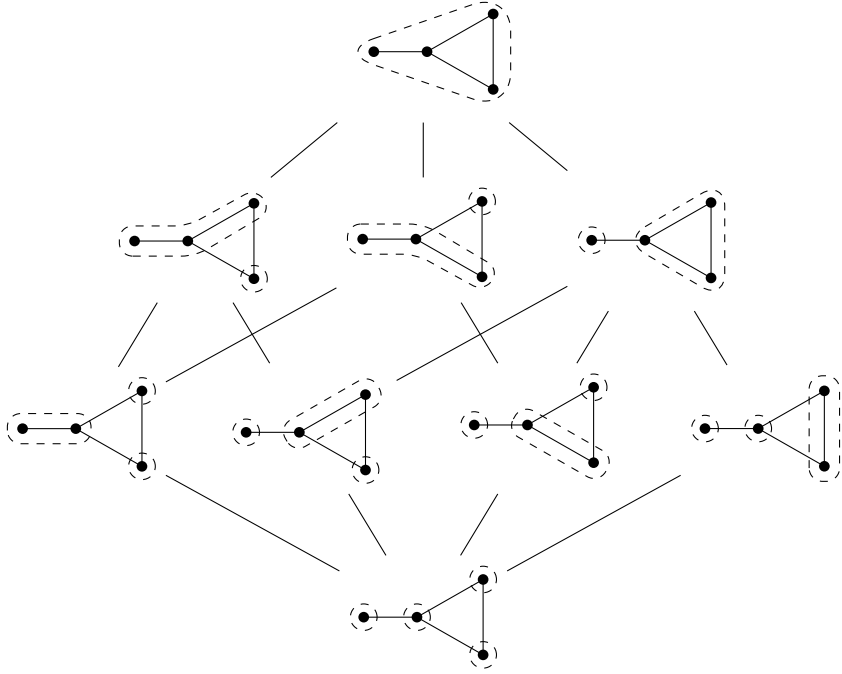

\begin{defi}[Graph contraction]
For a partition $I$ of a graph $\Gr$, the contracted graph, denoted $\Gr/I$, is the graph obtained from $\Gr$  by contracting each block of $I$ to a single vertex: specifically, vertices of $\Gr/I$ are partition blocks and edges are pairs $\{G\},\{H\}$ of blocks such that their union $G\cup H$ is a tube of $\Gr$.
\end{defi}

\begin{figure}[ht]
  \centering
  \begin{gather*}
  \vcenter{\hbox{\begin{tikzpicture}[scale=0.7]
    \fill (0,0) circle (2pt);
    \node at (0,0.6) {1};
    \fill (1,0) circle (2pt);
    \node at (1,0.6) {2};
    \fill (2,0) circle (2pt);
    \node at (2,0.6) {3};
    \fill (3,0) circle (2pt);
    \node at (3,0.6) {4};
    \fill (4,0) circle (2pt);
    \node at (4,0.6) {5};
    \draw (0,0)--(1,0)--(2,0)--(3,0)--(4,0);
    \draw[dashed, rounded corners=5pt] (-0.25,-0.25) rectangle ++(1.5,0.5);
    \draw[dashed, rounded corners=5pt] (2.75,-0.25) rectangle ++(1.5,0.5);
    \draw[dashed] (2,0) circle (7pt);
    \end{tikzpicture}}}
    \quad
    \longrightarrow
    \quad
  \vcenter{\hbox{\begin{tikzpicture}[scale=0.7]
    \fill (0,0) circle (2pt);
    \node at (0,0.5) {\{1,2\}};
    \fill (1.5,0) circle (2pt);
    \node at (1.5,0.5) {\{3\}};
    \fill (3,0) circle (2pt);
    \node at (3,0.5) {\{4,5\}};
    \draw (0,0)--(1.5,0)--(3,0);    
    \end{tikzpicture}}}
    \\
    \\
  \vcenter{\hbox{\begin{tikzpicture}[scale=0.7]
    \fill (0,0) circle (2pt);
    \node at (-0.4,-0.3) {1};
    \fill (2,0) circle (2pt);
    \node at (2.4,-0.3) {4};
    \fill (0,2) circle (2pt);
    \node at (-0.4,2.3) {2};
    \fill (1,1) circle (2pt);
    \node at (1,0.5) {5};
    \fill (2,2) circle (2pt);
    \node at (2.4,2.3) {3};
    \draw (0,0)--(2,0)--(2,2)--(0,2)--cycle;
    \draw (0,0)--(1,1)--(2,2);
    \draw (2,0)--(1,1)--(0,2);
    \draw[dashed] (0,0) circle (7pt);
    \draw[dashed] (1,1) circle (7pt);
    \draw[dashed] (2,0) circle (7pt);
    \draw[dashed, rounded corners=5pt] (-0.25,1.75) rectangle ++(2.5,0.5);
    \end{tikzpicture}}}
    \quad
    \longrightarrow
    \vcenter{\hbox{\begin{tikzpicture}[scale=0.75]
    \fill (0,0) circle (2pt);
    \node at (-0.4,-0.3) {\{1\}};
    \fill (1,1) circle (2pt);
    \node at (1,0.5) {\{5\}};
    \fill (1,2) circle (2pt);
    \node at (1,2.4) {\{2,3\}};
    \fill (2,0) circle (2pt);
    \node at (2.4,-0.3) {\{4\}};
    \draw (0,0)--(2,0)--(1,2)-- cycle;
    \draw (0,0)--(1,1)--(1,2);
    \draw (2,0)--(1,1);
    \end{tikzpicture}}}
    \quad
    \quad
    \quad
    \vcenter{\hbox{\begin{tikzpicture}[scale=0.7]
    \fill (-0.63,1.075)  circle (2pt);
    \node at (-0.9,1.4) {1};
    \fill (0.63,1.075)  circle (2pt);
    \draw[dashed] (0.63,1.075) circle  (7pt);
    \node at (0.9,1.4) {2};
    \fill (-1.22,0) circle (2pt);
    \node at (-1.6,0) {6};
    \fill (1.22,0) circle (2pt);
    \node at (1.6,0) {3};
    \draw[dashed] (1.22,0) circle  (7pt);
    \fill (-0.63,-1.075)  circle (2pt);
    \node at (-0.9,-1.4) {5};
    \fill (0.63,-1.075)  circle (2pt);
    \node at (0.9,-1.4) {4};
    \draw[dashed] (0.63,-1.075) circle  (7pt);
    \draw (-1.22,0)--(-0.63,1.075)--(0.63,1.075)--(1.22,0)--(0.63,-1.075)--(-0.63,-1.075)--cycle;
    \draw[dashed] (-1.6,0)[rounded corners=15pt]--(-0.4,1.7)[rounded corners=15pt]--(-0.4,-1.7)[rounded corners=12pt]--cycle;
    \end{tikzpicture}}}
    \quad\longrightarrow\quad
    \vcenter{\hbox{\begin{tikzpicture}[scale=0.7]
     \fill (0.63,1.075)  circle (2pt);
     \node at (0.9,1.5) {\{2\}};
    \fill (-1.22,0) circle (2pt);
    \node at (-2.2,0) {\{1,5,6\}};
    \fill (1.22,0) circle (2pt);
    \node at (1.75,0) {\{3\}};
    \fill (0.63,-1.075)  circle (2pt);
    \node at (0.9,-1.5) {\{4\}};
    \draw (-1.22,0)--(0.63,1.075)--(1.22,0)--(0.63,-1.075)--cycle;
    \end{tikzpicture}}}
  \end{gather*}
  \caption{Examples of contractions.}
  \label{contrpic}
\end{figure}
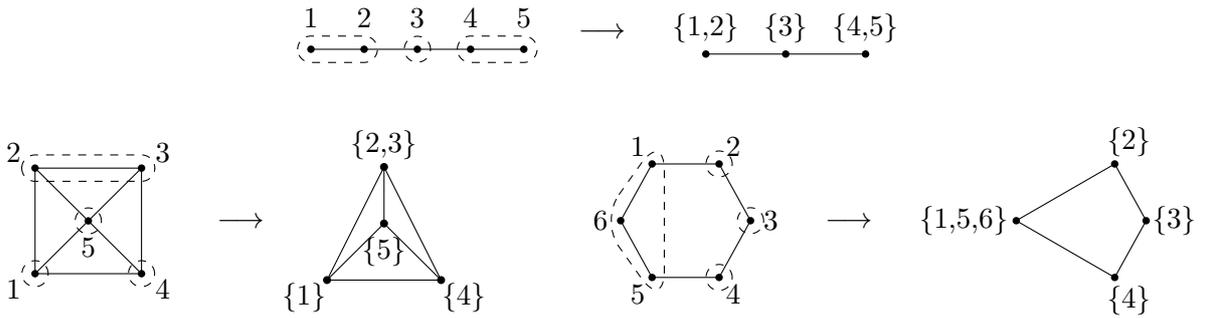

When a partition has the form $I=\{G\}\cup \{\{v\}|v \not\in G\}$ for some tube $G$, we shall denote the associated contracted graph by  $\Gr/G$. For a pair of comparable partitions $I_1\leq I_2$, let $[I_1,I_2]$ be the closed interval of intermediate partitions $I_1 \leq J \leq I_2$. The following lemma is proved by direct inspection.
\begin{lemma}
\label{intervals}
For a graph $\Gr$ with partition $I$, we have isomorphisms of posets
\begin{gather*}
    [\hat{0},I]\cong \prod_{G \in I} \parti(\Gr|_G),
    \\
    [I, \hat{1}] \cong \parti(\Gr/I).
\end{gather*}
\end{lemma}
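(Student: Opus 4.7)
The plan is to prove each isomorphism separately by constructing explicit order-preserving bijections from the definitions of refinement, tube, and contraction, and then verifying monotonicity in both directions.

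For the first isomorphism $[\hat 0,I]\cong \prod_{G\in I}\parti(\Gr|_G)$, I unfold the condition $J\leq I$ as saying that every block of $J$ is contained in some block of $I$. Grouping the blocks of $J$ by which $G\in I$ contains them produces, for each $G\in I$, a set-theoretic partition $J_G$ of $G$. The key observation is that a subset $U\subseteq G$ is a tube of $\Gr$ iff it is a tube of the induced subgraph $\Gr|_G$, since induced subgraphs inherit their edges from $\Gr$ and connectivity depends only on those edges; hence each $J_G$ belongs to $\parti(\Gr|_G)$. The assignment $J\mapsto (J_G)_{G\in I}$ has an obvious inverse given by taking the disjoint union of local partitions, and refinement on the left is tested blockwise, so the bijection is an isomorphism of posets.

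For the second isomorphism $[I,\hat 1]\cong \parti(\Gr/I)$, I use the map sending $J\geq I$, whose blocks must be unions of blocks of $I$, to the partition $\overline J$ of $V_{\Gr/I}$ whose blocks consist of the $I$-blocks lying in a common block of $J$. The essential point to verify is the following dictionary: for any collection $S$ of blocks of $I$, the union $H=\bigcup_{G\in S}G$ is a tube of $\Gr$ iff $S$, regarded as a subset of $V_{\Gr/I}$, is a tube of $\Gr/I$. By definition of $\Gr/I$, adjacency of $G,G'\in I$ in $\Gr/I$ is equivalent to $G\cup G'$ being a tube of $\Gr$, i.e.\ to the existence of an edge of $\Gr$ joining $G$ to $G'$. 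Combined with the fact that each individual block $G\in I$ is already connected in $\Gr$, a straightforward path-lifting argument shows that paths in $\Gr|_H$ descend to paths in $(\Gr/I)|_S$ by collapsing consecutive vertices lying in the same block, and conversely any path in $(\Gr/I)|_S$ lifts to a path in $\Gr|_H$ by interpolating through the connected blocks $G$. This gives the required bijection, and both directions clearly respect refinement, since coarsening $\overline J$ corresponds exactly to merging unions in $J$.

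The main obstacle is the connectivity-transfer step in the second isomorphism, namely identifying tubes of $\Gr$ that are unions of $I$-blocks with tubes of $\Gr/I$. Once this dictionary is established, both isomorphisms follow by routine verification from the definition of refinement.
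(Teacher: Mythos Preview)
Your proposal is correct and is precisely the ``direct inspection'' the paper invokes without spelling out: you construct the obvious bijections $J\mapsto (J_G)_{G\in I}$ and $J\mapsto \overline{J}$ and verify that tubes match up on both sides. The only substantive point, the connectivity-transfer dictionary between tubes of $\Gr$ that are unions of $I$-blocks and tubes of $\Gr/I$, is exactly what you identify and handle with the path-lifting argument, so nothing is missing.
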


\subsection{Contractads}\label{subsec:contractads}

Let $\C=(\C, \otimes, 1_{\C})$ be a symmetric monoidal category. In most cases, by $\C$ we mean the category of topological spaces $(\mathsf{Top},\times)$ or the category of differential graded vector spaces $(\mathsf{dgVect}, \otimes)$ (with the Koszul signs rule). Let us recall the definition of graphical collections from~ \cite{dotsenko2024reconnectads}. Consider the \textit{groupoid of connected graphs} $\mathsf{CGr}$ whose objects are non-empty connected simple graphs and whose morphisms are isomorphisms of graphs.
\begin{defi}
A graphical collection with values in $\C$ is a contravariant functor $\mathsf{CGr}^{\mathrm{op}}\rightarrow \C$. All graphical collections with values in $\C$ with natural transformations form a category $\GrCol_{\C}$.
\end{defi}

Let $\Orb$ be a graphical collection. For functorial reasons, for each component $\Orb(\Gr)$, there is a right action of the graph automorphisms group $\Aut(\Gr)$. This observation gives us a naive parallel with reduced $\Sigma$-modules (contravariant functors from the groupoid of non-empty finite sets). Recall that the category of $\Sigma$-modules has the "composition product", and monoids associated to this product are symmetric operads~\cite{loday2012algebraic}. A similar story holds for graphical collections.

\begin{defi}
The contraction product of two graphical collections $\Pop$ and $\Q$ is the graphical collection $\Pop\circ_{\mathsf{C}}\Q$ defined by the formula
\[
    \Pop \circ_{\mathsf{C}} \Q (\Gr) := \bigoplus_{I \in \parti(\Gr)} \Pop(\Gr/I) \otimes \bigotimes_{G \in I} \Q(\Gr|_G),
\]  where the sum ranges over all partitions of $\Gr$.
\end{defi}

\noindent Define the unit graphical collection $\mathbb{1}$ by putting
\[
\mathbb{1}(\Gr):= \begin{cases}
1_{\C}, \text{ for } \Gr \cong \Path_1,
\\
0, \text{ otherwise}.
\end{cases}
\]

\begin{prop}
The triple $\GrCol=(\GrCol_{\C}, \circ, \mathbb{1})$ forms a monoidal category.
\end{prop}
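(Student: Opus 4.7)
The plan is to verify the three structural data of a monoidal category, namely the left/right unitors and the associator, and then argue that the coherence axioms (pentagon and triangle) reduce to those already satisfied in $\C$. I will work componentwise on graphs $\Gr$, since $\GrCol_{\C}$ is enriched pointwise over $\C$.

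For the unitors, I would first compute $\mathsf{I}\circ\Pop(\Gr)=\bigoplus_{I\in\parti(\Gr)} \mathsf{I}(\Gr/I)\otimes\bigotimes_{G\in I}\Pop(\Gr|_G)$ and observe that $\mathsf{I}(\Gr/I)$ vanishes unless $\Gr/I\cong \Path_1$, which forces $I$ to be the maximal partition $\hat{1}$ (a single block covering all of $V_{\Gr}$); the only surviving summand is thus $1_{\C}\otimes\Pop(\Gr)\cong\Pop(\Gr)$ via the left unitor of $\C$. Dually, in $\Pop\circ\mathsf{I}(\Gr)$ the tensor $\bigotimes_{G\in I}\mathsf{I}(\Gr|_G)$ forces each tube $G$ to be a singleton, i.e.\ $I=\hat{0}$, leaving $\Pop(\Gr/\hat{0})\otimes\bigotimes_{v}1_{\C}\cong\Pop(\Gr)$. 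Naturality in $\Pop$ and in isomorphisms of $\Gr$ is immediate from the functoriality of $\otimes$ in $\C$.

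The core of the proof is associativity. Expanding both sides for $\Pop,\Q,\R\in\GrCol_{\C}$ gives
\[
((\Pop\circ\Q)\circ\R)(\Gr)=\bigoplus_{J\in\parti(\Gr)}\ \bigoplus_{K\in\parti(\Gr/J)} \Pop((\Gr/J)/K)\otimes\bigotimes_{B\in K}\Q((\Gr/J)|_B)\otimes\bigotimes_{H\in J}\R(\Gr|_H),
\]
\[
(\Pop\circ(\Q\circ\R))(\Gr)=\bigoplus_{I\in\parti(\Gr)}\ \Pop(\Gr/I)\otimes\bigotimes_{G\in I}\bigoplus_{I_G\in\parti(\Gr|_G)}\Q((\Gr|_G)/I_G)\otimes\bigotimes_{H\in I_G}\R((\Gr|_G)|_H).
\]
By distributing the tensor product over the direct sum in $\C$, both expressions become sums indexed by flags of partitions. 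Using Lemma~\ref{intervals}, I would identify these indexing sets with the common set of pairs $J\leq I$ in $\parti(\Gr)$: the interval $[J,\hat{1}]\cong\parti(\Gr/J)$ supplies $K$ in the first expression (so that $I$ is recovered as the preimage of $K$), and $[\hat{0},I]\cong\prod_{G\in I}\parti(\Gr|_G)$ supplies the family $\{I_G\}$ in the second (so that $J$ is the partition assembled from these). After this reindexing, the matching of summands comes from the natural graph isomorphisms $(\Gr/J)/K\cong\Gr/I$, $(\Gr/J)|_B\cong(\Gr|_G)/(J|_G)$, and $(\Gr|_G)|_H=\Gr|_H$ — all three are easily verified by inspection of the induced/contracted vertex sets. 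The resulting correspondence of summands is an isomorphism in $\C$ obtained by reordering tensor factors via the symmetry, which is the associator.

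Finally, the pentagon and triangle coherence axioms become, summand by summand under these bijections, instances of the pentagon and triangle in $\C$; naturality with respect to graph isomorphisms and transformations of collections is clear because every ingredient (the bijections of partition sets, the graph isomorphisms above, and the associator of $\C$) is natural. The main obstacle, if any, is bookkeeping: making the bijection of summands on the two sides of associativity honest, especially tracking how $J$ restricts to each block of $I$ and how these pieces reassemble under contraction. Once Lemma~\ref{intervals} is applied this becomes essentially combinatorial, with no new algebraic content beyond what the monoidal structure of $\C$ already provides.
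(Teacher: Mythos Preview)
Your proposal is correct and follows essentially the same route as the paper: expand both bracketings of the triple product, invoke Lemma~\ref{intervals} to reindex the double sum by comparable pairs $J\leq I$ in $\parti(\Gr)$, match the summands via the evident graph isomorphisms, and handle the unitors via the extremal partitions $\hat{0}$ and $\hat{1}$. The only minor difference is that you spell out the pentagon and triangle coherence (reducing them to those of $\C$), whereas the paper simply asserts this is clear.
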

\begin{proof}
Let us extend each graphical collection to non-connected graphs by putting  $\F(\Gr\coprod \Gr')=\F(\Gr)\otimes \F(\Gr')$. Using this convention, the product of graphical collections is written in a more compact way
\[
    \Pop\circ\Q(\Gr)=\bigoplus_{I \in \parti(\Gr)} \Pop(\Gr/I) \otimes \Q(\Gr|_I),
\] where $\Gr|_I=\coprod_{G \in I} \Gr|_G$. For a triple of graphical collections, we have
\[
    ((\Pop\circ\Q)\circ \Ho)(\Gr)=\bigoplus_{I \in \parti(\Gr)}(\bigoplus_{J \in \parti(\Gr/I)} \Pop((\Gr/I)/J)\otimes \Q((\Gr/I)|_J))\otimes \Ho(\Gr|_I).
\] By Lemma~\ref{intervals}, each partition $J$ of the contracted graph $\Gr/I$ corresponds to a unique partition of the original graph from the interval $[I,\hat{1}]$. Hence, we have
\[
 \bigoplus_{I \in \parti(\Gr)}(\bigoplus_{J \in [I,\hat{1}]} \Pop(\Gr/J)\otimes \Q((\Gr|_J)/_I))\otimes \Ho(\Gr|_I) \cong \bigoplus_{J\geq I} \Pop(\Gr/J)\otimes \Q((\Gr|_J)/_I)\otimes \Ho(\Gr|_I),
\] where the sum on the right hand side ranges over all possible pairs of comparable partitions $J \geq I$. Note that the right hand side is exactly the evaluation of the graphical collection $\Pop\circ(\Q\circ\Ho)$ on $\Gr$
\[
    \bigoplus_{J\geq I} \Pop(\Gr/J)\otimes \Q((\Gr|_J)/_I)\otimes \Ho(\Gr|_I) = \bigoplus_{J \in \parti(\Gr)} \Pop(\Gr/J)\otimes (\Q\circ\Ho)(\Gr|_J)=(\Pop\circ(\Q\circ\Ho))(\Gr).
\] Moreover, similarly to the operad case~\cite{loday2012algebraic}, the associativity isomorphism $(\Pop\circ \Q)\circ \Ho \cong \Pop \circ (\Q \circ \Ho)$ satisfies the axioms of a monoidal category. Finally, for the maximal partition $\hat{1}=\{V_{\Gr}\}$ and the minimal one $\{\{v\}|v \in V_{\Gr}\}$, we have the obvious identities $\Gr|_{\{V_{\Gr}\}}=\Gr$ and $\Gr|_{\{\{v\}\}}=\coprod_{v \in V_{\Gr}} \Gr|_{v} \cong \coprod_{v \in V_{\Gr}} \Path_1$. These assertions immediately imply  the following isomorphisms
\begin{equation*}
    \Pop\circ\text{ }\mathbb{1} \cong \Pop\cong \mathbb{1} \circ \Pop,
\end{equation*} which verify the unit axiom of a monoidal category.
\end{proof}

\begin{defi}[Monoidal definition of contractads]\label{def:monoidal}
A contractad is a monoid in the monoidal category of graphical collections equipped with the contraction product $\circ_{\mathsf{C}}$.
\end{defi}

Let $\Pop$ be a $\C$-valued contractad. According to the definition provided above, we have the unit $\eta\colon \mathbb{1} \rightarrow \Pop$ and the product map $\gamma\colon \Pop\circ\Pop \rightarrow \Pop$ which satisfy axioms of monoids. More explicitly, the unit is given by the morphism $u\colon 1_{\C} \rightarrow \Pop(\Path_1)$ from the unit of the category to the one-vertex component of the underlying graphical collection. In the set-theoretical case, we shall denote by $\Id$ the unique element arising from the latter map. The product map $\gamma$ is given by the collection of morphisms
\[
    \gamma_I^{\Gr}\colon \Pop(\Gr/I)\otimes \bigotimes_{G \in I} \Pop(\Gr|_G)\to \Pop(\Gr),
\] ranging over all graphs and all their partitions. In a dual fashion, we define a \textit{cocontractad} as a comonoid in the category of graphical collections. We shall denote by $\triangle_I^{\Gr}$ the composition of the coproduct map $\Q \rightarrow \Q\circ\Q$ with the projection to the corresponding summand
\[
    \triangle_I^{\Gr}\colon \Q(\Gr) \to \Q(\Gr/I)\otimes \bigotimes_{G \in I} \Q(\Gr|_G). 
\]

Recall that a symmetric operad can be defined by a collection of maps $\circ_i\colon \Orb(n)\otimes \Orb(m) \rightarrow \Orb(n+m-1)$ called infinitesimal compositions. The same idea applies to contractads as follows. Recall that for a graph $\Gr$, each tube $G$ defines the partition $I=\{ G\}\cup \{ \{v\}| v \not\in G\}$, and the related contracted graph is denoted by  $\Gr/G$. Define the infinitesimal composition $\circ^{\Gr}_G\colon \Pop(\Gr/G)\otimes \Pop(\Gr|_G) \rightarrow \Pop(\Gr)$ by the substitution
\[
    \Pop(\Gr/G)\otimes\Pop(\Gr|_G) \cong \Pop(\Gr/G)\otimes\Pop(\Gr|_G)\otimes \bigotimes_{v \not\in G} 1_{\C} \overset{\Id \otimes u^{\otimes}}{\hookrightarrow} \Pop(\Gr/G)\otimes\Pop(\Gr|_G)\otimes \bigotimes_{v \not\in G}\Pop(\Gr|_{\{v\}}) \overset{\gamma}{\rightarrow} \Pop(\Gr).
\] In the set-theoretical case, these compositions can be written in the form $\mu\circ_G^{\Gr}\nu=\gamma(\mu;\nu,\Id,...,\Id)$. As in the case of operads, the collection of infinitesimal compositions recovers a contractad structure. The following proposition is proved by direct inspection.
\begin{prop}["Partial" definition of contractads] \label{partdefi}
A contractad structure on a graphical collection $\Pop$ is equivalent to the datum of morphisms $\circ_G^{\Gr}\colon \Pop(\Gr/G)\otimes\Pop(\Gr|_G) \to \Pop(\Gr)$, ranging over all pairs $(\Gr,G)$ of graph $\Gr$ and tube $G$, satisfying the following axioms:
\begin{itemize}
    \item \textnormal{Unit:} We have a morphism $u\colon 1_{\C} \to \Pop(\Path_1)$ such that
\begin{equation}  
    \begin{tikzcd}
	& \Pop(\Gr)\otimes 1_{\C} && \Pop(\Gr)\otimes\Pop(\Gr|_{\{v\}}) \\
	\Pop(\Gr) &&&& \Pop(\Gr) \\
	&  1_{\C}\otimes\Pop(\Gr) && \Pop(\Gr/\{V_{\Gr}\})\otimes \Pop(\Gr)
	\arrow["\cong", from=2-1, to=1-2]
	\arrow["\cong"', from=2-1, to=3-2]
	\arrow["\Id \otimes u", from=1-2, to=1-4]
	\arrow["\circ^{\Gr}_{\{v\}}", from=1-4, to=2-5]
	\arrow["u\otimes \Id"', from=3-2, to=3-4]
	\arrow["\circ^{\Gr}_{\{V_{\Gr}\}}"', from=3-4, to=2-5]
	\arrow["\Id", from=2-1, to=2-5]
\end{tikzcd}
\end{equation}
    \item \textnormal{Parallel:} For any pair $G_1,G_2$ of disjoint tubes, the following diagram commutes:
    \begin{equation}
    \begin{CD}
    \Pop((\Gr/G_1)/G_2) \bigotimes \Pop(\Gr|_{G_2}) \bigotimes \Pop(\Gr|_{G_1}) @> \circ_{G_2}^{\Gr/G_1} \otimes 1  >>\Pop(\Gr/G_1) \bigotimes \Pop(\Gr|_{G_1})\\
    @VV \circ_{G_1}^{\Gr/G_2}\otimes 1 V @VV \circ_{G_1}^{\Gr} V\\
    \Pop(\Gr/G_2) \bigotimes \Pop(\Gr|_{G_2})  @>\circ_{G_2}^{\Gr}>>\Pop(\Gr)
    \end{CD}
    \end{equation}

    \item \textnormal{Associativity:} For any pair of included tubes $G \subset H$, the following diagram commutes:
    \begin{equation}
    \begin{CD}
    \Pop(\Gr/H) \bigotimes \Pop((\Gr|_H)/G) \bigotimes \Pop(\Gr|_G) @> 1 \otimes \circ_G^H >>\Pop(\Gr/H) \bigotimes \Pop(\Gr|_H)\\
    @VV \circ_{H/G}^{\Gr/G}\otimes 1 V @VV \circ_H^{\Gr} V\\
    \Pop(\Gr/G) \bigotimes \Pop(\Gr|_G)  @>\circ_G^{\Gr}>>\Pop(\Gr)
    \end{CD}
    \end{equation}
    \item \textnormal{Equivariance:} For any tube $G$ and automorphism $\tau\in \Aut(\Gr)$, the following diagram commutes:
    \begin{equation}
    \begin{CD}
     \Pop(\Gr/\tau(G)) \bigotimes \Pop(\Gr|_{\tau(G)}) @>\circ_{\tau(G)}^{\Gr} >> \Pop(\Gr)\\
    @VV \tau/_G \otimes \tau|_{G} V @VV \tau V\\
    \Pop(\Gr/G) \bigotimes \Pop(\Gr|_G)  @>\circ_{G}^{\Gr}>>\Pop(\Gr).
    \end{CD}
    \end{equation}
\end{itemize}
\end{prop}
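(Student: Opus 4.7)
The plan is to prove the two directions of the equivalence separately, in both cases working componentwise on graphs and partitions and relying crucially on Lemma~\ref{intervals} to match the monoidal associativity with the combinatorics of nested and disjoint tubes.

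Starting from a monoid $(\Pop,\gamma,\eta)$ in $(\GrCol_{\C},\circ_{\mathsf{C}},\mathsf{I})$, I would define $\circ_G^{\Gr}$ exactly by the formula preceding the statement, i.e.\ the composite of $\gamma_I^{\Gr}$ (for the partition $I=\{G\}\cup\{\{v\}\mid v\notin G\}$) with the unit map $u$ inserted at every trivial block. The Unit axiom of the proposition is then a direct specialisation of the two monoid unit identities $\gamma\circ(\eta\circ_{\mathsf{C}}\Id)=\Id=\gamma\circ(\Id\circ_{\mathsf{C}}\eta)$ to the extremal partitions $\hat{1}=\{V_{\Gr}\}$ and $\hat{0}=\{\{v\}\}_{v\in V_{\Gr}}$. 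Equivariance is immediate from the contravariance of $\Pop$ on $\mathsf{CGr}$, since $\circ_G^{\Gr}$ was built functorially out of $\gamma$ and $u$. Parallel and Associativity then drop out of the monoid associativity $\gamma\circ(\gamma\circ_{\mathsf{C}}\Id)=\gamma\circ(\Id\circ_{\mathsf{C}}\gamma)$ applied to the partition pair $(I,J)$ with $I=\{G_1,G_2\}\cup\{\{v\}\}$, respectively $I=\{G\}\cup\{\{v\}\}$ with $J=\{H\}\cup\{\{v\}\}$ for $G\subset H$; Lemma~\ref{intervals} guarantees that the intermediate partitions on the two sides of the associativity square are precisely the ones appearing in the two diagrams.

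In the converse direction, given a family $\{\circ_G^{\Gr}\}$ satisfying the four axioms, I reconstruct $\gamma_I^{\Gr}$ for a partition $I=\{G_1,\dots,G_k\}$ as the iterated infinitesimal composition along the blocks, taken in any chosen linear order. The Parallel axiom, applied inductively to adjacent transpositions, shows that the result is independent of the order, so $\gamma$ is well defined. The unit $\eta$ is the evident extension of the map $u\colon 1_{\C}\to\Pop(\Path_1)$, and the monoid unit axiom follows from the Unit axiom component by component.

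The main obstacle, and the step I would handle most carefully, is verifying associativity of the reconstructed $\gamma$. Given a pair of comparable partitions $I\leq J$ of $\Gr$, Lemma~\ref{intervals} identifies $J$ with a partition of $\Gr/I$ and simultaneously with compatible partitions of the blocks $\Gr|_{G}$ for $G\in I$, so that both sides of the associativity diagram for $\gamma$ unfold into iterated infinitesimal compositions indexed by the same set of tubes. It therefore suffices to show that any two such iterated compositions coincide; this reduces, by an induction on the number of elementary moves, to two elementary cases: commuting two compositions at disjoint tubes, which is the Parallel axiom, and swapping an outer composition with a composition strictly inside one of its blocks, which is the Associativity axiom for nested tubes $G\subset H$. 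Once associativity is established, the fact that the two constructions are mutually inverse is immediate from their definitions, because substituting units at trivial blocks and then re-expanding via iterated infinitesimal compositions is, by the Unit and Parallel axioms, the identity on both the monoidal product and the family of infinitesimal ones.
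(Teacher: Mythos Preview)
Your proposal is correct and is precisely the argument the paper has in mind: the paper's own proof is the single sentence ``proved by direct inspection,'' so you are simply spelling out that inspection. The only small omission is that in the reconstruction direction you should also observe that the Equivariance axiom makes the assembled $\gamma$ a morphism of graphical collections (i.e.\ natural with respect to graph automorphisms), not merely a family of maps in $\C$; this is immediate, but it is what completes the check that $(\Pop,\gamma,\eta)$ is a monoid in $\GrCol_{\C}$ rather than just componentwise.
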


\subsection{Graph admissible trees and free contractads}\label{subsec:admissibletrees}
In this subsection, we give a combinatorial definition of a contractad based on the notion of \textit{admissible} rooted trees.  A \textit{rooted tree} is a connected directed tree $T$ in which each vertex has at least one input edge and exactly one output edge. Some edges of a tree might be bounded by a vertex at one end only. Such edges are called external. This tree should have exactly one external outgoing edge, output. The endpoint of this edge is called the \textit{root}. The endpoints of incoming external edges that are not vertices are called \textit{leaves}. A tree with a single vertex is called a \textit{corolla}. For a rooted tree $T$ and edge $e\in\Edge(T)$, let $T_e$ be the subtree of $T$  with the root at $e$, and let $T^e$ be the subtree obtained from $T$ by removing $T_e$.

\begin{defi}
For a connected graph $\Gr$, a $\Gr$-\textit{admissible} rooted tree is a rooted tree $T$ with leaves labeled by the vertex set $V_{\Gr}$ of the given graph such that, for each edge $e$ of the tree, the leaves of subtree $T_e$ form a tube of $\Gr$.
\end{defi} 

\begin{figure}[ht] 
  \centering
\[
\vcenter{\hbox{\begin{tikzpicture}[scale=0.6]
    \fill (0,0) circle (2pt);
    \fill (0,1.5) circle (2pt);
    \fill (1.5,0) circle (2pt);
    \fill (1.5,1.5) circle (2pt);
    \draw (0,0)--(1.5,0)--(1.5,1.5)--(0,1.5)-- cycle;
    \node at (-0.25,1.75) {$1$};
    \node at (1.75,1.75) {$2$};
    \node at (1.75,-0.25) {$3$};
    \node at (-0.25,-0.25) {$4$};
  \end{tikzpicture}}}
\qquad
\vcenter{\hbox{\begin{tikzpicture}[scale=0.75]
        \draw (0,0)--(0,1);
        \draw (0,1)--(1,2);
        \draw (0,1)--(-1,2);
        \draw (1,2)--(1.75,2.75);
        \draw (1,2)--(0.25,2.75);
        \draw (-1,2)--(-1.75,2.75);
        \draw (-1,2)--(-0.25,2.75);
        \node at (1.75,3.1) {$4$};
        \node at (0.25,3.1) {$3$};
        \node at (-1.75,3.1) {$1$};
        \node at (-0.25,3.1) {$2$};
  \end{tikzpicture}}}
\qquad 
\vcenter{\hbox{\begin{tikzpicture}[scale=0.75]
        \draw (0,0)--(0,1);
        \draw (0,1)--(1,2);
        \draw (0,1)--(-1,2);
        \draw (0,1)--(0,2);
        \draw (0,2)--(0.75,2.75);
        \draw (0,2)--(-0.75,2.75);
        \node at (-1,2.25) {$1$};
        \node at (-0.75,3.1) {$2$};
        \node at (0.75,3.1) {$3$};
        \node at (1,2.25) {$4$};
  \end{tikzpicture}}}
\qquad 
\vcenter{\hbox{\begin{tikzpicture}[scale=0.75]
        \draw (0,0)--(0,1);
        \draw (0,1)--(1,2);
        \draw (0,1)--(-1,2);
        \draw (-1,2)--(-1.75,2.75);
        \draw (-1,2)--(-1,2.75);
        \draw (-1,2)--(-0.25,2.75);
        \node at (-1.75,3.1) {$3$};
        \node at (-1,3.1) {$4$};
        \node at (-0.25,3.1) {$1$};
        \node at (1,2.25) {$2$};
  \end{tikzpicture}}}
\qquad 
\vcenter{\hbox{\begin{tikzpicture}[scale=0.75]
        \draw[thick] (-1.2,3)--(1.2,0);
        \draw[thick] (-1.2,0)--(1.2,3);
        \draw (0,0)--(0,1);
        \draw (0,1)--(1,2);
        \draw (0,1)--(-1,2);
        \draw (0,1)--(0,2);
        \draw (0,2)--(0.75,2.75);
        \draw (0,2)--(-0.75,2.75);
        \node at (-1,2.25) {$1$};
        \node at (-0.75,3.1) {$2$};
        \node at (0.75,3.1) {$4$};
        \node at (1,2.25) {$3$};
  \end{tikzpicture}}}
\]
\caption{Graph $\Cyc_4$ (on the left side) and examples of $\Cyc_4$-admissible trees. The first three are $\Cyc_4$-admissible, but the fourth is not, since leaves 2,4 do not form a tube.}
\label{roottrees}
\end{figure}
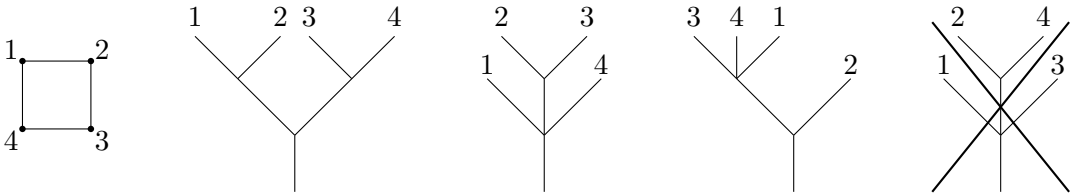

 We denote by $\Tree(\Gr)$ the set of all $\Gr$-admissible rooted trees. Note that a corolla with leaves labeled by the vertex set is always $\Gr$-admissible. Let us describe explicitly admissible trees for particular types of graphs.
 \begin{itemize}
 \label{grroottrees}
     \item For  paths, $\Path_n$-admissible trees are those that can be embedded in the plane such that leaves $[n]=\{1,2,3,...,n\}$ are arranged in increasing order. Indeed, this follows from the  fact  that tubes of $\Path_n$ are ordered intervals of $[n]$.
     \item For  cycles, $\Cyc_n$-admissible trees are those that can be embedded in the plane such that leaves are arranged in cyclic order, as in Figure~\ref{roottrees}.
     \item For complete graphs, $\K_n$-admissible trees are ordinary rooted trees since each vertex subset of a complete graph is a tube.
 \end{itemize}  
Note that subtrees of admissible trees are also admissible. Indeed, for each edge $e$ of a $\Gr$-admissible tree $T$, the subtree $T_e$ is a $\Gr|_{L_e}$-admissible tree, where $L_e$ is the set of leaves of $T_e$, and the subtree $T^e$ is a $\Gr/L_e$-admissible tree. For a partition $I$ of the graph $\Gr$, we define the substitution map
\[
    \Tree(\Gr/I)\times\prod_{G \in I} \Tree(\Gr|_G)\to \Tree(\Gr),
\] which joins roots of $\Gr|_G$-admissible trees to corresponding leaves of $\Gr/I$-admissible trees, as in Figure~\ref{substitution}.

\begin{figure}[ht]
  \centering
  \[
  \vcenter{\hbox{\begin{tikzpicture}[scale=0.7]
    %vertices
    \fill (-0.63,1.075)  circle (2pt);
    \node at (-0.9,1.4) {1};
    \fill (0.63,1.075)  circle (2pt);
    \node at (0.9,1.4) {2};
    \fill (1.22,0) circle (2pt);
    \node at (1.6,0) {3};
    \fill (0.63,-1.075)  circle (2pt);
    \node at (0.9,-1.4) {4};
    \fill (-0.63,-1.075)  circle (2pt);
    \node at (-0.9,-1.4) {5};
    \fill (-1.22,0) circle (2pt);
    \node at (-1.6,0) {6};
    %partition
    \draw (-1.22,0)--(-0.63,1.075)--(0.63,1.075)--(1.22,0)--(0.63,-1.075)--(-0.63,-1.075)--cycle;
    \draw[dashed] (-1.6,0)[rounded corners=15pt]--(-0.4,1.7)[rounded corners=15pt]--(-0.4,-1.7)[rounded corners=12pt]--cycle;
    \draw[dashed] (1.6,0)[rounded corners=15pt]--(0.4,1.7)[rounded corners=15pt]--(0.4,-1.7)[rounded corners=12pt]--cycle;
    \end{tikzpicture}}}
    \quad\quad
    \text{\huge(} \vcenter{\hbox{\begin{tikzpicture}[scale=0.6]
        \draw (0,0)--(0,1);
        \draw (0,1)--(1,2);
        \draw (0,1)--(-1,2);
        \node at (-1.2,2.3) {\small $\{1,5,6\}$};
        \node at (1,2.3) {\small $\{2,3,4\}$};
     \end{tikzpicture}}};
     \vcenter{\hbox{\begin{tikzpicture}[scale=0.6]
        \draw (0,0)--(0,1);
        \draw (0,1)--(1,2);
        \draw (0,1)--(-1,2);
        \draw (-1,2)--(-1.75,2.75);
        \draw (-1,2)--(-0.25,2.75);
        \node at (-1.75,3.1) {$1$};
        \node at (-0.25,3.1) {$6$};
        \node at (1,2.3) {$5$};
     \end{tikzpicture}}},
     \vcenter{\hbox{\begin{tikzpicture}[scale=0.6]
        \draw (0,0)--(0,1);
        \draw (0,1)--(1,2);
        \draw (0,1)--(-1,2);
        \draw (1,2)--(1.75,2.75);
        \draw (1,2)--(0.25,2.75);
        \node at (-1,2.3) {$2$};
        \node at (0.25,3.1) {$3$};
        \node at (1.75,3.1) {$4$};
    \end{tikzpicture}}}\text{\huge)}
\quad \longrightarrow
    \vcenter{\hbox{\begin{tikzpicture}[scale=0.6]
        %tree
        \draw (0,0)--(0,1);
        \draw (0,1)--(1,2);
        \draw (0,1)--(-1,2);
        \draw (1,2)--(1.75,2.75);
        \draw (1,2)--(0.25,2.75);
        \draw (1.75,2.75)--(2.5,3.5);
        \draw (1.75,2.75)--(1,3.5);
        \draw (-1,2)--(-1.75,2.75);
        \draw (-1,2)--(-0.25,2.75);
        \draw (-1.75,2.75)--(-2.5,3.5);
        \draw (-1.75,2.75)--(-1,3.5);
        %leaves
        \node at (-2.5,3.85) {\small$1$};
        \node at (-1,3.85) {\small$6$};
        \node at (-0.25,3.1) {\small$5$};
        \node at (0.25,3.1) {\small$2$};
        \node at (1,3.85) {\small$3$};
        \node at (2.5,3.85) {\small$4$};
        
  \end{tikzpicture}}}
  \]
  \caption{Example of substitution.}
  \label{substitution}
\end{figure}
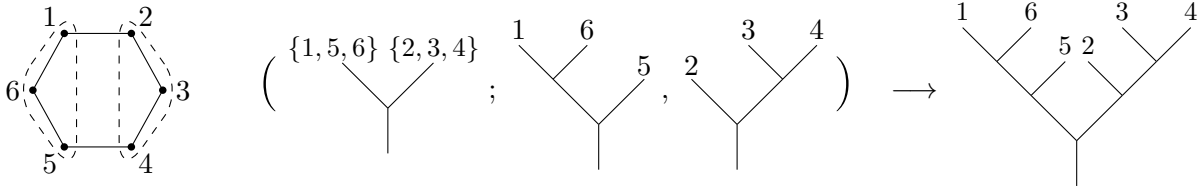

Recall that an operad can be defined as an algebra over the monad of rooted trees \cite[Ch.~5]{loday2012algebraic}. A similar story applies to contractads if we replace ordinary trees with admissible ones. For a $\Gr$-admissible tree $T$ and a vertex $v$, let $e$ be the output edge and $e_1,e_2,...,e_k$ be the input edges. Note that the collection of leaf sets $\{L_{e_1},L_{e_2},...,L_{e_k}\}$ forms a partition of the induced subgraph $\Gr|_{L_e}$. Define the \textit{input graph of the vertex} by $\In(v):=(\Gr|_{L_e})/\{L_{e_1},...,L_{e_k}\}$.
\begin{defi}
    The admissible rooted trees endofunctor is the functor
    \[
    \T\colon \GrCol \to \GrCol
    \] defined by the rule
    \[
    \T(\Pop)(\Gr)=\bigoplus_{T \in \Tree(\Gr)} \bigotimes_{v \in \Ver(T)} \Pop(\In(v)).
    \]
\end{defi}
\noindent In the linear case, each element in the component $\T(\F)(\Gr)$ can be expressed as a sum of $\Gr$-admissible trees whose vertices $v$ are labeled by elements of $\F(\In(v))$. We endow this endofunctor with a monad structure as follows. The unit transformation $\Id_{\GrCol} \Rightarrow \T$ is the natural inclusion $\F \to \T(\F)$ which corresponds to the corolla in each component. The natural transformation $\T\circ\T \Rightarrow \T$  is induced by the substitution of graph-admissible trees $\T(\T(\F)) \rightarrow \T(\F)$. Checking that this data defines a monad structure proceeds \textit{mutatis mutandis} in the same way as for the operad case~\cite[Lem.~5.5.2]{loday2012algebraic}. 

\begin{defi}[Monadic definition of contractads]\label{def:monadic}
A contractad is an algebra over the admissible tree monad. In other words, it is a graphical collection equipped with a structure map
\begin{equation*}
    \T(\Pop) \to \Pop,
\end{equation*}
compatible with the monad structure on $\mathbb{T}$.
\end{defi}

\noindent The monadic definition of contractads allows us to define a free contractad as a free $\T$-algebra.
\begin{defi}
The free contractad on a graphical collection $\E$ is the contractad $\T(\E)$ with the structure map $\T(\T(\E)) \to \T(\E)$ arising from the monad structure.
\end{defi}

\noindent Moreover, we can discuss presentations of contractads in terms of generators and relations as follows. An \textit{ideal} of a contractad $\Pop$ is a graphical subcollection $\mathcal{I} \subset \Pop$ invariant under the product map on both sides. Like in the case of algebras/groups/operads, the quotient of a contractad $\Pop/\I$ by an ideal is also a contractad.

\begin{defi}
The contractad presented by generators $\E$ and relations $\R \subset \T(\E)$ is the quotient contractad
\[
\T(\E)/\langle \R \rangle,
\] where $\langle \R \rangle$ is the minimal ideal containing the subcollection $\R$.
\end{defi}

\subsection{Relations to classical algebraic structures}\label{subsec:operads}

We can explicitly relate contractads to classical operad-like structures, such as (non)symmetric operads and twisted associative algebras. The key idea is to consider special families of graphs that are closed under induced and contracted graphs.

First, consider the family of complete graphs $\{\K_n\}_{n\geq 1}$. We have $\Aut(\K_n)\cong \Sigma_n$ since automorphisms of complete graphs are just permutations of vertices. Therefore, the restriction of a graphical collection to complete graphs defines a $\Sigma$-module by the rule
\[
\K_{*}(\Orb)(n):=\Orb(\K_n).
\]
\begin{prop}
For every contractad $\Pop$, the $\Sigma$-module $\K_*(\Pop)$ has a natural structure of a symmetric operad obtained from that of $\Pop$.
\end{prop}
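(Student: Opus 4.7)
The plan is to exploit the observation that the family of complete graphs $\{\K_n\}_{n\geq 1}$ is closed under induced subgraphs and contractions: for any vertex subset $S \subset V_{\K_n}$ one has a canonical isomorphism $\K_n|_S \cong \K_{|S|}$, and for any tube $G \subset V_{\K_n}$ (which is any nonempty subset) one has $\K_n/G \cong \K_{n-|G|+1}$. Consequently, the infinitesimal contractad compositions restricted to complete graphs land inside the $\Sigma$-module $\K_*(\Pop)$, giving candidates for the operadic partial compositions. The $\Sigma_n$-action on $\K_*(\Pop)(n) = \Pop(\K_n)$ comes for free from the identification $\Aut(\K_n) \cong \Sigma_n$ and the functoriality of $\Pop$.

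First I would define, for integers $n,m \geq 1$ and $1 \leq i \leq n$, the partial composition
\[
\circ_i \colon \Pop(\K_n) \otimes \Pop(\K_m) \to \Pop(\K_{n+m-1})
\]
as the contractad composition $\circ_G^{\K_{n+m-1}}$, where $G = \{i, i+1, \ldots, i+m-1\}$ is the tube whose induced subgraph is identified with $\K_m$ and whose contraction is identified with $\K_n$ via the order-preserving bijection between $V_{\K_n}$ and $\{1,\ldots, i-1\} \cup \{G\} \cup \{i+m,\ldots, n+m-1\}$. The unit morphism of the operad is simply the contractad unit $u \colon 1_{\C} \to \Pop(\Path_1) = \Pop(\K_1)$, since $\K_1 = \Path_1$.

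Next I would verify the three axioms of a symmetric operad from the corresponding axioms in Proposition \ref{partdefi}. The unit axiom of the operad is exactly the contractad unit axiom specialized to complete graphs. The two operadic associativity axioms (parallel and nested) follow directly from the Parallel and Associativity diagrams: given two disjoint tubes in $\K_{n+m+k-2}$, or a pair of nested tubes in $\K_{n+m+k-2}$, both the induced subgraphs and the contractions remain complete graphs, so the commutative diagrams of Proposition \ref{partdefi} descend verbatim to the partial compositions $\circ_i$ and $\circ_j$ with the usual index shifts. Finally, the operadic equivariance axiom for permutations $\sigma \in \Sigma_n$ and $\tau \in \Sigma_m$ follows from the contractad Equivariance diagram, because every permutation of $V_{\K_{n+m-1}}$ is an automorphism of $\K_{n+m-1}$.

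The one step that requires care is the bookkeeping translating between the intrinsic language of tubes in complete graphs and the traditional index-based notation $\circ_i$ for operadic compositions; in particular, one must check that the index shifts built into the usual operadic associativity and equivariance relations match those coming from relabeling $\K_n/G$ and $\K_n|_G$ via canonical order-preserving identifications with smaller complete graphs. I expect this to be the only real obstacle, and it is purely combinatorial: once the identifications are fixed consistently, every square in Proposition \ref{partdefi} reduces to a standard operadic square. Naturality of the construction $\Pop \mapsto \K_*(\Pop)$ with respect to morphisms of contractads is then automatic from the functoriality of $\K_*$ as a restriction.
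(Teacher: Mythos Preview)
Your proof is correct, but the paper takes a different, more conceptual route. Rather than verifying the partial-composition axioms one by one from Proposition~\ref{partdefi}, the paper invokes the monadic definition (Definition~\ref{def:monadic}): since every nonempty subset of $V_{\K_n}$ is a tube, the $\K_n$-admissible rooted trees are exactly \emph{all} rooted trees with $n$ leaves, so the restriction to complete graphs intertwines the admissible-tree monad with the usual rooted-tree monad, $\K_*(\T_{\GrCol}(\Orb)) \cong \T_{\Sigma}(\K_*(\Orb))$. Hence $\K_*$ sends $\T_{\GrCol}$-algebras to $\T_\Sigma$-algebras, and the result follows in one line.

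Your approach has the advantage of being self-contained at the level of the partial definition and of making explicit what the operadic $\circ_i$ actually is; the paper's approach has the advantage of avoiding all the index bookkeeping you flagged, at the cost of appealing to the equivalence of the monadic and partial definitions. Both arguments rest on the same underlying fact---that tubes of $\K_n$ are arbitrary nonempty subsets---but the paper's packaging via monads is what makes the proof a two-liner.
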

\begin{proof}
Recall that a symmetric operad is an algebra over the monad of rooted trees $\T_{\Sigma}$ \cite{loday2012algebraic}. As we have mentioned before in \eqref{grroottrees}, $\K_n$-admissible rooted trees are just ordinary rooted trees. In other words, the restriction to complete graphs preserves monads
\[
\K_{*}(\T_{\GrCol}(\Orb))\cong \T_{\Sigma}( \K_{*}(\Orb)).
\]So, the functor $\K_{*}$ sends $\T_{\GrCol}$-algebras to $\T_{\Sigma}$-algebras.
\end{proof}

Next, consider the family of paths $\{\Path_n\}_{n\geq 1}$. Note that each path $\Path_n$ has only one non-trivial automorphism which relabels vertices in the reverse order.  Therefore, the restriction of a graphical collection to paths defines a non-symmetric collection
\[
\Path_{*}(\Orb)(n):=\Orb(\Path_n),
\] with an additional involution $(-)^*\colon \Path_{*}(\Orb) \to \Path_{*}(\Orb)$ arising  from the relabeling of vertices in paths. Recall that a non-symmetric operad is an algebra over the monad of planar rooted trees $\T_{\mathrm{pl}}$. Referring to \cite{dotsenko2024reconnectads}, a \textit{mirrored ns operad} is an ns operad  $\mathcal{Q}$ with an involution $(-)^*\colon \mathcal{Q} \to \mathcal{Q}$ such that 
\[
(\mu \circ_i \nu)^*=\mu^*\circ_{n-i+1} \nu^*, \quad \mu \in \mathcal{Q}(n), \nu \in \mathcal{Q}(m).
\] 
\begin{prop}
For every contractad $\Pop$, the non-symmetric collection $\Path_*(\Pop)$ has a natural structure of a mirrored ns operad obtained from that of $\Pop$.
\end{prop}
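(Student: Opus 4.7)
The plan is to mirror the preceding proof for complete graphs to obtain the ns-operad structure on $\Path_*(\Pop)$, and then to derive the mirrored identity directly from the equivariance axiom of the contractad applied to the reversal automorphism of paths.

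For the ns-operad structure, I would invoke the observation noted on page~\pageref{grroottrees}: $\Path_n$-admissible rooted trees are exactly planar rooted trees with leaves labeled $1, 2, \ldots, n$ from left to right. Writing $\T_{\mathrm{pl}}$ for the monad of planar rooted trees (whose algebras are non-symmetric operads), this identification yields a natural isomorphism
$$\Path_*(\T_{\GrCol}(\Orb)) \cong \T_{\mathrm{pl}}(\Path_*(\Orb)).$$
By the monadic Definition~\ref{def:monadic}, the functor $\Path_*$ therefore sends contractads to ns operads. Under this transport, the $i$-th partial composition $\circ_i$ of the resulting ns operad is the contractad composition along the interval tube $G_i = \{i, i+1, \ldots, i+m-1\}$ of $\Path_{n+m-1}$, i.e.\ $\mu \circ_i \nu := \mu \circ^{\Path_{n+m-1}}_{G_i} \nu$.

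For the involution, $\Aut(\Path_n) \cong \mathbb{Z}/2$ is generated by the reversal $\tau_n \colon k \mapsto n+1-k$, and by functoriality we set $(-)^* := \tau_n^* \colon \Path_*(\Pop)(n) \to \Path_*(\Pop)(n)$, an involution since $\tau_n^2 = \Id$. To verify the mirrored axiom, I would apply the equivariance square from Proposition~\ref{partdefi} with $\Gr = \Path_{n+m-1}$, $\tau = \tau_{n+m-1}$, and $G = G_i$. The key numerical check is $\tau_{n+m-1}(G_i) = G_{n-i+1}$, together with the compatible identifications of $\tau_{n+m-1}/_{G_i}$ with the reversal $\tau_n$ on the contracted path $\Path_{n+m-1}/G_i \cong \Path_n$ and of $\tau_{n+m-1}|_{G_i}$ with $\tau_m$ on the induced subpath. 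Chasing $\mu \otimes \nu$ around the equivariance square then reads exactly as $(\mu \circ_i \nu)^* = \mu^* \circ_{n-i+1} \nu^*$.

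There is no real obstacle here: the monad-level identification is immediate from the combinatorial description of $\Path_n$-admissible trees, and the mirrored identity is a direct transcription of equivariance. The only care needed is the bookkeeping confirming that the reversal of $\Path_{n+m-1}$ restricts to the corresponding reversals on the subpath and the contracted path, with the index $i$ mapping to $n-i+1$; this is elementary.
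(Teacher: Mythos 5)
Your proposal is correct and follows the paper's proof essentially verbatim: the ns-operad structure comes from the identification $\Path_{*}(\T_{\GrCol}(\Orb))\cong \T_{\mathrm{pl}}(\Path_{*}(\Orb))$ via planar rooted trees, and the mirrored identity comes from the Equivariance Axiom applied to the reversal of paths. The only difference is that you spell out the index bookkeeping $\tau_{n+m-1}(G_i)=G_{n-i+1}$, which the paper leaves implicit.
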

\begin{proof}
As we have mentioned before~\ref{grroottrees}, $\Path_n$-admissible rooted trees are precisely planar rooted trees. In other words, restriction to paths preserves monads
\[
\Path_{*}(\T_{\GrCol}(\Orb))\cong \T_{\mathrm{pl}}( \Path_{*}(\Orb)).
\] So, the restriction of a contractad is a non-symmetric operad. The mirrored structure follows from Equivariance Axiom~\ref{partdefi} applied to paths. 
\end{proof}

Finally, consider the family of stellar graphs $\{\St_n\}_{n\geq 0}$. In this setting, we shall consider only \textit{connected} contractads: $\Pop(\Path_1)=1_{\C}$. For the graph $\St_n$, we have $\Aut(\St_n)\cong \Sigma_n$ since automorphisms of a stellar graph are vertex permutations that stabilize the core.  Therefore, the restriction of a graphical collection to stellar graphs defines a $\Sigma$-module by the rule
\[
\St_*(\Orb)(I):=\Orb(\St_{I}),
\] where $\St_{I}$ is the stellar graph on the  vertex set $I\cup \{*\}$ with core $*$. 
Recall that a \textit{twisted associative algebra} is a monoid in the category of $\Sigma$-modules equipped with the \textit{Cauchy product}~\cite[Ch.~4]{bremner2016algebraic}
\[
(\mathcal{A} \cdot \B)(I)=\bigoplus_{I=J\sqcup K} \mathcal{A}(J)\otimes \B(K).
\]
\begin{prop}
For every connected contractad $\Pop$, the $\Sigma$-module $\St_*(\Pop)$ has a natural structure of twisted associative algebra obtained from that of $\Pop$.
\end{prop}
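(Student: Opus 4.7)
The plan is to construct the Cauchy-product multiplication on $\St_*(\Pop)$ from a canonical family of infinitesimal compositions in $\Pop$, and then read off the monoid axioms directly from the contractad axioms collected in Proposition~\ref{partdefi}.

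First I would define, for each ordered decomposition $I = J \sqcup K$, the multiplication component
\[
m_{J,K} \colon \Pop(\St_J) \otimes \Pop(\St_K) \to \Pop(\St_I)
\]
as the infinitesimal composition $\circ_{K \cup \{*\}}^{\St_I}$. The key observation making this well-posed is that $K \cup \{*\}$ is a tube of $\St_I$, with induced subgraph $\St_I|_{K \cup \{*\}} \cong \St_K$, and that the contraction $\St_I/(K \cup \{*\})$ is canonically a star graph on $J$ in which the contracted vertex inherits the role of the sun. The unit for the Cauchy-product monoidal structure is the morphism $1_\C \to \Pop(\St_\emptyset) = \Pop(\Path_1)$ furnished by the contractad unit of $\Pop$, which is an isomorphism because $\Pop$ is connected.

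Associativity is the core verification. For a three-part ordered decomposition $I = A \sqcup B \sqcup C$, I would apply the nested-tube associativity axiom of Proposition~\ref{partdefi} inside $\St_I$ to the pair of nested tubes $G = C \cup \{*\} \subset H = B \sqcup C \cup \{*\}$. Direct inspection yields the identifications $\St_I/H \cong \St_A$, $(\St_I|_H)/G \cong \St_B$, $\St_I|_G \cong \St_C$ and $\St_I/G \cong \St_{A \sqcup B}$, under which the associativity diagram translates exactly to
\[
m_{A, B \sqcup C} \circ (\Id \otimes m_{B, C}) = m_{A \sqcup B, C} \circ (m_{A, B} \otimes \Id).
\]
The two unit laws for the monoid reduce, via the choices $J = \emptyset$ and $K = \emptyset$ respectively, to the two halves of the contractad unit axiom applied to the total tube $V_{\St_I}$ and to the trivial tube $\{*\}$. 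Finally, $\Sigma_I$-equivariance of the multiplication follows from the equivariance axiom of the contractad applied to the sun-preserving vertex permutations, which comprise exactly $\Aut(\St_I)$.

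The main obstacle is purely clerical: one must track the order of the tensor factors so that the resulting multiplication realises the Cauchy product rather than its opposite, and confirm that the four contractad axioms translate without surprise under the identifications above. A more conceptual alternative, paralleling the monadic arguments used for complete and path graphs, would proceed by setting up a bijection between $\St_I$-admissible trees and ordered partitions of $I$, and deducing that $\St_*$ intertwines the admissible-tree monad $\T$ with the free twisted-associative-algebra monad; setting up that bijection is, however, a combinatorial exercise of comparable length.
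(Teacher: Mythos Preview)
Your argument is correct. You work at the level of the partial/infinitesimal definition (Proposition~\ref{partdefi}), constructing the multiplication directly from the compositions $\circ^{\St_I}_{K\cup\{*\}}$ and reading off associativity, unit, and equivariance from the three corresponding contractad axioms. The identifications $\St_I|_{K\cup\{*\}}\cong\St_K$ and $\St_I/(K\cup\{*\})\cong\St_J$ are exactly right, and your check of associativity via the nested pair $G=C\cup\{*\}\subset H=B\sqcup C\cup\{*\}$ goes through cleanly.

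The paper takes a more global route: rather than verifying the monoid axioms one by one, it observes that the functor $\St_*$ is \emph{monoidal} from $(\GrCol,\circ_{\mathsf{C}})$ to $(\Sigma\text{-mod},\cdot)$. The key point is the same combinatorial observation you use, namely that every tube of $\St_I$ is either a singleton or contains the sun, so every graph-partition of $\St_I$ has a single non-trivial block $J\cup\{*\}$. Together with connectedness ($\Pop(\Path_1)=1_{\C}$), this collapses the contracted product $\St_*(\Pop\circ\Q)$ to the Cauchy product $\St_*(\Pop)\cdot\St_*(\Q)$, and a monoidal functor sends monoids to monoids. This buys you all three axioms at once, with no bookkeeping on tensor-factor orderings; your approach, in exchange, makes the multiplication completely explicit and requires nothing beyond Proposition~\ref{partdefi}. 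Your suggested alternative via $\St_I$-admissible trees would in effect reproduce the paper's monoidal argument at the monadic level.
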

\begin{proof}
Tubes of the stellar graph $\St_I$ are singletons or vertex subsets containing the core. Hence, a partition of the stellar graph is a partition of the vertex set that is made up of one non-trivial block $J\cup \{0\}$ containing core. Moreover, the resulting contracted graph $\St_{I}/_{J\cup \{0\}} \cong \St_{I\setminus J}$ and induced subgraph $\St_{I}|_{J\cup\{0\}}\cong \St_{J}$ are also stellar graphs.
 \[
  \vcenter{\hbox{\begin{tikzpicture}[scale=0.7]
    %vertices
    \fill (0,0)  circle (2pt);
    \fill (-0.63,1.075)  circle (2pt);
    \fill (0.63,1.075)  circle (2pt);
    \fill (1.22,0) circle (2pt);
    \fill (0.63,-1.075)  circle (2pt);
    \fill (-0.63,-1.075)  circle (2pt);
    \fill (-1.22,0) circle (2pt);
    %edges
    \draw (0,0)--(-0.63,1.075);
    \draw (0,0)--(0.63,1.075);
    \draw (0,0)--(-0.63,-1.075);
    \draw (0,0)--(0.63,-1.075);
    \draw (0,0)--(1.22,0);
    \draw (0,0)--(-1.22,0);
    %partition
    \draw[dashed] (-1.22,0) circle  (7pt);
    \draw[dashed] (-0.63,1.075) circle  (7pt);
    \draw[dashed] (-0.63,-1.075) circle  (7pt);
    \draw[dashed] (1.6,0)[rounded corners=15pt]--(0.63,1.7)[rounded corners=15pt]--(-0.4,0)[rounded corners=15pt]--(0.63,-1.7)[rounded corners=12pt]--cycle;
    \end{tikzpicture}}}
     \longrightarrow \vcenter{\hbox{\begin{tikzpicture}[scale=0.75]
    \fill (0,0) circle (2pt);
    \fill (1,1) circle (2pt);
    \fill (1,2) circle (2pt);
    \fill (2,0) circle (2pt);
    \draw (0,0)--(1,1)--(1,2);
    \draw (2,0)--(1,1);
    \end{tikzpicture}}}
    \]These assertions imply that the restriction to stellar graphs sends the contraction product of connected graphical collections to the Cauchy product of related $\Sigma$-modules
\[
\St_{*}(\Pop \circ \mathcal{Q})(I) \cong \bigoplus_{J \subset I} \Pop(\St_{I\setminus J}) \otimes \mathcal{Q}(\St_J) \cong \St_{*}(\Pop)\cdot \St_{*}(\mathcal{Q})(I).
\]

\end{proof}

\section{Examples of contractads}\label{sec:examples}
In this section, we discuss examples of contractads arising in algebra, combinatorics, topology, and geometry.  Most of them can be viewed as graphical counterparts of familiar operads.
\subsection{Commutative contractad}
Consider the simplest example of a contractad. Let $\mathsf{k}$ be an arbitrary field.
\begin{defi}
The commutative contractad $\Com$ is the $\mathsf{k}$-linear contractad whose underlying graphical collection is given by the rule
\[
\Com(\Gr)=\mathsf{k},
\] with the infinitesimal compositions of the form $\mathsf{k}\otimes\mathsf{k} \overset{\cong}{\rightarrow}\mathsf{k}$.
\end{defi}
Note that this contractad can be defined in any symmetric monoidal category $\C$ if we replace the field $\mathsf{k}$ with a unit. If we restrict this contractad to paths, the resulting non-symmetric operad $\Path_{*}(\Com)$ coincides with the ns operad $\mathsf{As}$ of associative algebras. Similarly, the restriction to complete graphs gives us $\K_{*}(\Com)\cong \mathsf{Com}$ the symmetric operad of commutative algebras. These observations explain the name of this contractad. The letters "gc" in the name stand for "graphical contractad". Let us list some properties of this contractad:
\begin{itemize}
    \item The contractad $\Com$ has a quadratic presentation (Proposition~\ref{quadcom}).
    \item This contractad is Koszul (Theorem~\ref{thm:comkoszul}).
    \item This contractad admits a quadratic Gr\"obner basis (Proposition~\ref{quadcom}).
\end{itemize}
A quadratic presentation of this contractad is presented in the following proposition. We leave the proof until Section~\ref{sec:grobner}.
\begin{prop}
\label{compres}
The contractad $\Com$ is generated by a symmetric generator $m$ in the component $\Path_2$, satisfying the relations
\begin{gather}
    m \circ_{\{1,2\}}^{\mathsf{P_3}} m = m \circ_{\{2,3\}}^{\mathsf{P_3}} m
    \\
    m \circ_{\{1,2\}}^{\mathsf{K_3}} m = m \circ_{\{2,3\}}^{\mathsf{K_3}} m
\end{gather}
\end{prop}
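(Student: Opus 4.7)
The plan is to prove the proposition in two steps: first build a surjection from the presented contractad $\Pop := \T(m)/\langle R \rangle$ onto $\Com$, then bound $\dim \Pop(\Gr) \leq 1$ for every connected graph $\Gr$. The surjection sends $m$ to the generator $1 \in \Com(\Path_2)$; this is a well-defined contractad morphism because $\dim \Com(\Gr) = 1$ for every $\Gr$, so both displayed relations hold trivially. Surjectivity is immediate since the image contains $1 \in \Com(\Path_2)$ and $\Com$ is generated by this element.

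For the dimension bound, I would use the monadic description of the free contractad from Subsection~\ref{subsec:admissibletrees}: since the generator $m$ sits in arity $\Path_2$, the component $\T(m)(\Gr)$ is spanned by $\Gr$-admissible rooted binary trees with leaves labeled by $V_{\Gr}$ and each internal vertex decorated by $m$. The symmetry of $m$ identifies trees that differ by swapping the two children at an internal vertex. The two stated relations identify the two admissible binary trees on any three leaves whose induced subgraph is $\Path_3$ or $\K_3$ respectively; since these are the only connected graphs on three vertices, the relations cover \emph{every} local reassociation of three consecutive leaves in any $\Gr$-admissible binary tree.

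The main obstacle is therefore combinatorial: showing that any two $\Gr$-admissible binary trees on the labeled leaf set $V_{\Gr}$ can be transformed into one another by a sequence of such three-leaf moves together with the symmetry of $m$. To establish this, I would fix a canonical normal form — for instance, given a total order on $V_{\Gr}$, a left-combed tree whose leaves are read off in increasing order from some root-first traversal — and show by induction on the number of internal vertices that every admissible binary tree can be rewritten into this normal form. The inductive step requires at each stage the existence of an admissible three-leaf subtree where the relations apply; here one uses that for a connected graph any tube admits a vertex that can be ``bubbled up'' while preserving admissibility, since both $\Path_3$ and $\K_3$ are available as triples. Once we have at most one normal form per component, combined with the surjection onto $\Com$, equality of dimensions forces isomorphism.

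As the authors indicate, the cleanest way to carry out this combinatorial argument is via the Gröbner basis machinery of Section~\ref{sec:grobner}: one checks that the two relations, with appropriate leading terms under a chosen admissible order on tree monomials, form a quadratic Gröbner basis, and then counts the normal $\Gr$-admissible binary tree monomials and shows there is exactly one per connected graph. This is the route I would ultimately take, deferring the combinatorial heart of the argument to the general Gröbner framework rather than performing the rewriting by hand.
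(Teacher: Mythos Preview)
Your proposal is correct and follows essentially the same approach as the paper: the paper's proof (given as Proposition~\ref{quadcom}) constructs the surjection $\Pop\twoheadrightarrow\Com$, passes to the shuffle version with the reverse $\grpermlex$-order, identifies the leading terms of the relations, and shows that the normal monomials are precisely the left-combed trees with a unique canonical leaf labeling ($v_1$ minimal, $v_{i+1}$ the minimal vertex adjacent to the tube $\{v_1,\ldots,v_i\}$), yielding one normal monomial per component. This is exactly the Gr\"obner-basis route you describe in your final paragraph, with the specific normal form made explicit.
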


\noindent\textbf{Remark:} Note that we do not explicitly mention the relations produced by graph automorphisms. For example, the action of the transposition $(12)$ on the second relation results in the following relation:
\[
 (m \circ_{\{1,2\}}^{\mathsf{K_3}} m - m \circ_{\{2,3\}}^{\mathsf{K_3}} m)^{(12)}=m \circ_{\{1,2\}}^{\mathsf{K_3}} m - m \circ_{\{1,3\}}^{\mathsf{K_3}} m.
\]
\subsection{Lie contractad}
Let us give a graphical counterpart of the operad $\mathsf{Lie}$ of Lie algebras.
\begin{defi}
The \textit{Lie contractad} $\Lie$ is the contractad generated by an anti-symmetric generator $b$ in the component $\Path_2$, satisfying the relations
\begin{gather}
    b \circ_{\{1,2\}}^{\Path_3} b =  b \circ_{\{2,3\}}^{\Path_3} b,
    \\
    b\circ_{\{1,2\}}^{\K_3} b + (b\circ_{\{1,2\}}^{\K_3} b)^{(123)} + (b\circ_{\{1,2\}}^{\K_3} b)^{(321)}=0.
\end{gather}
\end{defi}
\noindent Here, the choice of name $\Lie$ is motivated by the observation that the restriction to complete graphs $\K_{*}(\Lie)\cong \mathsf{Lie}$ gives us the operad of Lie algebras. In the next section, we explain the choice of associative relation in the component $\Path_3$.  Let us list some properties of this contractad:
\begin{itemize}
    \item The dimension of each component is given by the formula $\dim \Lie(\Gr)=|\mu_{\parti(\Gr)}(\hat{0},\hat{1})|$, where $\mu_{\parti(\Gr)}$ is the M\"obius function of the poset of graph-partitions (Corollary~\ref{cor:liekoszul}). For simplicity, we denote the number $\mu_{\parti(\Gr)}(\hat{0},\hat{1})$ by $\mu(\Gr)$. Also, this number appears as the first non-zero coefficient of the chromatic polynomial $\chi_{\Gr}(t)=\mu(\Gr)t+O(t^2)$ (Corollary~\ref{sled:hilbertseries}). For particular types of graphs, we have
    \[
    |\mu(T)|=1, \text{ if } T \text{ is a tree};\quad |\mu(\Cyc_n)|=n-1; \quad |\mu(\K_n)|=(n-1)!.
    \]
    \item This contractad is Koszul (Corollary~\ref{cor:liekoszul}).
    \item This contractad admits a quadratic Gr\"obner basis (Corollary~\ref{liegrob}).
\end{itemize}

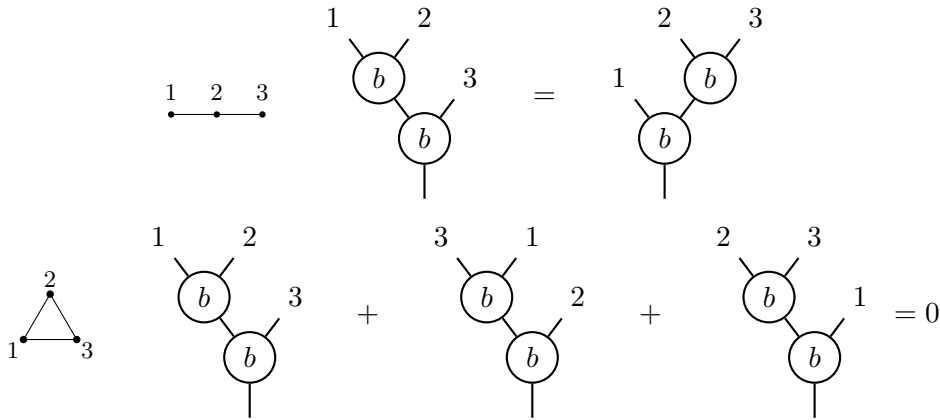
\begin{figure}[ht]
    \centering
    \caption{Relations in $\Lie$ in terms of admissible trees.}
    \begin{gather*}
        \vcenter{\hbox{\begin{tikzpicture}[scale=0.6]
        \fill (0,0) circle (2pt);
        \node at (0,0.5) {\footnotesize$1$};
        \fill (1,0) circle (2pt);
        \node at (1,0.5) {\footnotesize$2$};
        \fill (2,0) circle (2pt);
        \node at (2,0.5) {\footnotesize$3$};
        \draw (0,0)--(1,0)--(2,0);    
        \end{tikzpicture}}}
        \quad
        \vcenter{\hbox{\begin{tikzpicture}[
        scale=0.8,
        vert/.style={circle,  draw=black!30!black, thick, minimum size=1mm},
        leaf/.style={circle, thick, minimum size=1mm},
        edge/.style={-,black!30!black, thick},
        ]
        \node[leaf] (l1) at (-1.5,3) {$1$};
        \node[leaf] (l2) at (0,3) {$2$};
        \node[leaf] (l3) at (0.75,2) {$3$};
        \node[vert] (1) at (0,1) {$b$};
        \node[vert] (2) at (-0.75,2) {$b$};
        \draw[edge] (0,0)--(1);
        \draw[edge] (1)--(2);
        \draw[edge] (2)--(l1);
        \draw[edge] (2)--(l2);
        \draw[edge] (1)--(l3);
    \end{tikzpicture}}}
    \quad
    =
    \quad
    \vcenter{\hbox{\begin{tikzpicture}[
        scale=0.8,
        vert/.style={circle,  draw=black!30!black, thick, minimum size=1mm},
        leaf/.style={circle, thick, minimum size=1mm},
        edge/.style={-,black!30!black, thick},
        ]
        \node[leaf] (l1) at (-0.75,2) {$1$};
        \node[leaf] (l2) at (0,3) {$2$};
        \node[leaf] (l3) at (1.5,3) {$3$};
        \node[vert] (1) at (0,1) {$b$};
        \node[vert] (2) at (0.75,2) {$b$};
        \draw[edge] (0,0)--(1);
        \draw[edge] (1)--(2);
        \draw[edge] (1)--(l1);
        \draw[edge] (2)--(l2);
        \draw[edge] (2)--(l3);
    \end{tikzpicture}}}
    \\
    \vcenter{\hbox{\begin{tikzpicture}[scale=0.7]
    \fill (-0.5,0) circle (2pt);
    \node at (-0.7,-0.2) {\footnotesize$1$};
    \fill (0.5,0) circle (2pt);
    \node at (0.7,-0.2) {\footnotesize$3$};
    \fill (0,0.86) circle (2pt);
    \node at (0,1.14) {\footnotesize$2$};
    \draw (-0.5,0)--(0,0.86)--(0.5,0)--cycle;    
    \end{tikzpicture}}}
    \quad
    \vcenter{\hbox{\begin{tikzpicture}[
        scale=0.8,
        vert/.style={circle,  draw=black!30!black, thick, minimum size=1mm},
        leaf/.style={circle, thick, minimum size=1mm},
        edge/.style={-,black!30!black, thick},
        ]
        \node[leaf] (l1) at (-1.5,3) {$1$};
        \node[leaf] (l2) at (0,3) {$2$};
        \node[leaf] (l3) at (0.75,2) {$3$};
        \node[vert] (1) at (0,1) {$b$};
        \node[vert] (2) at (-0.75,2) {$b$};
        \draw[edge] (0,0)--(1);
        \draw[edge] (1)--(2);
        \draw[edge] (2)--(l1);
        \draw[edge] (2)--(l2);
        \draw[edge] (1)--(l3);
    \end{tikzpicture}}}
    \quad
    +
    \quad
        \vcenter{\hbox{\begin{tikzpicture}[
        scale=0.8,
        vert/.style={circle,  draw=black!30!black, thick, minimum size=1mm},
        leaf/.style={circle, thick, minimum size=1mm},
        edge/.style={-,black!30!black, thick},
        ]
        \node[leaf] (l1) at (-1.5,3) {$3$};
        \node[leaf] (l2) at (0,3) {$1$};
        \node[leaf] (l3) at (0.75,2) {$2$};
        \node[vert] (1) at (0,1) {$b$};
        \node[vert] (2) at (-0.75,2) {$b$};
        \draw[edge] (0,0)--(1);
        \draw[edge] (1)--(2);
        \draw[edge] (2)--(l1);
        \draw[edge] (2)--(l2);
        \draw[edge] (1)--(l3);
    \end{tikzpicture}}}
    \quad
    +
    \quad
        \vcenter{\hbox{\begin{tikzpicture}[
        scale=0.8,
        vert/.style={circle,  draw=black!30!black, thick, minimum size=1mm},
        leaf/.style={circle, thick, minimum size=1mm},
        edge/.style={-,black!30!black, thick},
        ]
        \node[leaf] (l1) at (-1.5,3) {$2$};
        \node[leaf] (l2) at (0,3) {$3$};
        \node[leaf] (l3) at (0.75,2) {$1$};
        \node[vert] (1) at (0,1) {$b$};
        \node[vert] (2) at (-0.75,2) {$b$};
        \draw[edge] (0,0)--(1);
        \draw[edge] (1)--(2);
        \draw[edge] (2)--(l1);
        \draw[edge] (2)--(l2);
        \draw[edge] (1)--(l3);
    \end{tikzpicture}}}=0
    \end{gather*}
\end{figure}

\subsection{Endomorphism contractad}

For a vector space $V$, we can associate the endomorphism operad $\End_V$ encoding multi-ary operations on $V$. Let us construct a graphical counterpart of such operads. For a symmetric monoidal category $\C$ and object $A$, we define the $\C$-valued \textit{endomorphism contractad} $\End_A$ by putting
\begin{equation*}
    \End_A(\Gr)=\Hom_{\C}(A^{\otimes V_{\Gr}}, A),
\end{equation*} and the product map is given by the usual composition
\begin{gather*}
    \gamma^{\Gr}_I\colon \End_A(\Gr/I)\otimes\bigotimes_{G\in I} \End_A(\Gr|_G) \to \End_A(\Gr)
    \\
    \gamma^{\Gr}_I(g;f_1,f_2,...,f_k)=g\circ(f_1\otimes f_2\otimes...\otimes f_k).
\end{gather*}
In the case when $A=\mathsf{k}$ is a field, the corresponding endomorphism contractad $\End_{\mathsf{k}} \cong \Com$ coincides with the commutative contractad.\\

\noindent\textbf{Remark:} Similarly to operads, for a contractad $\Pop$, we could define a $\Pop$-algebra structure on $A$ as a morphism of contractads $\Pop\rightarrow \End_A$. Unfortunately, the studying of algebras over contractads is not as interesting as in the case of operads. The reason is that, unlike operads, identities in algebras over an arbitrary contractad do not recover the contractad itself unambiguously. It follows from the observation that relations in a contractad "stick together" when we consider algebras. For example, if we take the contractad $\Lie$, the associated algebras are precisely Lie algebras with the additional identity $[x,[y,z]]=0$. Indeed, from the relations in this contractad, we conclude that the bracket $[-,-]$ corresponding to generator $b$ must simultaneously satisfy both the Jacobi and associative identities, hence the composition of two brackets is always zero.

\subsection{Contractad of (rooted) spanning trees}\label{subsec:spantrees}

In this subsection, we define combinatorial examples of contractads based on the spanning trees of graphs. More specifically, we define the contractad of spanning trees $\ST^{\vee}$ and the contractad of rooted spanning trees $\RST^{\vee}$. These contractads provide graphical counterparts of operads described in \cite{chapoton2001pre},\cite{aval2019graph}. The last example $\RST^{\vee}$ is a graphical counterpart of the operad $\mathsf{preLie}$ of pre-Lie algebras.

Let $\Gr$ be a connected graph. A spanning tree of $\Gr$ is a subgraph (not induced) $T$ of $\Gr$ that is a tree on the same vertex set as $\Gr$. We denote by $\ST(\Gr)$ the vector space generated by spanning trees. There is the right action on spanning trees $\Aut(\Gr) \curvearrowright \ST(\Gr)$ induced by the permutations of edges $\Aut(\Gr) \curvearrowright E_{\Gr}$. We endow the resulting graphical collection of spanning trees $\ST$ with a cocontractad structure as follows. For a spanning tree $T$ of a graph $\Gr$ we have an inclusion of posets $\parti(T)\subset\parti(\Gr)$ since each partition of the spanning tree is also a partition of the underlying graph. Moreover, for each partition $I$ of $T$, the contracted subgraph $T/I \subset \Gr/I$ is also a spanning tree. These observations allow us to define the coproduct map $\triangle\colon \ST \rightarrow \ST\circ \ST$ by the rule
\begin{gather*}
\triangle^{\Gr}_I\colon \ST(\Gr) \to \ST(\Gr/I)\otimes\bigotimes_{G \in I} \ST(\Gr|_G)
\\
T \mapsto \begin{cases}
(T/I;T|_{G_1},\cdots, T|_{G_k}) \text{, if } I \in \parti(T)\subset \parti(\Gr)
\\
0\text{, otherwise}.
\end{cases}
\end{gather*}
It is easy to check that this map endows $\ST$ with a cocontractad structure.

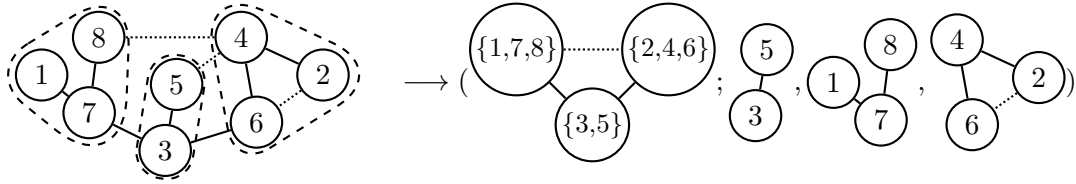
\begin{figure}[ht]
    \centering
    \[\vcenter{\hbox{\begin{tikzpicture}[
        scale=0.5,
        vert/.style={circle,  draw=black!30!black, thick, minimum size=1mm},
        edge/.style={-,black!30!black, thick},
        gedge/.style={-, densely dotted, thick},
        ]
        %first block
        \node[vert] (3) at (0,0) {$3$};
        \node[vert] (5) at (0.3,1.75) {$5$};
        \draw[edge] (3)--(5);
        \draw[dashed, thick, rounded corners=10pt,rotate=-9] (-0.75,-0.75) rectangle ++(1.5,3.3);
        %second block
        \node[vert] (1) at (-3.25,2) {$1$};
        \node[vert] (7) at (-2,1) {$7$};
        \node[vert] (8) at (-1.75,3) {$8$};
        \draw[edge] (7)--(8);
        \draw[edge] (1)--(7);
        \draw [dashed, thick] (-4.8,2.1)[rounded corners=25pt] - - (-0.9,4.5)[rounded corners=33pt] - - (-1.1,-0.9)[rounded corners=20pt] --cycle;
        %third block
        \node[vert] (6) at (2.4,0.75) {$6$};
        \node[vert] (4) at (2,3) {$4$};
        \node[vert] (2) at (4.15,2) {$2$};
        \draw[edge] (6)--(4);
        \draw[edge] (2)--(4);
        \draw[gedge]  (2)--(6);
        \draw [dashed, thick]  (1.85,-0.5)[rounded corners=25pt]  - - (6,2.1)[rounded corners=27pt] - - (0.9,4.5)[rounded corners=15pt] --cycle;
        %between       
        \draw[edge] (3)--(7); 
        \draw[edge] (3)--(6);
        \draw[gedge] (4)--(5);
        \draw[gedge] (4)--(8);
    \end{tikzpicture}}}
    \longrightarrow (
    \vcenter{\hbox{\begin{tikzpicture}[
        scale=0.5,
        vert/.style={inner sep=1pt, circle,  draw, thick},
        edge/.style={-,black!30!black, thick},
        gedge/.style={-, densely dotted, thick},
        ]
        \node[vert] (2) at (-2,2) {\small \{1,7,8\}};
        \node[vert] (1) at (0,0) {\small\{3,5\}};
        \node[vert] (3) at (2,2) {\small\{2,4,6\}};
        \draw[edge] (1)--(2);
        \draw[edge] (1)--(3);
        \draw[gedge] (2)--(3);
    \end{tikzpicture}}};
    \vcenter{\hbox{\begin{tikzpicture}[
        scale=0.5,
        vert/.style={circle,  draw=black!30!black, thick, minimum size=1mm},
        edge/.style={-,black!30!black, thick},
        gedge/.style={-, densely dotted, thick},
        ]
        \node[vert] (3) at (0,0) {$3$};
        \node[vert] (5) at (0.3,1.75) {$5$};
        \draw[edge] (3)--(5);  
    \end{tikzpicture}}},
    \vcenter{\hbox{\begin{tikzpicture}[
        scale=0.5,
        vert/.style={circle,  draw=black!30!black, thick, minimum size=1mm},
        edge/.style={-,black!30!black, thick},
        gedge/.style={-, densely dotted, thick},
        ]
        \node[vert] (7) at (-2,1) {$7$};
        \node[vert] (8) at (-1.75,3) {$8$};
        \node[vert] (1) at (-3.25,2) {$1$};
        \draw[edge] (7)--(8);
        \draw[edge] (1)--(7); 
    \end{tikzpicture}}},
    \vcenter{\hbox{\begin{tikzpicture}[
        scale=0.5,
        vert/.style={circle,  draw=black!30!black, thick, minimum size=1mm},
        edge/.style={-,black!30!black, thick},
        gedge/.style={-, densely dotted, thick},
        ]
        \node[vert] (6) at (2,0.75) {$6$};
        \node[vert] (4) at (1.6,3) {$4$};
        \node[vert] (2) at (3.75,2) {$2$};
        \draw[edge] (6)--(4);
        \draw[edge] (2)--(4);
        \draw[gedge] (2)--(6);      
    \end{tikzpicture}}})
    \]
    \caption{Example of the cocontractad map. The edges out of spanning trees are dotted.}
\end{figure} 

\noindent If we replace each component with its dual, we get the dual contractad $\ST^{\vee}$ of spanning trees with product map $\gamma=\triangle^{\vee}$. If we restrict this contractad to complete graphs, the resulting operad coincides with the operad of labeled trees described in \cite{aval2019graph}. The description of the latter operad in terms of generators and relations is a complicated task since it has infinitely many generators.

If we replace spanning trees with rooted ones we get the rooted spanning trees cocontractad, denoted $\RST$. Each component $\RST(\Gr)$ is the vector space generated by spanning trees with a marked vertex: the root. The coproduct map $\RST\rightarrow\RST\circ\RST$ is given by the rule
\begin{gather*}
\triangle^{\Gr}_I\colon \RST(\Gr) \to \RST(\Gr/I)\otimes\bigotimes_{G \in I} \RST(\Gr|_G)
\\
(T,r) \mapsto \begin{cases}
((T/I,r');(T|_{G_1},r_1),\cdots, (T|_{G_k},r_k) \text{, if } I \in \parti(T)
\\
0\text{, otherwise}.
\end{cases},
\end{gather*} where the choice of roots on the right hand side is defined as follows: for the contracted subtree $T/I$, the root $r'$ is the block of the partition containing $r$; for the block $G_i$, the root $r_i$ is the nearest vertex to $r$ from this block.

These two examples of cocontractads are connected by the cocontractad morphism
\[
\forget\colon \RST \twoheadrightarrow \ST,
\] induced by forgetting roots. The dual morphism $\forget^{\vee}\colon \ST^{\vee} \hookrightarrow \RST^{\vee}$ defines an embedding. If we restrict the contractad $\RST^{\vee}$ to the complete graphs, the  resulting symmetric operad $\K_{*}(\RST^{\vee})$ coincides with the rooted trees operad introduced by Chapoton and Livernet \cite{chapoton2001pre}. They proved that this operad is isomorphic  to the operad of pre-Lie algebras. Their  result can be generalized in the following way.

Let $\Gr$ be a graph and $T_1,T_2$ two disjoint rooted subtrees, such that their vertex sets form a partition $V_{\Gr}=V_{T_1}\coprod V_{T_2}$. We define their $\star$-product by the rule
\[
T_1\star T_2:=\gamma^{\Gr}_{\{V_{T_1},V_{T_2}\}}(\mu,T_1,T_2),
\] where $\mu$ is the rooted spanning tree of $\Gr/\{V_{T_2},V_{T_1}\}\cong \Path_2$ with the root $\{V_{T_1}\}$. More explicitly, the product $T_1\star T_2$ consists of grafting the root of $T_2$ on every adjacent vertex of $T_1$.
\begin{lemma}\label{lemma:prelieidentity}
For a graph $\Gr$ and a triple of non-intersecting rooted subtrees $T_1,T_2,T_3$, we have
\[
 (T_1\star T_2)\star T_3-T_1\star (T_2\star T_3)=(T_1\star T_3)\star T_2-T_1\star (T_3\star T_2)
\]
\end{lemma}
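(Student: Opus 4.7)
The plan is to expand the $\star$-product explicitly and observe that the associator is manifestly symmetric in $(T_2,T_3)$.

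First, I would unpack the basic formula for $S\star T$ when $V_S\sqcup V_T$ is the ambient vertex set. Writing $r_S=\mathrm{root}(S)$ and $r_T=\mathrm{root}(T)$, the composition $\gamma^{\Gr}_{\{V_S,V_T\}}(\mu;S,T)$ is dual to the coproduct $\triangle^{\Gr}_{\{V_S,V_T\}}$, which vanishes on a rooted spanning tree $(X,r)$ unless the partition refines the underlying tree, and which records in each block the vertex nearest to the root. Unwinding this, $S\star T$ equals the sum, over edges $(v,r_T)\in E_{\Gr}$ with $v\in V_S$, of the rooted spanning tree $S\cup T\cup\{(v,r_T)\}$ with root $r_S$. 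In plain terms, $\star$-multiplication by $T$ grafts $T$ at its own root onto each $\Gr$-neighbour of $r_T$ lying in $V_S$.

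Iterating this formula, I would compute
\begin{align*}
(T_1\star T_2)\star T_3 &= \sum_{\substack{v_1\in V_{T_1},\,(v_1,r_2)\in E_{\Gr}\\ w\in V_{T_1}\cup V_{T_2},\,(w,r_3)\in E_{\Gr}}} \bigl(T_1\cup T_2\cup T_3\cup\{(v_1,r_2),(w,r_3)\},\,r_1\bigr),\\
T_1\star (T_2\star T_3) &= \sum_{\substack{u\in V_{T_1},\,(u,r_2)\in E_{\Gr}\\ v_2\in V_{T_2},\,(v_2,r_3)\in E_{\Gr}}} \bigl(T_1\cup T_2\cup T_3\cup\{(u,r_2),(v_2,r_3)\},\,r_1\bigr),
\end{align*}
where $r_i=\mathrm{root}(T_i)$. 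The second sum is exactly the subsum of the first obtained by requiring $w\in V_{T_2}$, so subtracting gives
\[
(T_1\star T_2)\star T_3 - T_1\star(T_2\star T_3) = \sum_{\substack{v_1,w\in V_{T_1}\\ (v_1,r_2),(w,r_3)\in E_{\Gr}}} \bigl(T_1\cup T_2\cup T_3\cup\{(v_1,r_2),(w,r_3)\},\,r_1\bigr).
\]

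The right-hand side is visibly symmetric under the swap $(T_2,T_3)\leftrightarrow(T_3,T_2)$: exchanging $v_1\leftrightarrow w$ and $r_2\leftrightarrow r_3$ sends each summand to the corresponding summand after swapping. Running the same calculation with $T_2$ and $T_3$ interchanged therefore yields an identical expression for $(T_1\star T_3)\star T_2 - T_1\star(T_3\star T_2)$, proving the identity. The one delicate point to verify is that the root tracking behaves correctly under iteration; this becomes transparent once one rewrites both triple products via the Associativity Axiom of Proposition~\ref{partdefi} as a single composition $\gamma^{\Gr}_{\{V_{T_1},V_{T_2},V_{T_3}\}}$ applied to an appropriate rooted spanning tree of the three-vertex quotient $\Gr/\{V_{T_1},V_{T_2},V_{T_3}\}$ rooted at $\{V_{T_1}\}$, which pins the root to $r_1$ in both orderings.
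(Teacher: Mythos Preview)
Your proof is correct and follows essentially the same approach as the paper: both compute the associator directly and observe that what remains is the sum of ``fork'' graftings of $T_2$ and $T_3$ onto $T_1$, which is manifestly symmetric in $T_2,T_3$. The paper presents this via tree diagrams while you write out the explicit edge-sum formulas, but the underlying computation is identical.
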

\begin{proof} By direct computation, we have
\[
(T_1\star T_2)\star T_3-T_1\star (T_2\star T_3)=(\sum \vcenter{\hbox{\begin{tikzpicture}[
        scale=0.5,
        root/.style={inner sep=2pt, circle,draw, dashed, thick},
        vert/.style={inner sep=2pt, circle,draw, thick},
        leaf/.style={inner sep=2pt, rectangle, thick},
        edge/.style={-,black!30!black, thick},
        dedge/.style={-,black!30!black,densely dotted, thick},
        ]
        \node[root] (1) at (0,0) {\scriptsize$T_1$};
        \node[vert] (2) at (0,1.5) {\scriptsize$T_2$};
        \node[vert] (3) at (0,3) {\scriptsize$T_3$};
        \draw[edge] (1)--(2)--(3);
    \end{tikzpicture}}}-\sum \vcenter{\hbox{\begin{tikzpicture}[
        scale=0.5,
        root/.style={inner sep=2pt, circle,draw, dashed, thick},
        vert/.style={inner sep=2pt, circle,draw, thick},
        leaf/.style={inner sep=2pt, rectangle, thick},
        edge/.style={-,black!30!black, thick},
        dedge/.style={-,black!30!black,densely dotted, thick},
        ]
        \node[root] (1) at (0,0) {\scriptsize$T_1$};
        \node[vert] (2) at (-1.131,1.3) {\scriptsize$T_2$};
        \node[vert] (3) at (1.131,1.3) {\scriptsize$T_3$};
        \draw[edge] (1)--(2);
        \draw[edge] (1)--(3);
    \end{tikzpicture}}})-\sum \vcenter{\hbox{\begin{tikzpicture}[
        scale=0.5,
        root/.style={inner sep=2pt, circle,draw, dashed, thick},
        vert/.style={inner sep=2pt, circle,draw, thick},
        leaf/.style={inner sep=2pt, rectangle, thick},
        edge/.style={-,black!30!black, thick},
        dedge/.style={-,black!30!black,densely dotted, thick},
        ]
        \node[root] (1) at (0,0) {\scriptsize$T_1$};
        \node[vert] (2) at (0,1.5) {\scriptsize$T_2$};
        \node[vert] (3) at (0,3) {\scriptsize$T_3$};
        \draw[edge] (1)--(2)--(3);
    \end{tikzpicture}}}=\sum \vcenter{\hbox{\begin{tikzpicture}[
        scale=0.5,
        root/.style={inner sep=2pt, circle,draw, dashed, thick},
        vert/.style={inner sep=2pt, circle,draw, thick},
        leaf/.style={inner sep=2pt, rectangle, thick},
        edge/.style={-,black!30!black, thick},
        dedge/.style={-,black!30!black,densely dotted, thick},
        ]
        \node[root] (1) at (0,0) {\scriptsize$T_1$};
        \node[vert] (2) at (-1.131,1.3) {\scriptsize$T_2$};
        \node[vert] (3) at (1.131,1.3) {\scriptsize$T_3$};
        \draw[edge] (1)--(2);
        \draw[edge] (1)--(3);
    \end{tikzpicture}}}
\] and the right part is invariant under the permutation $T_2\leftrightarrow T_3$.
\end{proof}

\begin{prop}\label{rst}
 The rooted spanning trees contractad $\RST^{\vee}$ is generated by a generator $\mu$ in the component $\Path_2$, satisfying the relations
 \begin{gather}
     \mu \circ^{\Path_3}_{\{1,2\}} \mu= \mu \circ^{\Path_3}_{\{2,3\}} \mu
     \\
     \mu \circ^{\Path_3}_{\{1,2\}} \mu^{(12)} = \mu^{(12)} \circ^{\Path_3}_{\{2,3\}} \mu
     \\
     \mu^{(12)} \circ^{\Path_3}_{\{1,2\}} \mu=0
     \\
     \mu \circ^{\K_3}_{\{1,2\}} \mu- \mu \circ^{\K_3}_{\{2,3\}} \mu= (\mu \circ^{\K_3}_{\{1,2\}} \mu- \mu \circ^{\K_3}_{\{2,3\}} \mu)^{(23)}
 \end{gather}
 \end{prop}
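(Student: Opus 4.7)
The plan is to prove that the natural morphism $\phi\colon \Pop\to \RST^\vee$ from the quadratic contractad presented by the four stated relations is an isomorphism. Fix $\mu\in \RST^\vee(\Path_2)$ to be dual to the rooted spanning tree of $\Path_2$ with root at vertex $1$; then $\mu^{(12)}$ is dual to the tree with root $2$, and these two elements form a basis of $\RST^\vee(\Path_2)$.

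The first step is to verify the relations $(1)$--$(4)$ directly in $\RST^\vee$. Dualising the coproduct formula on $\RST$, a composition of the form $\mu^{\varepsilon_1}\circ_G^\Gr \mu^{\varepsilon_2}$ evaluated on a rooted spanning tree $(T,r)$ is nonzero precisely when $T|_G$ is a subtree of $T$ and the roots of the contracted tree $T/G$ and of the restriction $T|_G$ agree with those prescribed by the outer and inner generators. Enumerating the three rooted spanning trees of $\Path_3$ verifies $(1)$, $(2)$, and $(3)$; for instance, $(3)$ holds because whenever the outer $\mu^{(12)}$ forces the root of $T$ to lie in $\{3\}$, the nearest-vertex rule on restrictions places the root of $T|_{\{1,2\}}$ at $2$, making the inner $\mu$ vanish. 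Relation $(4)$ is the graphical pre-Lie identity of Lemma~\ref{lemma:prelieidentity} specialised to the trivial subtrees $T_i=\{i\}$ of $\K_3$.

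The resulting morphism $\phi$ is surjective by leaf-pruning induction on $|V_\Gr|$: given a rooted spanning tree $(T,r)$ of $\Gr$, choose a leaf $\ell\ne r$ of $T$ and its unique $T$-neighbour $u$. The pair $G=\{\ell,u\}$ is a tube of $\Gr$, and $(T,r)$ equals the infinitesimal composition $(T/G,r)\circ_G^\Gr(T|_G,u)$, whose inner factor is $\mu$ or $\mu^{(12)}$ depending on the relative labels of $u$ and $\ell$. By induction, every rooted spanning tree of $\Gr$ lies in the image of $\phi$.

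The main obstacle is injectivity. For each rooted spanning tree $(T,r)$ the leaf-pruning construction above yields a specific element $\widetilde{(T,r)}\in \Pop(\Gr)$, and these automatically map to the basis of $\RST^\vee(\Gr)$, so they are linearly independent in $\Pop(\Gr)$. The remaining task is to show that they also span $\Pop(\Gr)$, which amounts to a rewriting argument on $\mu$-decorated $\Gr$-admissible trees modulo the ideal generated by $(1)$--$(4)$. The natural route is the Gröbner-basis formalism of Section~\ref{sec:grobner}: fix an admissible monomial order on decorated admissible trees, verify that $(1)$--$(4)$---possibly supplemented by finitely many higher-degree consequences, since the contractad is not Koszul by Proposition~\ref{prop:nonkoszul}---form a complete rewriting system, and identify the resulting normal forms with rooted spanning trees.
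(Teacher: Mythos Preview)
Your verification of the relations and your surjectivity argument by leaf-pruning are fine and are essentially equivalent to what the paper does with the $\star$-product notation.

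The gap is in injectivity. You correctly reduce it to showing that the lifts $\widetilde{(T,r)}$ span $\Pop(\Gr)$, but you do not actually carry out the rewriting argument, and the route you propose is problematic. You yourself note that the contractad is not Koszul (Proposition~\ref{prop:nonkoszul}); by Theorem~\ref{quadgrob} this means the quadratic relations \emph{cannot} form a Gr\"obner basis for any admissible order. So your plan necessarily requires finding higher-degree $S$-polynomials and proving confluence of the resulting system, and you give no indication of what those extra relations are, why there are only finitely many of them, or why the normal forms coincide with rooted spanning trees. As written this is a hope, not a proof.

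The paper bypasses rewriting entirely. Following Chapoton--Livernet, it constructs an explicit section $\iota\colon \RST^{\vee}\to \Pop$ by the recursion
\[
\iota(B(v,T_1,\dots,T_k))=\iota(B(v,T_2,\dots,T_k))\star_{\Pop}\iota(T_1)-\sum_{i\geq 2}\iota(B(v,T_1,\dots,T_i\star T_1,\dots,T_k)),
\]
and the only thing to check is that the right-hand side is symmetric in the $T_i$'s. That is exactly what the pre-Lie identity of Lemma~\ref{lemma:prelieidentity} guarantees: it says the associator of $\star_{\Pop}$ is symmetric in its last two arguments, which is precisely the compatibility needed for the recursion to be independent of the chosen ordering. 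Once $\iota$ is well defined, $\pi\iota$ and $\iota\pi$ are both the identity on the generator $\mu$, hence everywhere. This is the missing idea in your proposal: use Lemma~\ref{lemma:prelieidentity} to build an inverse map directly, rather than attempt to classify normal forms in $\Pop$.
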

\begin{proof} The proof mimics that in \cite[Th.~1.9]{chapoton2001pre}.
Let $\mu:=\vcenter{\hbox{\begin{tikzpicture}[
        scale=0.5,
        root/.style={inner sep=2pt, circle,draw, dashed, thick},
        vert/.style={inner sep=2pt, circle,draw, thick},
        leaf/.style={inner sep=2pt, rectangle, thick},
        edge/.style={-,black!30!black, thick},
        ]
        \node[root] (1) at (0,0) {\scriptsize$1$};
        \node[vert] (2) at (1.5,0) {\scriptsize$2$};
        \draw[edge] (1)--(2);
    \end{tikzpicture}}}$ be the spanning tree of $\Path_2$ with root $1$. Let us examine the relations above. For the path $\Path_3$, we have
\begin{gather*}
    \mu \circ^{\Path_3}_{\{1,2\}} \mu= \vcenter{\hbox{\begin{tikzpicture}[
        scale=0.5,
        root/.style={inner sep=2pt, circle,draw, dashed, thick},
        vert/.style={inner sep=2pt, circle,draw, thick},
        edge/.style={-,black!30!black, thick},
        ]
        \node[root] (1) at (0,0) {\scriptsize$1$};
        \node[vert] (2) at (1.5,0) {\scriptsize$2$};
        \draw[edge] (1)--(2);
    \end{tikzpicture}}}\star \vcenter{\hbox{\begin{tikzpicture}[
        scale=0.5,
        root/.style={inner sep=2pt, circle,draw, dashed, thick},
        vert/.style={inner sep=2pt, circle,draw, thick},
        edge/.style={-,black!30!black, thick},
        ]
        \node[root] (1) at (0,0) {\scriptsize$3$};
    \end{tikzpicture}}}= \vcenter{\hbox{\begin{tikzpicture}[
        scale=0.5,
        root/.style={inner sep=2pt, circle,draw, dashed, thick},
        vert/.style={inner sep=2pt, circle,draw, thick},
        leaf/.style={inner sep=2pt, rectangle, thick},
        edge/.style={-,black!30!black, thick},
        ]
        \node[root] (1) at (0,0) {\scriptsize$1$};
        \node[vert] (2) at (1.5,0) {\scriptsize$2$};
        \node[vert] (3) at (3,0) {\scriptsize$3$};
        \draw[edge] (1)--(2)--(3);
    \end{tikzpicture}}}=\vcenter{\hbox{\begin{tikzpicture}[
        scale=0.5,
        root/.style={inner sep=2pt, circle,draw, dashed, thick},
        vert/.style={inner sep=2pt, circle,draw, thick},
        leaf/.style={inner sep=2pt, rectangle, thick},
        edge/.style={-,black!30!black, thick},
        ]
        \node[root] (1) at (0,0) {\scriptsize$1$};
    \end{tikzpicture}}}\star \vcenter{\hbox{\begin{tikzpicture}[
        scale=0.5,
        root/.style={inner sep=2pt, circle,draw, dashed, thick},
        vert/.style={inner sep=2pt, circle,draw, thick},
        leaf/.style={inner sep=2pt, rectangle, thick},
        edge/.style={-,black!30!black, thick},
        ]
        \node[root] (1) at (0,0) {\scriptsize$2$};
        \node[vert] (2) at (1.5,0) {\scriptsize$3$};
        \draw[edge] (1)--(2);
    \end{tikzpicture}}}=\mu \circ^{\Path_3}_{\{2,3\}} \mu
    \\
    \mu \circ^{\Path_3}_{\{1,2\}} \mu^{(12)}= \vcenter{\hbox{\begin{tikzpicture}[
        scale=0.5,
        root/.style={inner sep=2pt, circle,draw, dashed, thick},
        vert/.style={inner sep=2pt, circle,draw, thick},
        leaf/.style={inner sep=2pt, rectangle, thick},
        edge/.style={-,black!30!black, thick},
        ]
        \node[vert] (1) at (0,0) {\scriptsize$1$};
        \node[root] (2) at (1.5,0) {\scriptsize$2$};
        \draw[edge] (1)--(2);
    \end{tikzpicture}}}\star \vcenter{\hbox{\begin{tikzpicture}[
        scale=0.5,
        root/.style={inner sep=2pt, circle,draw, dashed, thick},
        vert/.style={inner sep=2pt, circle,draw, thick},
        leaf/.style={inner sep=2pt, rectangle, thick},
        edge/.style={-,black!30!black, thick},
        ]
        \node[root] (1) at (0,0) {\scriptsize$3$};
    \end{tikzpicture}}}= \vcenter{\hbox{\begin{tikzpicture}[
        scale=0.5,
        root/.style={inner sep=2pt, circle,draw, dashed, thick},
        vert/.style={inner sep=2pt, circle,draw, thick},
        leaf/.style={inner sep=2pt, rectangle, thick},
        edge/.style={-,black!30!black, thick},
        ]
        \node[vert] (1) at (0,0) {\scriptsize$1$};
        \node[root] (2) at (1.5,0) {\scriptsize$2$};
        \node[vert] (3) at (3,0) {\scriptsize$3$};
        \draw[edge] (1)--(2)--(3);
    \end{tikzpicture}}}=\vcenter{\hbox{\begin{tikzpicture}[
        scale=0.5,
        root/.style={inner sep=2pt, circle,draw, dashed, thick},
        vert/.style={inner sep=2pt, circle,draw, thick},
        leaf/.style={inner sep=2pt, rectangle, thick},
        edge/.style={-,black!30!black, thick},
        ]
        \node[root] (1) at (0,0) {\scriptsize$2$};
        \node[vert] (2) at (1.5,0) {\scriptsize$3$};
        \draw[edge] (1)--(2);
    \end{tikzpicture}}}\star \vcenter{\hbox{\begin{tikzpicture}[
        scale=0.5,
        root/.style={inner sep=2pt, circle,draw, dashed, thick},
        vert/.style={inner sep=2pt, circle,draw, thick},
        leaf/.style={inner sep=2pt, rectangle, thick},
        edge/.style={-,black!30!black, thick},
        ]
        \node[root] (1) at (0,0) {\scriptsize$1$};
    \end{tikzpicture}}}=\mu^{(12)} \circ^{\Path_3}_{\{2,3\}} \mu
    \\
    \mu^{(12)} \circ^{\Path_3}_{\{1,2\}} \mu=\vcenter{\hbox{\begin{tikzpicture}[
        scale=0.5,
        root/.style={inner sep=2pt, circle,draw, dashed, thick},
        vert/.style={inner sep=2pt, circle,draw, thick},
        leaf/.style={inner sep=2pt, rectangle, thick},
        edge/.style={-,black!30!black, thick},
        ]
        \node[root] (1) at (0,0) {\scriptsize$3$};
    \end{tikzpicture}}}\star\vcenter{\hbox{\begin{tikzpicture}[
        scale=0.5,
        root/.style={inner sep=2pt, circle,draw, dashed, thick},
        vert/.style={inner sep=2pt, circle,draw, thick},
        leaf/.style={inner sep=2pt, rectangle, thick},
        edge/.style={-,black!30!black, thick},
        ]
        \node[root] (1) at (0,0) {\scriptsize$1$};
        \node[vert] (2) at (1.5,0) {\scriptsize$2$};
        \draw[edge] (1)--(2);
    \end{tikzpicture}}}=0, \text{ since vertices }\vcenter{\hbox{\begin{tikzpicture}[
        scale=0.5,
        vert/.style={inner sep=2pt, circle,draw, thick},
        ]
        \node[vert] (1) at (0,0) {\scriptsize$1$};
    \end{tikzpicture}}}, \vcenter{\hbox{\begin{tikzpicture}[
        scale=0.5,
        vert/.style={inner sep=2pt, circle,draw, thick},
        ]
        \node[vert] (1) at (0,0) {\scriptsize$3$};.
    \end{tikzpicture}}} \text{ are not adjacent}.
\end{gather*}
For the graph $\K_3$, we have
\[
\mu \circ^{\K_3}_{\{1,2\}} \mu- \mu \circ^{\K_3}_{\{2,3\}} \mu=(\vcenter{\hbox{\begin{tikzpicture}[
        scale=0.5,
        root/.style={inner sep=2pt, circle,draw, dashed, thick},
        vert/.style={inner sep=2pt, circle,draw, thick},
        leaf/.style={inner sep=2pt, rectangle, thick},
        edge/.style={-,black!30!black, thick},
        ]
        \node[root] (1) at (0,0) {\scriptsize$1$};
        \node[vert] (2) at (1.5,0) {\scriptsize$2$};
        \draw[edge] (1)--(2);
    \end{tikzpicture}}}\star \vcenter{\hbox{\begin{tikzpicture}[
        scale=0.5,
        root/.style={inner sep=2pt, circle,draw, dashed, thick},
        vert/.style={inner sep=2pt, circle,draw, thick},
        leaf/.style={inner sep=2pt, rectangle, thick},
        edge/.style={-,black!30!black, thick},
        ]
        \node[root] (1) at (0,0) {\scriptsize$3$};
    \end{tikzpicture}}})-(\vcenter{\hbox{\begin{tikzpicture}[
        scale=0.5,
        root/.style={inner sep=2pt, circle,draw, dashed, thick},
        vert/.style={inner sep=2pt, circle,draw, thick},
        leaf/.style={inner sep=2pt, rectangle, thick},
        edge/.style={-,black!30!black, thick},
        ]
        \node[root] (1) at (0,0) {\scriptsize$1$};
    \end{tikzpicture}}}\star \vcenter{\hbox{\begin{tikzpicture}[
        scale=0.5,
        root/.style={inner sep=2pt, circle,draw, dashed, thick},
        vert/.style={inner sep=2pt, circle,draw, thick},
        leaf/.style={inner sep=2pt, rectangle, thick},
        edge/.style={-,black!30!black, thick},
        ]
        \node[root] (1) at (0,0) {\scriptsize$2$};
        \node[vert] (2) at (1.5,0) {\scriptsize$3$};
        \draw[edge] (1)--(2);
    \end{tikzpicture}}})=(\vcenter{\hbox{\begin{tikzpicture}[
        scale=0.5,
        root/.style={inner sep=2pt, circle,draw, dashed, thick},
        vert/.style={inner sep=2pt, circle,draw, thick},
        leaf/.style={inner sep=2pt, rectangle, thick},
        edge/.style={-,black!30!black, thick},
        dedge/.style={-,black!30!black,densely dotted, thick},
        ]
        \node[root] (1) at (0,0) {\scriptsize$1$};
        \node[vert] (2) at (-0.87,1) {\scriptsize$2$};
        \node[vert] (3) at (0.87,1) {\scriptsize$3$};
        \draw[edge] (1)--(2);
        \draw[edge] (1)--(3);
        \draw[dedge] (2)--(3);
    \end{tikzpicture}}}+\vcenter{\hbox{\begin{tikzpicture}[
        scale=0.5,
        root/.style={inner sep=2pt, circle,draw, dashed, thick},
        vert/.style={inner sep=2pt, circle,draw, thick},
        leaf/.style={inner sep=2pt, rectangle, thick},
        edge/.style={-,black!30!black, thick},
        dedge/.style={-,black!30!black,densely dotted, thick},
        ]
        \node[root] (1) at (0,0) {\scriptsize$1$};
        \node[vert] (2) at (-0.87,1) {\scriptsize$2$};
        \node[vert] (3) at (0.87,1) {\scriptsize$3$};
        \draw[edge] (1)--(2)--(3);
        \draw[dedge] (1)--(3);
    \end{tikzpicture}}})-\vcenter{\hbox{\begin{tikzpicture}[
        scale=0.5,
        root/.style={inner sep=2pt, circle,draw, dashed, thick},
        vert/.style={inner sep=2pt, circle,draw, thick},
        leaf/.style={inner sep=2pt, rectangle, thick},
        edge/.style={-,black!30!black, thick},
        dedge/.style={-,black!30!black,densely dotted, thick},
        ]
        \node[root] (1) at (0,0) {\scriptsize$1$};
        \node[vert] (2) at (-0.87,1) {\scriptsize$2$};
        \node[vert] (3) at (0.87,1) {\scriptsize$3$};
        \draw[edge] (1)--(2)--(3);
        \draw[dedge] (1)--(3);
    \end{tikzpicture}}}=\vcenter{\hbox{\begin{tikzpicture}[
        scale=0.5,
        root/.style={inner sep=2pt, circle,draw, dashed, thick},
        vert/.style={inner sep=2pt, circle,draw, thick},
        leaf/.style={inner sep=2pt, rectangle, thick},
        edge/.style={-,black!30!black, thick},
        dedge/.style={-,black!30!black,densely dotted, thick},
        ]
        \node[root] (1) at (0,0) {\scriptsize$1$};
        \node[vert] (2) at (-0.87,1) {\scriptsize$2$};
        \node[vert] (3) at (0.87,1) {\scriptsize$3$};
        \draw[edge] (1)--(2);
        \draw[edge] (1)--(3);
        \draw[dedge] (2)--(3);
    \end{tikzpicture}}},
\] and the resulting rooted spanning tree is invariant under the transposition $(23)$. Hence, we have the well-defined morphism of contractads $\pi\colon\Pop\rightarrow \RST^{\vee}$, where $\Pop$ is the quadratic contractad generated by the generators and relations above.

For a graph $\Gr$, each rooted tree can be written in the following way. For a vertex $v$ and collection of non-intersecting rooted subtrees $\{T_1,\cdots, T_k\}$ whose roots are adjacent to $v$, we define the rooted spanning tree $B(v,T_1,T_2,\cdots, T_k)$ of $\Gr$ by the rule
\[
B(v,T_1,T_2,\cdots, T_k)=\vcenter{\hbox{\begin{tikzpicture}[
        scale=0.7,
        root/.style={inner sep=2pt, circle,draw, dashed, thick},
        vert/.style={inner sep=2pt, circle,draw, thick},
        leaf/.style={inner sep=2pt, rectangle, thick},
        edge/.style={-,black!30!black, thick},
        dedge/.style={-,black!30!black,densely dotted, thick},
        ]
        \node[root] (1) at (0,0) {\scriptsize$v$};
        \node[vert] (T1) at (-2,1.1){\scriptsize$T_1$};
        \node[vert] (T2) at (-1,1.1) {\scriptsize$T_2$};
        \node[leaf] (d) at (0,1.1) {\scriptsize$\cdots$};
        \node[leaf] (T3) at (1,1.1){\scriptsize$\cdots$};
        \node[vert] (T4) at (2,1.1) {\scriptsize$T_k$};
        \draw[edge] (1)--(T1);
        \draw[edge] (1)--(T2);
        \draw[edge] (1)--(d);
        \draw[edge] (1)--(T3);
        \draw[edge] (1)--(T4);
    \end{tikzpicture}}}.
\] Note that the definition does not depend on the order of subtrees $T_i$. For $k=1$, we have $B(v,T_1)=\vcenter{\hbox{\begin{tikzpicture}[
        scale=0.5,
        root/.style={inner sep=2pt, circle,draw, dashed, thick},
        ]
        \node[root] (1) at (0,0) {\scriptsize$v$};
    \end{tikzpicture}}}\star T_1$, and for larger $k$ we have the recurrence relation
\[
B(v,T_1,\cdots,T_k)=B(v,T_2,\cdots,T_k)\star T_1-\sum^k_{i=2} B(v,T_1,\cdots,T_i\star T_1,\cdots, T_k).
\] Since each rooted tree is obtained by means of the $\star$-product of smaller trees, we conclude that $\mu$ generates $\RST^{\vee}$. Hence the morphism $\pi: \Pop\rightarrow\RST^{\vee}$ is onto. 

To complete the proof, it suffices to construct an inverse map $\iota: \RST^{\vee}\rightarrow \Pop$. We define the section $\iota$ inductively by the rule
\[
\iota(B(v,T_1,\cdots,T_k))=\iota(B(v,T_2,\cdots,T_k))\star_{\Pop} \iota(T_1)-\sum^k_{i=2} \iota(B(v,T_1,\cdots,T_i\star T_1,\cdots, T_k)),
\] where $\star_{\Pop}$ stands for the star product in $\Pop$, defined in a similar way.  It remains to verify that this correspondence does not depend on the order of $T_i$. This part is proved analogously to \cite[Th.~1.9]{chapoton2001pre} using Lemma~\ref{lemma:prelieidentity}. By construction, we see that the map $\iota$ is a morphism of contractads. Since compositions $\iota\circ \pi$ and $\pi\circ\iota$ are the identity maps on the generator  $\mu$, we conclude that  $\pi$ and $\iota$ are inverse to each other.
\end{proof}
\subsection{Little disks contractads}\label{subsec:disks}

In this subsection, we give topological examples of contractads. More explicitly, we define the little $n$-disks contractads $\D_n$. These contractads  are graphical counterparts of the little disks operads, first described in~\cite{cohen1976homology}.

Let $\Disc \subset \mathbb{R}^n$ be the $n$-dimensional unit disk. For a graph $\Gr$, a \textit{graphical disk configuration} is a continuous map $i: \coprod_{v \in V_{\Gr}} \Disc_{v} \rightarrow \Disc$ from the disjoint union of labeled unit disks to the unit disk, such that the restriction to each connected component $i|_{\Disc_v}$ is the composition of a dilation and a translation, and for each pair of adjacent vertices $v,w \in V_{\Gr}$, the related interiors of disks $\mathring{\Disc}_v$ and $\mathring{\Disc}_w$ don't intersect in the image. Let us denote by $\D_n(\Gr)$ the space of graphical disk configurations. This space is topologized as a subspace of $((0,1)\times \mathring{\Disc})^{V_{\Gr}}$
\[
\D_n(\Gr):=\{(r_v,x_v)_{v \in V_{\Gr}}| (v,w)\in E_{\Gr} \Rightarrow (r_v\mathring{\Disc}+x_v)\cap (r_w\mathring{\Disc}+x_w)=\varnothing\} \subset ((0,1)\times \mathring{\Disc})^{V_{\Gr}}.
\]

\begin{figure}[ht]
    \centering
    \[
    \vcenter{\hbox{\begin{tikzpicture}[scale=0.6]
    \fill (0,0) circle (2pt);
    \fill (0,1.5) circle (2pt);
    \fill (1.5,0) circle (2pt);
    \fill (1.5,1.5) circle (2pt);
    \draw (0,0)--(1.5,0)--(1.5,1.5)--(0,1.5)-- cycle;
    \node at (-0.25,1.75) {$1$};
    \node at (1.75,1.75) {$2$};
    \node at (1.75,-0.25) {$3$};
    \node at (-0.25,-0.25) {$4$};
    \end{tikzpicture}}}
    \quad
    \quad
    \vcenter{\hbox{\begin{tikzpicture}
     \draw[thick] (0,0) circle [radius=35pt];
     \draw[thick] (-0.5,0.4) circle [radius=12pt];
     \node at (-0.5,0.4) {1};
     \draw[thick] (-0.25,-0.75) circle [radius=9pt];
     \node at (-0.25,-0.75) {2};
     \draw[thick] (0.5,-0.3) circle [radius=10pt];
     \node at (0.5,-0.3) {3};
     \draw[thick] (0.6,0.5) circle [radius=10pt];
     \node at (0.6,0.5) {4};
    \end{tikzpicture}}}
    \quad
    \vcenter{\hbox{\begin{tikzpicture}
     \draw[thick] (0,0) circle [radius=35pt];
     \draw[thick] (-0.4,0.4) circle [radius=13pt];
     \node at (-0.4,0.4) {1};
     \draw[thick] (-0.3,-0.6) circle [radius=10pt];
     \node at (-0.3,-0.6) {2};
     \draw[thick] (0.3,0.5) circle [radius=10pt];
     \node at (0.3,0.5) {3};
     \draw[thick] (0.7,-0.3) circle [radius=8pt];
     \node at (0.7,-0.3) {4};    
    \end{tikzpicture}}}
    \quad
    \vcenter{\hbox{\begin{tikzpicture}
     \draw[thick] (0,0) circle [radius=35pt];
     \draw[thick] (0.3,0.5) circle [radius=16pt];
     \node at (-0.1,0.5) {3};
     \draw[thick] (-0.1,-0.8) circle [radius=9pt];
     \node at (-0.1,-0.8) {2};
     \draw[thick] (-0.5,-0.4) circle [radius=10pt];
     \node at (-0.5,-0.4) {4};
     \draw[thick] (0.3,0.5) circle [radius=8pt];
     \node at (0.3,0.5) {1};
    \end{tikzpicture}}}\]
    \caption{Example of configurations in $\D_2(\Cyc_4)$.}
\end{figure}
The resulting graphical collection of disk configurations $\D_n$ is endowed with a contractad structure as follows. For a graph $\Gr$ and a tube $G$, we define the infinitesimal composition $\circ_G^{\Gr}\colon \D_n(\Gr/G)\times \D_n(\Gr|_G) \to \D_n(\Gr)$ by compositions of disk configurations as in Figure~\ref{fig:disccomp}. It is easy to check that this datum of maps endows $\D_n$ with a structure of contractad. The restrictions of these contractads to complete graphs $\K_{*}(\D_n)$ are the classical little $n$-disks operads.
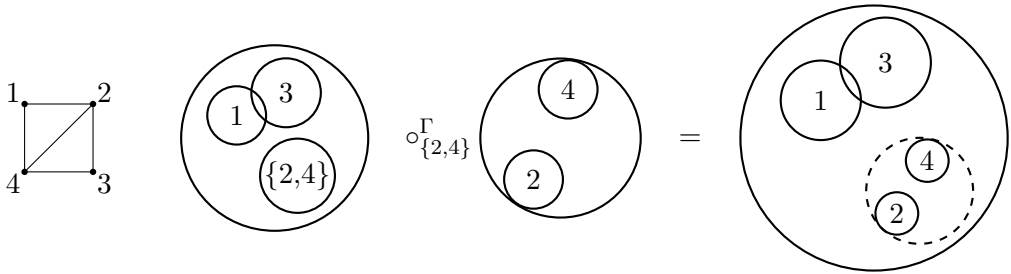
\begin{figure}[ht]
    \centering
    \[
      \vcenter{\hbox{\begin{tikzpicture}[scale=0.6]
    \fill (0,0) circle (2pt);
    \fill (0,1.5) circle (2pt);
    \fill (1.5,0) circle (2pt);
    \fill (1.5,1.5) circle (2pt);
    \draw (0,0)--(1.5,0)--(1.5,1.5)--(0,1.5)-- cycle;
    \draw (0,0)--(1.5,1.5);
    \node at (-0.25,1.75) {$1$};
    \node at (1.75,1.75) {$2$};
    \node at (1.75,-0.25) {$3$};
    \node at (-0.25,-0.25) {$4$};
    \end{tikzpicture}}}
    \quad
    \quad
     \vcenter{\hbox{\begin{tikzpicture}
     \draw[thick] (0,0) circle [radius=35pt];
     \draw[thick] (-0.5,0.3) circle [radius=11pt];
     \node at (-0.5,0.3) {1};
     \draw[thick] (0.3,-0.5) circle [radius=14pt];
     \node at (0.3,-0.5) {\{2,4\}};
     \draw[thick] (0.15,0.6) circle [radius=13pt];
     \node at (0.15,0.6) {3};
    \end{tikzpicture}}}
    \quad
    \circ^{\Gr}_{\{2,4\}}
     \vcenter{\hbox{\begin{tikzpicture}
     \draw[thick] (0,0) circle [radius=30pt];
     \draw[thick] (-0.355,-0.55) circle [radius=11pt];
     \node at (-0.355,-0.55) {2};
     \draw[thick] (0.1,0.645) circle [radius=11pt];
     \node at (0.1,0.645) {4};    
    \end{tikzpicture}}}
    \quad
    =
    \quad
    \vcenter{\hbox{\begin{tikzpicture}
     \draw[thick] (0,0) circle [radius=50pt];
     \draw[thick] (-0.7,0.5) circle [radius=15pt];
     \node at (-0.7,0.5) {1};
     \draw[dashed, thick] (0.6,-0.7) circle [radius=20pt];
     \draw[thick] (0.7,-0.3) circle [radius=8pt];
     \node at (0.7,-0.3) {4};
     \draw[thick] (0.3,-1) circle [radius=8pt];
     \node at (0.3,-1) {2};
     \draw[thick] (0.15,1) circle [radius=17pt];
     \node at (0.15,1) {3};
    \end{tikzpicture}}}
\]
    \caption{Disks composition in $\D_2$}
    \label{fig:disccomp}
\end{figure}

Let us explore the relationship between little disks contractads and configuration spaces. For a graph $\Gr$ and topological space $X$, define the \textit{graphical configuration space} $\Conf_{\Gr}(X)$ as the space of functions $f\colon V_{\Gr} \rightarrow X$ from the vertex set to the given space such that images of adjacent vertices do not coincide. This space is topologized as a subspace of $X^{V_{\Gr}}$
\[
\Conf_{\Gr}(X)=\{(x_v) \in X^{V_{\Gr}}| (v,w) \in E_{\Gr} \Rightarrow x_v \neq x_w\}.
\] In the case of complete graphs, we get the classical configuration spaces of $n$-distinct points $\Conf_{\K_n}(X):=\Conf_n(X)$.
\begin{prop}\label{discsconf}
For a graph $\Gr$, the continuous map
\begin{gather*}
    \pi\colon \D_n(\Gr) \to \Conf_{\Gr}(\mathbb{R}^n)
    \\
    (r_v\Disc+x_v)_{v \in V_{\Gr}} \mapsto (x_v)_{v \in V_{\Gr}}
\end{gather*} provides a homotopy equivalence.
\end{prop}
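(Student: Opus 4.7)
The plan is to realize $\pi$ as a strong deformation retraction by contracting all radii to a uniform small value. First I would reduce to showing $\pi$ induces a homotopy equivalence $\D_n(\Gr)\simeq \Conf_{\Gr}(\mathring{\Disc})$, where $\mathring{\Disc}$ is the open unit disk: the radial homeomorphism $\mathbb{R}^n\xrightarrow{\cong}\mathring{\Disc}$, $y\mapsto y/(1+|y|)$, induces a homeomorphism $\Conf_{\Gr}(\mathbb{R}^n)\cong \Conf_{\Gr}(\mathring{\Disc})$, through which $\pi$ factors.

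The key geometric observation is that, over any fixed configuration of centers $(x_v)_v$, the fiber of $\pi$ is cut out in $\mathbb{R}_{>0}^{V_{\Gr}}$ by the linear inequalities $r_v\leq 1-|x_v|$ for each $v\in V_{\Gr}$ and $r_v+r_w\leq |x_v-x_w|$ for each edge $(v,w)\in E_{\Gr}$. Such a fiber is convex, hence contractible, and the proof below turns this pointwise contractibility into a global deformation retraction by exhibiting a canonical ``small-radius'' section.

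Concretely, I would define a continuous section $s\colon \Conf_{\Gr}(\mathring{\Disc})\to \D_n(\Gr)$ by $s(x):=(x_v,\epsilon(x))_v$, where
\[
\epsilon(x) := \tfrac{1}{2}\min\Bigl\{\min_{(v,w)\in E_{\Gr}}|x_v-x_w|,\ \min_{v\in V_{\Gr}}(1-|x_v|)\Bigr\}>0.
\]
By construction $\epsilon(x)\leq 1-|x_v|$ for every $v$ and $2\epsilon(x)\leq |x_v-x_w|$ for every edge, so $s(x)\in \D_n(\Gr)$ and $\pi\circ s=\Id$. I then deform $\Id_{\D_n(\Gr)}$ into $s\circ \pi$ by linearly interpolating the radii:
\[
H\bigl((x_v,r_v)_v,\,t\bigr) := \bigl(x_v,\,(1-t)r_v+t\,\epsilon(x)\bigr)_v,\qquad t\in[0,1].
\]
The verification that $H(-,t)\in \D_n(\Gr)$ for all $t$ is immediate: each constraint cutting out $\D_n(\Gr)$ over fixed centers is linear in the radii and holds both at the original values $r_v$ (by hypothesis) and at the terminal value $\epsilon(x)$ (by choice of $\epsilon$), hence for every convex combination.

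The main step requiring care is the verification that the homotopy $H$ stays inside $\D_n(\Gr)$, but this reduces to the convexity of the fibers, which is transparent from the linearity of the defining inequalities. There is no substantial obstacle: the entire argument is a graphical transcription of the classical deformation retraction $\D_n(k)\simeq \Conf_k(\mathbb{R}^n)$.
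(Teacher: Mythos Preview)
Your proof is correct and follows essentially the same route as the paper's: both construct a continuous section by assigning a uniform small radius depending continuously on the centers, then linearly interpolate the radii to exhibit $s\circ\pi\sim\Id$. The only difference is cosmetic---the paper takes $r(x)=\tfrac{1}{3}\min_{(v,w)\in E_{\Gr}}|x_v-x_w|$ without the boundary-distance term, whereas your $\epsilon(x)$ also bounds by $\tfrac{1}{2}(1-|x_v|)$, which is arguably a bit more careful if one requires the little disks to sit inside the ambient disk.
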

\begin{proof}
We can replace $\mathbb{R}^n$ with the homeomorphic open unit disk $\mathring{\Disc}$ since the related configuration spaces $\Conf_{\Gr}(\mathbb{R}^n)\cong \Conf_{\Gr}(\mathring{\Disc})$ are homeomorphic. Let us construct the section $s\colon \Conf_{\Gr}(\mathring{\Disc}) \to \D_n(\Gr)$ as follows. For each point in the configuration space $x=(x_v)$, we define the radius $r(x):=\frac{1}{3}\min\{|x_v-x_w| (v,w) \in E_{\Gr}\}$, which is non-zero by the construction of a graphical configuration space and depends continuously on $x$. The related map
\begin{gather*}
   s\colon \Conf_{\Gr}(\mathring{\Disc}) \rightarrow \D_n(\Gr)
   \\
   x \mapsto (r(x)\Disc+x_v)_{v \in V_{\Gr}}
\end{gather*} is a well-defined continuous section of $\pi$ and is the homotopy inverse. Indeed, we have the homotopy $s\circ\pi\sim\Id$ given by the rule
\[
H_t((r_v\Disc+x_v))= ((tr(x)+(1-t)r_v)\Disc+x_v).
\]
\end{proof}    

\noindent If we replace each component of the contractad $\D_n$ with its homology groups, we obtain a linear contractad $H_{\bullet}(\D_n)$. Let us list some properties of these contractads
\begin{itemize}
    \item For $n\geq 1$, the contractad $H_{\bullet}(\D_n)$ has a quadratic presentation (Proposition~\ref{thm:ass},\ref{thm:en}). 
    \item For $n=1$, the contractad $H_{\bullet}(\D_1)$ defines a graphical counterpart of the operad $\mathsf{Ass}$ of associative algebras (Theorem~\ref{thm:ass}). The dimension of each component $H_{\bullet}(\D_1(\Gr))$ is given by the formula $\dim H_{\bullet}(\D_1)=(-1)^{|V_{\Gr}|}\chi_{\Gr}(-1)$, where $\chi_{\Gr}(t)$ is the chromatic polynomial of the graph (Corrollary~\ref{sled:hilbertseries}). It is worth mentioning that the number  $(-1)^{|V_{\Gr}|}\chi_{\Gr}(-1)$ coincides with the number of acyclic edge orientation of the graph.
    \item For $n=2$, the contractad $H_{\bullet}(\D_2)$ defines a graphical counterpart of the operad $\mathsf{Gerst}$ of Gerstenhaber algebras (Theorem~\ref{thm:gerst}). The Hilbert series of each component of this contractad is given by the formula (Proposition~\ref{discring})
    \[
    H_{\D_2(\Gr)}(t)=t^{|V_{\Gr}|}\chi_{\Gr}(\frac{1}{t}).
    \] For particular types of graphs, we have
    \[
    H_{\D_2(\Path_n)}(t)=(1-t)^{n-1};\quad H_{\D_2(\Cyc_n)}(t)=(1-t)^{n}-(-t)^n-(-t)^{n-1}; \quad H_{\D_2(\K_n)}(t)=\prod^{n-1}_{k=1}(1-kt).
    \]
    \item For $n\geq 1$, the contractad $H_{\bullet}(\D_n)$ is Koszul and has a quadratic Gr\"obner basis\\ (Proposition~\ref{thm:en}).
\end{itemize}

\subsection{Wonderful contractad}\label{subsec:wonderful}
In this subsection, we give a geometric example of a contractad. More specifically, we define the contractad of graphical compactifications $\bM$. The restriction of this contractad to particular types of graphs gives us well-known examples of moduli spaces of stable curves of genus zero with certain conditions.\\

\noindent\textbf{Building sets and Wonderful models}  Let us briefly recall some facts about building sets and wonderful compactifications. For details, refer to \cite{de1995wonderful, rains2010homology, feichtner2004chow}.

Let $\A=\{H_1,H_2,...,H_n\}$ be an arrangement of complex linear hyperplanes in the complex vector space $V$. The \textit{intersection lattice} $\La(\A)$ is the set of all intersections of subsets of $\A$ partially ordered by reverse inclusion. It has maximal element $\hat{1}=\bigcap \A$ and minimal element $\hat{0}= V$.  The join(least upper bound) of a subset $S\subset \La(\A)$ is equal to the intersection $\vee S= \bigcap_{H \in S} H$. 

For an element $X \in \La(\A)$ and a subset $S$, we denote by $S|_X$ the intersection $S\cap [\hat{0},X]=\{y  \in S| y \leqslant X\}$. A \textit{building set} on an arrangement $\A$ is a subset $\G \subset \La(\A)\setminus \{\hat{0}\}$ such that, for each non-minimal element $X \in \La(\A)$ with $\max\G|_X=\{G_1,\cdots, G_k\}$, the join provides  a bijective map
\begin{gather*}
    \prod_{i=1}^k [\hat{0},G_i] \overset{\cong}{\longrightarrow} [\hat{0},X], \quad (X_1,\cdots, X_k) \mapsto \bigvee^k_{i=1} X_i.
\end{gather*}
A building set $\G$ is called \textit{connected} if it contains a maximal element of an intersection lattice. For a building set $\G$ on a hyperplane arrangement $\A$, consider the map from the projective complement of  the hyperplane arrangement $\mathcal{M}(\mathcal{A})=\Pro(V)\setminus \bigcup_{H \in \alpha} \Pro(H)$ to the product of projective spaces
\begin{equation*}
    \rho\colon \mathcal{M}(\mathcal{A}) \rightarrow \prod_{G \in \G} \Pro(V/G),
\end{equation*} where $\rho$ is the product of projections $\rho_G\colon \Pro(V)\setminus \bigcup_i \Pro(H_i) \rightarrow \Pro(V/G)$ induced by quotients $V \rightarrow V/G$. The \textit{projective wonderful compactification} $\Y_{\G}$ associated with building set $\G$ is the closure of the image of $\M(\A)$ under the map above. It is known that $\Y_{\G}$ is a smooth irreducible projective variety, and if $\G$ is connected, then the projection $\Y_{\G} \rightarrow \Pro(V)$ is an iterated sequence of blow-downs encoded by elements of the building set \cite{de1995wonderful}.

For an element $X \in \La(\A)$, the restriction $\A|_X=\{H \in \A| X \subset H\}$ forms a hyperplane arrangement in the quotient space $V/X$. Note that the related intersection lattice $\La(\A|_X)$ forms an  initial interval $[\hat{0},X]$ in the origin $\La(\A)$, and the restriction $\G|_X$ forms a building set on this arrangement. In a dual way, the intersections of hyperplanes $H_i \cap X$ that differ from $X$ defines the contracted arrangement $\A/X$ in the vector space $X$, and the corresponding intersection lattice forms a terminal interval $\La(\A/X)\cong [X,\hat{1}]\subset \La(\A)$.  It is easy to see that the collection  $\G/X:=\{G\cap X| G \in \G\cap (X, \hat{1}]\}$ forms a building set on the arrangement $\A/X$.

According to Rains \cite[Th.~2.5]{rains2010homology}, for any non-maximal element of the building set $G \in \G\setminus\{\hat{1}\}$, there is a closed embedding of wonderful compactifications
\begin{equation}
\label{rainsmap}
     \circ^{\G}_{H}\colon \Y_{\G/G}\times \Y_{\G|_G} \longrightarrow \Y_{\G}.
\end{equation} To define this morphism it suffices to specify $\rho_H\circ^{\Gr}_G$ for each $H\in \G$, where $\rho_H\colon \Y_{\G}\rightarrow \Pro(V/H)$. For $G\subset H$, we set $\rho_H\circ^{\Gr}_G=\rho_{H/G}$, projecting from $\Y_{\G|_G}$. Otherwise, we compose $\rho_{H\cap G}$ (projecting from $\Y_{\G/G}$) with the embedding $\Pro(G/H\cap G)\rightarrow \Pro(V/H)$. The image of this product is an irreducible divisor which we denote by $D_G$. It is a known fact that the complement $\Y_{\G}\setminus \M(\A)$ is a divisor with normal crossings whose irreducible components are divisors $D_G$ indexed by elements of $\G\setminus \{\hat{1}\}$~\cite{de1995wonderful}.

Intersections of these divisors are uniquely determined by the combinatorics of the building set. A non-empty subset of a building set $\Susp \subset \G$ is called a \textit{nested set} if for each subset $\{G_1,G_2,\cdots, G_k\} \subset \Susp$ of pairwise incomparable elements, we have $\bigvee^k_{i=1}G_i \not\in \G$. For a subset $\Susp \subset  \G\setminus\{\hat{1}\}$ the intersection $D_{\Susp}:=\bigcap_{G \in \Susp} D_G$ is non-empty iff $\Susp$ is nested. The collection of all nested sets forms a simplicial set $\hat{\N}(\La(\A), \G)$ on the vertex set $\G$. This complex is homeomorphic to a cone with apex $\{\hat{1}\}$
\[
\hat{\N}(\La(\A), \G) \cong \{\hat{1}\} * \N(\La(\A),\G),
\] whose base $\N(\La(\A),\G)$ is called a \textit{nested set complex} and consists  of all  nested sets which do not contain the maximal element. For our purposes, we consider the \textit{augmented nested set complex} obtained by adding to $\N(\La(\A),\G)$ one  $(-1)$-simplex $\varnothing$.  While the nested set complex encodes intersections of divisors, the augmented version encodes their interiors
\[
\mathring{D}_{\Susp}:=D_{\Susp}\setminus \bigcup_{\Susp \subsetneq \Susp'} D_{\Susp'}
\] with the notation $\mathring{D}_{\varnothing}=\M(\A)$, which form a locally-open stratification of $\Y_{\G}$.\\

\noindent\textbf{Wonderful contractad.} For a connected graph $\Gr$ with at least one edge, consider the \textit{graphic arrangement} $\B(\Gr)=\{\{x_v=x_w\}| (v,w)\in E_{\Gr}\}$ in $\mathbb{C}^{V_{\Gr}}$. For an edge $e=(v,w)$, we denote the related hyperplane $\{x_v=x_w\}$ by $H_e$ the . Note that the complete intersection of hyperplanes $\cap_{e\in E_{\Gr}} H_e=\langle \sum_{v\in V_{\Gr}} x_v\rangle$ is a one-dimensional space, hence, we replace the vector space $\mathbb{C}^{V_{\Gr}}$ with its quotient $V=\mathbb{C}^{V_{\Gr}}/\langle \sum_{v\in V_{\Gr}} x_v\rangle$ to make the arrangement $\B(\Gr)$ essential.
\begin{prop}\label{grapharrang}
For a graph $\Gr$, the intersection lattice of the graphic arrangement is isomorphic to the partition poset of $\Gr$
    \begin{equation*}
    \parti(\Gr) \cong \La(\mathcal{B}(\Gr)),
\end{equation*}
\end{prop}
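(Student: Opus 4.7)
The plan is to exhibit an explicit order-preserving bijection $\Phi\colon \parti(\Gr) \to \La(\B(\Gr))$ by sending a graph partition to the linear subspace where coordinates are identified along each block. Concretely, for $I \in \parti(\Gr)$ set
\[
L_I \;:=\; \{\,x \in V \mid x_v = x_w \text{ whenever } v,w \text{ lie in the same block of } I\,\},
\]
and define $\Phi(I) = L_I$.

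First I would check that $L_I$ actually lies in the intersection lattice $\La(\B(\Gr))$. Given a block $G \in I$, which is a tube of $\Gr$ by definition, choose any spanning tree $T_G$ of the connected induced subgraph $\Gr|_G$. Since $T_G$ connects all vertices of $G$ inside $\Gr$, the identification $x_v = x_w$ for all $v,w \in G$ is equivalent to imposing the single equation $x_v = x_w$ for each edge of $T_G$. Consequently
\[
L_I \;=\; \bigcap_{G \in I}\; \bigcap_{e \in E(T_G)} H_e,
\]
which exhibits $L_I$ as an intersection of hyperplanes from $\B(\Gr)$. Conversely, any element of $\La(\B(\Gr))$ has the form $L_S := \bigcap_{e \in S} H_e$ for some $S \subseteq E_{\Gr}$, and $L_S$ is cut out precisely by the equivalence relation on $V_{\Gr}$ generated by the edges in $S$. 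The equivalence classes are the vertex sets of the connected components of the subgraph $(V_{\Gr},S)$; each such component is connected in $\Gr$ (its edges lie in $E_{\Gr}$), so its vertex set is a tube. Thus $L_S = L_I$ for the partition $I$ whose blocks are these components, proving surjectivity.

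Injectivity is immediate: from the subspace $L_I$ one recovers $I$ by declaring $v \sim w$ iff $x_v = x_w$ holds identically on $L_I$. For compatibility of orderings, observe that if $I$ refines $J$ (i.e.\ $I \leq J$ in $\parti(\Gr)$), then every identification imposed by $I$ is also imposed by $J$, so $L_J \subseteq L_I$; since $\La(\B(\Gr))$ is ordered by reverse inclusion, this gives $L_I \leq L_J$, and the reverse implication is equally clear. The only genuine point to verify is surjectivity, which amounts to the observation that the equivalence relation generated by any set of edges of $\Gr$ has tube classes; everything else is bookkeeping and the isomorphism follows by direct inspection.
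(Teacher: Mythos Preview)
Your proof is correct and follows essentially the same approach as the paper: both define the map sending a partition $I$ to the subspace $\bigcap_{G\in I} H_G$ where coordinates are constant along each block, and then verify that this is an order-preserving bijection. The paper's proof is more terse (it simply asserts that the correspondence is ``order-preserving and bijective''), whereas you spell out the surjectivity argument via connected components of an edge subset and the injectivity via recovering the equivalence relation from the subspace; these are exactly the details one would fill in when expanding the paper's sketch.
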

\begin{proof}
For each non-trivial tube $G$, we associate the subspace $H_G:=\bigcap_{v,w \in G} H_{v,w}$. In the case when tube $G=\{v\}$ is trivial, we set $H_G=V$. The correspondence $I \mapsto \bigcap_{G \in I} H_G$ provides a well-defined map from the partition poset $\phi: \parti(\Gr) \rightarrow \La(\B(\Gr))$ to the related intersection lattice. It is easy to see that this correspondence is order-preserving and bijective.
\end{proof}

For a connected graph $\Gr$, the collection of subspaces $\G(\Gr):=\{ H_G \}_{G \subset V_{\Gr}}$ encoded by non-trivial tubes defines a connected building set on the graphic arrangement  $\B(\Gr)$. Indeed, via isomorphism  $\parti(\Gr) \cong \La(\B(\Gr))$, for each non-minimal partition $I$, we have an isomorphism
\[
\prod [\hat{0},G]\overset{\cong}{\rightarrow} [\hat{0},I],
\] where the product is taken over all non-trivial blocks of partition $I$. 

For simplicity of notation, we shall denote the projective complement $\Pro(V)\setminus \bigcup_{(i,j) \in E_{\Gr}} \Pro(H_e)$ by $\M(\Gr)$ and the corresponding projective wonderful compactification associated with $\G(\Gr)$ by $\bM(\Gr)$. For the one-vertex graph, we set $\bM(\Path_1)=\{pt\}$.

Note that the action of the automorphism group $\Aut(\Gr)$ on the lattice $\La(\B(\Gr))$ stabilizes the building set $\G(\Gr)$. Therefore, the action of $\Aut(\Gr)$ on $\M(\Gr)$ lifts to the action on its compactification $\bM(\Gr)$. The resulting graphical collection of wonderful compactifications $\bM$ is endowed with a contractad structure as follows.

\begin{prop}
The Rains maps \eqref{rainsmap} define a contractad structure on the graphical collection $\bM$ of graphical wonderful compactifications.
\end{prop}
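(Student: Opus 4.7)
The plan is to reduce everything to the combinatorics of building sets via the lattice isomorphism $\parti(\Gr)\cong\La(\B(\Gr))$ from \textbf{Proposition}~\ref{grapharrang}, and then invoke the known functorial properties of Rains' embeddings \eqref{rainsmap}. First, I would check that for every non-trivial tube $G$ of $\Gr$, the combinatorial operations on building sets agree with the graph operations on $\Gr$: namely, under the isomorphism of lattices, the restricted building set $\G(\Gr)|_{H_G}$ is sent to $\G(\Gr|_G)$ and the contracted building set $\G(\Gr)/H_G$ is sent to $\G(\Gr/G)$. This is a direct consequence of \textbf{Lemma}~\ref{intervals}, which identifies the intervals $[\hat0,H_G]$ and $[H_G,\hat1]$ with $\prod_{G'\in I}\parti(\Gr|_{G'})$ and $\parti(\Gr/G)$ respectively; the graphical nature of the building set is preserved under both operations because induced subgraphs and contractions of tubes are again graphs, and their tubes correspond bijectively with tubes of $\Gr$ lying above/below $G$.

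Using this, the Rains embedding \eqref{rainsmap} for the element $H_G\in\G(\Gr)$ becomes a closed immersion
\[
\circ^{\Gr}_G\colon \bM(\Gr/G)\times \bM(\Gr|_G)\longrightarrow \bM(\Gr),
\]
which is the candidate infinitesimal composition. The unit axiom is immediate since $\bM(\Path_1)$ is a point: taking $G=\{v\}$ a singleton tube makes $\G(\Gr)|_{H_G}$ empty and the left factor trivial, so $\circ^{\Gr}_{\{v\}}$ reduces to the identity on $\bM(\Gr)$; the case $G=V_{\Gr}$ is symmetric. Equivariance follows because every $\tau\in\Aut(\Gr)$ permutes the tubes of $\Gr$ and therefore stabilises the building set $\G(\Gr)$; the Rains map is natural with respect to such lattice automorphisms, so it intertwines the induced actions on the two sides.

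For the associativity and parallel axioms, the key point is that the image of an iterated Rains embedding along a chain of tubes $G\subset H$, or along a pair of disjoint tubes $G_1,G_2$, is the closure of the stratum $\mathring D_{\Susp}$ indexed by the corresponding nested set $\Susp\subset\G(\Gr)$ inside $\bM(\Gr)$, and that this factorisation through a product of smaller wonderful models only depends on $\Susp$, not on the order in which its elements are composed. Concretely, for nested $G\subset H$ the chain of inclusions $\bM(\Gr/H)\times\bM((\Gr|_H)/G)\times\bM(\Gr|_G)\hookrightarrow\bM(\Gr)$ factorises through both $\circ^{\Gr}_H\circ(\Id\times\circ^{H}_G)$ and $\circ^{\Gr}_G\circ(\circ^{\Gr/G}_{H/G}\times\Id)$, and both compositions agree by the associativity of Rains' factorisation theorem~\cite[Thm.~2.5]{rains2010homology}; for disjoint $G_1,G_2$ the analogous equality follows because $\{H_{G_1},H_{G_2}\}$ is a nested set in $\G(\Gr)$ (they are incomparable and the corresponding strata intersect transversely), so the image of the three-fold product in $\bM(\Gr)$ is $\overline{\mathring D_{\{H_{G_1},H_{G_2}\}}}$ regardless of the order of composition.

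The main obstacle is the last step: while the identifications of building sets and the unit/equivariance axioms are bookkeeping, the associativity and parallel diagrams require that Rains' construction is \emph{strictly} associative (not merely up to isomorphism) under the identifications of contracted and restricted building sets from Step~1. I would handle this by observing that each wonderful compactification is realised inside the ambient product $\prod_{H\in\G}\Pro(V/H)$, and the Rains maps are given component-wise by the explicit projection-and-embedding recipe recalled after \eqref{rainsmap}; verifying that both sides of each diagram yield the same tuple of projections in the ambient product reduces to a compatibility that holds on the Zariski-open locus $\M(\A)$ and then extends by density. Once this strict compatibility of projections is in place, the contractad axioms of \textbf{Proposition}~\ref{partdefi} follow immediately.
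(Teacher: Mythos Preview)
Your proposal is correct and follows essentially the same approach as the paper: reduce to the combinatorics of building sets via \textbf{Lemma}~\ref{intervals} and \textbf{Proposition}~\ref{grapharrang} to identify $\G(\Gr)|_{H_G}\cong\G(\Gr|_G)$ and $\G(\Gr)/H_G\cong\G(\Gr/G)$, then invoke the properties of Rains' maps from \cite[Th.~2.5]{rains2010homology}. The paper's own proof stops at this citation (adding only that the axioms ``can be checked by hand''), whereas you actually sketch that check---via the nested-set description of strata closures and the explicit componentwise formula for the embedding into $\prod_{H\in\G}\Pro(V/H)$, extended from $\M(\A)$ by density---which is exactly the verification the paper leaves implicit.
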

\begin{proof}
Consider some graph $\Gr$ and the corresponding building set $\G(\Gr)$. By Proposition~\ref{intervals}, restrictions/contractions of graphs and lattices commute: 
\[\La(\B(\Gr)|_{H_G})\cong \La(\B(\Gr|_G)), \quad \La(\B(\Gr)/H_G)\cong \La(\B(\Gr/G)).\] This assertion remains true on the level of building sets: $\G(\Gr)|_{H_G} \cong \G(\Gr|_G)$, $\G(\Gr)/H_G \cong \G(\Gr/G)$. As a result, for each non-trivial tube $G$, Rains' map \eqref{rainsmap} has the form
\[
\bM(\Gr/G)\times \bM(\Gr|_G) \rightarrow \bM(\Gr).
\] The fact that this datum of morphisms satisfies contractad axioms follows from the properties of Rain's maps \eqref{rainsmap} described in \cite[Th.~2.5]{rains2010homology} or can be checked by hand.
\end{proof}

Let us consider some examples of  graphical compactifications for particular types of graphs.
\begin{itemize}
\item In the case of complete graphs, the graphic arrangements coincide with the braid arrangements $\B(\K_n):=\B_n$. The graphical building set $\G(\K_n)$ is a minimal building set on  this arrangement. It is a known fact that the resulting wonderful compactification $\bM(\K_n)$ coincides with the moduli space $\bM_{0,n+1}$ of   stable genus zero curves with $(n+1)$-marked points (with one marked  point singled out $\infty$ as a  special). The infinitesimal maps on the restriction $\K_{*}(\bM)=\{\bM_{0,n+1}\}$ are gluing maps
\[
\circ_i\colon \bM_{0,n+1} \times \bM_{0,m+1} \rightarrow \bM_{0,n+m},
\] obtained by gluing the special point $\infty$  of the curve  from the right factor to the $i$-th point of the curve from the left one. Moreover, the stratification of $\bM_{0,n+1}=\coprod \mathring{D}_{\Susp}$ encoded by elements of the augmented complex $\overline{\N}(\parti(\K_n),\G(\K_n))\cup \{\varnothing\}$ is equivalent to the canonical stratification $\bM_{0,n+1}=\coprod \M((T))$ by dual graphs, which are precisely $\K_n$-admissible rooted trees.

\item In the case of stellar graphs, the graphical building set $\G(\St_n)$ is a maximal building set on the arrangement $\B(\St_n)=\{\{x_0=x_i\}| i \in [n]\}$. It is a known fact that the resulting wonderful compactification $\bM(\St_n)$ coincides with the Losev-Manin moduli space $\overline{L}_n$ \cite{losev2000new}. This moduli space parametrizes chains of projective lines with two poles(labeled by $\infty$ and $0$) and $n$ marked points that may coincide. The associated structure of a twisted algebra on the restriction $\St_{*}(\bM)=\{\overline{L}_n\}$ is given by gluing of poles $\overline{L}_n \times \overline{L}_m \rightarrow \overline{L}_{n+m}$. Similarly to the previous example, the stratification coming from nested sets is equivalent to the stratification by dual graphs, which are $\St_n$-admissible rooted trees.
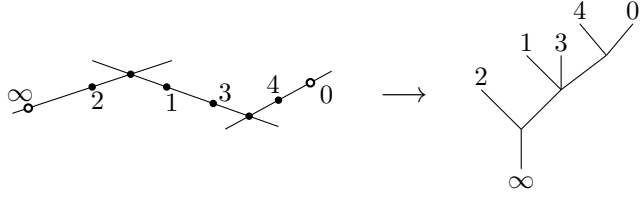
\begin{figure}[ht]
\caption{Dual graphs of stable chains are $\St$-admissible trees.}
    \[
    \vcenter{\hbox{\begin{tikzpicture}[scale=0.7]
    %curve
    \draw[name path=1] (0,0)--(3,1);
    \draw[name path=2] (1.5,1)--(5,-0.25);
    \fill[name intersections={of=1 and 2}] (intersection-1) circle (2pt);
    \draw[name path=3] (4,-0.3)--(6,0.8);
    \fill[name intersections={of=2 and 3}] (intersection-1) circle (2pt);
    %nodes
    \draw[thick, fill=white] ($(0,0)!.1!(3,1)$) circle [radius=2pt];
    \node at (0.15,0.35) {\small$\infty$};
    \fill ($(0,0)!.5!(3,1)$) circle (2pt);
    \node at (1.6,0.25) {\small$2$};
    \fill ($(1.5,1)!.4!(5,-0.25)$) circle (2pt);
    \node at (3,0.2) {\small$1$};
    \fill ($(1.5,1)!.65!(5,-0.25)$) circle (2pt);
    \node at (4,0.4) {\small$3$};
    \fill ($(4,-0.3)!.5!(6,0.8)$) circle (2pt);
    \node at (4.9,0.55) {\small$4$};
    \draw[thick, fill=white] ($(4,-0.3)!.8!(6,0.8)$) circle [radius=2pt];
    \node at (5.9,0.35) {\small$0$};
    
    %\node at (3.8,0.45) {\small$0$};
    \end{tikzpicture}}}
    \quad
    \longrightarrow
    \quad
    \vcenter{\hbox{\begin{tikzpicture}[scale=0.7]
        %tree
        \draw (0,0)--(0,0.75);
        \draw (0,0.75)--(-0.75,1.5);
        \draw (0,0.75)--(0.75,1.5);
        \draw (0.75,1.5)--(0.1,2.15);
        \draw (0.75,1.5)--(0.75,2.15);
        \draw (0.75,1.5)--(1.6,2.15);
        \draw (1.6,2.15)--(1.1,2.75);
        \draw (1.6,2.15)--(2.1,2.75);
        %leaves
        \node at (0,-0.25) {\small$\infty$};
        \node at (-0.75,1.75) {\small$2$};
        \node at (0.1,2.4) {\small$1$};
        \node at (0.75,2.4) {\small$3$};
        \node at (1.1,3) {\small$4$};
        \node at (2.1,3) {\small$0$};
  \end{tikzpicture}}}
\]
\end{figure}

\item Both of these examples are special cases of the Losev-Manin modular operad $\La\M_{0,S}$~\cite{losev2004extended}. These moduli spaces parametrize stable curves with marked points indexed by two-colored set $S$ (say “black” and “white”), where the points of type black are allowed to coincide and the points of type white are not. Let us define the graph $\K_{(1^m,n)}:=\K_m*\overline{\K}_n$ as the graph join of the complete graph on $m$ vertices with the graph on $n$ vertices without edges. It was shown in \cite[Th.~4.4]{coron2023supersolvability}, that the wonderful compactification $\bM(\K_{(1^n,m)})$ coincides with the component of the extended Losev-Manin operad encoding stable curves with $n$ black and $m$ white points respectively.

\item Finally, consider the case of paths. It was shown in \cite[Sec.~5]{dotsenko2019toric}, that the wonderful compactification $\bM(\Path_n)$ associated with the path is an $(n-2)$-dimensional toric projective variety. Its dual polytope is the Stasheff polytope $\mathcal{K}_{n-2}$. Recall that the faces of this polytope are encoded by planar rooted trees, i.e., $\Path_n$-admissible trees. Moreover, the locally open stratification that comes from nested sets is equivalent to the torus orbit decomposition $\bM(\Path_n)=\coprod_{T \in \Tree(\Path_n)} \Orb_T$.
\end{itemize}
For a graph $\Gr$, elements of the building set $\G(\Gr)$ correspond to non-trivial tubes. In terms of tubes, the nested condition on subsets of non-trivial tubes is as follows: a subset of non-trivial tubes $\Susp$ is nested if every two tubes $G,G'$ are comparable by inclusion $G\subset G'$ or do not intersect $G\cap G'=\varnothing$. As we have shown above, for some types of graphs, there is a one-to-one correspondence between elements of an augmented nested set complex $\N(\parti(\Gr),\G(\Gr))\cup \{\varnothing\}$ and \textit{stable} $\Gr$-admissible trees, i.e., trees whose vertices have at least 2 inputs. This observation remains true for all connected graphs.
\begin{prop}\label{nesttrees}
Let $\Gr$ be a connected graph. There is the natural one-to-one correspondence between stable $\Gr$-admissible rooted trees and elements of augmented nested set complex
\[
\Tree_{\mathrm{st}}(\Gr) \cong \N(\parti(\Gr),\G(\Gr))\cup \{\varnothing\}.
\]
\end{prop}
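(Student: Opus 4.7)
The plan is to identify both sides of the claimed bijection with the combinatorial notion of a laminar family of non-trivial proper tubes of $\Gr$, and then spell out an explicit correspondence. The first step, which will be the main content, is to unpack the nested set condition for $\G(\Gr)$ combinatorially. Under the isomorphism $\parti(\Gr)\cong\La(\B(\Gr))$ of Proposition~\ref{grapharrang}, elements of $\G(\Gr)$ correspond to non-trivial proper tubes, and the join $\bigvee_i H_{G_i}$ corresponds to the partition in which any set of $G_i$'s sharing a vertex is merged into a single block and the rest are kept separate (with singletons filling out the vertex set). This join lies in $\G(\Gr)$ iff the resulting partition has exactly one non-trivial block, itself a tube. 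For a pairwise incomparable collection $\{G_1,\ldots,G_k\}$ I would run two cases: if some pair $G_i,G_j$ overlaps without containment, then $G_i\cup G_j$ is connected and hence a tube, so the join lies in $\G(\Gr)$ and the nestedness axiom fails; while if all pairs are disjoint, the join has $k\geq 2$ non-trivial blocks and cannot lie in $\G(\Gr)$. The conclusion is that $\Susp\subset\G(\Gr)$ is nested iff its elements are pairwise either comparable or disjoint.

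Next I would write down the two maps. Given such a nested $\Susp$, augment it to $\widetilde\Susp:=\Susp\cup\{V_{\Gr}\}\cup\{\{v\}:v\in V_{\Gr}\}$ and consider its Hasse diagram under inclusion. Since $\widetilde\Susp$ is laminar with unique maximum $V_{\Gr}$ and minima the singletons, this Hasse diagram is a rooted tree $T(\Susp)$ with internal vertices $\Susp\cup\{V_{\Gr}\}$ and leaves the singletons $\{v\}$, identified with $v\in V_{\Gr}$; the children of a node $G$ are the maximal elements of $\widetilde\Susp$ strictly contained in $G$. Admissibility is immediate because the leaves above the incoming edge of $G$ form exactly the tube $G$. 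For stability, if $G\in\Susp\cup\{V_{\Gr}\}$ has maximal proper sub-tubes $G'_1,\ldots,G'_k\in\Susp$, its children are those together with the singletons in $G\setminus\bigcup_i G'_i$; the total count is at least two in every case -- when $k=0$ because $|G|\geq 2$, and when $k\geq 1$ because there is either another sibling $G'_j$ or a singleton from $G\setminus G'_1$. Conversely, from a stable $\Gr$-admissible tree $T$ I set $\Susp(T):=\{L_e : e\text{ is an edge between two internal vertices}\}$; admissibility forces each $L_e$ to be a tube, stability forces $2\leq|L_e|<|V_{\Gr}|$, and the tree structure guarantees that two such leaf-sets are either nested or disjoint.

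Finally, the two constructions are mutually inverse by direct inspection: the internal edges of $T(\Susp)$ biject with the elements of $\Susp$ via $e\mapsto L_e$, and the Hasse diagram of $\Susp(T)\cup\{V_{\Gr}\}\cup\{\{v\}:v\in V_{\Gr}\}$ reconstructs $T$ vertex-by-vertex. The empty nested set matches the corolla with internal vertex $V_{\Gr}$ and all singletons as leaves, corresponding to the $(-1)$-simplex $\varnothing$ in the augmented complex. The main obstacle is the first step: translating the abstract join in $\La(\B(\Gr))$ into the concrete merging of tubes, so as to reduce the general nested set axiom to the simple laminar condition. Once that is in hand, the remaining correspondence is the standard bijection between laminar families on a set and rooted trees, and naturality with respect to graph automorphisms is automatic from the construction.
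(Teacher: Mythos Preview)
Your proof is correct and follows the same approach as the paper: the forward map $T\mapsto\Susp(T)=\{L_e\}$ over internal edges is exactly the paper's, and the paper likewise relies on the laminar characterization of nested sets (stated just before the proposition). Your treatment is simply more thorough --- you prove that characterization and construct the inverse via the Hasse diagram of $\Susp\cup\{V_{\Gr}\}\cup\{\{v\}\}$, whereas the paper just asserts that $\Susp$ is ``bijective by the stability condition''; one small slip is that $\G(\Gr)$ consists of all non-trivial tubes (including $V_{\Gr}$), not only the proper ones, but this is harmless since $\N$ already excludes $\hat{1}$.
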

\begin{proof}
Recall that for a $\Gr$-admissible tree $T$ and edge $e \in \Edge(T)$, the set of leaves $L_e=\Leav(T_e)$ of the rooted subtree defines tube. It is easy to see that the collection of tubes $\Susp(T)=\{L_e| e \in \Edge(T)\}$ belongs to the augmented complex  $\N(\parti(\Gr),\G(\Gr))\cup \{\varnothing\}$. Indeed, for the corolla, the collection $\Susp(T)=\varnothing$ is the $-1$-cell; for other trees, $\Susp(T)$ is nested since, for each pair of edges of a rooted tree, the related subtrees are comparable by inclusion order or do not intersect at all. So, we have the map $\Susp\colon \Tree_{\mathrm{st}}(\Gr) \rightarrow \N(\parti(\Gr),\G(\Gr))\cup \{\varnothing\}$, which is bijective by the stability condition.
\end{proof}
 
\section{Koszul duality for contractads}\label{sec:koszul}
In this section, we construct a Bar-Cobar adjunction for contractads and Koszul duality theory for quadratic contractads. The main purpose of these constructions is finding minimal models for contractads. For Koszul contractads we construct these models explicitly. At the end, we give the first examples of Koszul and non-Koszul contractads respectively.  We outline the necessary statements; most of the proofs are \textit{mutatis mutandis} those of \cite[Chap.~6-7]{loday2012algebraic}.
\subsection{Bar-Cobar adjunction}\label{subsec:bar}

\begin{defi}
A differential graded contractad is a contractad in the category of differential-graded spaces. More explicitly, it is a pair of a linear graded contractad $\Pop$ and a differential $d_{\Pop}$ compatible with a contractad structure
\[
d_{\Pop}(\mu \circ^{\Gr}_{G}\nu)=d_{\Pop}(\mu)\circ^{\Gr}_{G}\nu+(-1)^{|\mu|}\mu\circ_G^{\Gr}d_{\Pop}(\nu).
\]
\end{defi}
\noindent The definition of differential graded cocontractad is defined in the dual way.\\

\noindent\textbf{Twisting morphisms.}
For graphical collections $\Pop, \Q$, define their \textit{infinitesimal product} by the rule 
\[
\Pop\circ'\Q(\Gr)=\bigoplus_{G} \Pop(\Gr/G)\otimes \Q(\Gr|_G),
\] where the sum is taken over all tubes $G$ of $\Gr$. Note that the infinitesimal product of $\Pop$ with itself is isomorphic to the weight 2 part of the free contractad on $\Pop$
\[
\Pop\circ'\Pop \cong \T^{(2)}(\Pop).
\]
Let $\Pop$ and $\Q$ be a pair of a dg contractad and a dg cocontractad respectively. Consider the space of homogeneous morphisms of graphical collections $\Hom_{\GrCol}(\Q,\Pop)$. This collection admits the structure of a dg complex with respect to the differential $\partial$
\[
\partial(f):=d_{\Pop}\circ f-(-1)^{|f|}f\circ d_{\Q}.
\] Similarly to the operad case, we define a convolution of two morphisms $f \star g$ by the composition
\[
\Q \overset{\Delta'}{\longrightarrow} \Q\circ'\Q \overset{f \circ' g}{\longrightarrow} \Pop\circ'\Pop \overset{\gamma'}{\longrightarrow} \Pop.
\] The partial axioms of contractads~\ref{partdefi} ensure that this product assembles  the complex $\Hom_{\GrCol}(\Q,\Pop)$ into a dg pre-Lie algebra:
\[
(f\star g)\star h-f\star(g\star h)=(-1)^{|g||h|}((f\star h)\star\ g-f\star(h\star g)).
\] 
The alternating sum $[f,g]=f\star g - (-1)^{|f||g|}g\star f$ defines the dg Lie algebra
\[
\Hom(\Q,\Pop):=(\Hom_{\GrCol}(\Q,\Pop), \partial, [,]),
\] which we refer to as the \textit{convolution Lie algebra}. A \textit{twisting morphism} is a degree $-1$ solution of the Maurer-Cartan Equation
\[
\partial(\alpha)+\frac{1}{2}[\alpha,\alpha]=\partial(\alpha)+\alpha\star\alpha=0.
\] Similarly to operads~\cite[Sec.~6.4.11]{loday2012algebraic}, such an element twists the complex $\Q\circ\Pop=(\Q\circ\Pop, d_{\Q}\circ \Id+d_{\Pop}\circ \Id)$ to a new one  $\Q \circ^{\alpha}\Pop=(\Q\circ\Pop, d_{\alpha})$ on the same graphical collection. More explicitly, the differential $d_{\alpha}$ have  the form 
\[
d_{\alpha}=d_{\Q}\circ \Id_{\Pop} + \Id_{\Q}\circ d_{\Pop}+d_{\alpha}^{r},
\] where $d_{\alpha}^r=(\Id_{\Q}\circ \gamma)([(\Id_{\Q}\circ'\alpha)\Delta']\circ \Id_{\Pop})$ is the twisting term. This map is square-zero  since, for any $(-1)$-degree element $\alpha$, we have $d^{2}_{\alpha}=d^r_{\partial(\alpha)+\alpha\star\alpha}$. We denote by $\Tw(\Q,\Pop)$ the set of twisting morphisms.\\

\noindent\textbf{Bar-Cobar construction.}
Let $\E$ be a graphical collection. By the Leibniz rule, any derivation of the free contractad $\T(\E)$ on $\E$ is uniquely determined  by the images of generators, and the freeness guarantees that any morphism of graphical collections $\E \rightarrow \T(\E)$ extends to a unique derivation. Dually, each coderivation of the free cocontractad $\T^c(\E)$ is determined by the projection on the cogenerators:
\begin{gather*}
    \Der(\T(\E))\cong \Hom_{\GrCol}(\E,\T(\E)),
    \\
    \Coder(\T^c(\E))\cong \Hom_{\GrCol}(\T^c(\E),\E).
\end{gather*}

Let $(\Pop,d_{\Pop})$ be a dg contractad with an augmentation $\varepsilon\colon \Pop \twoheadrightarrow \mathbb{1}$. Consider the free cocontractad generated by the suspension of the augmentation ideal $\T^c(s\overline{\Pop})$. We define the pair $d_1,d_2$ of differentials on this cocontractad arising from a dg contractad structure on $\Pop$ as follows. The differential $d_1$ is a coderivation induced from the morphism of graphical collections 
\[
 \T^c(s\overline{\Pop}) \twoheadrightarrow s\overline{\Pop} \overset{sd_{\Pop}}{\rightarrow} s\overline{\Pop}.
\] This coderivation is square-zero because the $d_{\Pop}$ is. The second coderivation $d_2$ arises from the infinitesimal product
\[
\T^c(s\overline{\Pop})\twoheadrightarrow \T^{(2)}(s\overline{\Pop})\cong s\overline{\Pop}\circ's\overline{\Pop}\cong (s\otimes s)\otimes \overline{\Pop}\circ'\overline{\Pop}\overset{\gamma_s\otimes \gamma'}{\longrightarrow} s\overline{\Pop},
\] where $\gamma_s\colon s\otimes s \mapsto s$ is the desuspension map and $\gamma'\colon \overline{\Pop}\circ'\overline{\Pop} \rightarrow \overline{\Pop}$ is the reduced infinitesimal product. Proposition~\ref{partdefi} and sign conventions ensure that coderivation $d_2$ is square-zero. The compatibility of the differential $d_{\Pop}$ with the infinitesimal compositions in $\Pop$ implies the identity $d_1d_2+d_2d_1=0$. So, the sum $d_1+d_2$ is a square-zero coderivation of the free cocontractad $\T^c(s\overline{\Pop})$.

\begin{defi}
The Bar construction of an augmented dg contractad $\Pop$ is the dg cocontractad
\[
\mathsf{B}\Pop:=(\T^c(s\overline{\Pop}), d_1+d_2).
\]
\end{defi}

Dually, we define the Cobar construction of a coaugmented cocontractad as follows. Let $(\Q,d_{\Q})$ be a dg cocontractad with a coaugmentation $\eta\colon \mathbb{1} \hookrightarrow \Q$. Consider the free contractad generated by the desuspension of the coaugmentation cokernel $\T(s^{-1}\overline{\Q})$. We define the pair $d_1,d_2$ of differentials on this contractad as follows. The differential $d_1$ is a square-zero derivation induced from the morphism of graphical collections 
\[
 s^{-1}\overline{\Q} \overset{s^{-1}d_{\Q}}{\rightarrow} s^{-1}\overline{\Q} \hookrightarrow \T(s^{-1}\overline{\Q})
\] The second derivation $d_2$ arises from
\[
s^{-1}\overline{\Q} \overset{\gamma_{s^{-1}}\otimes \triangle'}{\longrightarrow} (s^{-1}\otimes s^{-1})\otimes \overline{\Q}\circ'\overline{\Q}\cong  s^{-1}\overline{\Q}\circ's^{-1}\overline{\Q} \hookrightarrow \T(s^{-1}\overline{\Q}),
\] where $\gamma_{s^{-1}}\colon s^{-1} \mapsto s^{-1}\otimes s^{-1} $ is the desuspension map and $\triangle'\colon \overline{\Q} \rightarrow \overline{\Q}\circ' \overline{\Q}$ is the reduced infinitesimal coproduct. Similarly to the previous case, the sum $d_1+d_2$ is a square-zero derivation of $\T(s^{-1}\Q)$.

\begin{defi}
The Cobar construction of a coaugmented dg cocontractad $\Q$ is the dg contractad
\[
\mathsf{\Omega}\Q:=(\T(s^{-1}\overline{\Q}), d_1+d_2).
\]
\end{defi}
\noindent The following proposition explains the significance of these constructions.
\begin{prop}[Bar-Cobar Adjunction]
\label{barcobar}
The Bar $\mathsf{B}$ and Cobar construction $\mathsf{\Omega}$ form an adjoint pair. Moreover, we have
\[
\Hom_{\mathsf{dgCon}}(\mathsf{\Omega}\Q,\Pop)\cong \Tw(\Q,\Pop)\cong \Hom_{\mathsf{dgcoCon}}(\Q,\mathsf{B}\Pop).
\]
\end{prop}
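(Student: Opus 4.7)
The plan is to establish each of the two bijections by invoking the universal property of the free (co)contractad and then translating the chain-map condition into the Maurer--Cartan equation. Both halves follow the same scheme as the operadic case \cite[Sec.~6.5]{loday2012algebraic}, with the admissible-tree monad $\T$ and the infinitesimal (co)product $\triangle', \gamma'$ playing the roles of their operadic counterparts. Composing the two bijections yields the desired adjunction.

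First, for $\Hom_{\mathsf{dgCon}}(\mathsf{\Omega}\Q,\Pop)\cong \Tw(\Q,\Pop)$: since $\mathsf{\Omega}\Q=\T(s^{-1}\overline{\Q})$ is free as a graded contractad, a morphism of graded contractads $F\colon \mathsf{\Omega}\Q\to \Pop$ is determined by its restriction $s^{-1}\overline{\Q}\to \Pop$ to generators. Via desuspension this is the same data as a degree $-1$ element $\alpha\in\Hom_{\GrCol}(\Q,\Pop)$ vanishing on the coaugmentation. I would then impose the chain-map condition $F\circ(d_1+d_2)=d_{\Pop}\circ F$ and restrict to generators: the term $F\circ d_1$ contributes $\alpha\circ d_{\Q}$ up to a Koszul sign coming from $s^{-1}$; the term $d_{\Pop}\circ F$ contributes $d_{\Pop}\circ \alpha$; and the quadratic term $F\circ d_2$ unwinds, by the very definition of $d_2$, to $\gamma'\circ(\alpha\circ'\alpha)\circ\triangle'=\alpha\star\alpha$. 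Assembling these three pieces gives exactly $\partial(\alpha)+\alpha\star\alpha=0$.

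The dual bijection $\Tw(\Q,\Pop)\cong \Hom_{\mathsf{dgcoCon}}(\Q,\mathsf{B}\Pop)$ proceeds symmetrically. A graded cocontractad map $G\colon \Q\to \mathsf{B}\Pop=\T^c(s\overline{\Pop})$ is determined by its projection onto cogenerators, a map $\Q\to s\overline{\Pop}$; suspension shifts this to the same degree $-1$ twisting datum $\alpha$. Compatibility of $G$ with the codifferentials, checked via projection to the cogenerators and to the weight-$2$ piece $s\overline{\Pop}\circ's\overline{\Pop}$, reduces once more to the Maurer--Cartan equation. Functoriality of both bijections in $\Pop$ and $\Q$ is immediate from the universal properties, so they combine into the claimed adjoint pair.

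The main obstacle is bookkeeping. One must verify (i) that a derivation of $\T(\E)$, respectively a coderivation of $\T^c(\E)$, is uniquely determined by its value on, respectively projection to, generators; both facts follow from the explicit admissible-tree description of $\T$ in Subsection~\ref{subsec:admissibletrees} by an induction on the number of internal vertices of a tree, since every element of $\T(\E)(\Gr)$ is obtained from generators by iterated substitutions along $\Gr$-admissible trees. And (ii) one must check that the Koszul sign conventions governing the desuspension $s^{-1}$ make the quadratic piece of $F\circ d_2$ agree with the pre-Lie convolution $\alpha\star\alpha$, rather than its opposite; this is a sign verification that hinges on the compatibility between the definition of $d_2$ on the Bar/Cobar side and the definition of the convolution product via $\triangle'$ and $\gamma'$ on the convolution Lie algebra side.
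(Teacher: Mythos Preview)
Your proposal is correct and follows essentially the same approach as the paper's own proof: use freeness of $\mathsf{\Omega}\Q=\T(s^{-1}\overline{\Q})$ to reduce a dg contractad morphism to its restriction on generators, then observe that the chain-map condition is equivalent to the Maurer--Cartan equation for the associated degree $-1$ map. The paper's proof is in fact much terser than yours---it only sketches the first bijection and declares the second analogous---so your additional detail on the sign bookkeeping and the explicit treatment of the dual side via coderivations of $\T^c$ is a welcome elaboration rather than a deviation.
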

\begin{proof}
Let us establish the first isomorphism, the second one is established analogously. Let $f\colon \mathsf{\Omega}\Q\rightarrow \Pop$ be a morphism of dg contractads. This morphism is characterized by its restriction $s^{-1}\overline{\Q} \rightarrow \Pop$ since the Cobar construction $\mathsf{\Omega}\Q=\T(s^{-1}\overline{\Q})$ is a free contractad. It is easy to see that the associated map $\Bar{f}\colon \Q\rightarrow \Pop$ is a twisted morphism if and only if $f\colon \mathsf{\Omega}\Q\rightarrow \Pop$ is a chain map.
\end{proof}
Let us examine when a twisting morphism $\alpha \in \Tw(\Q,\Pop)$ produces a quasi-isomorphism $\Omega(\Q)\overset{\simeq}{\rightarrow}\Pop$ of dg contractads.
\begin{defi}[Koszul morphisms]
A twisting morphism $\alpha\colon \Q \rightarrow \Pop$ is called a Koszul if the twisted contracted product $\Q\circ^{\alpha}\Pop$ is acyclic.
\end{defi}
 For example, for each contractad $\Pop$, there is the twisting morphism $\pi\colon \mathsf{B}\Pop \twoheadrightarrow \Pop$ given by projection and desuspension. Similarly to the operad case~\cite[Lem.~6.5.14]{loday2012algebraic}, we see that the contracted complex $\mathsf{B}\Pop \circ^{\pi} \Pop$ is acyclic. The following proposition is proved  analogously to \cite[Th.~6.6.2]{loday2012algebraic}.
 \begin{prop}
 For an augmented contractad $\Pop$ and coaugmented cocontractad $\Q$, and twisting morphism $\alpha: \Q \rightarrow \Pop$, the following assertions are equivalent:
 \begin{enumerate}
     \item $\alpha$ is Koszul,
     \item $f_{\alpha}\colon \Q \rightarrow \mathsf{B}\Pop$ is a quasi-isomorphism,
     \item $g_{\alpha}\colon \mathsf{\Omega}\Q \rightarrow \Pop$ is a quasi-isomorphism.
 \end{enumerate}
 \end{prop}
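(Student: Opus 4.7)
The plan is to follow the operadic template of \cite[Th.~6.6.2]{loday2012algebraic} \emph{mutatis mutandis}, leveraging the Bar--Cobar adjunction just established, together with the combinatorial description of free contractads via graph-admissible rooted trees. The overall strategy is to reduce all three conditions to the acyclicity of an explicit twisted composite product, which can be tested through a filtration/spectral sequence.

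First I would set up the \textbf{universal twisting morphisms}. Define $\pi\colon \mathsf{B}\Pop \twoheadrightarrow s\overline{\Pop}\to \overline{\Pop}$ by projection on cogenerators followed by desuspension, and dually $\iota\colon \overline{\Q}\hookrightarrow s^{-1}\overline{\Q}\hookrightarrow \mathsf{\Omega}\Q$. A direct computation (checking that $d_1+d_2$ is square-zero boils down to the Maurer--Cartan equation for $\pi$ and $\iota$) shows that both are twisting morphisms. By \textbf{Proposition}~\ref{barcobar}, $\pi$ and $\iota$ are universal: any twisting morphism $\alpha\colon \Q\to \Pop$ factors as $\alpha=\pi\circ f_\alpha = g_\alpha\circ \iota$. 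I would then verify that the universal Koszul complexes $\mathsf{B}\Pop\circ^\pi \Pop$ and $\Q\circ^\iota \mathsf{\Omega}\Q$ are always acyclic, by constructing an explicit contracting homotopy that moves a single rightmost subtree in an admissible tree across the composition product. The combinatorial setup of \textbf{Subsection}~\ref{subsec:admissibletrees} (the monad $\T$ and the substitution/decomposition of $\Gr$-admissible trees) makes the standard operadic contracting homotopy go through.

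Next I would prove a \textbf{comparison lemma} for twisted composite products of graphical collections: given a morphism $\phi\circ \psi\colon \Q_1\circ^{\alpha_1}\Pop_1 \to \Q_2\circ^{\alpha_2}\Pop_2$ compatible with dg cocontractad/contractad maps $\phi,\psi$ and with the respective twisting morphisms, if two of $\phi$, $\psi$, $\phi\circ\psi$ are quasi-isomorphisms then so is the third. The proof proceeds by filtering $\Q_i\circ^{\alpha_i}\Pop_i$ by the total number of vertices of the indexing graph (which is preserved by contractad compositions, since $|V_{\Gr}|=|V_{\Gr/I}|+\sum_{G\in I}(|V_{\Gr|_G}|-1)+\ldots$, or more cleanly by a secondary filtration by the number of internal edges of the admissible tree representing an element of $\Q_i\circ^{\alpha_i}\Pop_i$). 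On the associated graded, the twisting terms $d_{\alpha_i}^r$ drop out and one is left with the untwisted tensor differentials, so the usual Künneth-type argument applies to the spectral sequence.

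Finally, the three equivalences follow by applying the comparison lemma to the canonical morphisms
\begin{equation*}
    \Q\circ^\iota \mathsf{\Omega}\Q \xrightarrow{\Id_{\Q}\circ g_\alpha} \Q\circ^\alpha \Pop, \qquad \mathsf{B}\Pop\circ^\pi\Pop \xleftarrow{f_\alpha\circ \Id_{\Pop}} \Q\circ^\alpha\Pop,
\end{equation*}
since the outer terms are acyclic by Step~2 (in the sense that the augmentation $\Q\circ^\iota \mathsf{\Omega}\Q\to\mathsf{I}$ is a quasi-isomorphism, analogously for $\mathsf{B}\Pop\circ^\pi\Pop$); matching up the comparison lemma hypotheses yields (1)$\Leftrightarrow$(3) and (1)$\Leftrightarrow$(2) respectively. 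The hardest step will be the comparison lemma: one must verify that the weight filtration on graphical collections (where the combinatorics of tubes and contractions is richer than that of operadic arities) is exhaustive and bounded below, so that the spectral sequence converges. The fact that every graph partition has finitely many blocks and that $\parti(\Gr)$ is a finite poset (\textbf{Lemma}~\ref{intervals}) ensures this boundedness and lets the usual convergence arguments go through without modification.
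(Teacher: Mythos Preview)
Your proposal is correct and follows essentially the same approach as the paper: the paper does not give an independent argument but states that the proof is ``analogous to \cite[Th.~6.6.2]{loday2012algebraic}'', and your sketch is precisely that adaptation, including the acyclicity of $\mathsf{B}\Pop\circ^{\pi}\Pop$ (which the paper also records separately) and the comparison lemma for twisted composite products.
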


 \noindent As an immediate corollary we get, that for each dg contractad $\Pop$, the morphism $g_{\pi}\colon \mathsf{\Omega B}\Pop \twoheadrightarrow \Pop$ is a quasi-isomorphism. Dually, for each dg cocontractad, the unit $\Q\rightarrow\mathsf{B\Omega}\Q$ is a quasi-isomorphism.

\subsection{Koszul Duality}\label{subsec:koszul}
In this subsection, we define Koszul contractads and construct their minimal models. As we have mentioned before, for an arbitrary dg contractad $\Pop$, we have the free model given by the counit $\mathsf{\Omega B}\Pop \twoheadrightarrow \Pop$. Unfortunately, this model is too "large" and it does not give much useful information about the latter contractad. A good candidate for such a role is a \textit{minimal model}. A minimal model of a dg contractad $(\Pop, d_{\Pop})$ is a free dg contractad $\T(\E)$ with a surjective quasi-morphism $\T(\E) \twoheadrightarrow \Pop$ and whose differential is \textit{decomposable}  $d(\E) \subset \T^{(\geq 2)}(\E)$, i.e., images of generators have the weight of at least 2.

\begin{defi}
The suspension contractad is the endomorphism contractad $\Susp:=\End_{\mathsf{k}s}$ for the one-dimensional vector space $\mathsf{k}s$, $|s|=1$. Similarly, the desuspension contractad $\Susp^{-1}:=\End_{\mathsf{k}s^{-1}}$, where $|s^{-1}|=-1$.
\end{defi}
Each component of the former contractad has the form $\Susp(\Gr)=\Hom(\mathsf{k}s^{\otimes n},\mathsf{k}s)$, where $n$ is the number of vertices, is a one-dimensional vector space concentrated in degree $(1-n)$, since its generator sends $s^n$ to $s$. The automorphism group $\Aut(\Gr)$ acts on this space by the sign of the associated permutations
\[
(s^n\mapsto s)^{\sigma}=(-1)^{|\sigma|}(s^n\mapsto s).
\]

Contractads, themselves, form a symmetric monoidal category as follows. For a given pair of contractads $\Pop, \mathcal{Q}$, their \textit{Hadamard product} $\Pop \underset{\mathrm{H}}{\otimes}\mathcal{Q}$ is the contractad whose underlying graphical collection is given by the tensor product
\[
\Pop\underset{\mathrm{H}}{\otimes} \mathcal{Q}(\Gr):=\Pop(\Gr)\otimes \mathcal{Q}(\Gr),
\] with the natural contractad structure 
\[
(\Pop\underset{\mathrm{H}}{\otimes}\Q)(\Gr/I)\otimes\bigotimes_{G\in I} (\Pop\underset{\mathrm{H}}{\otimes}\Q)(\Gr|_G)\cong (\Pop(\Gr/I)\otimes\bigotimes_{G\in I} \Pop(\Gr|_G))\otimes(\Q(\Gr/I)\otimes\bigotimes_{G\in I} \Q(\Gr|_G))\overset{\gamma_{\Pop}\otimes\gamma_Q}{\longrightarrow}(\Pop\underset{\mathrm{H}}{\otimes}\Q)(\Gr).
\] Note that the commutative contractad $\Com$ is the unit with respect to the Hadamard product
\[
\Com\underset{\mathrm{H}}{\otimes}\Pop\cong 
\Pop\underset{\mathrm{H}}{\otimes}\Com\cong \Pop.
\] For a contractad $\Pop$, its (de)suspension is the contractad defined by the rule
\[
\Susp\Pop=\Susp\underset{\mathrm{H}}{\otimes}\Pop \quad \Susp^{-1}\Pop=\Susp^{-1}\underset{\mathrm{H}}{\otimes}\Pop
\]

\begin{defi}[Koszul dual contractad]
Let $\Pop=\Pop(\E, \R) $ be a contractad generated by generators $\E$ and quadratic relations $\R \subset \E\circ'\E$. The  Koszul dual cocontractad $\Pop^{\text{!`}}=\Q(s\E,s^2\R)$ is the cocontractad with cogenerators $s\E$ and corelations $s^2\R$. The Koszul dual contractad $\Pop^{!}$ is the contractad defined by
\[
\Pop^{!}=\mathcal{S}^{-1}(\Pop^{\text{!`}})^*
\]
\end{defi}
Similarly to operads~\cite[Pr.~7.2.4]{loday2012algebraic}, for a quadratic contractad, its Koszul dual is also a quadratic contractad. The following proposition describes the presentation of Koszul dual contractads. 
\begin{prop}
    Let $\Pop=\Pop(\E, \R)$ be a quadratic contractad, whose generators $\E$ are finite-dimensional in each component. Then the Koszul dual contractad admits the quadratic presentation
    \[
    \Pop^{!}=\Pop(s^{-1}\Susp^{-1}\E^*, \R^{\bot}),
    \] where $\R^{\bot}$ is the annihilator of $\R$ with respect to the natural pairing
    \[
    \langle-,-\rangle\colon (\E\circ'\E)\otimes (s^{-1}\Susp^{-1}\E^*\circ' s^{-1}\Susp^{-1}\E^*)\rightarrow \mathsf{k}.
    \]
    Furthermore, we have an isomorphism of contractads
    \[
    (\Pop^!)^!\cong \Pop.
    \]
\end{prop}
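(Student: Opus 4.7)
The plan is to adapt the proof of Proposition~7.2.4 in \cite{loday2012algebraic} (the operadic analogue) to the contractadic setting, the key combinatorial input being the fact that the weight-two component of the free contractad decomposes via the infinitesimal composite product,
\[
\T^{(2)}(\E)(\Gr) = \E\circ'\E(\Gr) = \bigoplus_{G\ \text{tube}} \E(\Gr/G)\otimes\E(\Gr|_G),
\]
so that a quadratic contractad $\Pop=\Pop(\E,\R)$ (resp.\ a quadratic cocontractad $\Q=\Q(V,R)$) is determined by a subcollection $\R\subset\E\circ'\E$ (resp.\ $R\subset V\circ'V$). Under the finite-dimensionality hypothesis on $\E$, each graded piece of $\E\circ'\E$ is a finite direct sum of tensor products of finite-dimensional vector spaces, hence linear duality gives a canonical isomorphism of graphical collections $(\E\circ'\E)^*\cong \E^*\circ'\E^*$, and the pairing stated in the proposition is exactly the evaluation pairing under this identification.

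The first main step is to show that, for finite-type generators, componentwise linear duality exchanges quadratic cocontractads and quadratic contractads: if $\Q=\Q(V,R)$, then $\Q^*\cong \Pop(V^*,R^\bot)$, where $R^\bot$ is the annihilator of $R$ under the pairing above. This is proved by dualising the coproduct map term-by-term using the decomposition of $\circ'$ from Lemma~\ref{intervals}, and checking that the relations dualise to their annihilator by the same linear-algebra lemma used in the operadic case (if $f:V\to W$ is a surjection of finite-dimensional vector spaces, then $\ker(f^*)=(\mathrm{im}\,f)^\bot$). Applied to $\Pop^{\text{\textexclamdown}}=\Q(s\E,s^2\R)$, this yields $(\Pop^{\text{\textexclamdown}})^*\cong\Pop((s\E)^*,(s^2\R)^\bot)$.

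The second step is the suspension bookkeeping. I would verify that the Hadamard product with the (de)suspension contractad $\Susp^{-1}$ preserves quadratic presentations and identify
\[
\Susp^{-1}\underset{\mathrm{H}}{\otimes}(s\E)^*\ \cong\ s^{-1}\Susp^{-1}\E^*,\qquad (s^2\R)^\bot\ \cong\ \R^\bot
\]
(the second identification reabsorbing the two suspensions of $\R$ into the dualising shift, so that the resulting annihilator is with respect to the unshifted pairing $(\E\circ'\E)\otimes(s^{-1}\Susp^{-1}\E^*\circ' s^{-1}\Susp^{-1}\E^*)\to\mathsf{k}$ written in the statement). Combining the two steps gives the presentation $\Pop^!=\Pop(s^{-1}\Susp^{-1}\E^*,\R^\bot)$.

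The involutivity $(\Pop^!)^!\cong\Pop$ then follows formally from applying the presentation formula twice: the double (de)suspension collapses since $\Susp\circ_{\mathrm{H}}\Susp^{-1}\cong\Com$ is the Hadamard unit and $s\cdot s^{-1}$ is trivial, the double dual of the generators returns $\E$ by finite-dimensionality, and $(\R^\bot)^\bot=\R$ by the standard fact that the annihilator is an involution on subspaces of a finite-dimensional vector space (applied componentwise). The step I expect to be the main obstacle is the sign- and shift-tracking in the second step: one must check that the natural pairing is $\Aut(\Gr)$-equivariant, that the Koszul sign rule introduced by stripping off the suspensions matches the Koszul signs in the infinitesimal coproduct dualised from the product, and that the resulting annihilator subcollection is stable under the graph automorphism actions, so that $\R^\bot$ defines a bona fide set of quadratic relations in the contractad sense.
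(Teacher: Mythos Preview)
Your proposal is correct and matches the paper's approach: the paper does not give an explicit proof of this proposition but defers it to the operadic analogue \cite[Pr.~7.2.4]{loday2012algebraic} via the blanket disclaimer at the start of Section~\ref{sec:koszul} that the proofs are \emph{mutatis mutandis} those of \cite[Chap.~6--7]{loday2012algebraic}. Your outline is precisely that adaptation, with the contractad-specific ingredient being the identification $\T^{(2)}(\E)\cong\E\circ'\E$ already recorded in the paper.
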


\noindent Let us consider some examples of Koszul dual contractads:
\begin{itemize}
    \item By Proposition~\ref{quadcom}, the commutative contractad $\Com$ is quadratic. By the previous proposition, its Koszul dual contractad coincides with the Lie contractad
    \[
    \Com^{!}\cong \Lie.
    \]
    \item By Proposition~\ref{rst}, the contractad $\RST^{\vee}$ of rooted spanning trees is quadratic. Its Koszul dual $\RST^!$ is the contractad generated by a generator $\nu=\mu^!$, satisfying the relations
    \begin{gather*}
    \nu \circ^{\Path_3}_{\{1,2\}} \nu= \nu \circ^{\Path_3}_{\{2,3\}} \nu,
     \\
     \nu \circ^{\Path_3}_{\{1,2\}} \nu^{(12)} = \nu^{(12)} \circ^{\Path_3}_{\{2,3\}} \nu,
     \\
     \nu \circ^{\K_3}_{\{1,2\}} \nu= \nu \circ^{\K_3}_{\{2,3\}} \nu=(\nu \circ^{\K_3}_{\{2,3\}}\nu)^{(23)}.
    \end{gather*} Note that this contractad is obtained from a set-theoretical contractad by linearization.
    
\end{itemize}
The bar construction $\mathsf{B}\Pop$ of a quadratic contractad $\Pop=\Pop(\E,\R)$ is equipped with the \textit{syzygy degree} as follows. Since quadratic relations $\R$ are homogenous with respect to generators $\E$, we have a well-defined weight grading of $\Pop$. The syzygy degree on $\mathsf{B}\Pop$ is defined by $\omega(s\alpha_1\otimes s\alpha_2\otimes\cdots\otimes s\alpha_k)=\omega(\alpha_1)+\cdots+\omega(\alpha_k)-k$, where $\omega(\alpha_i)$ are weights of elements from $\Pop$.  Since $\Pop$ has trivial internal differential, the differential on $\mathsf{B}\Pop$ reduces to $d_2$, which raises the syzygy degree by 1. Hence, $\mathsf{B}^{\bullet}\Pop$ forms a cochain complex with respect to the syzygy degree. By the definition, $\Pop$, itself, is concentrated  in the zero syzygy degree.

Similarly, the Cobar construction $\mathsf{\Omega}\Q$ over the quadratic cocontractad $\Q$ has syzygy degree. In contrast with bar construction, the differential decreases the syzygy degree by 1.  Hence, $\mathsf{\Omega}_{\bullet}\Q$ forms a chain complex with respect to the syzygy degree.

Let $\Pop=\Pop(\E,\R)$ be a quadratic contractad. We have the canonical twisting morphism $\tau\colon \Pop^{\text{!`}} \to \Pop$ given by the formula
\[
\Pop^{\text{!`}}\twoheadrightarrow s\E \overset{s^{-1}}{\rightarrow} \E \hookrightarrow \Pop.
\] By Proposition~\ref{barcobar}, this twisting morphism induces the morphisms $\Pop^{\text{!`}} \hookrightarrow \mathsf{B}\Pop$ and $\mathsf{\Omega}\Pop^{\text{!`}} \twoheadrightarrow \Pop$, which we shall refer to as the Koszul inclusion and projection respectively. Moreover, the differential in the Cobar construction $\mathsf{\Omega}\Pop^{\text{!`}}$  is decomposable. Similarly to operads \cite[Pr.~7.3.2]{loday2012algebraic}, the Koszul inclusion and projection induce isomorphisms in the zero degree (co)homology
\[
\Pop^{\text{!`}} \overset{\cong}{\rightarrow} H^0(\mathsf{B}^{\bullet}\Pop), \quad H_{0}(\mathsf{\Omega}_{\bullet}\Pop^{\text{!`}}) \overset{\cong}{\rightarrow} \Pop,
\] with respect to the syzygy degree.

\begin{defi}[Koszul contractads]
A quadratic contractad $\Pop$ is called Koszul if the twisting morphism $\tau\colon \Pop^{\text{!`}} \rightarrow \Pop$ is Koszul. Equivalently, the Koszul projection $\mathsf{\Omega}\Pop^{\text{!`}} \rightarrow \Pop$ is a minimal model of $\Pop$, and the Koszul inclusion $\Pop^{\text{!`}} \rightarrow \mathsf{B}\Pop$ is a minimal comodel for $\Pop^{\text{!`}}$.
\end{defi}
Let us illustrate the first example of a Koszul contractad.
\begin{theorem}\label{thm:comkoszul}
The commutative contractad $\Com$ is Koszul.
\end{theorem}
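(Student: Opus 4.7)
The plan is to establish Koszulness by computing the cohomology of the bar construction $B\Com$ with respect to the syzygy grading and showing it is concentrated in degree zero, where it coincides with $\Com^{\text{!`}}$. Because each component $\Com(\Gr)=\mathsf{k}$ is one-dimensional, the underlying graded graphical collection of $B\Com(\Gr)=\T^c(s\overline{\Com})(\Gr)$ has a basis indexed by stable $\Gr$-admissible rooted trees: the vertex labels are forced up to a scalar since $s\overline{\Com}(\In(v))$ is one-dimensional whenever $\In(v)$ has at least two vertices. By Proposition~\ref{nesttrees}, such trees correspond bijectively to the augmented nested set complex $\hat{\N}(\parti(\Gr),\G(\Gr))$ associated with the graphical building set. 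I would then check that, under this identification, the bar differential $d_2$ (induced from the infinitesimal cocomposition $s\overline{\Com}\to s\overline{\Com}\circ' s\overline{\Com}$) matches, up to sign, the simplicial boundary on the nested set complex: contracting an internal edge of a stable tree is precisely the operation of removing a tube from the associated nested subset.

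Having reduced the problem to a purely topological computation, I would determine the reduced cohomology of the nested set complex of $\G(\Gr)$ on the poset $\parti(\Gr)$. By Proposition~\ref{grapharrang}, $\parti(\Gr)$ is isomorphic to the intersection lattice of the graphical hyperplane arrangement $\B(\Gr)$, and is therefore a geometric lattice. A theorem of Feichtner-Kozlov asserts that the nested set complex of any building set on an atomic lattice is homotopy equivalent to the order complex of the proper part of the lattice, while Folkman's theorem states that the reduced homology of the order complex of the proper part of a geometric lattice is concentrated in top degree, with total dimension $|\mu_{\parti(\Gr)}(\hat{0},\hat{1})|$. Since the Koszul inclusion $\Com^{\text{!`}}\hookrightarrow B\Com$ always realizes an isomorphism on zeroth cohomology, and since the dimension just computed matches $\dim \Lie(\Gr)=\dim \Com^{\text{!`}}(\Gr)$ by the formula recorded in the introduction, this inclusion upgrades to a quasi-isomorphism. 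By the equivalence between Koszul morphisms and quasi-isomorphisms $\Pop^{\text{!`}}\to B\Pop$ stated in the subsection on Bar-Cobar adjunction, this shows that $\tau\colon \Com^{\text{!`}}\to \Com$ is Koszul, whence $\Com$ is a Koszul contractad.

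The main obstacle I anticipate is the sign-sensitive identification of the bar differential with the simplicial boundary, which requires careful handling of the Koszul suspension conventions and the actions of graph automorphisms on both sides. A secondary subtlety is that the argument leaves the purely combinatorial contractad framework: the Cohen-Macaulay property is invoked via the geometric-lattice realization of $\parti(\Gr)$ coming from the hyperplane arrangement $\B(\Gr)$. An alternative route, foreshadowed by the forward reference to Section~\ref{sec:grobner} stating that $\Com$ admits a quadratic Gr\"obner basis, would be to apply a PBW-style implication that quadratic Gr\"obner bases on contractads force Koszulness; I would expect both routes to yield the same result, and checking their compatibility would offer a useful sanity check.
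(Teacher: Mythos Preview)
Your proposal is correct and follows essentially the same route as the paper: identify $\mathsf{B}\Com(\Gr)$ with the augmented chain complex of the nested set complex via Proposition~\ref{nesttrees}, invoke the Feichtner--M\"uller/Feichtner--Kozlov comparison with the order complex of $\parti(\Gr)$, and then use that geometric (intersection) lattices are Cohen--Macaulay to conclude that the syzygy cohomology is concentrated in degree zero. One small caution: your appeal to the identity $\dim\Lie(\Gr)=|\mu(\Gr)|$ ``recorded in the introduction'' is superfluous and mildly circular, since in the paper that formula is derived \emph{from} the Koszulness of $\Com$ (Corollary~\ref{cor:liekoszul}); once you know the cohomology vanishes outside degree zero and that the Koszul inclusion is always an isomorphism on $H^0$, you are already done.
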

\begin{proof}
Consider the bar construction $\mathsf{B}\Com$ of the commutative contractad. On the level of cocontractads, we have $\mathsf{B}\Com=\T^c(s\overline{\Com})$. Each component of the underlying graphical collection has the form
\[
\mathsf{B}\Com(\Gr)=\bigoplus_{T\in \Tree_{\mathrm{st}}(\Gr)}\bigotimes_{v\in \Ver(T)} \mathsf{k}s,
\] Hence, the component $\mathsf{B}\Com(\Gr)$ is spanned by vertex-ordered stable $\Gr$-admissible rooted trees $(T,s_{v_1}\otimes s_{v_2}\otimes \cdots \otimes s_{v_n}\otimes s_{v_{root}})$, but with different orderings identified, up to the obvious sign factor. The differential of $\mathsf{B}\Com$ is given by edge contractions of rooted trees as follows
\begin{equation*}
    d(T,s_{v_1}\otimes \cdots \otimes s_{v_n}\otimes s_{v_{root}})=\sum^n_{i=1} (-1)^{i-1} (T/e_i,s_{v_1}\otimes \cdots \hat{s}_{v_i}\cdots\otimes s_{v_{root}})
\end{equation*} where $v_{root}$ is the root vertex, $e_i$ is the edge outgoing from the vertex $v_i$. Consider the total order $<$ on the vertex set $V_{\Gr}$ which refines the inclusion order. With respect to this order, we define a canonical vertex ordering for a stable $\Gr$-admissible rooted tree $T$ as follows. For a pair of vertices $v,w\in \Ver(T)$, we put $v\prec w$ if $L_{e_v}<L_{e_w}$, where $L_{e_v}$ is the leaf set of the subtree with the root at the edge $e_v$ outgoing from $v$.

Let us consider the map $\Susp\colon \Tree_{\mathrm{st}}(\Gr)\rightarrow \N(\parti(\Gr), \G(\Gr))\cup \{\varnothing\}$ constructed in Proposition~\ref{nesttrees}. The ordering of tubes with respect to $<$-order turns the nested set complex $\N(\parti(\Gr), \G(\Gr))$ into an ordered simplicial complex. With respect to this ordering, we have $\Susp(T/e_i)=\Susp(T)\setminus \{L_{e_i}\}=\partial_i(\Susp(T))$, where $\partial_i$ is the $i$-th degeneracy map. Hence, the linearisation of $\Susp$ preserves differentials
\[
\Susp(dT)=\sum^{n}_{i=1}(-1)^{i-1}\Susp(T/e_i)=\sum^{n}_{i=1}(-1)^{i-1}\partial_i(\Susp(T))=\partial\Susp(T),
\] where trees above are considered with canonical vertex-orderings. Since $\Susp$ is a bijective map, its linearisation defines an isomorphism of dg complexes (after suitable reordering of grading)
\[
\Susp\colon \mathsf{B}^{\bullet}(\Com)(\Gr)\overset{\cong}{\longrightarrow} \mathsf{C}_{(n-3)-\bullet}^{+}(\N(\parti(\Gr), \G(\Gr))),
\] where $\mathsf{C}^{+}(\N(\parti(\Gr), \G(\Gr)))$ is the augmented simplicial complex. It was proved in \cite[Cor.~3.4]{feichtner2005topology} that, for any atomic lattice $\La$ and connected building set $\G$, the complex of nested sets is homeomorphic to the reduced order complex of this lattice (simplicial complex of ordered chains from $\hat{0}$ to $\hat{1}$): $\N(\La,\G)\cong \Delta(\La)$. Combining with the previous isomorphism, we obtain
\begin{equation*}
    H^{\bullet}(\mathsf{B}\Com(\Gr)) \cong \Tilde{H}_{(n-3)-\bullet}(\N(\parti(\Gr),\G(\Gr)))\cong \Tilde{H}_{(n-3)-\bullet}(\parti(\Gr)),
\end{equation*} where $\Tilde{H}_{\bullet}(\parti(\Gr))$ are homology groups of the reduced ordered complex of $\parti(\Gr)$.  By Proposition~\ref{grapharrang}, for each graph, the partition poset $\parti(\Gr)$ is isomorphic to the intersection lattice of a hyperplane arrangement. It is known that the reduced homology groups of an intersection lattice are concentrated in the top degree~\cite{bjorner1980shellable}. Hence, the cohomology groups of the Bar-construction are concentrated in the zero degree. So, the Koszul embedding $\Com^{\text{!`}}\hookrightarrow \mathsf{B}\Com$ is a quasi-isomorphism.
\end{proof}
As an immediate consequence, we get
\begin{cor}\label{cor:liekoszul}
The Lie contractad $\Lie$ is Koszul. Moreover, the dimension of each component is given by the formula
\[
\dim \Lie(\Gr)=|\mu(\Gr)|,
\] where $\mu(\Gr):=\mu_{\parti(\Gr)}(\hat{0},\hat{1})$ and $\mu_{\parti(\Gr)}$ is the M\"obius function of the poset of graph-partitions.
\end{cor}
\begin{proof}
Recall that the contractad $\Lie$ is Koszul dual to $\Com$. The first assertion follows from the observation that the Koszul dual contractad of a Koszul contractad is also Koszul. The second one follows from the isomorphism $\Com^{\text{!`}}(\Gr)\cong H_{n-3}(\parti(\Gr))$ and the fact that the rank of this homology group is equal to the evaluation of the associated M\"obius function on the pair $(\hat{0},\hat{1})$~\cite{bjorner1980shellable}.
\end{proof}
Let us present an example of a non-Koszul quadratic contractad.
\begin{prop}\label{prop:nonkoszul}
The contractad of rooted spanning trees $\RST^{\vee}$  is not Koszul.
\end{prop}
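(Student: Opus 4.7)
The plan is to detect non-Koszulness at the cycle $\Cyc_4$, the smallest non-chordal graph, following the heuristic suggested in the introduction. If $\RST^{\vee}$ were Koszul then the Koszul complex $(\RST^{\vee})^{\text{!`}}\circ^{\tau}\RST^{\vee}$ would be acyclic in every positive weight on every graph, and in particular its Euler characteristic evaluated on $\Cyc_4$ in each fixed weight would vanish, giving a numerical identity of the shape
\[
\sum_{I\in\parti(\Cyc_4)} (-1)^{|I|}\,\dim (\RST^{\vee})^{\text{!`}}(\Cyc_4/I)\prod_{G\in I}\dim \RST^{\vee}(\Cyc_4|_G)=0
\]
(with signs coming from the syzygy grading on $\mathsf{\Omega}(\RST^{\vee})^{\text{!`}}$). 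I will show that this identity fails.

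First I would enumerate rooted spanning trees on each small graph arising in the lattice $\parti(\Cyc_4)$. The induced subgraphs and contractions of $\Cyc_4$ across partitions are paths $\Path_k$ with $k\leq 4$ or $\Cyc_4$ itself; one checks directly that $\dim \RST^{\vee}(\Path_k)=k$ and $\dim \RST^{\vee}(\Cyc_4)=16$ (four spanning paths, each with four choices of root). Next I would compute $\dim \RST^!(\Gr)$ on the same graphs, using the explicit quadratic presentation of $\RST^!$ displayed just above the proposition. Restricted to complete graphs, $\RST^!$ coincides with the Perm operad (Koszul dual to $\mathsf{preLie}$), so $\dim\RST^!(\K_n)=n$; restricted to paths it is finite-dimensional and straightforwardly enumerated arity by arity.

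The heart of the computation is $\dim \RST^!(\Cyc_4)$. I would obtain it by a Gr\"obner-type reduction on the set of $\Cyc_4$-admissible rooted trees built from the single generator $\nu$, using a compatible monomial order and the quadratic relations from the presentation, precisely in the setting developed in Section~\ref{sec:grobner}; the action of $\Aut(\Cyc_4)\cong D_4$ further organises the enumeration into a manageable number of orbits. Substituting all the collected dimensions into the Euler-characteristic sum above and summing over the graph-partitions of $\Cyc_4$ then produces a nonzero total, contradicting acyclicity of the Koszul complex and hence Koszulness of $\RST^{\vee}$.

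The main obstacle is computing $\dim \RST^!(\Cyc_4)$ cleanly. Because the defining relations of $\RST^!$ couple the path-type relations on $\Path_3$ with the extra symmetry condition on $\K_3$, and because $\Cyc_4$ admits genuinely new admissible-tree configurations with no analogue in the chordal case, one must carefully produce a complete reduction system and verify via confluence that no further relations collapse the surviving normal forms. Once that count is secure, the numerical contradiction is obtained by direct inspection.
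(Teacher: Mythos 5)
Your overall strategy---testing acyclicity of the Koszul complex on $\Cyc_4$ via an Euler characteristic---is exactly the paper's, and your dimension counts for the easy pieces ($\dim\RST^{\vee}(\Path_k)=k$, $\dim\RST^{\vee}(\Cyc_4)=16$, $\dim\RST^{!}(\K_3)=3$) are correct. But as written the argument has a genuine gap: the one computation you identify as "the heart" and "the main obstacle," namely $\dim\RST^{!}(\Cyc_4)$, is never carried out, and without it you cannot assert that the alternating sum is nonzero. Indeed, if you substitute the values you do have, the contributions from all partitions other than $\hat{0}$ cancel exactly: $16-4\cdot 6-2\cdot 8+4\cdot 6=0$. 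So the entire Euler characteristic collapses to $-\dim\RST^{\text{!`}}(\Cyc_4)$, and your proof stands or falls on that single unknown dimension. A confluence analysis of the quadratic relations on $\Cyc_4$-admissible trees is delicate precisely because $\Cyc_4$ is non-chordal, so deferring it is not a harmless omission.

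The paper sidesteps this entirely: having observed that
\[
\chi\bigl(\RST^{\text{!`}}\circ^{\tau}\RST^{\vee}(\Cyc_4)\bigr)=-\dim\RST^{\text{!`}}(\Cyc_4),
\]
it only needs $\dim\RST^{\text{!`}}(\Cyc_4)\neq 0$, which is free: $\RST^{!}$ is the linearization of a set-theoretical contractad (its quadratic presentation, displayed just before the proposition, has relations identifying monomials rather than forming linear combinations), so every component is nonzero. If you rework your argument to first exhibit the cancellation of all other terms and then invoke this positivity, your proof closes without any Gr\"obner computation on $\Cyc_4$; alternatively you must actually complete the reduction system you sketch and exhibit the resulting nonzero count.
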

\begin{proof}
Consider the Koszul complex $\RST^{\text{!`}}\circ^{\tau} \RST$. For the cycle on 4 vertices, the complex $\RST^{\text{!`}}\circ^{\tau} \RST(\Cyc_4)$ has the form:
\begin{equation*}
\scalebox{0.75}{$
0\to \RST^{\text{!`}}(\Cyc_4)\to(\RST^{\text{!`}}(\Cyc_3)\otimes \RST(\Path_2))^{\oplus 4}\to (\RST^{\text{!`}}(\Path_2)\otimes \RST(\Path_2)\otimes \RST(\Path_2))^{\oplus 2} \oplus (\RST^{\text{!`}}(\Path_2)\otimes \RST(\Path_3))^{\oplus 3}\to \RST(\Cyc_4)\to 0$}.
\end{equation*} By direct computation of the Euler characteristic, we have
\[
\chi(\RST^{\text{!`}}\circ \RST(\Cyc_4))=-\dim\RST^{\text{!`}}(\Cyc_4).
\]Note that the dimension of each component of $\RST^{\text{!`}}$ can not be zero since the Koszul dual contractad $\RST^{!}$ is obtained from the set-theoretical contractad by linearization. So, we conclude that the Euler characteristic $\chi(\RST^{\text{!`}}\circ^{\tau} \RST(\Cyc_4))$ is non-zero. Hence, the complex $\RST^{\text{!`}}\circ^{\tau} \RST$ is not acyclic.
\end{proof}

\section{Gr\"obner bases for contractads}\label{sec:grobner}
In this section, we develop the theory of Gr\"obner bases for contractads. This theory gives us efficient tools for finding bases in contractads. Moreover, Gr\"obner bases allow us to reduce questions about the presentation of contractads or koszulity to computational tasks. First, we define \textit{shuffle} contractads similarly to the operad case~\cite{dotsenko2010grobner}. Next, we define the notion of Gr\"obner bases and give the first examples of such bases. We then explain how this theory is applied to Koszul duality theory. As in the previous section, we outline the necessary statements; most of the proofs are \textit{mutatis mutandis} those of \cite{dotsenko2010grobner}.

\subsection{Shuffle contractads}
An \textit{ordered graph} is a simple graph $(\Gr,<)$ with a total order on the set of vertices $V_{\Gr}$. For a tube $G$ of the graph $(\Gr,<)$, the restriction of the order to the tube $G$ defines the induced  ordered subgraph $(\Gr|_G,<_{\res})$. For a partition $I=\{G_1,G_2,...,G_k\}$ of the graph $\Gr$, we define the order on the vertex set of the contracted graph $\Gr/I$ by comparing minimal vertices from blocks
\begin{equation*}
    \{G_i\}<^{\ind} \{G_j\}:= \min_{v\in G_i} v<\min_{v\in G_j} v.
\end{equation*}Consider the category $\mathsf{OCGr}$ of ordered connected simple graphs with order-preserving isomorphisms. Note that in this setting each ordered graph has no non-trivial automorphisms.

\begin{defi}
A non-symmetric graphical collection with values in $\C$ is a contravariant functor from the category $\mathsf{OCGr}$ to the category $\C$. 
\end{defi} 
\noindent We define the \textit{shuffle contraction product} of ns graphical collections by the rule:
\begin{equation*}
    (\Pop\circ_{\Sha}\Q)(\Gr,<)=\bigoplus_{I \in \parti(\Gr)} \Pop(\Gr/I,<^{\ind})\otimes\bigotimes_{\substack{I=\{G_1,G_2,...,G_k\}\\ \min G_1<\cdots<\min G_k}} \Q(\Gr|_{G_i},<_{\res}).
\end{equation*} Similarly with contractads, this product endows the category of ns graphical collections with a monoidal category structure.
\begin{defi}
A shuffle contractad is a monoid in the monoidal category of non-symmetric graphical collections equipped with the shuffle contraction product $\circ_{\Sha}$.
\end{defi} 
\noindent Analogously to Section~\ref{subsec:contractads}, we can define infinitesimal compositions for shuffle contractads and the monad $\T_{\Sha}$ of shuffle admissible rooted trees. Let us discuss the relationship between contractads and shuffle ones. For a graphical collection $\Orb$, we define its shuffle version $\Orb^{\forget}$ by 
\[
\Orb^{\forget}(\Gr,<)=\Orb(\Gr)
\]
The following proposition is proved by direct inspection.
\begin{prop}
The forgetful functor is monoidal
\[
    (\Pop\circ\Q)^{\forget}=\Pop^{\forget}\circ_{\Sha} \Q^{\forget}.
\]
\end{prop}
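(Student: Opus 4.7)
The plan is to unfold both sides, observe that they index over the same set with matching tensor factors, and then use the canonical ordering of blocks by minima to promote the identification between the unordered and specifically-ordered tensor products into an equality of monoidal functors. For an ordered connected graph $(\Gr,<)$, the left-hand side evaluates to
\[
(\Pop\circ\Q)^{\forget}(\Gr,<) \;=\; \bigoplus_{I \in \parti(\Gr)} \Pop(\Gr/I) \otimes \bigotimes_{G \in I} \Q(\Gr|_G),
\]
with an unordered tensor product, while the right-hand side unfolds to
\[
(\Pop^{\forget}\circ_{\Sha}\Q^{\forget})(\Gr,<) \;=\; \bigoplus_{I \in \parti(\Gr)} \Pop(\Gr/I,\,<^{\ind}) \otimes \Q(\Gr|_{G_1},\,<_{\res}) \otimes \cdots \otimes \Q(\Gr|_{G_k},\,<_{\res}),
\]
with blocks ordered by $\min G_1 < \cdots < \min G_k$.

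First, I would note that the indexing set $\parti(\Gr)$ is identical on both sides, and the equalities $\Pop^{\forget}(\Gr/I,<^{\ind}) = \Pop(\Gr/I)$ and $\Q^{\forget}(\Gr|_G,<_{\res}) = \Q(\Gr|_G)$ are immediate from the definition of the forgetful functor. The key point is that the total order $<$ on $V_{\Gr}$ canonically selects an ordering of the blocks of each partition by their minima; via the symmetry of $\C$ this picks out a specific representative of the unordered tensor product, providing the desired identification on each summand. Naturality is automatic, since any order-preserving graph isomorphism $\phi\colon (\Gr,<)\to (\Gr',<')$ sends partitions to partitions and preserves the ordering of blocks by minima, so the identification commutes with the morphisms induced on direct sums.

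Finally, I would check compatibility with the monoidal structures. The unit axiom is immediate, since the unit $\mathsf{I}$ is supported on $\Path_1$, whose single-vertex set carries a unique total order. For associativity, the iterated contracted product is indexed by comparable pairs of partitions $J \geq I$ exactly as in the proof that $(\GrCol, \circ, \mathsf{I})$ is monoidal, and \textbf{Lemma}~\ref{intervals} identifies these data with the corresponding iterated shuffle product. The main thing to verify — which I expect to be mild rather than a genuine obstacle — is that no nontrivial symmetry corrections appear when comparing associators on the two sides: this reduces to the combinatorial observation that ordering sub-blocks by minima is transitive through nested partitions, so the induced order read off at either step of the iterated decomposition agrees with the one read off directly from the total order on $V_{\Gr}$. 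This yields the required equality of monoidal functors.
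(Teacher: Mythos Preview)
Your proposal is correct and is exactly the ``direct inspection'' the paper leaves implicit: the paper offers no proof beyond that phrase, and your unfolding of both sides, identification of tensor factors via the forgetful functor, and use of the canonical ordering of blocks by minima to match the unordered tensor product with the shuffle-ordered one is precisely what that inspection amounts to. Your additional check of the monoidal axioms (unit and associator compatibility via transitivity of the min-ordering through nested partitions) goes beyond what the paper states but is the right thing to verify.
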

\noindent As an immediate consequence, we have the following results
\begin{cor}
\begin{enumerate}
\item The forgetful functor sends contractads to shuffle ones.
\item For every graphical collection $\E$, we have an isomorphism of shuffle contractads
 \[
\T(\E)^{\forget}\cong\T_{\Sha}(\E^{\forget}). 
 \]
\item For every graphical collection $\E$ and every graphical subcollection $\R\subset\T(E)$, we have an isomorphism of shuffle contractads
 \[
\T(\E)/\langle \R)^{\forget}\rangle \cong \T_{\Sha}(\E^{\forget})/\langle \R^{\forget}\rangle.
 \]
\item For every contractad $\Pop$, we have an isomorphism of dg shuffle contractads
 \[
\mathsf{B}(\Pop)^{\forget}\cong\mathsf{B}_{\Sha}(\Pop^{\forget}).
 \] 
\item A quadratic contractad $\Pop$ is Koszul if and only if the associated shuffle contractad $\Pop^{\mathrm{f}}$ is Koszul.
\end{enumerate}
\end{cor}
\subsection{Monomials and orders}

This section is a direct generalization of that of \cite[Sec.~3.1-3.3]{dotsenko2010grobner}. To define Gr\"obner basis, one needs a notion of "monomials" of the free contractad and ordering compatible with the contractad structure. In the case of shuffle operads, the role of monomials is played by decorated trees. The same idea applies to contractads.

Consider some  linear graphical collection $\E$ and some basis $\B \subset \E$, which we shall refer to as an alphabet. Then the free shuffle contractad $\T_{\Sha}(\E)$ is obtained from the set-theoretical free shuffle contractad $\T_{\Sha}(\B)$ by linearization. Each element of the shuffle contractad $\T_{\Sha}(\B)$ can be expressed as an admissible tree, whose vertices are decorated by elements of the alphabet $\B$. Hence why we denote this contractad by $\Tree_{\B}$ and call elements of this contractad \textit{tree monomials} of $\T_{\Sha}(\E)$.

Consider some tree monomial $T \in \Tree_{\B}(\Gr,<)$. For each edge $e$,  the rooted subtree $T_e \subset T$ with a root at $e$ defines a tree monomial in the component $\Tree_{\B}(\Gr|_{L_e}, <_{\res})$, where $L_e \subset V_{\Gr}$ is the set of leaves of $T_e$. Also, the subtree $T^e\subset T$ obtained by cutting the subtree $T_e$ from the original is a well-defined tree monomial in the  component $\Tree_{\B}(\Gr/L_e)$. So, each subtree of a tree monomial is also a tree monomial. We say that a monomial $T$ is \textit{divisible} by a monomial $T'$ if $T'$ forms a decorated subtree of $T$.

\begin{defi}
A monomial order of tree monomials of  $\T_{\Sha}(\E)$ is a collection of total well orders on each component of $\Tree_{\B}$ such that shuffle compositions are strictly increasing functions: For any tube $G$ of the ordered graph $(\Gr,<)$, and pairs $T_1, T_1' \in \Tree_{\B}(\Gr/G,<^{\ind})$, $T_2, T_2' \in \Tree_{\B}(\Gr|_G,<_{\res})$, we have:
    \begin{gather*}
    T_1\circ_G^{\Gr} T_2 \prec T_1'\circ_G^{\Gr} T_2  \text{, if } T_1\prec T'_1
    \\
    T_1\circ_G^{\Gr} T_2 \prec T_1\circ_G^{\Gr} T_2'  \text{, if } T_2\prec T'_2
    \end{gather*}
\end{defi}

In practice, it is more convenient to deal with planar trees. For a tree with labeled leaves, its canonical planar representative is defined as follows. Let $T$ be a rooted tree. For each vertex $v \in \Ver(T)$ let $T(v) \subset T$ be a corolla containing this vertex. An embedding of the tree $T$ in a plane is called \textit{canonical} if, for each vertex $v$,  the ordering of leaves of $T(v)$ coincides with the ordering given by the planar structure.
\[
\vcenter{\hbox{\begin{tikzpicture}[
        scale=0.8,
        vert/.style={circle,  draw=black!30!black, thick, minimum size=1mm},
        leaf/.style={rectangle, thick, minimum size=1mm},
        edge/.style={-,black!30!black, thick},
        ]
        \node[vert] (1) at (0,1) {$v$};
        \node[leaf] (l1) at (-2,2) {\footnotesize$\{G_1\}$};
        \node[leaf] (l2) at (-1,2) {\footnotesize$\{G_2\}$};
        \node[leaf] (lmid) at (0,2) {\footnotesize$\cdots$};
        \node[leaf] (l3) at (1,2) {\footnotesize$\cdots$};
        \node[leaf] (lk) at (2,2) {\footnotesize$\{G_k\}$};
        \draw[edge] (0,0)--(1);
        \draw[edge] (1)--(l1);
        \draw[edge] (1)--(l2);
        \draw[edge] (1)--(lmid);
        \draw[edge] (1)--(l3);
        \draw[edge] (1)--(lk);
    \end{tikzpicture}}} \quad \min G_1<\min G_2<\cdots <\min G_k.
\]\\

\noindent\textbf{$\grpermlex$-order.} Let us present the first example of a monomial order. Checking that this order is monomial proceeds \textit{mutatis mutandis} in the same way as for the operad case~\cite[Sec.~3.2.1]{dotsenko2010grobner}. Let $T$ be a tree monomial in $\Tree_{\B}(\Gr)$. We associate to $T$ a \textit{path-sequence} $\Pa(T)=(\alpha_{v_1}, \alpha_{v_2},\cdots, \alpha_{v_n})$ of words labeled by leaves in the increasing order in the alphabet $\B$,
and a \textit{leaf-permutation} $\sigma(T)$ as follows. For each leaf $v_i$ of the underlying tree, there exists a unique path
from the root to $v$. The word $\alpha_{v_i}$ is the word composed, from left to right of the labels of the vertices of this path, starting from the root vertex. The permutation $\sigma(T)$ lists the labels of leaves of the underlying tree in the order
determined by the planar structure (from left to right). It is known that the datum $(\Pa(T),\sigma(T))$ defines the tree monomial uniquely. 
\[
\vcenter{\hbox{\begin{tikzpicture}[
        scale=0.8,
        vert/.style={circle,  draw=black!30!black, thick, minimum size=1mm},
        leaf/.style={rectangle, thick, minimum size=1mm},
        edge/.style={-,black!30!black, thick},
        ]
        \node[leaf] (l1) at (1.65,3) {$5$};
        \node[leaf] (l2) at (0.35,3) {$2$};
        \node[vert] (1) at (0,1) {$a$};
        \node[vert] (2) at (1,2) {$c$};
        \node[vert] (3) at (-1,2) {$e$};
        \node[leaf] (r1) at (-1.75,3) {$1$};
        \node[leaf] (r2) at (-1,3) {$3$};
        \node[leaf] (r3) at (-0.25,3) {$4$};
        \draw[edge] (0,0)--(1);
        \draw[edge] (1)--(2);
        \draw[edge] (1)--(3);
        \draw[edge] (3)--(r1);
        \draw[edge] (3)--(r2);
        \draw[edge] (3)--(r3);
        \draw[edge] (2)--(l1);
        \draw[edge] (2)--(l2);
    \end{tikzpicture}}}\rightarrow ((ae,ac,ae,ae,ac),13425)
\]
\begin{defi}
Let $\B$ be an alphabet with a monomial order on the words $\B^*$. The graphical permutation lexical order $\grpermlex$  on the shuffle trees $\Tree_{\B}$ is the order defined as follows:
\begin{itemize}
    \item For a pair of shuffle trees $T_1,T_2 \in \Tree_{\B}(\Gr)$ we compare the sequences $\Pa(T_1)$ and $\Pa(T_2)$ word by word, comparing words using the monomial order $\B^*$.
     \item If their path sequences coincide, we compare their leaf-permutations $\sigma(T_1),\sigma(T_2)$ by lexicographic order.
\end{itemize}
\end{defi}
\begin{figure}[ht]
    \centering

\[
        \vcenter{\hbox{\begin{tikzpicture}[
        scale=0.7,
        vert/.style={circle,  draw=black!30!black, thick, minimum size=1mm},
        leaf/.style={circle, thick, minimum size=1mm},
        edge/.style={-,black!30!black, thick},
        ]
        \node at (0,-0.5) {$((\alpha,\alpha\alpha,\alpha\alpha),123)$};
        \node[leaf] (l1) at (1.5,3) {\footnotesize$3$};
        \node[leaf] (l2) at (0,3) {\footnotesize$2$};
        \node[leaf] (l3) at (-0.75,2) {\footnotesize$1$};
        \node[vert] (1) at (0,1) {\footnotesize$\alpha$};
        \node[vert] (2) at (0.75,2) {\footnotesize$\alpha$};
        \draw[edge] (0,0)--(1);
        \draw[edge] (1)--(2);
        \draw[edge] (2)--(l1);
        \draw[edge] (2)--(l2);
        \draw[edge] (1)--(l3);
    \end{tikzpicture}}}
    \quad
    <
    \quad
        \vcenter{\hbox{\begin{tikzpicture}[
        scale=0.7,
        vert/.style={circle,  draw=black!30!black, thick, minimum size=1mm},
        leaf/.style={circle, thick, minimum size=1mm},
        edge/.style={-,black!30!black, thick},
        ]
        \node at (0,-0.5) {$((\alpha\alpha,\alpha,\alpha\alpha),132)$};
        \node[leaf] (l1) at (-1.5,3) {\footnotesize$1$};
        \node[leaf] (l2) at (0,3) {\footnotesize$3$};
        \node[leaf] (l3) at (0.75,2) {\footnotesize$2$};
        \node[vert] (1) at (0,1) {\footnotesize$\alpha$};
        \node[vert] (2) at (-0.75,2) {\footnotesize$\alpha$};
        \draw[edge] (0,0)--(1);
        \draw[edge] (1)--(2);
        \draw[edge] (2)--(l1);
        \draw[edge] (2)--(l2);
        \draw[edge] (1)--(l3);
    \end{tikzpicture}}}
    \quad
    <
    \quad
        \vcenter{\hbox{\begin{tikzpicture}[
        scale=0.7,
        vert/.style={circle,  draw=black!30!black, thick, minimum size=1mm},
        leaf/.style={circle, thick, minimum size=1mm},
        edge/.style={-,black!30!black, thick},
        ]
        \node at (0,-0.5) {$((\alpha\alpha,\alpha\alpha,\alpha),123)$};
        \node[leaf] (l1) at (-1.5,3) {\footnotesize$1$};
        \node[leaf] (l2) at (0,3) {\footnotesize$2$};
        \node[leaf] (l3) at (0.75,2) {\footnotesize$3$};
        \node[vert] (1) at (0,1) {\footnotesize$\alpha$};
        \node[vert] (2) at (-0.75,2) {\footnotesize$\alpha$};
        \draw[edge] (0,0)--(1);
        \draw[edge] (1)--(2);
        \draw[edge] (2)--(l1);
        \draw[edge] (2)--(l2);
        \draw[edge] (1)--(l3);
    \end{tikzpicture}}}
    \]
    \caption{$\grpermlex$-order.}
\end{figure}
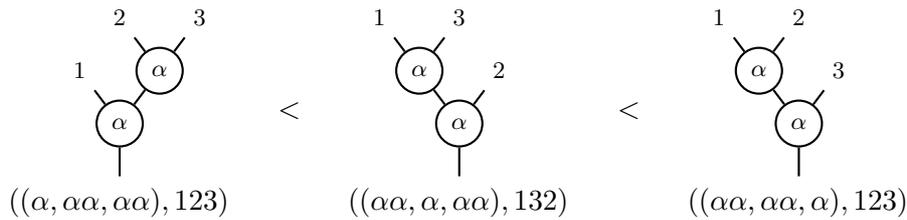
\subsection{Gr\"obner bases}

Consider some free shuffle contractad $\T_{\Sha}(\E)$ with a monomial order on tree monomials $\Tree_{\B}$. For an element $\N \in \T_{\Sha}(\E)(\Gr)$, we denote by $\LT(\N)$ the leading term in the monomial expansion $\N=\alpha\LT(\N)+\sum_{T_i<\LT(\N)}\alpha_i T_i$. It is easy to see that for an ideal $\mathcal{I} \subset \T_{\Sha}(\E)$ in the free contractad, the linear span of leading terms $\LT(\mathcal{I})$ of elements from the ideal forms an ideal, which we refer to as a \textit{leading term ideal}.

\begin{defi}
Let $\mathcal{I}$ be an ideal in the free shuffle operad $\T_{\Sha}(\E)$. A Gr\"obner basis of the ideal $\I$ is a non-symmetric graphical subcollection $\G \subset \mathcal{I}$ whose leading terms $\LT(\G)$ generate the leading term ideal:
\[
(\LT(\G))=\LT(\mathcal{I}).
\]
\end{defi}
  Using classical methods of Gr\"obner bases, it can be shown that the Gr\"obner basis of an ideal $\G \subset \mathcal{I}$ generates this ideal. For a given subcollection $\Ho \subset \T_{\Sha}(\E)$, \textit{normal monomials with respect to} $\Ho$ are monomials that are not divisible by leading terms $\LT(\Ho)$. The following proposition gives us an effective criterion for a set of elements in an ideal to be a Gr\"obner basis.
\begin{prop}[Monomial basis]
\label{coset}
For any subset $\G$ of an ideal $\I$, the set of normal monomials with respect to $\G$ spans the quotient $\T_{\Sha}(\E)/\I$. Moreover, $\G$ is a Gr\"obner basis of the ideal if and only if normal monomials form a basis of the quotient $\T_{\Sha}(\E)/\mathcal{I}$.
\end{prop}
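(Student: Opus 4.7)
The plan is to mimic the shuffle operad argument of Dotsenko--Khoroshkin, with the monad of shuffle admissible trees $\T_{\Sha}$ playing the role of the planar rooted tree monad. Two ingredients do the work: the well-foundedness of the monomial order on each component $\Tree_{\B}(\Gr,<)$, and the compatibility of shuffle infinitesimal compositions with the order (strict monotonicity), which together ensure a Noetherian reduction process.

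For the first statement I would fix a component $(\Gr,<)$ and argue by well-founded induction on the monomial order. Given $x\in\T_{\Sha}(\E)(\Gr,<)$, write $x=\alpha\,\LT(x)+(\text{lower terms})$. If $\LT(x)$ is divisible by $\LT(g)$ for some $g\in\G$, i.e.\ $\LT(x)$ contains $\LT(g)$ as a decorated subtree at some edge $e$ with leaf set $L_e$, then the shuffle composition $T^{e}\circ^{\Gr}_{L_e}\,(-)$ applied to $g$ produces an element $g'\in\I$ with $\LT(g')=\LT(x)$; here strict monotonicity of shuffle compositions with respect to the order is used to conclude that the other monomials in $g'$ are strictly smaller than $\LT(g')$. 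Subtracting a scalar multiple of $g'$ from $x$ yields $x'\in x+\I$ with $\LT(x')\prec\LT(x)$. Iterating and invoking the well-ordering reduces $x$ modulo $\I$ to a linear combination of monomials not divisible by any $\LT(g)$, that is, of normal monomials with respect to $\G$.

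For the second statement, observe first that, by the above reduction, the set of normal monomials with respect to $\G$ is independent of $\G$ up to the span it projects to in $\T_{\Sha}(\E)/\I$: what matters is that no nonzero element of $\I$ is a linear combination of normal monomials. Suppose $\G$ is a Gr\"obner basis and take a nonzero $y\in\I$ which is a linear combination of normal monomials. Then $\LT(y)$ is itself a normal monomial, but by assumption $\LT(y)\in\LT(\I)=(\LT(\G))$, so $\LT(y)$ is divisible by some $\LT(g)$, contradicting normality. Hence normal monomials map to linearly independent classes in the quotient, and combined with the spanning statement they form a basis. Conversely, suppose normal monomials with respect to $\G$ form a basis of $\T_{\Sha}(\E)/\I$. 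Given any $z\in\I$, reduce it modulo $\G$ to a combination of normal monomials; since the reduction keeps $z$ inside $\I$ and the normal monomials are independent in the quotient, this combination must be zero, so $z$ is a $\T_{\Sha}(\E)$-linear combination of elements of $\G$. In particular $\LT(z)$ is divisible by some $\LT(g)$ for $g\in\G$, proving $\LT(\I)\subseteq(\LT(\G))$; the reverse inclusion is trivial, so $(\LT(\G))=\LT(\I)$, i.e.\ $\G$ is a Gr\"obner basis.

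The only place where the contractad setting differs from the operadic one is the bookkeeping of tubes, induced/contracted orderings, and the identification of a "decorated subtree" inside a tree monomial via an edge $e$ and its leaf set $L_e$; I expect this combinatorial setup, rather than any conceptual step, to be the main subtlety. Once one verifies that the shuffle composition $T^{e}\circ^{\Gr}_{L_e}(-)$ restricted along the canonical planar representative is well defined and strictly order-preserving in both arguments (which is exactly the monomial order axiom), the classical Gr\"obner argument transports verbatim.
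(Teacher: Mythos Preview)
Your proposal is correct and is exactly what the paper intends: the paper does not give a proof of this proposition, declaring it \emph{mutatis mutandis} the shuffle-operad argument of Dotsenko--Khoroshkin, and your reduction-by-well-founded-induction followed by the standard equivalence argument is precisely that adaptation. One small imprecision worth tightening: when you write that $\LT(g)$ sits ``at some edge $e$ with leaf set $L_e$'' and then form $T^{e}\circ^{\Gr}_{L_e} g$, this literally handles only the case where the divisor subtree extends all the way to the leaves of $\LT(x)$; in general the divisor can sit in the middle of the tree, and producing $g'\in\I$ with $\LT(g')=\LT(x)$ requires grafting the lower subtrees onto $g$ as well (an iterated two-sided composition, which stays in $\I$ since $\I$ is an ideal). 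This is exactly the bookkeeping you flag in your last paragraph and does not affect the argument.
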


So, now we are ready to prove quadraticity of the commutative contractad $\Com$.

\begin{prop}
\label{quadcom}
The commutative contractad $\Com$ is generated by one symmetric generator $m$ in the component $\Path_2$, satisfying the relations
\begin{gather}
    m \circ_{\{1,2\}}^{\mathsf{P_3}} m = m \circ_{\{2,3\}}^{\mathsf{P_3}} m
    \\
    m \circ_{\{1,2\}}^{\mathsf{K_3}} m = m \circ_{\{2,3\}}^{\mathsf{K_3}} m.
\end{gather}
\end{prop}
\begin{proof} By the construction, each  infinitesimal composition is an isomorphism of one-dimensional spaces. Hence, this contractad is generated by the one-dimensional component $\Com(\mathsf{P}_2)\cong \mathsf{k}\langle m\rangle$. For similar reasons, the relations above are satisfied. Hence, we have the surjective morphism of contractads $\Pop\twoheadrightarrow \Com$, where $\Pop$ is the quadratic contractad obtained from generators and relations above. Consider the shuffle version $\Pop^{\forget}=\T_{\Sha}(m)/\langle \R^{\forget}\rangle$ and the monomial order on $\Tree_{m}$ reverse to the $\grpermlex$-order(with usual $\mathsf{deglex}$-order on words $\{m\}^*$). The leading terms of the relations $\R^{\forget}$ have the form
\begin{gather*}
\label{leadtermcom}
    \vcenter{\hbox{\begin{tikzpicture}[scale=0.5]
    \fill (0,0) circle (2pt);
    \node at (0,0.5) {\footnotesize$1$};
    \fill (1,0) circle (2pt);
    \node at (1,0.5) {\footnotesize$2$};
    \fill (2,0) circle (2pt);
    \node at (2,0.5) {\footnotesize$3$};
    \draw (0,0)--(1,0)--(2,0);    
    \end{tikzpicture}}}
    \vcenter{\hbox{\begin{tikzpicture}[
        scale=0.7,
        vert/.style={circle,  draw=black!30!black, thick, minimum size=1mm},
        leaf/.style={rectangle, thick, minimum size=1mm},
        edge/.style={-,black!30!black, thick},
        ]
        \node[vert] (1) at (0,1) {\footnotesize$m$};
        \node[leaf] (l1) at (-0.75,2) {\footnotesize$1$};
        \node[vert] (2) at (0.75,2) {\footnotesize$m$};
        \node[leaf] (l2) at (0,3) {\footnotesize$2$};
        \node[leaf] (3) at (1.5,3) {\footnotesize$3$};
        \draw[edge] (0,0)--(1);
        \draw[edge] (1)--(2)--(3);
        \draw[edge] (1)--(l1);
        \draw[edge] (2)--(l2);
    \end{tikzpicture}}}
    \quad
    \vcenter{\hbox{\begin{tikzpicture}[scale=0.5]
    \fill (0,0) circle (2pt);
    \node at (0,0.5) {\footnotesize$1$};
    \fill (1,0) circle (2pt);
    \node at (1,0.5) {\footnotesize$3$};
    \fill (2,0) circle (2pt);
    \node at (2,0.5) {\footnotesize$2$};
    \draw (0,0)--(1,0)--(2,0);    
    \end{tikzpicture}}}
    \vcenter{\hbox{\begin{tikzpicture}[
        scale=0.7,
        vert/.style={circle,  draw=black!30!black, thick, minimum size=1mm},
        leaf/.style={rectangle, thick, minimum size=1mm},
        edge/.style={-,black!30!black, thick},
        ]
        \node[vert] (1) at (0,1) {\footnotesize$m$};
        \node[leaf] (l1) at (-0.75,2) {\footnotesize$1$};
        \node[vert] (2) at (0.75,2) {\footnotesize$m$};
        \node[leaf] (l2) at (0,3) {\footnotesize$2$};
        \node[leaf] (3) at (1.5,3) {\footnotesize$3$};
        \draw[edge] (0,0)--(1);
        \draw[edge] (1)--(2)--(3);
        \draw[edge] (1)--(l1);
        \draw[edge] (2)--(l2);
    \end{tikzpicture}}}
    \quad
    \vcenter{\hbox{\begin{tikzpicture}[scale=0.5]
    \fill (0,0) circle (2pt);
    \node at (0,0.5) {\footnotesize$2$};
    \fill (1,0) circle (2pt);
    \node at (1,0.5) {\footnotesize$1$};
    \fill (2,0) circle (2pt);
    \node at (2,0.5) {\footnotesize$3$};
    \draw (0,0)--(1,0)--(2,0);    
    \end{tikzpicture}}}
    \vcenter{\hbox{\begin{tikzpicture}[
        scale=0.7,
        vert/.style={circle,  draw=black!30!black, thick, minimum size=1mm},
        leaf/.style={rectangle, thick, minimum size=1mm},
        edge/.style={-,black!30!black, thick},
        ]
        \node[vert] (1) at (0,1) {\footnotesize$m$};
        \node[leaf] (l1) at (0.75,2) {\footnotesize$2$};
        \node[vert] (2) at (-0.75,2) {\footnotesize$m$};
        \node[leaf] (l2) at (0,3) {\footnotesize$3$};
        \node[leaf] (3) at (-1.5,3) {\footnotesize$1$};
        \draw[edge] (0,0)--(1);
        \draw[edge] (1)--(2)--(3);
        \draw[edge] (1)--(l1);
        \draw[edge] (2)--(l2);
    \end{tikzpicture}}}
    \\
\vcenter{\hbox{\begin{tikzpicture}[scale=0.6]
    \fill (-0.5,0) circle (2pt);
    \node at (-0.7,-0.2) {\footnotesize$1$};
    \fill (0.5,0) circle (2pt);
    \node at (0.7,-0.2) {\footnotesize$3$};
    \fill (0,0.86) circle (2pt);
    \node at (0,1.14) {\footnotesize$2$};
    \draw (-0.5,0)--(0,0.86)--(0.5,0)--cycle;    
    \end{tikzpicture}}}
    \vcenter{\hbox{\begin{tikzpicture}[
        scale=0.7,
        vert/.style={circle,  draw=black!30!black, thick, minimum size=1mm},
        leaf/.style={rectangle, thick, minimum size=1mm},
        edge/.style={-,black!30!black, thick},
        ]
        \node[vert] (1) at (0,1) {\footnotesize$m$};
        \node[leaf] (l1) at (0.75,2) {\footnotesize$2$};
        \node[vert] (2) at (-0.75,2) {\footnotesize$m$};
        \node[leaf] (l2) at (0,3) {\footnotesize$3$};
        \node[leaf] (3) at (-1.5,3) {\footnotesize$1$};
        \draw[edge] (0,0)--(1);
        \draw[edge] (1)--(2)--(3);
        \draw[edge] (1)--(l1);
        \draw[edge] (2)--(l2);
    \end{tikzpicture}}}
    \quad
    \vcenter{\hbox{\begin{tikzpicture}[
        scale=0.7,
        vert/.style={circle,  draw=black!30!black, thick, minimum size=1mm},
        leaf/.style={rectangle, thick, minimum size=1mm},
        edge/.style={-,black!30!black, thick},
        ]
        \node[vert] (1) at (0,1) {\footnotesize$m$};
        \node[leaf] (l1) at (-0.75,2) {\footnotesize$1$};
        \node[vert] (2) at (0.75,2) {\footnotesize$m$};
        \node[leaf] (l2) at (0,3) {\footnotesize$2$};
        \node[leaf] (3) at (1.5,3) {\footnotesize$3$};
        \draw[edge] (0,0)--(1);
        \draw[edge] (1)--(2)--(3);
        \draw[edge] (1)--(l1);
        \draw[edge] (2)--(l2);
    \end{tikzpicture}}}
\end{gather*} 
Let us describe the normal monomials with respect to quadratic relations. The form of leading terms ensures us that the underlying trees of $\Gr$-input normal monomials have the form 
\[
\vcenter{\hbox{\begin{tikzpicture}[
        scale=0.7,
        vert/.style={circle,  draw=black!30!black, thick, minimum size=1mm},
        emptyvert/.style={circle, thick, minimum size=1mm},
        leaf/.style={rectangle, thick, minimum size=1mm},
        edge/.style={-,black!30!black, thick},
        ]
        \node[vert] (k) at (0.75,0) {\footnotesize$m$};
        \node[leaf] (lk) at (1.5,1) {\footnotesize$v_k$};
        \node[vert] (1) at (0,1) {\footnotesize$m$};
        \node[leaf] (l1) at (0.75,2) {\footnotesize$v_{k-1}$};
        \node[emptyvert] (2) at (-0.75,2) {\footnotesize$\cdots$};
        \node[leaf] (l2) at (0,3) {\footnotesize$\cdots$};
        \node[vert] (3) at (-1.5,3) {\footnotesize$m$};
        \node[leaf] (l3) at (-0.75,4) {\footnotesize$v_2$};
        \node[leaf] (l4) at (-2.25,4) {\footnotesize$v_1$};
        \draw[edge] (0.75,-1)--(k);
        \draw[edge] (k)--(1)--(2)--(3);
        \draw[edge] (k)--(lk);
        \draw[edge] (1)--(l1);
        \draw[edge] (2)--(l2);
        \draw[edge] (3)--(l3);
        \draw[edge] (3)--(l4);
    \end{tikzpicture}}}
\] with an additional condition on the labeling of leaves: if vertex $v_{i+1}$ is adjacent to tube $\{v_1,\cdots, v_{i-1}\}$, then $v_{i}<v_{i+1}$. Note that there is only one labeling of this type defined as follows: $v_1$ is the minimal vertex of $\Gr$, $v_2$ is the minimal vertex adjacent to $\{v_1\}$, $\cdots$, $v_i$ is the minimal vertex adjacent to tube $\{v_1,v_2,\cdots,v_{i-1}\}$. By Proposition~\ref{coset}, these monomials form a spanning set of $\Pop^{\forget}$. Hence, by dimension reasons, we conclude that the morphism of shuffle contractads $\Pop^{\forget}\to\Com^{\forget}$ is an isomorphism. Moreover, the set of quadratic relations $\R^{\forget}$ forms the Gr\"obner basis of the ideal $\langle \R^{\forget}\rangle$.
\end{proof}

The following result illustrates a connection between Gr\"obner bases of quadratic contractads and Koszul property. See the proof in Appendix~\ref{subsec:pbw}.

\begin{theorem}\label{quadgrob}
\begin{itemize}
    \item A shuffle contractad $\Pop$ which admits a quadratic Gr\"obner basis for some monomial order is Koszul.
    \item A shuffle contractad $\Pop$ has a quadratic Gr\"obner basis for some monomial order if and only if its Koszul dual $\Pop^{!}$ admits a quadratic Gr\"obner basis for the dual monomial order.
\end{itemize}
\end{theorem}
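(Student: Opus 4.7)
The plan is to adapt the classical operadic arguments of \cite{dotsenko2010grobner} to the contractad setting, making essential use of the shuffle framework developed in the previous subsection.

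For the first claim, I would begin by introducing the \textit{associated monomial shuffle contractad} $\mathrm{gr}_{\prec}(\Pop) := \T_{\Sha}(\E)/\langle\LT(\mathcal{I})\rangle$, where $\mathcal{I}$ is the ideal of relations of $\Pop$. The quadraticity of the Gr\"obner basis guarantees that $\mathrm{gr}_{\prec}(\Pop)$ is a quadratic contractad, and Proposition~\ref{coset} identifies the normal monomials with respect to $\G$ as a common basis of both $\Pop$ and $\mathrm{gr}_{\prec}(\Pop)$, so these graphical collections have identical component-wise dimensions. Next, I would filter the bar construction $\mathsf{B}_{\Sha}\Pop$ by the monomial order on tree monomials (a direct descendant of the filtration on $\Pop$ by powers of its augmentation ideal refined by $\prec$); by a standard check, the associated graded complex coincides with $\mathsf{B}_{\Sha}(\mathrm{gr}_{\prec}(\Pop))$. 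The standard convergent spectral sequence then reduces the Koszulness of $\Pop$ to that of $\mathrm{gr}_{\prec}(\Pop)$. It remains to prove that every quadratic monomial shuffle contractad is Koszul, which I would establish by exhibiting an explicit contracting homotopy on the Koszul complex: the leading-term ideal, being generated by quadratic tree monomials, forces the Koszul complex to decompose as a direct sum of sub-complexes indexed by \textit{normal chains} of admissible rooted trees, each of which is seen by inspection to be an acyclic chain complex.

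For the second claim, I would use the natural pairing $\langle -, -\rangle\colon (\E\circ'\E)\otimes(s^{-1}\Susp^{-1}\E^*\circ' s^{-1}\Susp^{-1}\E^*)\to\mathsf{k}$ which exhibits the quadratic relations of $\Pop^!$ as the orthogonal complement of those of $\Pop$. Fix a basis $\B$ of $\E$ and the dual basis $\B^!$ of $s^{-1}\Susp^{-1}\E^*$; then quadratic tree monomials in $\Tree_\B$ and in $\Tree_{\B^!}$ are in canonical bijection $T\leftrightarrow T^!$, and I would define the \textit{dual monomial order} $\prec^!$ as the opposite of $\prec$ under this bijection (extended to all syzygy degrees through the path-sequence/leaf-permutation data used in the definition of $\grpermlex$). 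The crucial combinatorial observation is that the set of quadratic tree monomials partitions into "leading terms of $\R$" and "normal monomials of $\R$", and under $T\mapsto T^!$ this exchanges with the analogous partition for $\R^\perp$. Granting this, a quadratic Gr\"obner basis of $\Pop$ produces precisely the leading quadratic monomials of a Gr\"obner basis for $\Pop^!$ with respect to $\prec^!$; the converse follows by symmetry, using $(\Pop^!)^! \cong \Pop$.

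The main obstacle I expect is verifying carefully that the filtration of $\mathsf{B}_{\Sha}\Pop$ induced by $\prec$ behaves well under the coderivation: one must check that the bar differential $d_2$ (coming from the infinitesimal decomposition) decreases or preserves the $\prec$-filtration in each tree-monomial component, and that the associated graded differential coincides with the bar differential of $\mathrm{gr}_{\prec}(\Pop)$. This uses the monomial order axiom that shuffle compositions are strictly increasing, together with the compatibility between admissible subtrees $T_e$, $T^e$ and induced/contracted graphs noted in Subsection~\ref{subsec:admissibletrees}. Once this compatibility is established, the remainder of both parts reduces to standard Gr\"obner-basis bookkeeping, and the operadic proofs transfer \emph{mutatis mutandis}.
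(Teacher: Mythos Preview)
Your overall strategy for part~1 is correct and close to the paper's, but the execution diverges at the key step. Both you and the paper filter $\mathsf{B}_{\Sha}\Pop$ by the monomial order and pass to the associated graded. The paper, following Hoffbeck, does \emph{not} identify the associated graded with the bar construction of a monomial contractad and then invoke a separate Koszulity result; instead it observes directly that the $E^0$-piece indexed by a fixed tree monomial $\alpha$ is the augmented simplicial complex on the set $\mathrm{Adm}(\alpha)$ of ``normal edges'' of $\alpha$ (those internal edges $e$ for which the weight-2 subtree $\alpha|_e$ is normal with respect to $\R$). Such a complex is acyclic unless $\mathrm{Adm}(\alpha)=\varnothing$, so the spectral sequence degenerates and $H^\bullet(\mathsf{B}\Pop)$ is concentrated in syzygy degree~0. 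Your route through a contracting homotopy on the Koszul complex of the monomial contractad would also work, but it is a detour: the simplex argument is shorter and, crucially, it already hands you a monomial basis of $H^0(\mathsf{B}\Pop)\cong\Pop^{\text{!`}}$, namely the monomials with $\mathrm{Adm}(\alpha)=\varnothing$.

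This is exactly where your treatment of part~2 has a gap. You correctly note that at the quadratic level the leading terms of $\R$ and the normal monomials of $\R$ swap under passage to $\R^{\perp}$ with the reversed order. But that is only a statement in weight~2; it does not by itself show that $\R^{\perp}$ is a Gr\"obner basis, which requires that the normal monomials with respect to $\LT(\R^{\perp})$ form a basis of $\Pop^{!}$ in \emph{every} component. The paper closes this gap as a direct byproduct of part~1: a monomial $\alpha$ has $\mathrm{Adm}(\alpha)=\varnothing$ precisely when every weight-2 subtree of $\alpha$ is a leading term of $\R$, i.e.\ precisely when $\alpha$ is normal for $\R^{\perp}$ under the reversed order; since these monomials were just shown to be a basis of $\Pop^{!}$, Proposition~\ref{coset} gives the Gr\"obner property immediately. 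Your argument needs either this identification or an independent dimension count to be complete.
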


During the proof of Proposition~\ref{quadcom}, we showed that shuffle contractad $\Com^{\forget}$ has a  quadratic Gr\"obner basis with respect to reverse $\grpermlex$-order. By Theorem~\ref{quadgrob}, we get an alternative proof of Koszulity of $\Com$. Another consequence is related to the Koszul dual contractad $\Lie$.

\begin{cor}
\label{liegrob}
The shuffle contractad $\Lie^{\forget}$ has a quadratic Gr\"obner basis with respect to $\grpermlex$-order.
\end{cor}
Let us discuss the monomial basis of this contractad. With respect to the $\grpermlex$-order, the leading terms of relations $\R^{\forget}_{\Lie}$ have the form
\[\label{leadtermlie}
    \vcenter{\hbox{\begin{tikzpicture}[scale=0.5]
    \fill (0,0) circle (2pt);
    \node at (0,0.5) {\footnotesize$1$};
    \fill (1,0) circle (2pt);
    \node at (1,0.5) {\footnotesize$2$};
    \fill (2,0) circle (2pt);
    \node at (2,0.5) {\footnotesize$3$};
    \draw (0,0)--(1,0)--(2,0);    
    \end{tikzpicture}}}
    \vcenter{\hbox{\begin{tikzpicture}[
        scale=0.7,
        vert/.style={circle,  draw=black!30!black, thick, minimum size=1mm},
        leaf/.style={rectangle, thick, minimum size=1mm},
        edge/.style={-,black!30!black, thick},
        ]
        \node[vert] (1) at (0,1) {\footnotesize$b$};
        \node[leaf] (l1) at (0.75,2) {\footnotesize$3$};
        \node[vert] (2) at (-0.75,2) {\footnotesize$b$};
        \node[leaf] (l2) at (0,3) {\footnotesize$2$};
        \node[leaf] (3) at (-1.5,3) {\footnotesize$1$};
        \draw[edge] (0,0)--(1);
        \draw[edge] (1)--(2)--(3);
        \draw[edge] (1)--(l1);
        \draw[edge] (2)--(l2);
    \end{tikzpicture}}}
    \vcenter{\hbox{\begin{tikzpicture}[scale=0.5]
    \fill (0,0) circle (2pt);
    \node at (0,0.5) {\footnotesize$1$};
    \fill (1,0) circle (2pt);
    \node at (1,0.5) {\footnotesize$3$};
    \fill (2,0) circle (2pt);
    \node at (2,0.5) {\footnotesize$2$};
    \draw (0,0)--(1,0)--(2,0);    
    \end{tikzpicture}}}
    \vcenter{\hbox{\begin{tikzpicture}[
        scale=0.7,
        vert/.style={circle,  draw=black!30!black, thick, minimum size=1mm},
        leaf/.style={rectangle, thick, minimum size=1mm},
        edge/.style={-,black!30!black, thick},
        ]
        \node[vert] (1) at (0,1) {\footnotesize$b$};
        \node[leaf] (l1) at (0.75,2) {\footnotesize$2$};
        \node[vert] (2) at (-0.75,2) {\footnotesize$b$};
        \node[leaf] (l2) at (0,3) {\footnotesize$3$};
        \node[leaf] (3) at (-1.5,3) {\footnotesize$1$};
        \draw[edge] (0,0)--(1);
        \draw[edge] (1)--(2)--(3);
        \draw[edge] (1)--(l1);
        \draw[edge] (2)--(l2);
    \end{tikzpicture}}}
    \vcenter{\hbox{\begin{tikzpicture}[scale=0.5]
    \fill (0,0) circle (2pt);
    \node at (0,0.5) {\footnotesize$2$};
    \fill (1,0) circle (2pt);
    \node at (1,0.5) {\footnotesize$1$};
    \fill (2,0) circle (2pt);
    \node at (2,0.5) {\footnotesize$3$};
    \draw (0,0)--(1,0)--(2,0);    
    \end{tikzpicture}}}
    \vcenter{\hbox{\begin{tikzpicture}[
        scale=0.7,
        vert/.style={circle,  draw=black!30!black, thick, minimum size=1mm},
        leaf/.style={rectangle, thick, minimum size=1mm},
        edge/.style={-,black!30!black, thick},
        ]
        \node[vert] (1) at (0,1) {\footnotesize$b$};
        \node[leaf] (l1) at (0.75,2) {\footnotesize$3$};
        \node[vert] (2) at (-0.75,2) {\footnotesize$b$};
        \node[leaf] (l2) at (0,3) {\footnotesize$2$};
        \node[leaf] (3) at (-1.5,3) {\footnotesize$1$};
        \draw[edge] (0,0)--(1);
        \draw[edge] (1)--(2)--(3);
        \draw[edge] (1)--(l1);
        \draw[edge] (2)--(l2);
    \end{tikzpicture}}}
    \vcenter{\hbox{\begin{tikzpicture}[scale=0.5]
    \fill (-0.5,0) circle (2pt);
    \node at (-0.7,-0.2) {\footnotesize$1$};
    \fill (0.5,0) circle (2pt);
    \node at (0.7,-0.2) {\footnotesize$3$};
    \fill (0,0.86) circle (2pt);
    \node at (0,1.2) {\footnotesize$2$};
    \draw (-0.5,0)--(0,0.86)--(0.5,0)--cycle;    
    \end{tikzpicture}}}
    \vcenter{\hbox{\begin{tikzpicture}[
        scale=0.7,
        vert/.style={circle,  draw=black!30!black, thick, minimum size=1mm},
        leaf/.style={rectangle, thick, minimum size=1mm},
        edge/.style={-,black!30!black, thick},
        ]
        \node[vert] (1) at (0,1) {\footnotesize$b$};
        \node[leaf] (l1) at (0.75,2) {\footnotesize$3$};
        \node[vert] (2) at (-0.75,2) {\footnotesize$b$};
        \node[leaf] (l2) at (0,3) {\footnotesize$2$};
        \node[leaf] (3) at (-1.5,3) {\footnotesize$1$};
        \draw[edge] (0,0)--(1);
        \draw[edge] (1)--(2)--(3);
        \draw[edge] (1)--(l1);
        \draw[edge] (2)--(l2);
    \end{tikzpicture}}}
\]
By direct inspection, we  conclude that a tree monomial $T$ is normal if, for each  subtree of the form $\vcenter{\hbox{\begin{tikzpicture}[
        scale=0.5,
        vert/.style={inner sep=2pt, circle,draw, thick},
        leaf/.style={inner sep=2pt, rectangle, thick},
        edge/.style={-,black!30!black, thick},
        ]
        \node[vert] (1) at (0,1) {\scriptsize$b$};
        \node[leaf] (l1) at (0.75,2) {\scriptsize$L_3$};
        \node[vert] (2) at (-0.75,2) {\scriptsize$b$};
        \node[leaf] (l2) at (0,3) {\scriptsize$L_2$};
        \node[leaf] (3) at (-1.5,3) {\scriptsize$L_1$};
        \node[leaf] (dd) at (0.75,0) {\scriptsize$\cdots$};
        \draw[edge] (dd)--(1);
        \draw[edge] (1)--(2)--(3);
        \draw[edge] (1)--(l1);
        \draw[edge] (2)--(l2);
    \end{tikzpicture}}}$, the union $L_1 \cup L_3$ is a tube and $\min L_2 >\min L_3$. By Proposition~\ref{coset}, such monomials form the basis of $\Lie^{\forget}$. We construct an alternative basis of this contractad in Appendix~\ref{subsec:diamond}.

\begin{figure}[ht]
    \centering
    \caption{List of $\Lie$-monomials for classical ordering of the cycle $\Cyc_4$}
    \begin{gather*}
    \vcenter{\hbox{\begin{tikzpicture}[scale=0.5]
    \fill (0,0) circle (2pt);
    \fill (0,1) circle (2pt);
    \fill (1,0) circle (2pt);
    \fill (1,1) circle (2pt);
    \draw (0,0)--(1,0)--(1,1)--(0,1)-- cycle;
    \node at (-0.25,1.25) {\footnotesize$1$};
    \node at (1.25,1.25) {\footnotesize$2$};
    \node at (1.25,-0.25) {\footnotesize$3$};
    \node at (-0.25,-0.25) {\footnotesize$4$};
  \end{tikzpicture}}}
  \quad
  \vcenter{\hbox{\begin{tikzpicture}[
        scale=0.7,
        vert/.style={circle,  draw=black!30!black, thick, minimum size=1mm},
        leaf/.style={rectangle, thick, minimum size=1mm},
        edge/.style={-,black!30!black, thick},
        ]
        \node[vert] (1) at (0,1) {\footnotesize$b$};
        \node[leaf] (l1) at (-0.75,2) {\footnotesize$1$};
        \node[vert] (2) at (0.75,2) {\footnotesize$b$};
        \node[leaf] (l2) at (0,3) {\footnotesize$2$};
        \node[vert] (3) at (1.5,3) {\footnotesize$b$};
        \node[leaf] (l3) at (0.75,4) {\footnotesize$3$};
        \node[leaf] (l4) at (2.25,4) {\footnotesize$4$};
        \draw[edge] (0,0)--(1);
        \draw[edge] (1)--(2)--(3);
        \draw[edge] (1)--(l1);
        \draw[edge] (2)--(l2);
        \draw[edge] (3)--(l3);
        \draw[edge] (3)--(l4);
    \end{tikzpicture}}}
    \quad
    \vcenter{\hbox{\begin{tikzpicture}[
        scale=0.7,
        vert/.style={circle,  draw=black!30!black, thick, minimum size=1mm},
        leaf/.style={rectangle, thick, minimum size=1mm},
        edge/.style={-,black!30!black, thick},
        ]
        \node[leaf] (l1) at (1.65,3) {\footnotesize$3$};
        \node[leaf] (l2) at (0.35,3) {\footnotesize$2$};
        \node[vert] (1) at (0,1) {\footnotesize$b$};
        \node[vert] (2) at (1,2) {\footnotesize$b$};
        \node[vert] (3) at (-1,2) {\footnotesize$b$};
        \node[leaf] (r1) at (-1.65,3) {\footnotesize$1$};
        \node[leaf] (r3) at (-0.35,3) {\footnotesize$4$};
        \draw[edge] (0,0)--(1);
        \draw[edge] (1)--(2);
        \draw[edge] (1)--(3);
        \draw[edge] (3)--(r1);
        \draw[edge] (3)--(r3);
        \draw[edge] (2)--(l1);
        \draw[edge] (2)--(l2);
    \end{tikzpicture}}}
    \quad
    \vcenter{\hbox{\begin{tikzpicture}[
        scale=0.7,
        vert/.style={circle,  draw=black!30!black, thick, minimum size=1mm},
        leaf/.style={rectangle, thick, minimum size=1mm},
        edge/.style={-,black!30!black, thick},
        ]
        \node[vert] (1) at (0,1) {\footnotesize$b$};
        \node[leaf] (l1) at (0.75,2) {\footnotesize$2$};
        \node[vert] (2) at (-0.75,2) {\footnotesize$b$};
        \node[leaf] (l2) at (-1.5,3) {\footnotesize$1$};
        \node[vert] (3) at (0,3) {\footnotesize$b$};
        \node[leaf] (l3) at (-0.75,4) {\footnotesize$3$};
        \node[leaf] (l4) at (0.75,4) {\footnotesize$4$};
        \draw[edge] (0,0)--(1);
        \draw[edge] (1)--(2)--(3);
        \draw[edge] (1)--(l1);
        \draw[edge] (2)--(l2);
        \draw[edge] (3)--(l3);
        \draw[edge] (3)--(l4);
    \end{tikzpicture}}}
\end{gather*}
\label{liemon}
\end{figure}
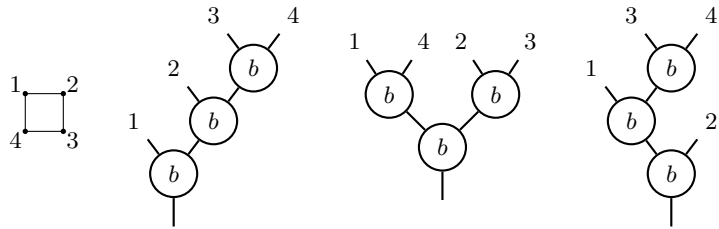

\section{(Co)Homology of the little disks contractads}\label{sec:discs}
In this section, we describe explicitly the homology contractads of the little disks contractads defined in Section~\ref{subsec:disks}. The results of this section generalise the ones in \cite{getzler1994operads}.
\subsection{One-dimensional case}
In this subsection, we describe the homology contractad of the little intervals contractad $\D_1$. This contractad is a graphical counterpart of the operad $\mathsf{Ass}$ of associative algebras. Moreover, we prove that this contractad is Koszul.

Consider the little intervals contractad $\D_1$. By Proposition~\ref{discsconf},  each component of this contractad $\D_1(\Gr) \simeq \Conf_{\Gr}(\mathbb{R}):=\mathbb{R}^n\setminus \B(\Gr)$ is homotopy equivalent to the complement of the real graphic arrangement $\B(\Gr)=\{\{x_v=x_w\}| (v,w)\in E_{\Gr}\}$.

\begin{prop}
\label{dimD1}
The homology groups of little intervals contractad $H_{\bullet}(\D_1)$ are concentrated in degree zero. Moreover, the dimension of each component is given by the formula
\[
\dim H_{0}(\D_1)=\sum_{I \in \parti(\Gr)} \prod_{G \in I}|\mu(\Gr|_G)|,
\] where $\mu(\Gr):=\mu_{\parti(\Gr)}(\hat{0},\hat{1})$.
\end{prop}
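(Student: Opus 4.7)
The plan is to reduce the statement to a standard fact about real hyperplane arrangements via \textbf{Proposition}~\ref{discsconf}, and then invoke Zaslavsky's theorem together with the description of the intersection lattice from \textbf{Proposition}~\ref{grapharrang}. First I would use \textbf{Proposition}~\ref{discsconf} (whose argument adapts verbatim to the one-dimensional case) to get a homotopy equivalence $\D_1(\Gr)\simeq \Conf_{\Gr}(\mathbb{R})$, and then observe that
\[
\Conf_{\Gr}(\mathbb{R}) = \mathbb{R}^{V_{\Gr}} \setminus \bigcup_{(v,w)\in E_{\Gr}}\{x_v=x_w\}
\]
is precisely the real complement of the graphic arrangement $\B(\Gr)$.

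Next, because $\Conf_{\Gr}(\mathbb{R})$ is the complement of a finite family of real linear hyperplanes, it is a disjoint union of its (open, convex, hence contractible) chambers. This immediately shows that $H_i(\D_1(\Gr))=0$ for $i>0$ and that $\dim H_0(\D_1(\Gr))$ equals the number $r(\B(\Gr))$ of chambers of the graphic arrangement.

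The remaining step is to evaluate $r(\B(\Gr))$ combinatorially. I would apply Zaslavsky's theorem, which states that for a real essential hyperplane arrangement $\A$ in a vector space,
\[
r(\A) = \sum_{X \in \La(\A)} |\mu_{\La(\A)}(\hat{0},X)|.
\]
By \textbf{Proposition}~\ref{grapharrang} the intersection lattice $\La(\B(\Gr))$ is canonically isomorphic to the graph-partition poset $\parti(\Gr)$, so the sum is indexed by partitions $I$ of $\Gr$. To handle the individual terms, I would use \textbf{Lemma}~\ref{intervals}: the interval $[\hat{0},I]$ factors as $\prod_{G\in I}\parti(\Gr|_G)$, and the M\"obius function is multiplicative over products of posets, giving
\[
\mu_{\parti(\Gr)}(\hat{0},I) \;=\; \prod_{G\in I}\mu_{\parti(\Gr|_G)}(\hat{0},\hat{1}) \;=\; \prod_{G\in I}\mu(\Gr|_G).
\]
Since $\parti(\Gr)$ is a geometric lattice (being the intersection lattice of a hyperplane arrangement), $\mu(\hat{0},\cdot)$ has a well-defined sign determined by rank, so the absolute value distributes over the product and yields $|\mu_{\parti(\Gr)}(\hat{0},I)|=\prod_{G\in I}|\mu(\Gr|_G)|$. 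Substituting into Zaslavsky's formula gives the desired identity.

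No step looks hard: the topology boils down to chambers being contractible, and the combinatorics is a routine application of Zaslavsky plus the product decomposition of intervals in $\parti(\Gr)$. The only point that deserves care is justifying the passage from $\mathbb{R}^{V_{\Gr}}$ to the essential arrangement after quotienting by the diagonal line $\langle\sum_v x_v\rangle$ (as was done in Subsection~\ref{subsec:wonderful}); this affects neither the number of chambers nor the homotopy type of the complement, so it has no bearing on the final count.
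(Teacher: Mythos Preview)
Your proposal is correct and follows essentially the same approach as the paper's proof: reduce via \textbf{Proposition}~\ref{discsconf} to the real complement of the graphic arrangement, use that chambers are contractible, and then count chambers by Zaslavsky's formula (the paper cites this as \cite[Th.~3.11.7]{stanley2011enumerative}) together with the identification $\La(\B(\Gr))\cong\parti(\Gr)$. You are simply more explicit than the paper in justifying the last equality via \textbf{Lemma}~\ref{intervals} and multiplicativity of the M\"obius function.
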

\begin{proof}
The  first assertion of the proposition follows from the general fact that connected components of a complement to a real hyperplane arrangement are contractible. Moreover, the number of connected components is uniquely determined by the M\"obius function of the related intersection lattice \cite[Th.~3.11.7]{stanley2011enumerative}
\[
|\pi_0(\mathbb{R}^n\setminus \B(\Gr))|=\sum_{I \in \La(\B(\Gr))} |\mu(\hat{0},I)|=\sum_{I \in \parti(\Gr)} \prod_{G \in I}|\mu(\Gr|_G)|.
\]
\end{proof}
For path $\Path_2$, the space $\D_1(\Path_2)$ consists of two connected components $\{z_1>z_2\}$, $\{z_1<z_2\}$, where $z_1,z_2$ are centers of labeled discs. Let $\nu \in H_0(\D_2(\Path_2))$ be a class of the point belonging to the first connected component. The second generator $\nu^{(12)}$ of this group is obtained from $\nu$ by applying the unique automorphism of the path $\Path_2$.
\begin{lemma}
\label{relD1}
We have
\begin{gather}
    \nu\circ^{\Path_3}_{\{1,2\}}\nu=\nu\circ^{\Path_3}_{\{2,3\}}\nu,
    \\
    \nu^{(12)}\circ^{\Path_3}_{\{1,2\}}\nu=\nu \circ^{\Path_3}_{\{2,3\}}\nu^{(12)},\label{relD12}
    \\
    \nu\circ^{\Path_3}_{\{1,2\}}\nu^{(12)}=\nu^{(12)} \circ^{\Path_3}_{\{2,3\}}\nu,\label{relD22}
    \\
    \nu\circ^{\K_3}_{\{1,2\}}\nu=\nu\circ^{\K_3}_{\{2,3\}}\nu.
\end{gather} 
\end{lemma}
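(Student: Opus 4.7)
The strategy is to argue entirely at the level of $\pi_0$ and read off each relation from the combinatorics of connected components. By \textbf{Proposition}~\ref{discsconf}, each space $\D_1(\Gr)$ is homotopy equivalent to $\Conf_\Gr(\mathbb{R}) = \mathbb{R}^{V_\Gr}\setminus \B(\Gr)$, and since the graphic arrangement is real, $H_0(\D_1(\Gr))$ is freely generated by the connected components of this complement. A component is uniquely determined by the sign pattern of $(z_v - z_w)_{(v,w)\in E_\Gr}$. Concretely, $H_0(\D_1(\Path_3))$ has four classes indexed by the signs of $(z_1-z_2,\, z_2-z_3)$, and $H_0(\D_1(\K_3))$ has six classes, one for each linear ordering of $z_1,z_2,z_3$. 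The generator $\nu$ is the class of $\{z_1>z_2\}$ in $H_0(\D_1(\Path_2))$, and $\nu^{(12)}$ is the class of $\{z_1<z_2\}$.

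The plan is then to compute each composition $\mu\circ^{\Gr}_G \mu'$ by picking concrete point representatives in $\D_1(\Gr/G)$ and $\D_1(\Gr|_G)$ and substituting as in the contractad structure, then recording the resulting order of the centers $z_1,z_2,z_3$ and reading off the connected component. Throughout I will identify both $\Gr/G$ and $\Gr|_G$ with $\Path_2$ via the minimum-of-block convention (for $G=\{1,2\}$, the block $\{1,2\}$ is the smaller vertex of $\Gr/G$; for $G=\{2,3\}$, the block $\{2,3\}$ is the larger). Under this convention $\nu$ on $\Path_3/\{1,2\}$ means the disc $\{1,2\}$ sits to the right of the disc $\{3\}$, whereas $\nu$ on $\Path_3/\{2,3\}$ means the disc $\{1\}$ sits to the right of the disc $\{2,3\}$, and analogously for $\nu^{(12)}$.

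Carrying this out gives:
\begin{itemize}
\item $\nu\circ^{\Path_3}_{\{1,2\}}\nu$ produces $z_1>z_2>z_3$, and $\nu\circ^{\Path_3}_{\{2,3\}}\nu$ produces $z_1>z_2>z_3$, both lying in the component with signs $(+,+)$, giving relation~(1);
\item $\nu^{(12)}\circ^{\Path_3}_{\{1,2\}}\nu$ produces $z_3>z_1>z_2$, and $\nu\circ^{\Path_3}_{\{2,3\}}\nu^{(12)}$ produces $z_1>z_3>z_2$; both have sign pattern $(+,-)$ hence lie in the same component, giving~(2);
\item $\nu\circ^{\Path_3}_{\{1,2\}}\nu^{(12)}$ produces $z_3<z_1<z_2$ and $\nu^{(12)}\circ^{\Path_3}_{\{2,3\}}\nu$ produces $z_1<z_3<z_2$; both have sign pattern $(-,+)$, giving~(3);
\item for $\K_3$, both $\nu\circ^{\K_3}_{\{1,2\}}\nu$ and $\nu\circ^{\K_3}_{\{2,3\}}\nu$ produce $z_1>z_2>z_3$, giving~(4), since this is a single component in $H_0(\D_1(\K_3))$.
\end{itemize}
Each of these computations is a one-line verification once the identifications are pinned down.

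The only point that requires any care is the bookkeeping of the identifications $\Path_3/\{1,2\}\cong\Path_2$ and $\Path_3/\{2,3\}\cong\Path_2$ (and analogously for $\K_3$), because the $\mathbb{Z}_2$-action relating $\nu$ and $\nu^{(12)}$ is non-trivial and one must be consistent about which block corresponds to which endpoint of $\Path_2$. Once that convention is fixed, relation~(1) and~(4) follow because in both cases the composition nests into a strictly decreasing chain of centers, while~(2) and~(3) exploit the crucial observation that on $\Path_3$ the component $\{(z_1-z_2, z_2-z_3) \text{ has sign } (+,-)\}$ (resp.\ $(-,+)$) is connected, and so the two orderings appearing on the two sides of the relation, which differ only by the relative position of $z_1$ and $z_3$, collapse to the same class in $H_0$. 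I do not anticipate any serious obstacle beyond this identification.
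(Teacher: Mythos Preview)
Your proof is correct and follows essentially the same approach as the paper: identify $H_0(\D_1(\Gr))$ with the set of connected components of the real graphic arrangement complement, choose explicit representatives for each composition, and verify that the two sides of each relation land in the same component. The paper carries out only relation~\eqref{relD12} in detail (with a picture of interval configurations) and declares the others analogous; your write-up is more systematic and makes the identification conventions explicit. The one cosmetic discrepancy is that under your minimum-of-block identification both sides of~\eqref{relD12} sit in the component with sign pattern $(+,-)$, whereas the paper's figure (using a different identification of $\Path_3/G$ with $\Path_2$) places them in component~(II), i.e.\ $(-,+)$; this is purely a matter of convention and does not affect the validity of either argument.
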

\begin{proof}
Let us examine the second relation. For the path $\Path_3$, the space $\D_1(\Path_3)$ consists of 4 connected components: (\textrm{I}) $\{z_1>z_2>z_3\}$, (\textrm{II}) $\{z_1<z_2>z_3\}$, (\textrm{III}) $\{z_1>z_2<z_3\}$, (\textrm{IV}) $\{z_1<z_2<z_3\}$. We see that representatives of $\nu^{(12)}\circ^{\Path_3}_{\{1,2\}}\nu$, and $\nu \circ^{\Path_3}_{\{2,3\}}\nu^{(12)}$ belong to the same component, specifically to (\textrm{II}). Hence, we have $\nu^{(12)}\circ^{\Path_3}_{\{1,2\}}\nu=\nu \circ^{\Path_3}_{\{2,3\}}\nu^{(12)}$. The other relations are examined in a similar way.
\begin{gather*}
\vcenter{\hbox{\begin{tikzpicture}[
        scale=0.5,
        ]
        \draw (0,0)--(6,0);
        \node (bl) at (0,0) {[};
        \node (i1) at (1.5,0.5) {$3$};
        \node (i1l) at (1,0) {[};
        \node (i1r) at (2,0) {]};
        \node (i2) at (4,0.5) {$\{1,2\}$};
        \node (i2l) at (3,0) {[};
        \node (i2r) at (5,0) {]};
        \node (br) at (6,0) {]};
    \end{tikzpicture}}} \circ^{\Path_3}_{\{1,2\}}
    \vcenter{\hbox{\begin{tikzpicture}[
        scale=0.5,
        ]
        \draw (0,0)--(5,0);
        \node (bl) at (0,0) {[};
        \node (i1) at (1.5,0.5) {$1$};
        \node (i1l) at (1,0) {[};
        \node (i1r) at (2,0) {]};
        \node (i2) at (3.5,0.5) {$2$};
        \node (i2l) at (3,0) {[};
        \node (i2r) at (4,0) {]};
        \node (br) at (5,0) {]};
    \end{tikzpicture}}}=
    \vcenter{\hbox{\begin{tikzpicture}[
        scale=0.5,
        ]
        \draw (0,0)--(7,0);
        \node (bl) at (0,0) {[};
        \node (i1) at (1.5,0.5) {$3$};
        \node (i1l) at (1,0) {[};
        \node (i1r) at (2,0) {]};
        \node (i2) at (3.5,0.5) {$1$};
        \node (i2l) at (3,0) {[};
        \node (i2r) at (4,0) {]};
        \node (i3) at (5.5,0.5) {$2$};
        \node (i3l) at (5,0) {[};
        \node (i3r) at (6,0) {]};
        \node (br) at (7,0) {]};
    \end{tikzpicture}}}
    \\
    \vcenter{\hbox{\begin{tikzpicture}[
        scale=0.5,
        ]
        \draw (0,0)--(6,0);
        \node (bl) at (0,0) {[};
        \node (i1) at (1.5,0.5) {$1$};
        \node (i1l) at (1,0) {[};
        \node (i1r) at (2,0) {]};
        \node (i2) at (4,0.5) {$\{2,3\}$};
        \node (i2l) at (3,0) {[};
        \node (i2r) at (5,0) {]};
        \node (br) at (6,0) {]};
    \end{tikzpicture}}} \circ^{\Path_3}_{\{2,3\}}
    \vcenter{\hbox{\begin{tikzpicture}[
        scale=0.5,
        ]
        \draw (0,0)--(5,0);
        \node (bl) at (0,0) {[};
        \node (i1) at (1.5,0.5) {$3$};
        \node (i1l) at (1,0) {[};
        \node (i1r) at (2,0) {]};
        \node (i2) at (3.5,0.5) {$2$};
        \node (i2l) at (3,0) {[};
        \node (i2r) at (4,0) {]};
        \node (br) at (5,0) {]};
    \end{tikzpicture}}}=
    \vcenter{\hbox{\begin{tikzpicture}[
        scale=0.5,
        ]
        \draw (0,0)--(7,0);
        \node (bl) at (0,0) {[};
        \node (i1) at (1.5,0.5) {$1$};
        \node (i1l) at (1,0) {[};
        \node (i1r) at (2,0) {]};
        \node (i2) at (3.5,0.5) {$3$};
        \node (i2l) at (3,0) {[};
        \node (i2r) at (4,0) {]};
        \node (i3) at (5.5,0.5) {$2$};
        \node (i3l) at (5,0) {[};
        \node (i3r) at (6,0) {]};
        \node (br) at (7,0) {]};
    \end{tikzpicture}}}
\end{gather*} 
Note that relations \eqref{relD12}, \eqref{relD22} do not hold in the complete graph case because the space $\D_1(\K_3)$ has more connected components than $\D_1(\Path_3)$.
\end{proof}

Note that the contractad $H_{0}(\D_1)$ can be described in a purely combinatorial way as follows. For a graph $\Gr$ on $n$ vertices, let $\mathsf{Seq}(V_{\Gr})$ be a set consisting of ordered $n$-tuples $(v_1,v_2,\cdots,v_n)$ of distinct vertices. We say that two tuples $\sigma$ and $\sigma'$ are equivalent if $\sigma'$ is obtained from $\sigma$ by a sequence of permutations of the form $(\cdots,v_i,v_{i+1},\cdots)\rightsquigarrow(\cdots,v_{i+1},v_i,\cdots)$, where vertices $v_i,v_j$ are not adjacent. Let us define the set of $\Gr$-tuples $\mathsf{Seq}(\Gr)$ as the set of class equivalences $\mathsf{Seq}(V_{\Gr})/\sim$. For example, for the path $\Path_3$, we have 4 equivalence classes $\mathsf{Seq}(\Path_3)=\{[(1,2,3)],[(1,3,2)],[(2,1,3)],[(3,2,1)]\}$, where classes $[(1,3,2)],[(2,1,3)]$ consist of two elements, while $[(1,2,3)],[(3,2,1)]$ of only one.
\begin{lemma}
For a graph $\Gr$, there is a one-to-one correspondence between connected components of $\Conf_{\Gr}(\mathbb{R})$ and $\Gr$-tuples
\[
\mathsf{Seq}(\Gr)\cong \pi_0(\Conf_{\Gr}(\mathbb{R})).
\]
\end{lemma}
\begin{proof}
We correspond a $\Gr$-tuple to a configuration $p=(x_v)_{v\in V_{\Gr}}$ from $\Conf_{\Gr}(\mathbb{R})$  by arranging the labels of marked points of $p$ from left to right. It should be noted that this procedure is well-defined for configurations where part of the marked points coincide because all possible tuples associated with this configuration are equivalent. For example, for $\Path_3$ we have
\begin{gather*}
    \vcenter{\hbox{\begin{tikzpicture}[
        scale=0.5,
        ]
        \draw (0,0)--(4,0);
        \fill (0.6,0) circle (2pt);
        \node (i1) at (1,0.5) {$1$};
        \fill (2.5,0) circle (2pt);
        \node (i2) at (2.5,0.5) {$3$};
        \fill (3.2,0) circle (2pt);
        \node (i3) at (3.2,0.5) {$2$};
    \end{tikzpicture}}}\mapsto [(1,3,2)]\quad
    \vcenter{\hbox{\begin{tikzpicture}[
        scale=0.5,
        ]
        \draw (0,0)--(4,0);
        \fill (1,0) circle (2pt);
        \node (i1) at (1,0.5) {$2$};
        \fill (2.5,0) circle (2pt);
        \node (i3) at (2.5,0.42) {$1,3$};
    \end{tikzpicture}}}\mapsto [(2,1,3)]=[(2,3,1)]
\end{gather*} During the construction of the space $\Conf_{\Gr}(\mathbb{R})=\mathbb{R}^n\setminus \cup_{(v,w)\in E_{\Gr}} \{x_v=x_w\}$, we remove only hyperplanes labeled by edges, therefore two points $p,p'\in\Conf_{\Gr}(\mathbb{R})$ belong to the same connected component if and only if the associated tuples are equivalent. Hence, we obtain a well-defined map $\pi_0(\Conf_{\Gr}(\mathbb{R}))\to \mathsf{Seq}(\Gr)$ that is bijective by construction.
\end{proof}
 From the lemma above, we see that the graphical collection $H_{0}(\D_1)$ is obtained from the graphical collection of graph tuples $\mathsf{Seq}$ by linearization. In terms of tuples, the contractad structure on  $H_{\bullet}(\D_1)$ is given by the substitution of graph tuples. For example, the relation~\eqref{relD12} is rewritten in the following way
 \[
[(3,\{1,2\})]\circ^{\Path_3}_{\{1,2\}}[(1,2)]=[(3,1,2)]=[(1,3,2)]=[(1,\{2,3\})]\circ^{\Path_3}_{\{2,3\}}[(3,2)].
\] Also, in the combinatorial description we see that element $\nu=[(1,2)]$ generates $H_{0}(\D_1)$ as a contractad. Indeed, it follows from the observation that each tuple can be obtained by a sequence of substitutions.

Recall that the restriction of the contractad $\K_{*}(H_0(\D_2))$ to complete graphs is isomorphic to the symmetric  operad $\mathsf{Ass}$ of  associative algebras. We shall introduce a graphical counterpart of this operad.
\begin{defi}
The Associative contractad  $\Ass$ is the contractad with a generator $\nu$ in the component $\Path_2$, satisfying the relations
\begin{gather*}
    \nu\circ^{\Path_3}_{\{1,2\}}\nu=\nu\circ^{\Path_3}_{\{2,3\}}\nu,
    \\
    \nu^{(12)}\circ^{\Path_3}_{\{1,2\}}\nu=\nu \circ^{\Path_3}_{\{2,3\}}\nu^{(12)},
    \\
    \nu\circ^{\Path_3}_{\{1,2\}}\nu^{(12)}=\nu^{(12)} \circ^{\Path_3}_{\{2,3\}}\nu,
    \\
    \nu\circ^{\K_3}_{\{1,2\}}\nu=\nu\circ^{\K_3}_{\{2,3\}}\nu.
\end{gather*}
\end{defi}

By Lemma~\ref{relD1}, we have a well-defined morphism of contractads $\Ass \rightarrow H_{0}(\D_1)$. We want to show that this morphism is precisely an isomorphism. We must introduce another presentation of $\Ass$ in order to see this. Let $m:=\nu+\nu^{(12)}$ and $b:=\nu-\nu^{(12)}$ be the "odd" and "even" part of $\nu$. In the operad case, these generators correspond to Jordan and Lie brackets, respectively. By direct computations, we see that the relations in new generators have the following form
\begin{gather*}
    m\circ^{\Path_3}_{\{1,2\}}m-m\circ^{\Path_3}_{\{2,3\}}m=0,
    \\
    b\circ^{\Path_3}_{\{1,2\}}b-b\circ^{\Path_3}_{\{2,3\}}b=0,
    \\
    m\circ^{\Path_3}_{\{1,2\}}b-b\circ^{\Path_3}_{\{2,3\}}m=0,
    \\
    m\circ^{\K_3}_{\{1,2\}}m-m\circ^{\K_3}_{\{2,3\}}m+b\circ^{\K_3}_{\{1,3\}}b=0,
    \\
    b\circ^{\K_3}_{\{1,2\}}b+(b\circ^{\K_3}_{\{1,2\}}b)^{(123)}+(b\circ^{\K_3}_{\{1,2\}}b)^{(132)}=0,
    \\
    b\circ^{\K_3}_{\{1,2\}}m- m\circ^{\K_3}_{\{2,3\}}b-m\circ^{\K_3}_{\{1,3\}}b=0.
\end{gather*}
\begin{lemma}
\label{quantumorder}
    There is a monomial order on tree monomials $\Tree_{\{ m,b\}}$, such that the leading terms of quadratic relations $\R_{\Ass}$ have the form
\[
    \LT(\R_{\Ass})=\LT(\R_{\Com})\cup \LT(\R_{\Lie}) \cup \{b \circ^{\Gr}_{e} m| e \in E_{\Gr}\},
\] where $\LT(\R_{\Com}),  \LT(\R_{\Lie})$ are the leading terms of Gr\"obner basis for the commutative \eqref{leadtermcom} and  Lie contractad \eqref{leadtermlie}, respectively.
\end{lemma}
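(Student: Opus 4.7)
The plan is to construct a three-tier monomial order on $\Tree_{\{m,b\}}$ that simultaneously extends the orders used for $\Com$ and $\Lie$ in Proposition~\ref{quadcom} and Corollary~\ref{liegrob}. Concretely, I would compare two tree monomials successively by:
(T1) the total number of $m$-labelled vertices, more $m$'s being larger;
(T2) the label at the root, with $b>m$;
(T3) $\grpermlex$ with respect to a weighted $\mathsf{deglex}$ order on $\{m,b\}^*$ in which each occurrence of $m$ contributes $-1$ and each occurrence of $b$ contributes $+1$ to the weight of a word, words being compared first by total weight and then by $\mathsf{deglex}$ on ties.

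Monomiality is straightforward because each tier is compatible with shuffle composition: the $m$-vertex count is additive, the root label of $T_1\circ^{\Gr}_{G}T_2$ is inherited from $T_1$, and weighted $\mathsf{deglex}$ is a monoid order since both weight and length are additive under concatenation, so the underlying $\grpermlex$ remains monotone under infinitesimal substitution. The key observation about (T3) is that on pure-$m$ words the weight $-|w|$ reverses $\mathsf{deglex}$, so the induced order on pure-$m$ trees coincides with the reverse $\grpermlex$-order used in Proposition~\ref{quadcom}; dually on pure-$b$ words the weight $+|w|$ agrees with $\mathsf{deglex}$, reproducing the order of Corollary~\ref{liegrob}. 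Thus the tertiary tier already recovers $\LT(\R_{\Com})$ on $m$-only trees and $\LT(\R_{\Lie})$ on $b$-only trees.

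With this setup in place, I would verify the claim by inspecting each of the six generators of $\R_{\Ass}$. The two purely-$m$ relations have all summands tied on (T1) and (T2), so (T3) selects precisely the trees in $\LT(\R_{\Com})$ from equation~\eqref{leadtermcom}; the two purely-$b$ relations yield $\LT(\R_{\Lie})$ from~\eqref{leadtermlie} in the same way. For the two mixed relations, all summands have a single $m$-vertex (tied on (T1)), and (T2) isolates the unique $b$-rooted summand, of the form $b\circ^{\Gr}_{e}m$ for an edge $e$, as the leading term.

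The subtle point is the $\K_3$-relation $m\circ^{\K_3}_{\{1,2\}}m-m\circ^{\K_3}_{\{2,3\}}m+b\circ^{\K_3}_{\{1,3\}}b=0$, where a naive path-lexicographic comparison could let the $b\circ b$-summand dominate the $m\circ m$-summands; this is precisely what (T1) is designed to prevent, since the former carries zero $m$'s against two, so the leading term remains in $\LT(\R_{\Com})$. Extending these leading-term computations over the full $\Aut(\Gr)$-orbit via the equivariance axiom~\ref{partdefi} produces the complete set of leading monomials and yields the claimed decomposition $\LT(\R_{\Ass})=\LT(\R_{\Com})\cup\LT(\R_{\Lie})\cup\{b\circ^{\Gr}_{e}m\mid e\in E_{\Gr}\}$.
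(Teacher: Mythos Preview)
There is a genuine gap: your three-tier order is not a monomial order, because tier (T2) is incompatible with substitution in the second argument. You correctly note that the root label of $T_1\circ^{\Gr}_G T_2$ is that of $T_1$; but this is exactly the problem for the other monotonicity axiom. If $T_2\prec T_2'$ is decided by (T2) alone (same $m$-count, root $m$ versus root $b$), then after substituting into any $T_1$ both products inherit the root of $T_1$, so (T1) and (T2) are tied and (T3) must decide --- yet nothing forces (T3) to agree with the original (T2)-comparison. Concretely, in $\Path_4$ take
\[
T_2=m\circ_{\{1,2,3\}}(b\circ_{\{1,2\}}b),\qquad T_2'=b\circ_{\{2,3,4\}}(b\circ_{\{3,4\}}m).
\]
Both carry one $m$, and (T2) gives $T_2\prec T_2'$. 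Now substitute both into $T_1=b$ at the tube $\{1,2,3,4\}\subset\Path_5$. The first path-words become $bmbb$ and $bb$, both of weight $+2$; your $\mathsf{deglex}$ tie-breaker then gives $bmbb>bb$, hence $T_1\circ T_2\succ T_1\circ T_2'$ in your order, violating monotonicity.

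The paper circumvents this by encoding the ``$b$-above-$m$'' information at the word level rather than at the tree level: path-words are compared through the ordered monoid $\QM=\langle m,b,q\mid mq=qm,\ bq=qb,\ bm=mbq\rangle$, with normal form $m^k b^l q^p$ ordered by $k$ descending, then $l$, then $p$. The relation $bm=mbq$ is precisely the mechanism that records a $b$ sitting above an $m$ along a path, compatibly with concatenation; your (T2) attempts this globally and fails. Incidentally, on path-words of length $\leq 2$ your weighted $\mathsf{deglex}$ (with the letter convention $m<b$, which you should state) agrees with the quantum order, so (T1)+(T3) alone already produces the desired leading terms on quadratic trees and \emph{is} monomial. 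Either delete (T2) and check directly that (T3) handles the mixed relations, or follow the paper and push the root-label comparison into the word order.
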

\begin{proof}
Consider the monoid of quantum monomials $\mathsf{QM}=\langle m,b,q |mq-qm,bq-qb,bm-mbq\rangle$. Each element of this monoid can be written in the canonical form $m^kb^lq^m$. Consider the order on these monomials by putting $m^kb^lq^m\prec m^{k'}b^{l'}q^{m'}$ if $k>k'$ or $k=k'$ and $l<l'$, or $k=k'$ and $l=l'$, and $m<m'$. It was proved in \cite[Th.~2.2]{dotsenko2020word} that this order turns $\mathsf{QM}$ into an ordered monoid, i.e., this order is compatible with a monoid structure. Define the modified $\grpermlex$ order on tree monomials in $\Tree_{\{ m, b \}}$ by the rule:
\begin{itemize}
    \item For a pair of monomials $T', T$, we set $T' > T$ if the number of vertices labeled $m$ from
    the left monomial is strictly greater than the one from the right
    \item If such numbers are equal, we compare $T,T'$ using the $\grpermlex$ extension of quantum monomial order on $\{m,b,q\}^*$.
\end{itemize}
\[
\vcenter{\hbox{\begin{tikzpicture}[
        scale=0.6,
        vert/.style={inner sep=2.5pt, circle,  draw, thick},
        leaf/.style={inner sep=2pt, rectangle},
        edge/.style={-,black!30!black, thick},
        ]
        \node[vert] (1) at (0,1) {\small $b$};
        \node[leaf] (l1) at (0.75,2) {\small$3$};
        \node[vert] (2) at (-0.75,2) {\small$b$};
        \node[leaf] (l2) at (0,3) {\small$2$};
        \node[leaf] (3) at (-1.5,3) {\small$1$};
        \node[leaf] (root) at (0,-0.5) {\scriptsize$(0,(bb,
        bb,b),123)$};
        \draw[edge] (root)--(1);
        \draw[edge] (1)--(2)--(3);
        \draw[edge] (1)--(l1);
        \draw[edge] (2)--(l2);
    \end{tikzpicture}}}<
\vcenter{\hbox{\begin{tikzpicture}[
        scale=0.6,
        vert/.style={inner sep=2.5pt, circle,  draw, thick},
        leaf/.style={inner sep=2pt, rectangle},
        edge/.style={-,black!30!black, thick},
        ]
        \node[vert] (1) at (0,1) {\small$m$};
        \node[leaf] (l1) at (0.75,2) {\small$3$};
        \node[vert] (2) at (-0.75,2) {\small$b$};
        \node[leaf] (l2) at (0,3) {\small$2$};
        \node[leaf] (3) at (-1.5,3) {\small$1$};
        \node[leaf] (root) at (0,-0.5) {\scriptsize$(1,(mb,mb,m),123)$};
        \draw[edge] (root)--(1);
        \draw[edge] (1)--(2)--(3);
        \draw[edge] (1)--(l1);
        \draw[edge] (2)--(l2);
    \end{tikzpicture}}}<
    \vcenter{\hbox{\begin{tikzpicture}[
        scale=0.6,
        vert/.style={inner sep=2.5pt, circle,  draw, thick},
        leaf/.style={inner sep=2pt, rectangle},
        edge/.style={-,black!30!black, thick},
        ]
        \node[vert] (1) at (0,1) {\small$b$};
        \node[leaf] (l1) at (0.75,2) {\small$3$};
        \node[vert] (2) at (-0.75,2) {\small$m$};
        \node[leaf] (l2) at (0,3) {\small$2$};
        \node[leaf] (3) at (-1.5,3) {\small$1$};
        \node[leaf] (root) at (0,-0.5) {\scriptsize$(1,(mbq,mbq,b),123)$};
        \draw[edge] (root)--(1);
        \draw[edge] (1)--(2)--(3);
        \draw[edge] (1)--(l1);
        \draw[edge] (2)--(l2);
    \end{tikzpicture}}}<
    \vcenter{\hbox{\begin{tikzpicture}[
        scale=0.6,
        vert/.style={inner sep=2.5pt, circle,  draw, thick},
        leaf/.style={inner sep=2pt, rectangle},
        edge/.style={-,black!30!black, thick},
        ]
        \node[vert] (1) at (0,1) {\small$m$};
        \node[leaf] (l1) at (0.75,2) {\small$3$};
        \node[vert] (2) at (-0.75,2) {\small$m$};
        \node[leaf] (l2) at (0,3) {\small$2$};
        \node[leaf] (3) at (-1.5,3) {\small$1$};
        \node[leaf] (root) at (0,-0.5) {\scriptsize$(2,(mm,mm,m),123)$};
        \draw[edge] (root)--(1);
        \draw[edge] (1)--(2)--(3);
        \draw[edge] (1)--(l1);
        \draw[edge] (2)--(l2);
    \end{tikzpicture}}}
\] By direct computations, we see that the leading terms of relations $\R^{\forget}_{\Ass}$ concerning this order have the required form.
\end{proof}

\begin{theorem}\label{thm:ass}
The homology contractad of the little intervals contractad is isomorphic to the Associative contractad
\[
H_{0}(\D_1)\cong \Ass.
\] Moreover, this contractad is self dual $\Ass^{!}\cong \Ass$ and Koszul.
\end{theorem}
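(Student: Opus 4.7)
The plan is to construct a surjective morphism $\Ass \twoheadrightarrow H_0(\D_1)$, bound $\dim \Ass(\Gr)$ from above via a Gr\"obner basis argument, and match the bound with Proposition~\ref{dimD1} to simultaneously deduce the isomorphism, Koszulity, and (eventually) self-duality.

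First, I would send $\nu \in \Ass(\Path_2)$ to the class of any point in the chamber $\{z_1 > z_2\} \subset \D_1(\Path_2)$. Lemma~\ref{relD1} verifies every defining relation of $\Ass$, so this extends uniquely to a contractad morphism $\pi \colon \Ass \to H_0(\D_1)$. For surjectivity of $\pi$, I would observe that each connected component of $\D_1(\Gr) \simeq \mathbb{R}^{V_\Gr} \setminus \B(\Gr)$ is a chamber of the graphic arrangement, represented by a point whose coordinates induce a linear order on $V_\Gr$; iteratively composing $\nu$ and $\nu^{(12)}$ along this order produces a preimage.

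Next, by Lemma~\ref{quantumorder} the leading terms of $\R_\Ass$ under the modified $\grpermlex$-order are $\LT(\R_\Com) \cup \LT(\R_\Lie) \cup \{b \circ^{\Gr}_{e} m : e \in E_\Gr\}$. The last family forces every $m$-corolla in a normal tree monomial to lie strictly above every $b$-corolla, so each normal monomial decomposes uniquely as a $\Com$-normal $\{m\}$-tree grafted above a family of $\Lie$-normal $\{b\}$-subtrees whose leaf-sets constitute a graph-partition $I$ of $\Gr$. Combining Proposition~\ref{quadcom} ($\dim \Com(\Gr/I) = 1$) with Corollary~\ref{cor:liekoszul} ($\dim \Lie(\Gr|_G) = |\mu(\Gr|_G)|$), enumeration of normal monomials yields
\[
\dim \Ass(\Gr) \;\leq\; \sum_{I \in \parti(\Gr)} \prod_{G \in I} |\mu(\Gr|_G)|.
\]
By Proposition~\ref{dimD1}, the right-hand side equals $\dim H_0(\D_1(\Gr))$. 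Surjectivity of $\pi$ then forces equality throughout, so $\pi$ is an isomorphism and the normal monomials form a basis. By Proposition~\ref{coset} the quadratic relations $\R_\Ass$ are a Gr\"obner basis, and Theorem~\ref{quadgrob} delivers Koszulity of $\Ass$.

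For self-duality, I would compute the Koszul-dual presentation directly via the orthogonal-complement description of $\Pop^!$. Since the desuspension $s^{-1}\Susp^{-1}$ flips parities, the dual generators $m^!$ and $b^!$ are antisymmetric and symmetric respectively. Pairing the six quadratic relations of $\R_\Ass$ against the candidate quadratic space in the dual generators, I expect $\R_\Ass^\bot$ to reproduce $\R_\Ass$ after the relabelling $m \leftrightarrow b^!$, $b \leftrightarrow m^!$. The main technical obstacle is the sign bookkeeping in the three $\K_3$ relations, where the $\Sigma$-action on quadratic monomials interacts nontrivially with the $\Susp^{-1}$-twist; this is a direct but tedious computation, parallel to, though more intricate than, the verification that $\Com^! \cong \Lie$ used in Subsection~\ref{subsec:koszul}.
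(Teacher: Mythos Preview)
Your argument is correct and follows essentially the same route as the paper: build the surjection via Lemma~\ref{relD1}, use the monomial order of Lemma~\ref{quantumorder} to bound the number of normal monomials, match against Proposition~\ref{dimD1}, and conclude Koszulity from Theorem~\ref{quadgrob}. One slip worth fixing: the leading term $b\circ^{\Gr}_{e} m$ has $b$ at the \emph{root} and $m$ grafted \emph{above} it, so in a normal monomial the $m$-vertices are concentrated at the \emph{bottom} and the $b$-subtrees sit on top---the reverse of what you wrote. Your enumeration already uses the correct orientation (the $\Com$-part lives on $\Gr/I$ and the $\Lie$-parts on the blocks $\Gr|_G$), so only the verbal description needs correcting.
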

\begin{proof}
Note that the morphism of contractads $\Ass \rightarrow H_{0}(\D_1)$ described above is surjective since $\nu$ generates $H_{0}(\D_1)$. Consider the shuffle version $\Ass^{\forget}$ and the monomial order described above.  By Lemma~\ref{quantumorder}, the normal monomials in each component $\Tree_{\{ m, b\}}(\Gr)$ with  respect to the quadratic relations $\R_{\Ass}$ have the following form
\[
    \vcenter{\hbox{\begin{tikzpicture}[
        scale=0.6,
        vert/.style={inner sep=1pt, circle,  draw, thick},
        leaf/.style={inner sep=2pt,rectangle},
        edge/.style={-,black!30!black, thick},
        ]
        \node[vert] (1) at (0,1) {\scriptsize$m_{\scalebox{0.7}{$\Gr/I$}}$};
        \node[vert] (l) at (-1.5,2) {\scriptsize$b^{\scalebox{0.7}{$(I_1)$}}$};
        \node[leaf] (ll) at (-2,3) {\space};
        \node[leaf] (lm) at (-1.5,3.1) {\footnotesize$\cdots$};
        \node[leaf] (lr) at (-1,3) {\space};
        \node[leaf] (mid) at (0,2.2) {\footnotesize$\cdots$};
        \node[vert] (r) at (1.5,2) {\scriptsize$b^{\scalebox{0.7}{$(I_k)$}}$};
        \node[leaf] (rl) at (1,3) {\space};
        \node[leaf] (rm) at (1.5,3.1) {\footnotesize$\cdots$};
        \node[leaf] (rr) at (2,3) {\space};
        \draw[edge] (0,0)--(1);
        \draw[edge] (1)--(l);
        \draw[edge] (l)--(lm);
        \draw[edge] (l)--(ll);
        \draw[edge] (l)--(lr);
        \draw[edge] (1)--(mid);
        \draw[edge] (1)--(r);
        \draw[edge] (r)--(rm);
        \draw[edge] (r)--(rl);
        \draw[edge] (r)--(rr);
    \end{tikzpicture}}} \quad \quad (m_{\Gr/I};b^{(I_1)},b^{(I_2)},\cdots,b^{(I_k)}),
\]
where $I=\{I_1,I_2,\cdots,I_k\}$ is ranged over all partitions of $\Gr$, $m_{\Gr/I}$ is the unique $\Com$-monomial, and $b^{(I_j)}$ are $\Lie$-monomials in the corresponding components. Indeed, the relations of the form $\{b \circ^{\Gr}_{e} m| e \in E_{\Gr}\}$ ensures us that the vertices labeled by $m$ in a normal monomial are concentrated at the bottom. Recall that $\Lie$-monomials form a  basis of $\Lie^{\forget}$, so the number of normal monomials in $\Ass^{\forget}$ is equal to
\[
\sum_{I  \in \parti(\Gr)} \prod_{G \in I}\dim\Lie(\Gr|_G)=\sum_{I \in \parti(\Gr)} \prod_{G \in I}|\mu(\Gr|_G)|.
\]
By Proposition~\ref{dimD1} and Proposition~\ref{coset}, we conclude that the morphism $\Ass\twoheadrightarrow H_{\bullet}(\D_1)$ is an isomorphism and the quadratic relations form a Gr\"obner basis. Hence, by Theorem~ \ref{quadgrob}, this contractad is Koszul.
\end{proof}

\subsection{Two-dimensional case}\label{subsec:2disks}
In this subsection, we describe the homology contractad of the little 2-disks contractad $\D_2$. This contractad is a graphical counterpart of the operad $\mathsf{Gerst}$ of Gerstenhaber algebras. Moreover, we prove that this contractad is Koszul.

Consider the little 2-disks  contractad $\D_2$. By Proposition~\ref{discsconf},  each component of this contractad $\D_2(\Gr) \simeq \Conf_{\Gr}(\mathbb{R}^2)\cong \mathbb{C}^n\setminus \B(\Gr)$ is homotopy equivalent to the complement of the complex graphic arrangement.

Recall that cohomology ring $H^{\bullet}(V\setminus \bigcup \mathcal{A})$ of a complement to a central complex hyperplane arrangement $\mathcal{A}$ is isomorphic to an \textit{Orlik-Solomon algebra} $\OS(\mathcal{A})$ \cite{yuzvinsky2001orlik}. The latter algebra is the $\mathbb{Z}$-algebra generated by logarithmic $1$-forms $\omega_{H}=\frac{d\alpha_{H}}{\alpha_H}$, where $H$ is a hyperplane from the arrangement, and $\alpha_H$ is a non-zero linear function from the annihilator of $H^{\bot}$. Note that the element $\omega_H$ is determined uniquely since the annihilator $H^{\bot}$ is a one-dimensional subspace and the form $\omega_H$ is invariant under rescaling. A subset $S \subset \mathcal{A}$ is called \textit{dependent} if there is an element $H' \in S$ such that the intersection $\bigcap_{H \in S\setminus \{H'\}} H=\bigcap_{H \in S} H$. Note that the independent sets defines the intersection matroid $M(\A)$ on the set $\A$. It is known that, for each dependent set $S=\{H_1,\cdots, H_n\}$, there is the relation of form $ \sum_{i=1}^n (-1)^{i-1}\omega_{H_1}\omega_{H_2}...\hat{\omega}_{H_i}...\omega_{H_n}=0$ and such relations determine the algebra $\OS(\mathcal{A})$ uniquely. Recall that the Hilbert series of the Orlik-Solomon algebra $\OS(\A)$ is uniquely determined by the intersection matroid $M(\A)$
\[
H_{\OS(\A)}(t)=\sum_{X\in \La(\A)} t^{\rk X}\mu_{\La(\A)}(\hat{0},X)=t^{\rk \A}\chi_{M(\A)}(\frac{1}{t}),
\] where $\chi_{M(\A)}(t)$ is the characteristic polynomial of the intersection matroid $M(\A)$~\cite[Th.~4.2]{eschenbrenner1999orlik}.

\begin{prop}
\label{discring}
\begin{enumerate}
\item For a graph $\Gr$, the cohomology ring of the graphical configuration space $H^{\bullet}(\Conf_{\Gr}(\mathbb{C}))$ is generated by logarithmic forms $\omega_e$
\[
\omega_e=\frac{d(x_v-x_w)}{x_v-x_w}\text{, for } e=(v,w)\in E_{\Gr},
\]satisfying the relations
\[
    \sum_{i=1}^n (-1)^{i-1}\omega_{e_1}\omega_{e_2}...\hat{\omega}_{e_i}...\omega_{e_n}=0, \quad \text{if}\quad \{e_1,e_2,\cdots,e_n\} \subset E_{\Gr}\quad \text{contains a cycle.}
\]
\item The Hilbert series $H_{\Conf_{\Gr}(\mathbb{C})}(t)=\sum_{i=0} (-t)^i\dim H^i(\Conf_{\Gr}(\mathbb{C}))$ is given by the formula
\[
H_{\Conf_{\Gr}(\mathbb{C})}(t)=\sum_{I \in \parti(\Gr)} t^{\rk I}\prod_{G \in I}\mu(\Gr|_G)=t^{|V_{\Gr}|}\chi_{\Gr}(\frac{1}{t}),
\] where $\chi_{\Gr}(t)$ is the chromatic polynomial of the graph.
\end{enumerate}
\end{prop}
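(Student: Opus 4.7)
The plan is to reduce the claim to the classical Orlik--Solomon theorem for the complex graphic arrangement $\B(\Gr)$ and then apply the poset combinatorics developed earlier in the section. By \textbf{Proposition}~\ref{discsconf} the little 2-discs contractad is homotopy equivalent to $\Conf_{\Gr}(\mathbb{C})=\mathbb{C}^{V_{\Gr}}\setminus\bigcup\B(\Gr)$, so its cohomology ring is the Orlik--Solomon algebra $\OS(\B(\Gr))$. The defining linear form of the hyperplane $H_e$ for $e=(v,w)$ is $\alpha_e=x_v-x_w$, and so the canonical Orlik--Solomon generator $\omega_{H_e}$ coincides with the stated $\omega_e$. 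This gives the generators and reduces the problem to identifying the circuits of $\B(\Gr)$.

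For the relations I will invoke the standard identification of the matroid of $\B(\Gr)$ with the graphic matroid of $\Gr$: the normal covectors $\alpha_{e_1},\dots,\alpha_{e_n}$ of a set of edges are linearly dependent if and only if the edges contain a cycle, so the minimal dependent subsets (circuits) are precisely the edge sets of cycles of $\Gr$. The Orlik--Solomon presentation is generated by the boundary relations on circuits, which yields exactly $\sum_{i=1}^n (-1)^{i-1}\omega_{e_1}\cdots\hat{\omega}_{e_i}\cdots\omega_{e_n}=0$ for every cycle; the relation attached to a larger dependent set follows formally from the cycle relations.

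For the Hilbert series I will use the Orlik--Solomon--Brieskorn formula
\[
P_{\OS(\B(\Gr))}(t) \;=\; \sum_{X\in\La(\B(\Gr))} |\mu(\hat 0,X)|\, t^{\rk X}.
\]
By \textbf{Proposition}~\ref{grapharrang} the intersection lattice is isomorphic to $\parti(\Gr)$ as a ranked poset, with $\rk I=|V_{\Gr}|-|I|$ matching the codimension of the associated flat in the essential quotient. By \textbf{Lemma}~\ref{intervals} the interval $[\hat 0,I]$ factors as $\prod_{G\in I}\parti(\Gr|_G)$, and multiplicativity of the M\"obius function over products of bounded posets gives $\mu_{\parti(\Gr)}(\hat 0,I)=\prod_{G\in I}\mu(\Gr|_G)$. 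Taking absolute values produces the stated product formula.

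The step I expect to require most care is the passage from ``circuits of the graphic arrangement $=$ cycles of $\Gr$'' to the assertion that the cycle relations \emph{alone} suffice as a presentation of $\OS(\B(\Gr))$; this is the standard nbc-basis argument for Orlik--Solomon algebras of representable matroids and will be handled by citing Yuzvinsky~\cite{yuzvinsky2001orlik}. The only remaining verification is the rank compatibility between $\parti(\Gr)$ and $\La(\B(\Gr))$, which follows immediately from the fact that merging two blocks in a graph partition raises $\rk$ by one and simultaneously increases the codimension of the corresponding flat by one.
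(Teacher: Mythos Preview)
Your proposal is correct and follows essentially the same route as the paper: both identify $\Conf_{\Gr}(\mathbb{C})$ with the complement of the graphic arrangement $\B(\Gr)$, invoke the Orlik--Solomon theorem for the presentation by logarithmic forms, translate ``dependent set of hyperplanes'' into ``edge set containing a cycle,'' and read off the Hilbert series from the standard Orlik--Solomon/M\"obius formula combined with \textbf{Proposition}~\ref{grapharrang}. Your explicit use of \textbf{Lemma}~\ref{intervals} to justify the factorisation $\mu_{\parti(\Gr)}(\hat 0,I)=\prod_{G\in I}\mu(\Gr|_G)$ is a detail the paper leaves implicit, but it is exactly the right ingredient.
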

\begin{proof}
As we have mentioned before, the graphical configuration space $\Conf_{\Gr}(\mathbb{C})$ is precisely the complement to the graphic arrangement $\B(\Gr)$, hence the cohomology ring $H^{\bullet}(\Conf_{\Gr}(\mathbb{C}))$ is isomorphic to the Orlik-Solomon algebra $\OS(\B(\Gr))$. The relations above follow from the fact that a subset of hyperplanes $\{H_{e_1},\cdots, H_{e_n}\} \subset \B(\Gr)$ is dependent if the underlying set of edges $\{e_1, \cdots, e_n\}$ contains a cycle. From the description of independent sets, we see that the intersection matroid of the graphic arrangement $M(\B(\Gr))$ coincides with the so-called graphic matroid $M_{\Gr}$. Recall that the characteristic polynomial of the graphic matroid is given by the rule $\chi_{M_{\Gr}}(t)=(\frac{1}{t})^{\pi_0(\Gr)}\chi_{\Gr}(t)$, where $\pi_0(\Gr)$ is the number of connected components, hence $t^{\rk \B(\Gr)}\chi_{M_{\Gr}}(\frac{1}{t})=t^{|V_{\Gr}|}\chi_{\Gr}(\frac{1}{t})$. So, we have  
\[
H_{\OS(\B(\Gr))}(t)=t^{|V_{\Gr}|}\chi_{\Gr}(\frac{1}{t})=\sum_{I \in \La(\B(\Gr))} t^{\rk I}\mu(\hat{0},I)=\sum_{I \in \parti(\Gr)} t^{\rk I}\prod_{G \in I}\mu(\Gr|_G).
\]
\end{proof}

As an immediate consequence, from the identity $\sum_{I \in \parti(\Gr)} t^{\rk I}\prod_{G \in I}\mu(\Gr|_G)=t^{|V_{\Gr}|}\chi_{\Gr}(\frac{1}{t})$, we have
\begin{cor}\label{sled:hilbertseries}
For a connected graph $\Gr$, we have
\begin{enumerate}
    \item $\mu(\Gr)=\chi_{\Gr}'(0)$.
    \item $\dim H_0(\D_1)(\Gr) =(-1)^{|V_{\Gr}|}\chi_{\Gr}(-1)$. 
\end{enumerate}
\end{cor}

Let us denote by $\OS$ the graphical collection of cohomology rings $\OS(\Gr):=H^{\bullet}(\Conf_{\Gr}(\mathbb{C}))$ in the category of graded commutative rings. This graphical collection is endowed with a cocontractad structure concerning the homotopy equivalence $\D_2(\Gr) \cong \Conf_{\Gr}(\mathbb{C})$.
\begin{prop}
\label{osmaps}
The contractad structure on the little 2-disks $\D_2$ induces a cocontractad structure on the graphical collection of Orlik-Solomon algebras $\OS$. The infinitesimal compositions are homomorphisms of algebras given by the rule:
\begin{gather*}
    \triangle_G^{\Gr}\colon\OS(\Gr) \rightarrow \OS(\Gr/G)\otimes\OS(\Gr|_G)
    \\
    \omega_{e}\mapsto \begin{cases}
    \omega_{e'} \otimes 1, \text{ if } e \not\subset G
    \\
    1\otimes \omega_{e}, \text{ if } e \subset G
    \\
    \end{cases},
\end{gather*} where $e'$ is the image of $e$ under contraction $\Gr \rightarrow \Gr/G$.
\end{prop}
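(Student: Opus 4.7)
The plan is to transfer the contractad structure from $\D_2$ through the homotopy equivalence $\D_2(\Gr)\simeq \Conf_{\Gr}(\mathbb{C})$ of \textbf{Proposition}~\ref{discsconf}, then apply the (contravariant, symmetric monoidal) cohomology functor with $\mathbb{Z}$-coefficients. Concretely, I would first write down an explicit continuous model for the composition: for a tube $G$ of $\Gr$, fix any base configuration $(y_w)_{w\in V_{\Gr/G}}$ and form the map
\[
\Conf_{\Gr/G}(\mathbb{C})\times \Conf_{\Gr|_G}(\mathbb{C})\longrightarrow \Conf_{\Gr}(\mathbb{C}),\qquad
((y_w),(z_v))\longmapsto (x_v),
\]
where $x_v=y_v$ for $v\notin G$ and $x_v=y_{\{G\}}+\varepsilon z_v$ for $v\in G$, with $\varepsilon>0$ small enough that no forbidden coincidences are produced (the set of admissible $\varepsilon$ is non-empty and connected). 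This map realises, up to homotopy, the infinitesimal composition $\circ_G^{\Gr}$ of $\D_2$ under the equivalences $\D_2(-)\simeq \Conf_-(\mathbb{C})$; the choice of $\varepsilon$ is irrelevant on cohomology because different admissible choices are joined by a straight-line homotopy.

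Applying $H^{\bullet}(-;\mathbb{Z})$ and the Künneth isomorphism $H^{\bullet}(X\times Y)\cong H^{\bullet}(X)\otimes H^{\bullet}(Y)$ (which is available since the $\OS$-algebras are $\mathbb{Z}$-free by \textbf{Proposition}~\ref{discring}) produces maps
\[
\triangle_G^{\Gr}\colon \OS(\Gr)\longrightarrow \OS(\Gr/G)\otimes \OS(\Gr|_G)
\]
which are homomorphisms of graded-commutative algebras, since cohomology is a symmetric monoidal functor with cup product. The cocontractad axioms (coassociativity on nested pairs of tubes, coparallel relation on disjoint tubes, counit, and equivariance) follow immediately by applying $H^{\bullet}$ to the dual diagrams of \textbf{Proposition}~\ref{partdefi} satisfied by $\D_2$, using again Künneth to identify the iterated cohomology of products.

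It remains to compute $\triangle_G^{\Gr}$ on the generators $\omega_e$. By the ring-homomorphism property it suffices to evaluate on degree-one classes. For an edge $e=(v,w)\in E_{\Gr}$ one pulls back $\omega_e=\tfrac{d(x_v-x_w)}{x_v-x_w}$ through the explicit composition above: if $e\subset G$ then $x_v-x_w=\varepsilon(z_v-z_w)$, so the pullback equals $\tfrac{d(z_v-z_w)}{z_v-z_w}=\omega_e\in \OS(\Gr|_G)$, giving the image $1\otimes \omega_e$; if $e\not\subset G$ then at most one endpoint lies in $G$, and writing $x_v-x_w$ as $(y_{\alpha}-y_{\beta})+\varepsilon\eta$ where $\alpha,\beta$ are the images of $v,w$ in $\Gr/G$ and $\eta$ is a bounded correction, the logarithmic form is cohomologous to $\tfrac{d(y_{\alpha}-y_{\beta})}{y_{\alpha}-y_{\beta}}=\omega_{e'}\in \OS(\Gr/G)$, giving $\omega_{e'}\otimes 1$; here $e'=(\alpha,\beta)$ is an edge of $\Gr/G$ since $e$ is an edge of $\Gr$ and its endpoints are not both collapsed.

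The main obstacle will be the cohomological computation of $\triangle_G^{\Gr}(\omega_e)$ in the mixed case where exactly one endpoint of $e$ lies in $G$: one must check that $\tfrac{d(x_v-x_w)}{x_v-x_w}$ and $\tfrac{d(y_{\alpha}-y_{\beta})}{y_{\alpha}-y_{\beta}}$ represent the same cohomology class on the image. The cleanest way is to observe that both forms extend to closed logarithmic $1$-forms on an ambient open set where $y_{\alpha}\neq y_{\beta}$, and that their difference is exact (a logarithmic derivative of a nowhere-vanishing function of the form $1+\varepsilon\eta/(y_{\alpha}-y_{\beta})$); equivalently, one can check residues on the divisor $y_{\alpha}=y_{\beta}$ and use that the Orlik--Solomon algebra is the subalgebra of closed forms generated by residues, so two logarithmic forms with matching residues on all hyperplanes of $\B(\Gr/G)$ agree in cohomology. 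Everything else is bookkeeping with Künneth and the contractad axioms.
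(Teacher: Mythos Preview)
Your approach is correct and differs from the paper's chiefly in how you compute $\triangle_G^{\Gr}(\omega_e)$. You build an explicit $\varepsilon$-rescaling model of the composition on configuration spaces and pull back the logarithmic forms directly, handling the mixed case by an exactness/residue argument. The paper instead notes that each generator $\omega_e\in\OS(\Gr)$ is the pullback of the single class $\omega\in H^1(\Conf_{\Gr|_e}(\mathbb{C}))$ along the forgetful map $f_e\colon\Conf_{\Gr}(\mathbb{C})\to\Conf_{\Gr|_e}(\mathbb{C})$; it then only has to check that, under the projections $\pi$ from $\D_2$ to configurations, the composite $f_e\circ\pi\circ(\circ_G^{\Gr})$ agrees (up to homotopy) with the obvious projection-then-forget map $g_e\colon\Conf_{\Gr/G}(\mathbb{C})\times\Conf_{\Gr|_G}(\mathbb{C})\to\Conf_{\Gr|_e}(\mathbb{C})$, and read off $g_e^*\omega$. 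This packages your mixed-case computation into a single homotopy of maps into a space with $H^1\cong\mathbb{Z}$, and avoids manipulating forms entirely.

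One point in your write-up needs tightening. Your $\varepsilon$-map is not globally well-defined for any \emph{fixed} $\varepsilon>0$: the quantity $y_{\alpha}-y_{\beta}$ can be arbitrarily small while $z_v$ can be arbitrarily large, so ``$\varepsilon$ small enough'' cannot be chosen uniformly. Relatedly, ``nowhere-vanishing'' is not sufficient to conclude that $d\log\bigl(1+\varepsilon\eta/(y_{\alpha}-y_{\beta})\bigr)$ is exact; you need the function to stay in a simply connected region of $\mathbb{C}^*$. Both issues disappear if you stay on the $\D_2$ side, where the radius $r_{\{G\}}$ of the disk $\{G\}$ plays the role of $\varepsilon$ and the non-overlap condition forces $|r_{\{G\}}z_v|<r_{\{G\}}<|y_{\alpha}-y_{\beta}|$, so the function lies in the half-plane $\{\operatorname{Re}>0\}$ and its $d\log$ is genuinely exact. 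Your residue alternative is also a clean fix.
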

\begin{proof}
As cohomology is a contravariant functor, it sends topological contractad $\D_2$ to the cocontractad $H^{\bullet}(\D_2)\cong \OS$ in the category of $\mathbb{Z}$-algebras. For a graph $\Gr$ and tube $G$, consider the infinitesimal composition $\triangle_G^{\Gr}\colon \OS(\Gr) \rightarrow \OS(\Gr/G)\otimes \OS(\Gr|_G)$. This map is uniquely determined by the images of generators $\triangle_G^{\Gr}\omega_e$ since it is a  ring homomorphism. By the definition, each generator $\omega_e \in \OS(\Gr)$ is a pullback $f_e^* \omega_e$ of the 1-form on $\Conf_{\Gr|_e}(\mathbb{C})$, where $f_e: \Conf_{\Gr}(\mathbb{C})\rightarrow \Conf_{\Gr|_e}(\mathbb{C})$ is the map forgetting points out of $e$. For each edge $e$, define the map $g_e\colon \Conf_{\Gr/G}(\mathbb{C})\times\Conf_{\Gr|_G}(\mathbb{C})\rightarrow \Conf_{e}(\mathbb{C})$ as follows:  if $e \subset G$ take the composition of the projection on the right factor with the forgetful map $\Conf_{\Gr|_G}(\mathbb{C}) \rightarrow \Conf_{\Gr|_e}(\Gr)$, otherwise, take the composition of the projection on the left factor with the forgetful map $\Conf_{\Gr/G}(\mathbb{C}) \rightarrow \Conf_{\Gr|_{e'}}(\Gr)$. We have the commutative diagram
\[\begin{tikzcd}
	{\mathcal{D}_2(\Gamma/G)\times \mathcal{D}_2(\Gamma|_G)} && {\mathcal{D}_2(\Gamma)} \\
	\\
	{\Conf_{\Gr/G}(\mathbb{C})\times\Conf_{\Gr|_G}(\mathbb{C}) } && {\Conf_{\Gr}(\mathbb{C})} \\
	& {\Conf_{\Gr|_e}(\mathbb{C})}
	\arrow["\pi", from=1-1, to=3-1]
	\arrow["{\circ_G^{\Gamma}}", from=1-1, to=1-3]
	\arrow["\pi", from=1-3, to=3-3]
	\arrow["{g_e}"', from=3-1, to=4-2]
	\arrow["{f_e}", from=3-3, to=4-2]
\end{tikzcd}\] from which we deduce
\[
\triangle^{\Gr}_G\omega_{e}:=(\pi^*)^{-1}(\circ^{\Gr}_G)^*\pi^*(f^*_e\omega_e)=g^*_e\omega_e= \begin{cases}
    \omega_{e'} \otimes 1, \text{ if } e \not\subset G
    \\
    1\otimes \omega_{e}, \text{ if } e \subset G
    \\
    \end{cases},
\]
\end{proof}

From the description of cohomology rings, we see that the natural pairing $H^{\bullet}(\D_2)\otimes H_{\bullet}(\D_2) \rightarrow \mathbb{Z}$ is perfect. So the homology contractad $H_{\bullet}(\D_2)$ is isomorphic to the dual contractad $H^{\bullet}(\D_2)^{\vee}$ concerning the pairing:
\begin{equation*}
    H_{\bullet}(\D_2)\cong H^{\bullet}(\D_2)^{\vee}=\OS^{\vee}.
\end{equation*} Let $m, b \in H_{\bullet}(\D_2(\Path_2))$ be the classes dual to $1$ and $\omega_{12}$, respectively.
\begin{lemma}
\label{relD2}
We have
\begin{gather}
    m \circ_{\{1,2\}}^{\Path_3} m -  m \circ_{\{2,3\}}^{\Path_3} m=0
    \\
    b \circ_{\{1,2\}}^{\Path_3} b  +b \circ_{\{2,3\}}^{\Path_3} b=0
    \\
    b \circ_{\{1,2\}}^{\Path_3} m -  m \circ_{\{2,3\}}^{\Path_3} b=0
    \\
    m \circ_{\{1,2\}}^{\K_3}m  -  m \circ_{\{2,3\}}^{\K_3} m=0
    \\
    b\circ_{\{1,2\}}^{\K_3} b + (b\circ_{\{1,2\}}^{\K_3} b)^{(123)} + (b\circ_{\{1,2\}}^{\K_3} b)^{(321)}=0
    \\
    b\circ_{\{1,2\}}^{\K_3} m - m\circ_{\{2,3\}}^{\K_3} b - (m\circ_{\{1,2\}}^{\K_3}b)^{(23)}=0
\end{gather}
\end{lemma}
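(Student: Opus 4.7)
The approach is to pass to the Orlik--Solomon dual: Proposition~\ref{discring} gives $H_\bullet(\D_2(\Gr))\cong \OS(\Gr)^\vee$, and Proposition~\ref{osmaps} identifies the adjoint of $\circ^{\Gr}_{G}$ with the explicit ring homomorphism $\triangle^{\Gr}_{G}$ described on generators. Thus a relation $\sum c_i\,\alpha_i\circ^{\Gr}_{G_i}\beta_i=0$ in $H_\bullet(\D_2(\Gr))$ is equivalent to
\[
\sum_i c_i\,\langle \alpha_i\otimes\beta_i,\;\triangle^{\Gr}_{G_i}(\xi)\rangle = 0
\]
for every $\xi$ in a basis of $\OS(\Gr)$. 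Since $m,b$ are dual to $1$ and $\omega_{12}$ in $\OS(\Path_2)$, for each relation one only needs to read off, from $\triangle^{\Gr}_{G_i}(\xi)$, the coefficient of the appropriate one of $1\otimes 1$, $\omega_*\otimes 1$, $1\otimes\omega_{12}$, $\omega_*\otimes\omega_{12}$.

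For the $\Path_3$-relations, $\OS(\Path_3)=\Lambda\langle\omega_{12},\omega_{23}\rangle$ has basis $\{1,\omega_{12},\omega_{23},\omega_{12}\omega_{23}\}$, and one computes $\triangle^{\Path_3}_{\{1,2\}}$ and $\triangle^{\Path_3}_{\{2,3\}}$ on each basis element via the ring-homomorphism property. The key sign in the anti-symmetric relation emerges from the Koszul rule on the target: for instance,
\[
\triangle^{\Path_3}_{\{1,2\}}(\omega_{12}\omega_{23}) \;=\; (1\otimes\omega_{12})\cdot(\omega_{*}\otimes 1) \;=\; -\,\omega_{*}\otimes\omega_{12},
\]
whereas $\triangle^{\Path_3}_{\{2,3\}}(\omega_{12}\omega_{23})=\omega_{*}\otimes\omega_{23}$ carries no such sign, producing $b\circ^{\Path_3}_{\{1,2\}}b+b\circ^{\Path_3}_{\{2,3\}}b=0$. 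The other two $\Path_3$-relations follow by the same pairings on lower-degree basis elements.

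For the $\K_3$-relations, use
\[
\OS(\K_3) \;=\; \Lambda\langle\omega_{12},\omega_{13},\omega_{23}\rangle \,\big/\, \bigl(\omega_{13}\omega_{23}-\omega_{12}\omega_{23}+\omega_{12}\omega_{13}\bigr)
\]
with basis $\{1,\omega_{12},\omega_{13},\omega_{23},\omega_{12}\omega_{13},\omega_{12}\omega_{23}\}$. The associativity relation for $m$ and the Leibniz-type relation mixing $m$ and $b$ follow by the same direct computation as for $\Path_3$: the dependency relation is not activated, since at each step one of the two tensor factors has degree at most one. The Jacobi-type relation is, up to signs, dual to the single Orlik--Solomon dependency $\omega_{13}\omega_{23}-\omega_{12}\omega_{23}+\omega_{12}\omega_{13}=0$ of the $3$-cycle: the three summands pair nontrivially only with degree-$2$ classes, the $\Sigma_3$-action cyclically permutes $\{\omega_{ij}\omega_{jk}\}$ (with Koszul signs from graded commutativity), and identifying the resulting sum with the vanishing dependency relation yields the claim.

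The main obstacle is systematic bookkeeping of Koszul signs: those coming from graded commutativity on the targets $\OS(\Gr/G)\otimes\OS(\Gr|_G)$ of $\triangle^{\Gr}_{G}$, and those from the $\Sigma_3$-action on the degree-$2$ part of $\OS(\K_3)$. Once tracked carefully, each of the six relations becomes a finite calculation on a basis of size at most six.
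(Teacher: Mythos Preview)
Your proposal is correct and is precisely the direct computation the paper refers to: the paper's own proof is the single sentence ``By direct computations using \textbf{Proposition}~\ref{discring} and \textbf{Proposition}~\ref{osmaps},'' and what you wrote is exactly that computation, dualizing to the Orlik--Solomon side via the perfect pairing and tracking Koszul signs on the coproducts. Your identification of the Jacobi relation with the single $\K_3$-dependency in $\OS(\K_3)$ and your handling of the sign in $\triangle^{\Path_3}_{\{1,2\}}(\omega_{12}\omega_{23})$ are the essential points, and both are correct.
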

\begin{proof}
By direct computations using Proposition~\ref{discring} and Proposition~\ref{osmaps}.
\end{proof}
Recall that the restriction of this contractad $\K_{*}(H_{\bullet}(\D_2))$ to complete graphs is isomorphic to the symmetric  operad $\mathsf{Gerst}$ of Gerstenhaber algebras~\cite{cohen1976homology}. We introduce a graphical counterpart of the latter operad.
\begin{defi}
\label{gerst}
The Gerstenhaber contractad $\Gerst$ is the contractad with two symmetric generators $m$ and $b$ in the component $\mathsf{P}_2$ of degree $0$ and $1$, respectively, satisfying the relations
\begin{gather*}
    m \circ_{\{1,2\}}^{\Path_3} m =  m \circ_{\{2,3\}}^{\Path_3} m 
    \\
    b \circ_{\{1,2\}}^{\Path_3} b = -b \circ_{\{2,3\}}^{\Path_3} b
    \\
    b \circ_{\{1,2\}}^{\Path_3} m =  m \circ_{\{2,3\}}^{\Path_3} b
    \\
    m \circ_{\{1,2\}}^{\K_3}m  =  m \circ_{\{2,3\}}^{\K_3} m
    \\
    b\circ_{\{1,2\}}^{\K_3} b + (b\circ_{\{1,2\}}^{\K_3} b)^{(123)} + (b\circ_{\{1,2\}}^{\K_3} b)^{(321)}=0
    \\
    b\circ_{\{1,2\}}^{\K_3} m = m\circ_{\{2,3\}}^{\K_3} b + (m\circ_{\{1,2\}}^{\K_3}b)^{(23)}
\end{gather*}
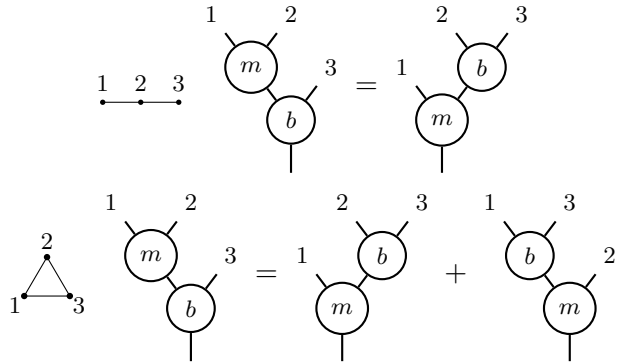
\begin{figure}[ht]
    \centering
    \caption{Rewriting rules in $\Gerst$.}
    \begin{gather*}
    \vcenter{\hbox{\begin{tikzpicture}[scale=0.5]
    \fill (0,0) circle (2pt);
    \node at (0,0.5) {\footnotesize$1$};
    \fill (1,0) circle (2pt);
    \node at (1,0.5) {\footnotesize$2$};
    \fill (2,0) circle (2pt);
    \node at (2,0.5) {\footnotesize$3$};
    \draw (0,0)--(1,0)--(2,0);    
    \end{tikzpicture}}}
    \vcenter{\hbox{\begin{tikzpicture}[
        scale=0.7,
        vert/.style={circle,  draw=black!30!black, thick, minimum size=1mm},
        leaf/.style={rectangle, thick, minimum size=1mm},
        edge/.style={-,black!30!black, thick},
        ]
        \node[vert] (1) at (0,1) {\footnotesize$b$};
        \node[leaf] (l1) at (0.75,2) {\footnotesize$3$};
        \node[vert] (2) at (-0.75,2) {\footnotesize$m$};
        \node[leaf] (l2) at (0,3) {\footnotesize$2$};
        \node[leaf] (3) at (-1.5,3) {\footnotesize$1$};
        \draw[edge] (0,0)--(1);
        \draw[edge] (1)--(2)--(3);
        \draw[edge] (1)--(l1);
        \draw[edge] (2)--(l2);
    \end{tikzpicture}}}=
    \vcenter{\hbox{\begin{tikzpicture}[
        scale=0.7,
        vert/.style={circle,  draw=black!30!black, thick, minimum size=1mm},
        leaf/.style={rectangle, thick, minimum size=1mm},
        edge/.style={-,black!30!black, thick},
        ]
        \node[vert] (1) at (0,1) {\footnotesize$m$};
        \node[leaf] (l1) at (-0.75,2) {\footnotesize$1$};
        \node[vert] (2) at (0.75,2) {\footnotesize$b$};
        \node[leaf] (l2) at (1.5,3) {\footnotesize$3$};
        \node[leaf] (3) at (0,3) {\footnotesize$2$};
        \draw[edge] (0,0)--(1);
        \draw[edge] (1)--(2)--(3);
        \draw[edge] (1)--(l1);
        \draw[edge] (2)--(l2);
    \end{tikzpicture}}}
    \\
     \vcenter{\hbox{\begin{tikzpicture}[scale=0.6]
    \fill (-0.5,0) circle (2pt);
    \node at (-0.7,-0.2) {\footnotesize$1$};
    \fill (0.5,0) circle (2pt);
    \node at (0.7,-0.2) {\footnotesize$3$};
    \fill (0,0.86) circle (2pt);
    \node at (0,1.2) {\footnotesize$2$};
    \draw (-0.5,0)--(0,0.86)--(0.5,0)--cycle;    
    \end{tikzpicture}}}
    \vcenter{\hbox{\begin{tikzpicture}[
        scale=0.7,
        vert/.style={circle,  draw=black!30!black, thick, minimum size=1mm},
        leaf/.style={rectangle, thick, minimum size=1mm},
        edge/.style={-,black!30!black, thick},
        ]
        \node[vert] (1) at (0,1) {\footnotesize$b$};
        \node[leaf] (l1) at (0.75,2) {\footnotesize$3$};
        \node[vert] (2) at (-0.75,2) {\footnotesize$m$};
        \node[leaf] (l2) at (0,3) {\footnotesize$2$};
        \node[leaf] (3) at (-1.5,3) {\footnotesize$1$};
        \draw[edge] (0,0)--(1);
        \draw[edge] (1)--(2)--(3);
        \draw[edge] (1)--(l1);
        \draw[edge] (2)--(l2);
    \end{tikzpicture}}}=
    \vcenter{\hbox{\begin{tikzpicture}[
        scale=0.7,
        vert/.style={circle,  draw=black!30!black, thick, minimum size=1mm},
        leaf/.style={rectangle, thick, minimum size=1mm},
        edge/.style={-,black!30!black, thick},
        ]
        \node[vert] (1) at (0,1) {\footnotesize$m$};
        \node[leaf] (l1) at (-0.75,2) {\footnotesize$1$};
        \node[vert] (2) at (0.75,2) {\footnotesize$b$};
        \node[leaf] (l2) at (1.5,3) {\footnotesize$3$};
        \node[leaf] (3) at (0,3) {\footnotesize$2$};
        \draw[edge] (0,0)--(1);
        \draw[edge] (1)--(2)--(3);
        \draw[edge] (1)--(l1);
        \draw[edge] (2)--(l2);
    \end{tikzpicture}}}+
    \vcenter{\hbox{\begin{tikzpicture}[
        scale=0.7,
        vert/.style={circle,  draw=black!30!black, thick, minimum size=1mm},
        leaf/.style={rectangle, thick, minimum size=1mm},
        edge/.style={-,black!30!black, thick},
        ]
        \node[vert] (1) at (0,1) {\footnotesize$m$};
        \node[leaf] (l1) at (0.75,2) {\footnotesize$2$};
        \node[vert] (2) at (-0.75,2) {\footnotesize$b$};
        \node[leaf] (l2) at (0,3) {\footnotesize$3$};
        \node[leaf] (3) at (-1.5,3) {\footnotesize$1$};
        \draw[edge] (0,0)--(1);
        \draw[edge] (1)--(2)--(3);
        \draw[edge] (1)--(l1);
        \draw[edge] (2)--(l2);
    \end{tikzpicture}}}
    \end{gather*}
\end{figure}
\end{defi}

\begin{theorem}\label{thm:gerst}
The homology contractad of the Little 2-disks is isomorphic to the Gerstenhaber contractad
\[
H_{\bullet}(\D_2)\cong \Gerst.
\] Moreover, this contractad is self dual up to suspension $\Gerst^{!}\cong \Susp\Gerst$ and Koszul.
\end{theorem}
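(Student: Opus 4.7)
My plan closely mirrors the proof for the associative contractad (Theorem~\ref{thm:ass}). First, by Lemma~\ref{relD2}, the assignments $m \mapsto m$ and $b \mapsto b$ extend to a well-defined morphism of contractads $\pi\colon \Gerst \to H_{\bullet}(\D_2)$, and this morphism is surjective because the Orlik--Solomon presentation of $\OS(\Gr)$ given in Proposition~\ref{discring} shows that $H_{\bullet}(\D_2)$ is generated, as a contractad, by the duals of $1$ and of the logarithmic form $\omega_e$ attached to an edge, i.e.\ by $m$ and $b$.

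To show $\pi$ is injective I would exhibit a quadratic Gr\"obner basis for $\Gerst^{\forget}$. Following Lemma~\ref{quantumorder}, I refine $\grpermlex$ by first comparing the number of $m$-decorated vertices (preferring trees with $m$'s closer to the root), so that the mixed relations
\[
b \circ_{\{1,2\}}^{\Path_3} m = m \circ_{\{2,3\}}^{\Path_3} b, \qquad b \circ_{\{1,2\}}^{\K_3} m = m \circ_{\{2,3\}}^{\K_3} b + (m \circ_{\{1,2\}}^{\K_3} b)^{(23)}
\]
become decreasing rewrites with leading term $b \circ_e^{\Gr} m$. With this order, the leading terms of the defining quadratic relations of $\Gerst$ split into three disjoint families: the leading terms of the $\Com$ Gr\"obner basis (acting on subtrees made of $m$-vertices), the leading terms of the $\Lie$ Gr\"obner basis (acting on subtrees made of $b$-vertices, via the anti-symmetric generator and the Jacobi relation on $\K_3$), and the Leibniz-type leading terms $\{\, b \circ_e^{\Gr} m \mid e \in E_{\Gr}\,\}$.

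Consequently, the normal monomials in component $\Gr$ are precisely trees of the form
\[
\bigl(m_{\Gr/I};\, b^{(I_1)}, b^{(I_2)}, \dots, b^{(I_k)}\bigr),\qquad I=\{I_1,\dots,I_k\}\in\parti(\Gr),
\]
where $m_{\Gr/I}$ is the unique $\Com$-normal tree on $\Gr/I$ and each $b^{(I_j)}$ is a $\Lie$-normal tree on $\Gr|_{I_j}$. By Corollary~\ref{cor:liekoszul}, the $\Lie$-normal trees on $\Gr|_{I_j}$ are counted by $|\mu(\Gr|_{I_j})|$, hence the total number of normal monomials is
\[
\sum_{I \in \parti(\Gr)}\, \prod_{G \in I} |\mu(\Gr|_G)|,
\]
which by Proposition~\ref{discring} (Hilbert series evaluated at $t=1$) matches $\dim H_{\bullet}(\D_2(\Gr))$. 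Proposition~\ref{coset} then forces $\pi$ to be an isomorphism and identifies the quadratic relations as a Gr\"obner basis; Koszulness of $\Gerst$ follows from Theorem~\ref{quadgrob}.

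For the self-duality $\Gerst^{!} \cong \Susp\Gerst$, I would apply the Koszul dual presentation formula of Subsection~\ref{subsec:koszul}: under desuspension and linear dualisation the generators become $m^{!} = s^{-1}\Susp^{-1}b^{*}$ and $b^{!} = s^{-1}\Susp^{-1}m^{*}$, and a direct computation of the orthogonal complement $\R^{\bot}$ with respect to the quadratic pairing turns each defining relation of $\Gerst$ into a relation of the same shape after swapping $m \leftrightarrow b$ and absorbing the suspension shift. The main obstacle here is the sign bookkeeping coming from the internal degree of $b$ and the sign representation carried by $\Susp$; once those are tracked correctly, the combinatorial matching of the six relations of $\Gerst$ with those of $\Susp\Gerst$ is immediate.
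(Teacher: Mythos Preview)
Your overall strategy matches the paper's proof almost exactly, and the Gr\"obner basis argument, the normal-monomial count, and the Koszulness conclusion are all correct and identical to what the paper does. However, there is a genuine gap in your surjectivity step.

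You assert that the map $\pi\colon \Gerst \to H_{\bullet}(\D_2)$ is surjective ``because the Orlik--Solomon presentation of $\OS(\Gr)$ shows that $H_{\bullet}(\D_2)$ is generated, as a contractad, by $m$ and $b$.'' But Proposition~\ref{discring} only says that $\OS(\Gr)$ is generated \emph{as a ring} by the classes $\omega_e$; the ring multiplication on cohomology is the cup product, which has nothing a priori to do with the cocontractad coproducts $\triangle^{\Gr}_G$ of Proposition~\ref{osmaps}. Contractad generation of $H_{\bullet}(\D_2)$ by its $\Path_2$-component is dual to the statement that the iterated coproducts of $\OS$ down to copies of $\OS(\Path_2)$ are jointly injective, and this is not at all obvious from multiplicative generation. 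Without surjectivity, your dimension inequality $\dim \Gerst(\Gr)\le \dim H_\bullet(\D_2(\Gr))$ does not let you conclude that $\pi$ is an isomorphism.

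The paper recognises this as the non-trivial point: surjectivity is stated separately as Lemma~\ref{onto} and proved in Appendix~\ref{subsec:lemma}. There one fixes an ordering of the edges of $\Gr$, takes the \textbf{nbc}-basis $\{\omega_S\}$ of $\OS(\Gr)$, and for each \textbf{nbc}-set $S$ explicitly constructs a tree monomial $T(S)\in\Gerst(\Gr)$. A direct analysis of the pairing $\langle T(S),\omega_{S'}\rangle$ via the explicit cocontractad maps shows that this pairing matrix is (up to signs) upper triangular with units on the diagonal, hence non-degenerate from the right; this is precisely what gives surjectivity of $\pi$. Your argument would be complete once you supply this construction or an equivalent one.
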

\begin{proof}
By Lemma~\ref{relD2}, we have a well-defined morphism of contractads $\tau\colon \Gerst \rightarrow H_{\bullet}(\D_2)$. Similarly to the case of the contractad $\Ass$, consider the quantum order on tree-monomials $\Tree_{\{ m, b\}}$. It is easy to see that leading terms of quadratic relations for the contractad $\Gerst^{\forget}$ coincide with the ones for the Associative contractad: $\LT(\R^{\forget}_{\Ass})=\LT(\R^{\forget}_{\Gerst}).$  By Proposition~\ref{discring}, we obtain that the number of normal monomials in each component coincides with the dimension of Orlik-Solomon algebra $\OS(\Gr)$. Hence, we have component-wise inequality $\dim \Gerst(\Gr) \leq \dim H_{\bullet}(\D_2(\Gr))$ in each component. To complete the proof, we use the following lemma, the proof of which is we leave to Appendix~\ref{subsec:lemma}.
\begin{lemma}
\label{onto}
The morphism $\tau\colon\Gerst \rightarrow H_{\bullet}(\D_2)$ is surjective.
\end{lemma} \noindent By dimension reasons, we conclude that the morphism $\tau\colon\Gerst \rightarrow H_{\bullet}(\D_2)$ is an isomorphism and, moreover, $\Gerst^{\forget}$ has a quadratic Gr\"obner basis. Hence, this contractad is Koszul.
\end{proof}

\subsection{Higher dimensional cases.}

In this subsection, we complete the description of the homology contractads $H_{\bullet}(\D_n)$. For $n\geq 2$, the description of $H_{\bullet}(\D_n)$ mimics the case $n=2$.

An $n$-arrangement of a real vector space $V$ is an arrangement  $\A$ of subspaces of codimension $n$. By Proposition~\ref{discsconf},  each component of the contractad $\D_n(\Gr) \simeq \Conf_{\Gr}(\mathbb{R}^n)$ is homotopy equivalent to the complement of graphic $n$-arrangement $\B(\Gr)$.

\begin{lemma}
Let $\Gr$ be a connected simple graph, and $n\geq2$. The cohomology ring of the configuration space $H^{\bullet}(\Conf_{\Gr}(\mathbb{R}^n))$ is generated by elements $\omega_e$ of degree $n-1$ labeled by edges of the graph, satisfying the relations
\begin{gather*}
    \omega^2_{e}=0
    \\
    \sum_{i=1}^n (-1)^{(n-1)(i-1)}\omega_{e_1}\omega_{e_2}...\hat{\omega}_{e_i}...\omega_{e_n}=0\text{, if } \{e_1,e_2,\cdots,e_n\} \subset E_{\Gr} \text{ contains a cycle.}
\end{gather*}
\end{lemma}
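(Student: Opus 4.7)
The proof will closely parallel Proposition~\ref{discring}, replacing the hyperplane arrangement complement computation of Orlik--Solomon by its higher-codimension analogue in the spirit of F.~Cohen \cite{cohen1976homology}. For each edge $e = (v,w) \in E_{\Gr}$ the formula $p_e(x) = (x_v - x_w)/|x_v-x_w|$ defines a continuous map $p_e \colon \Conf_{\Gr}(\mathbb{R}^n) \to S^{n-1}$. Fixing a generator $[\alpha] \in H^{n-1}(S^{n-1})$ and setting $\omega_e := p_e^*[\alpha]$, the relation $\omega_e^2 = 0$ is immediate from $[\alpha]^2 = 0$. For a cycle $e_1 = (v_1,v_2), \ldots, e_k = (v_k, v_1)$ in $\Gr$, the forgetful map $\Conf_{\Gr}(\mathbb{R}^n) \to \Conf_{\K_k}(\mathbb{R}^n)$ pulls back the same relation from the classical $k$-point configuration space, so it suffices to verify it there; this follows from the observation that $\prod_i p_{e_i}$ factors through the codimension-$n$ subspace $\{\sum_i y_i = 0\} \subset (\mathbb{R}^n)^k$ where one of the $k$ vectors is determined as the negated sum of the others, forcing the expected linear dependence in top cohomology.

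Generation proceeds by induction on $|V_{\Gr}|$, the base case being trivial. For $n = 2$ the statement is Proposition~\ref{discring}, so assume $n \geq 3$. Picking a vertex $v$, there is a Fadell--Neuwirth-type fibration
\[
\mathbb{R}^n \smallsetminus \{x_u : u \sim v\} \hookrightarrow \Conf_{\Gr}(\mathbb{R}^n) \xrightarrow{\,\pi\,} \Conf_{\Gr - v}(\mathbb{R}^n)
\]
obtained by forgetting the coordinate labelled by $v$, with $\Conf_{\Gr-v}$ interpreted via the multiplicative extension to disjoint unions of the connected components of $\Gr - v$. The fiber is homotopy equivalent to $\bigvee_{u \sim v} S^{n-1}$, and the classes $\omega_{(v,u)}$ for $u \sim v$ restrict to a $\mathbb{Z}$-basis of its top cohomology. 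Since $n \geq 3$ the base is simply connected, so Leray--Hirsch identifies $H^*(\Conf_{\Gr}(\mathbb{R}^n))$ as a free $H^*(\Conf_{\Gr - v}(\mathbb{R}^n))$-module on these classes, closing the induction.

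To see that no further relations are needed, let $\mathcal{A}(\Gr)$ denote the algebra with the stated presentation; we have a surjection $\mathcal{A}(\Gr) \twoheadrightarrow H^*(\Conf_{\Gr}(\mathbb{R}^n))$, and it suffices to match Hilbert series. Iterating the fibration above yields
\[
P_{H^*(\Conf_{\Gr}(\mathbb{R}^n))}(t) = \sum_{I \in \parti(\Gr)} t^{(n-1)\rk I}\prod_{G \in I}|\mu(\Gr|_G)|,
\]
generalizing Proposition~\ref{discring}. The corresponding equality for $\mathcal{A}(\Gr)$ reduces to a broken-circuit basis computation identical to the Orlik--Solomon case: after regrading generators to degree $n-1$, the sign pattern $(-1)^{(n-1)(i-1)}$ in the cycle relation produces a rewriting system whose normal forms are indexed by NBC-subsets of $E_{\Gr}$ with respect to any linear ordering of edges, and whose generating function matches the expression above.

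The principal obstacle is precisely this last, combinatorial step: one must verify that for arbitrary $n$ the signs in the cycle relation give a confluent reduction modulo the $\omega_e^2 = 0$ identities, producing no unexpected relations in higher degrees. For $n-1$ odd the signed Arnold identities of Orlik--Solomon transfer verbatim; for $n-1$ even the squaring relations (which are forced on us and not mere consequences of graded commutativity) restore the combinatorics to the odd case, so the broken-circuit argument goes through uniformly in $n$.
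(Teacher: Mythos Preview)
Your overall strategy is reasonable, but the Fadell--Neuwirth step contains a genuine gap. The forgetful map $\Conf_{\Gr}(\mathbb{R}^n)\to\Conf_{\Gr-v}(\mathbb{R}^n)$ is \emph{not} a locally trivial fibration for a general graph: the fibre over $(x_u)_{u\neq v}$ is $\mathbb{R}^n\setminus\{x_u:u\sim v\}$, and when two neighbours $u_1,u_2$ of $v$ are not adjacent to each other in $\Gr$ the base allows $x_{u_1}=x_{u_2}$, so the number of removed points---and hence the homotopy type of the fibre---jumps on a codimension-$n$ stratum. (Already for $\Gr=\Path_3$, deleting the middle vertex gives a map whose generic fibre is $S^{n-1}\vee S^{n-1}$ but whose special fibre over the diagonal is $S^{n-1}$.) Consequently Leray--Hirsch does not apply as stated, and neither the generation claim nor the inductive Hilbert-series computation is justified. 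The argument does go through if $v$ is simplicial (its neighbourhood is a clique), so your approach actually proves the lemma for chordal graphs; for arbitrary $\Gr$ you would need a Leray spectral sequence analysis over the stratified base, which is substantially more work.

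The paper sidesteps all of this: it simply observes that $\Conf_{\Gr}(\mathbb{R}^n)$ is the complement of an $n$-arrangement (codimension-$n$ linear subspaces) and cites the general presentation of such complements due to De Concini--Procesi. That result already packages the Goresky--MacPherson/Orlik--Solomon combinatorics for higher-codimension arrangements and yields the lemma in one line.
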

\begin{proof}
This follows from the description of the cohomology ring of a complement to $n$-arrangement~\cite[Th.~5.5]{de2001cohomology}.
\end{proof}
Note that for distinct $n,m$, the cohomology rings $H^{\bullet}(\Conf_{\Gr}(\mathbb{R}^n))$ and $H^{\bullet}(\Conf_{\Gr}(\mathbb{R}^m))$ differ only in the grading. Hence, we can adapt the results of Section~\ref{subsec:2disks} to the general case. The proof of the following proposition mimics that of Theorem~\ref{thm:gerst}.
\begin{theorem}\label{thm:en}
For $n\geq  2$, the homology contractad $H_{\bullet}(\D_n)$ is the contractad generated by a symmetric generator $m$  of degree $0$ and generator $c_{n}^{(12)}=(-1)^{n}c_{n}$ of degree $(n-1)$ in the component $\Path_2$, satisfying  the relations
\begin{gather*}
    m \circ_{\{1,2\}}^{\Path_3} m =  m \circ_{\{2,3\}}^{\Path_3} m,
    \\
    c_{n} \circ_{\{1,2\}}^{\Path_3} c_{n}=(-1)^{n-1}  c_{n} \circ_{\{2,3\}}^{\Path_3} c_{n},
    \\
    c_{n} \circ_{\{1,2\}}^{\Path_3} m =  m \circ_{\{2,3\}}^{\Path_3} c_{n},
    \\
    m \circ_{\{1,2\}}^{\K_3}m  =  m \circ_{\{2,3\}}^{\K_3} m,
    \\
    c_{n}\circ_{\{1,2\}}^{\K_3} c_{n} + (c_{n}\circ_{\{1,2\}}^{\K_3} c_{n})^{(123)} + (c_{n}\circ_{\{1,2\}}^{\K_3} c_{n})^{(321)}=0,
    \\
    c_{n}\circ_{\{1,2\}}^{\K_3} m = m\circ_{\{2,3\}}^{\K_3} c_{n} + (m\circ_{\{1,2\}}^{\K_3}c_{n})^{(23)}.
\end{gather*}
These contractads are self  dual up to suspension
\[
H_{\bullet}(\D_n)^{!} \cong \Susp^{n-1}H_{\bullet}(\D_n)
\] and Koszul.
\end{theorem}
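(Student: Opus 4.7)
The proof follows the same strategy as the proof of Theorem~\ref{thm:gerst}, with the homological degree shifted from $1$ to $n-1$ and signs adjusted accordingly. The plan is to first show that the listed quadratic relations hold in $H_\bullet(\D_n)$, then produce a quadratic Gröbner basis for the contractad defined by the presentation, and finally use dimension counting together with a surjectivity argument to conclude that the candidate map is an isomorphism; Koszulness follows immediately from Theorem~\ref{quadgrob}.

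First I would extend Proposition~\ref{osmaps} to the $n$-dimensional setting: the cohomology $\OS_n(\Gr) := H^\bullet(\Conf_\Gr(\mathbb{R}^n))$ forms a cocontractad in graded commutative (up to Koszul signs) rings, with the infinitesimal cocomposition $\triangle^\Gr_G$ sending $\omega_e$ to $\omega_{e'} \otimes 1$ if $e \not\subset G$ and $1 \otimes \omega_e$ if $e \subset G$, by exactly the same pullback diagram argument using that $\Conf_\Gr(\mathbb{R}^n)$ is $\D_n(\Gr)$ up to homotopy. Dualizing this yields $H_\bullet(\D_n) \cong \OS_n^\vee$ as contractads; denoting by $m, c_n \in H_\bullet(\D_n(\Path_2))$ the classes dual to $1$ and $\omega_{12}$, a direct computation in the cohomology ring using the Arnold-type relations from the preceding lemma verifies each of the six displayed identities (the signs in the antisymmetry of $c_n$ and in the Jacobi/Leibniz relations come from $|c_n|=n-1$ and the Koszul sign rule applied to $\omega_e^2 = 0$ and the cycle relation). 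This produces a well-defined morphism $\tau\colon \Pop_n \twoheadrightarrow H_\bullet(\D_n)$, where $\Pop_n$ denotes the contractad given by the presentation.

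Next I would set up the quantum monomial order on $\Tree_{\{m, c_n\}}$ exactly as in Lemma~\ref{quantumorder}, prioritizing monomials with more $m$-vertices and breaking ties via a $\grpermlex$ extension of the quantum monoid $\langle m, c_n, q \mid mq=qm,\, c_nq=qc_n,\, c_n m = m c_n q\rangle$. The sign modifications in the relations do not affect which term is leading, so the set of leading terms of $\R^{\forget}_{\Pop_n}$ coincides with $\LT(\R^{\forget}_{\Ass}) = \LT(\R^{\forget}_{\Com}) \cup \LT(\R^{\forget}_{\Lie}) \cup \{c_n \circ_e^\Gr m\}$ of Lemma~\ref{quantumorder}. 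Consequently the normal monomials in $\Pop_n(\Gr)^{\forget}$ have the same shape as for $\Ass^{\forget}$: an $m$-part at the bottom encoding a partition $I \in \parti(\Gr)$, decorated at each block by a $\Lie$-monomial on the induced subgraph. Thus
\[
\dim \Pop_n(\Gr) \leq \sum_{I \in \parti(\Gr)} \prod_{G \in I} |\mu(\Gr|_G)|,
\]
which by Proposition~\ref{discring} (whose Hilbert series formula is valid in any $n$ after adjusting the grading) equals $\dim H_\bullet(\D_n(\Gr))$.

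The main obstacle is thus the reverse inequality, i.e., surjectivity of $\tau$; I would prove it by the same argument as Lemma~\ref{onto}, adapted to arbitrary $n$: work by induction on the number of vertices, and for any cohomology class decompose the corresponding dual Orlik--Solomon element along the $\omega_e$-generators of $\OS_n(\Gr)$, using the cocontractad compositions $\triangle^\Gr_G$ to reduce to smaller graphs where the induction hypothesis applies. Combining surjectivity with the dimension bound forces $\tau$ to be an isomorphism and the quadratic relations to form a Gröbner basis. Koszulness of $H_\bullet(\D_n)$ then follows from Theorem~\ref{quadgrob}. Finally, the self-duality $H_\bullet(\D_n)^! \cong \Susp^{n-1} H_\bullet(\D_n)$ is verified purely formally from the presentation: pairing generators $m^! \leftrightarrow m$ and $c_n^! \leftrightarrow s^{n-1} c_n$ via the canonical pairing, one checks that the orthogonal complement of each listed relation (as computed from the explicit presentation of the Koszul dual in Subsection~\ref{subsec:koszul}) reproduces the same relations up to the suspension shift, just as in the $n=2$ case $\Gerst^! \cong \Susp\,\Gerst$.
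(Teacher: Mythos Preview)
Your proposal is correct and follows essentially the same approach as the paper, which simply states that the proof ``mimics that of \textbf{Theorem}~\ref{thm:gerst}'' without further detail. Your only imprecision is in the surjectivity sketch: the actual proof of \textbf{Lemma}~\ref{onto} is not a straightforward induction on vertices but rather an \textbf{nbc}-basis argument constructing explicit tree monomials $T(S)$ dual to the $\omega_S$ under the pairing; since you explicitly invoke that lemma, however, this does not affect the validity of your outline.
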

\appendix
\section{\space} 

\subsection{Quadratic Gr\"obner basis implies Koszulity}\label{subsec:pbw}
In this subsection, we prove Theorem~\ref{quadgrob}. The proof is just an adaptation of that of \cite[Sec.~4-5]{hoffbeck2010poincare}. The proof is completed in several steps.\\

\noindent \textbf{Step 1:} Consider the free shuffle contractad $\T_{\Sha}(\E)$ together with monomials $\Tree_{\B}$. Let us describe its Bar-construction $\mathsf{B}\T_{\Sha}(\E)$ explicitly. By definition, we have $\mathsf{B}\T_{\Sha}(\E)(\Gr)=\bigoplus_{T\in \Tree(\Gr)} \T_{\Sha}(\E)(T)$, where $\T_{\Sha}(\E)(T)=\bigotimes_{v\in \Ver(T)}\T_{\Sha}(\E)(\In(v))$. Explicitly, a generator of $\T_{\Sha}(\E)(T)$ corresponds to a tree $T$ labeled by tree monomials $\Tree_{\B}$. We can represent it by a large tree-monomial $\tau\in \Tree_{\B}$ equipped with a splitting in subtrees $\tau_{\mathrm{comp}}$, which we can see as connected components. The $\tau_{\mathrm{comp}}$ are separated by cutting edges which form a subset $D \subset \Edge(\tau)$. The union of the internal edges of the subtrees $\tau_{\mathrm{comp}}$ forms a set $S \subset \Edge(\tau)$ such that $S\coprod D=\Edge(\tau)$. We will work with $S$, the set of marking edges. So, each monomial in $\mathsf{B}\T_{\Sha}(\E)$ is identified with a pair $(\alpha, S)$, where $\alpha$ is a monomial from $\Tree_{\B}$ together with a subset $S\subset \Edge(\alpha)$ of edges of the underlying tree of $\alpha$. In this notation, the differential has the form
\[
\delta((\alpha,S))=\sum_{e\in \Edge(\alpha)\setminus S} \pm
(\alpha, S\coprod e).\]
Note that the zero syzygy degree of the Bar-construction consists of monomials of the form $(\alpha,\varnothing)$.\\

\noindent \textbf{Step 2:} Let $\Pop=\Pop(\E,\R)$ be a quadratic shuffle contractad which has a quadratic Gr\"obner basis with respect to some monomial order $<$ on $\Tree_{\B}$. Since $\Pop=\T(\E)/\langle \R \rangle$, its bar construction $\mathsf{B}\Pop$ is a quotient of $\mathsf{B}\T_{\Sha}(\E)$. For each ordered graph $\Gr$, consider the filtration of $\mathsf{B}\Pop(\Gr)$ indexed by monomials
\begin{gather*}
    \mathsf{B}\Pop(\Gr)=\bigcup_{\alpha \in \Tree_{\B}(\Gr)} \mathsf{B}\Pop(\Gr)_{\alpha},
\end{gather*} where the subcomplex $\mathsf{B}\Pop(\Gr)_{\alpha}$ is generated by monomials $(\beta,S)$ such that $\beta\leq \alpha$. Note that this filtration  preserves differential. Consider $E^0$-page of the spectral sequence associated with the filtration. The complex $E_{\alpha}^0\mathsf{B}\Pop(\Gr)$ is generated by monomials $(\alpha,S)$, such that each subtree from $\alpha_{\mathrm{comp}}$ is  a normal monomial with respect to the ideal  of relations.

For a monomial $\alpha\in \Tree_{\B}(\Gr)$, define a \textit{normal edge} as an edge $e\in \Edge(\alpha)$, such that the restricted monomial $\alpha|_e$ is a normal monomial with  respect  to $\R$. Denote by $\mathsf{Adm}(\alpha)$ the set of normal edges. Since $\Pop$ has a quadratic Gr\"obner basis, we have $(\alpha,S)\in E_{\alpha}^0\mathsf{B}\Pop(\Gr)$ if $S\subset \mathsf{Adm}(\alpha)$. Moreover, the differential in $E_{\alpha}^0\mathsf{B}\Pop(\Gr)$ has the form
\[
\delta^0((\alpha,S))=\sum_{e\in \mathsf{Adm}(\alpha)\setminus S} \pm
(\alpha, S\coprod e).\]\\

\noindent \textbf{Step 3:} From the description above, we see that the complex $E_{\alpha}^0\mathsf{B}\Pop(\Gr)$ is isomorphic to the augmented dual of the combinatorial complex $C_{\bullet}(\triangle_{\mathsf{Adm}(\alpha)})$. Note that this complex has trivial homology except when $\mathsf{Adm}(\alpha)=\varnothing$, in which case the complex is reduced to a one-dimensional space (with generator $(\alpha,\varnothing)$). By a standard spectral sequence argument, we conclude that homology  of $\mathsf{B}\Pop$ are concentrated  in the zero syzygy degree. Hence, $\Pop$ is  Koszul. Furthermore, we see that the collection  of monomials $\{\gamma|\mathsf{Adm}(\gamma)=\varnothing\}$ forms a basis of $H^0(\mathsf{B}\Pop)$. Since $\Pop$ is  Koszul, we  obtain an isomorphism of graphical collections $H^{0}(\mathsf{B}\Pop)\cong \Pop^!$(if we ignore grading). Note that monomials without normal edges correspond to normal monomials with respect to the quadratic  relations $\R^{\bot}$ and reverse monomial order. Hence, by Proposition~\ref{coset}, the  quadratic relations of $\Pop^!$ form a Gr\"obner  basis of the ideal of relations. 

\subsection{PBW theorem for contractads}\label{subsec:diamond} We shall briefly discuss some tools for computing Gr\"obner basis of an ideal. We leave proofs and constructions to the reader, see  for instance \cite[Sec.~5.5]{bremner2016algebraic}. Firstly, we can adapt Buchberger's algorithm to contractads, analogously to the case of operads \cite[Sec.~3.7]{dotsenko2010grobner}. So, the task of finding Gr\"obner basis of an ideal reduces to the computation of $S$-polynomials. Furthermore, this technique gives us an efficient criterion to recognize when a contractad has a quadratic Gr\"obner basis. 

\begin{theorem}[PBW theorem for contractads]\label{thm:pbw}
    Let $\Pop=\Pop(\E,\R)$ be a quadratic shuffle contractad with a monomial order. This contractad has a quadratic Gr\"obner basis, if the morphism of non-symmetric graphical collections
    \[
    \Pop(\E,\LT(\R))\twoheadrightarrow \Pop(\E,\R),
    \] is an isomorphism in the weight 3
    \[
    \Pop^{(3)}(\E,\LT(\R))\overset{\cong}{\rightarrow} \Pop^{(3)}(\E,\R).
    \]
\end{theorem}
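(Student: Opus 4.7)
The plan is to reduce the statement to a Buchberger-style criterion for shuffle contractads, following the strategy of the operadic PBW theorem in \cite{dotsenko2010grobner}. First I would make explicit the interplay between the two quotients appearing in the theorem. Since $\LT(\R)$ consists of monomials, no further rewriting is needed, and one checks directly that the set of normal tree monomials with respect to $\LT(\R)$ forms a \emph{basis} of $\Pop(\E,\LT(\R))$ in every weight. On the other hand, by the Monomial basis proposition (\textbf{Proposition}~\ref{coset}), these same normal monomials always \emph{span} $\Pop(\E,\R)=\T_{\Sha}(\E)/\langle \R\rangle$, and they form a basis if and only if $\R$ is a Gr\"obner basis. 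Hence the canonical surjection $\Pop(\E,\LT(\R))\twoheadrightarrow \Pop(\E,\R)$ is an isomorphism in every weight if and only if $\R$ is a Gr\"obner basis for its ideal.

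Second, I would formulate a Buchberger criterion adapted to shuffle contractads: $\R$ is a Gr\"obner basis of $\langle\R\rangle$ if and only if every $S$-polynomial associated with a pair of elements of $\R$ reduces to zero under rewriting by $\R$. The proof of this criterion is \textit{mutatis mutandis} to that of \cite[Sec.~3.7]{dotsenko2010grobner}, requiring only that overlaps of leading tree monomials of quadratic relations be classified along admissible tree monomials of $\Tree_{\B}$. The essential point is that since every relation in $\R$ is quadratic, every overlap of two leading terms occurs inside a tree monomial of weight exactly $3$, so that all $S$-polynomials live in the weight-$3$ component of $\T_{\Sha}(\E)$.

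Third, I would combine the previous two observations. Given the weight-$3$ isomorphism, normal monomials of weight $3$ are linearly independent in $\Pop^{(3)}(\E,\R)$. For any $S$-polynomial $s$, apply the rewriting procedure with respect to $\R$ until no leading term of $\R$ divides any of the surviving monomials; this produces a linear combination of normal monomials of weight $3$ which equals the image of $s$ in $\Pop^{(3)}(\E,\R)$. Since $s$ lies in $\langle \R\rangle$, this image is zero, and linear independence of normal weight-$3$ monomials forces the reduction itself to be zero. By the Buchberger criterion, $\R$ is then a quadratic Gr\"obner basis.

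The main obstacle is the careful formulation and verification of the Buchberger criterion in the contractad setting: one needs to set up rewriting, S-polynomials, and the classification of overlaps of leading terms of quadratic relations inside $\Tree_{\B}$, handling the interaction with graph-partitions and the order on blocks used in the shuffle composition. Once this combinatorial bookkeeping is in place, the rest of the argument is formal and parallels the operadic case verbatim.
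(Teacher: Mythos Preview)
Your proposal is correct and follows essentially the same route as the paper: both arguments reduce the weight-$3$ isomorphism hypothesis to the statement that every $S$-polynomial of the quadratic relations reduces to zero, and then invoke Buchberger's criterion (adapted \textit{mutatis mutandis} from \cite{dotsenko2010grobner}) to conclude that $\R$ is a Gr\"obner basis. Your write-up is more explicit about why the weight-$3$ isomorphism forces zero reductions---via linear independence of normal monomials in $\Pop^{(3)}(\E,\R)$---but this is exactly the content behind the paper's one-line equivalence.
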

\begin{proof}
The condition that the morphism $\Pop(\E,\LT(\R))\twoheadrightarrow \Pop(\E,\R)$ is an isomorphism in the weight 3 is equivalent to the fact that all $S$-polynomials between quadratic relations are reduced to zero with respect to the set of quadratic relations. Hence, by Buchberer's algorithm, the quadratic relations of $\Pop$ forms a Gr\"obner basis of the ideal  of relations.
\end{proof}

Let us discuss the statement of this theorem in the case when a contractad $\Pop$ is binary, i.e., generators are concentrated in component $\Path_2$. In this case, the weight 3 component $\Pop^{(3)}$  corresponds to components of $\Pop$ labeled by ordered graphs on 4 vertices. There are 5 non-isomorphic connected graphs on 4 vertices, presented in the figure below. Since a connected graph admits different non-isomorphic orderings, we obtain $12+4+12+3+6+1=38$ non-isomorphic ordered graphs on $4$ vertices. Hence, to check that a binary shuffle contractad has a quadratic Gr\"obner basis, we need to find normal monomials with respect to quadratic relations for $38$ different ordered graphs.
\[
\vcenter{\hbox{\begin{tikzpicture}[scale=0.6]
    \fill (0,0) circle (2pt);
    \fill (1,0) circle (2pt);
    \fill (2,0) circle (2pt);
    \fill (3,0) circle (2pt);
    \draw (0,0)--(1,0)--(2,0)--(3,0);
    \node at (1.5,-2.1) {$12$ orderings};
    \end{tikzpicture}}}
    \quad
\vcenter{\hbox{\begin{tikzpicture}[scale=0.6]
    \fill (0,0) circle (2pt);
    \fill (-1,1.5) circle (2pt);
    \fill (0,1.5) circle (2pt);
    \fill (1,1.5) circle (2pt);
    \draw (0,0)--(-1,1.5);
    \draw (0,0)--(0,1.5);
    \draw (0,0)--(1,1.5);
    \node at (0,-0.6) {$4$ orderings};
    \end{tikzpicture}}}
    \quad
\vcenter{\hbox{\begin{tikzpicture}[scale=0.6]
    \draw (0,0)--(0.87,0.5)--(0.87,-0.5)--cycle;
   \draw (-0.7,0)--(0,0);
   \fill (0,0) circle (2pt);
   \fill (-0.7,0) circle (2pt);
    \fill (0.87,0.5) circle (2pt);
    \fill (0.87,-0.5) circle (2pt);
    \node at (0,-1.6) {$12$ orderings};
\end{tikzpicture}}}
\vcenter{\hbox{\begin{tikzpicture}[scale=0.6]
    \fill (0,0) circle (2pt);
    \fill (0,1.5) circle (2pt);
    \fill (1.5,0) circle (2pt);
    \fill (1.5,1.5) circle (2pt);
    \draw (0,0)--(1.5,0)--(1.5,1.5)--(0,1.5)-- cycle;
    \node at (0.75,-0.6) {$3$ orderings};
    \end{tikzpicture}}}
    \quad
\vcenter{\hbox{\begin{tikzpicture}[scale=0.6]
    \fill (0,0) circle (2pt);
    \fill (0,1.5) circle (2pt);
    \fill (1.5,0) circle (2pt);
    \fill (1.5,1.5) circle (2pt);
    \draw (0,0)--(1.5,0)--(1.5,1.5)--(0,1.5)-- cycle;
    \draw (0,0)--(1.5,1.5);
    \node at (0.75,-0.6) {$6$ orderings};
    \end{tikzpicture}}}
    \quad
\vcenter{\hbox{\begin{tikzpicture}[scale=0.6]
    \fill (0,0) circle (2pt);
    \fill (0,1.5) circle (2pt);
    \fill (1.5,0) circle (2pt);
    \fill (1.5,1.5) circle (2pt);
    \draw (0,0)--(1.5,0)--(1.5,1.5)--(0,1.5)-- cycle;
    \draw (0,0)--(1.5,1.5);
    \draw (1.5,0)--(0,1.5);
    \node at (0.75,-0.6) {$1$ ordering};
    \end{tikzpicture}}}
\]

\noindent As an example, let us construct an alternative basis of the contractad $\Lie$.
\begin{prop}
    The shuffle contractad $\Lie^{\forget}$ has a quadratic Gr\"obner basis with respect to reverse $\grpermlex$-order.
\end{prop}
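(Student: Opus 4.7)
The plan is to apply the PBW theorem for contractads (Theorem~\ref{thm:pbw}), which reduces the claim to an isomorphism check on the weight~$3$ components, indexed by the $38$ isomorphism classes of ordered connected $4$-vertex graphs enumerated in the preceding discussion.

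The first step is to identify the leading monomials of the quadratic relations of $\Lie^{\forget}$ under the reverse $\grpermlex$-order. Since this order is obtained by reversing $\grpermlex$, whose comparison on $\Tree_{\{b\}}$ was worked out for Corollary~\ref{liegrob}, the new leading term in each relation is precisely the previously smallest shuffle tree. Explicitly: in the components $(\Path_3, 1<2<3)$, $(\K_3, 1<2<3)$, and the ordered $\Path_3$ with middle vertex $3$, the leading term becomes the right-heavy tree $b\circ_{\{2,3\}} b$ (with path-sequence $(b,bb,bb)$); in the ordered $\Path_3$ with middle vertex $1$ (edges $\{1,2\}$, $\{1,3\}$) the path-sequence comparison instead singles out the left-heavy tree with left subtree $\{1,3\}$ and right leaf $2$ as leading.

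With these leading terms in hand, the PBW criterion demands that for each of the $38$ ordered $4$-vertex connected graphs $(\Gr,<)$, the number of shuffle tree monomials none of whose sub-trees matches any of the above leading patterns equals $\dim\Lie^{\forget}(\Gr,<)=|\mu(\Gr)|$ from Corollary~\ref{cor:liekoszul}. Because normal monomials always span the quotient, their count is bounded above by $|\mu(\Gr)|$; the content of the verification is to produce, in each $4$-vertex component, sufficiently many distinct normal monomials to saturate this bound. The main obstacle is this case analysis: the non-chordal cycle $\Cyc_4$ forces one to track which vertex subsets fail to be tubes, while the highly symmetric $\K_4$ contributes many shuffle trees and relations inherited from every $3$-vertex induced subgraph, so careful bookkeeping is required to match the M\"obius counts. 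An equivalent slicker route sidestepping the $38$-case check is to appeal to Theorem~\ref{quadgrob}: by Koszul duality the statement is equivalent to $\Com^{\forget}$ admitting a quadratic Gr\"obner basis under the standard $\grpermlex$-order, which can be proved by adapting the argument of Proposition~\ref{quadcom} to the opposite monomial order.
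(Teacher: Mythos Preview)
Your main approach---invoking Theorem~\ref{thm:pbw} and reducing to a weight-$3$ dimension check against $|\mu(\Gr)|$ from Corollary~\ref{cor:liekoszul}---is precisely the paper's argument. However, you have the key inequality reversed. By Proposition~\ref{coset}, normal monomials with respect to $\R_{\Lie}^{\forget}$ always \emph{span} $\Lie^{\forget}(\Gr)$, so their number is bounded \emph{below} by $|\mu(\Gr)|$, not above. The substance of the weight-$3$ verification is therefore the opposite of what you describe: one must enumerate the normal monomials explicitly and show there are \emph{at most} $|\mu(\Gr)|$ of them, not produce enough to ``saturate'' a ceiling. The paper does this by showing that under reverse $\grpermlex$ every normal monomial is a left-comb tree $(v_1,\dots,v_k)$ subject to the constraint ``$v_i,v_{i+1}$ non-adjacent $\Rightarrow v_i<v_{i+1}$,'' and then (implicitly) counting such labelings against the M\"obius values.

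Your proposed shortcut via Theorem~\ref{quadgrob} is valid in principle: the claim is equivalent to $\Com^{\forget}$ admitting a quadratic Gr\"obner basis for $\grpermlex$. But Proposition~\ref{quadcom} establishes this only for \emph{reverse} $\grpermlex$, so you cannot cite it directly---you would have to redo that proof with the opposite order, identifying new leading terms and a new unique normal form. This is feasible (and arguably pleasant, since the target dimension for $\Com$ is always $1$), but it is not a free reduction to an existing result; it shifts the $38$-case check to the dual side rather than eliminating it.
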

\begin{proof} By Theorem~\ref{thm:pbw} and Corollary~\ref{cor:liekoszul}, it suffices to check that, for each ordered graph on 4 vertices $\Gr$, the number of normal $\Gr$-monomials coincides with the M\"obius function $|\mu(\Gr)|$. For graphs on $4$ vertices, we have
\[
|\mu(\vcenter{\hbox{\begin{tikzpicture}[scale=0.3]
    \fill (0,0) circle (2.3pt);
    \fill (1,0) circle (2.3pt);
    \fill (2,0) circle (2.3pt);
    \fill (3,0) circle (2.3pt);
    \draw (0,0)--(1,0)--(2,0)--(3,0);
    \end{tikzpicture}}})|=|\mu(\vcenter{\hbox{\begin{tikzpicture}[scale=0.3]
    \fill (0,0) circle (2pt);
    \fill (-1,1.5) circle (2pt);
    \fill (0,1.5) circle (2pt);
    \fill (1,1.5) circle (2pt);
    \draw (0,0)--(-1,1.5);
    \draw (0,0)--(0,1.5);
    \draw (0,0)--(1,1.5);
    \end{tikzpicture}}})|=1, \quad |\mu(\vcenter{\hbox{\begin{tikzpicture}[scale=0.35]
    \draw (0,0)--(0.87,0.5)--(0.87,-0.5)--cycle;
   \draw (-0.7,0)--(0,0);
   \fill (0,0) circle (2pt);
   \fill (-0.7,0) circle (2pt);
    \fill (0.87,0.5) circle (2pt);
    \fill (0.87,-0.5) circle (2pt);
\end{tikzpicture}}})|=2, \quad |\mu(\vcenter{\hbox{\begin{tikzpicture}[scale=0.3]
    \fill (0,0) circle (2pt);
    \fill (0,1.5) circle (2pt);
    \fill (1.5,0) circle (2pt);
    \fill (1.5,1.5) circle (2pt);
    \draw (0,0)--(1.5,0)--(1.5,1.5)--(0,1.5)-- cycle;
    \end{tikzpicture}}})|=3,  \quad |\mu(\vcenter{\hbox{\begin{tikzpicture}[scale=0.3]
    \fill (0,0) circle (2pt);
    \fill (0,1.5) circle (2pt);
    \fill (1.5,0) circle (2pt);
    \fill (1.5,1.5) circle (2pt);
    \draw (0,0)--(1.5,0)--(1.5,1.5)--(0,1.5)-- cycle;
    \draw (0,0)--(1.5,1.5);
    \end{tikzpicture}}})|=4, \quad |\mu(\vcenter{\hbox{\begin{tikzpicture}[scale=0.3]
    \fill (0,0) circle (2pt);
    \fill (0,1.5) circle (2pt);
    \fill (1.5,0) circle (2pt);
    \fill (1.5,1.5) circle (2pt);
    \draw (0,0)--(1.5,0)--(1.5,1.5)--(0,1.5)-- cycle;
    \draw (0,0)--(1.5,1.5);
    \draw (1.5,0)--(0,1.5);
    \end{tikzpicture}}})|=6.
\] The normal monomials with respect to the quadratic relations $\R_{\Lie}^{\forget}$ have the form
\[
\vcenter{\hbox{\begin{tikzpicture}[
        scale=0.7,
        vert/.style={circle,  draw=black!30!black, thick, minimum size=1mm},
        emptyvert/.style={circle, thick, minimum size=1mm},
        leaf/.style={rectangle, thick, minimum size=1mm},
        edge/.style={-,black!30!black, thick},
        ]
        \node[vert] (k) at (0.75,0) {\footnotesize$b$};
        \node[leaf] (lk) at (1.5,1) {\footnotesize$v_k$};
        \node[vert] (1) at (0,1) {\footnotesize$b$};
        \node[leaf] (l1) at (0.75,2) {\footnotesize$v_{k-1}$};
        \node[emptyvert] (2) at (-0.75,2) {\footnotesize$\cdots$};
        \node[leaf] (l2) at (0,3) {\footnotesize$\cdots$};
        \node[vert] (3) at (-1.5,3) {\footnotesize$b$};
        \node[leaf] (l3) at (-0.75,4) {\footnotesize$v_2$};
        \node[leaf] (l4) at (-2.25,4) {\footnotesize$v_1$};
        \draw[edge] (0.75,-1)--(k);
        \draw[edge] (k)--(1)--(2)--(3);
        \draw[edge] (k)--(lk);
        \draw[edge] (1)--(l1);
        \draw[edge] (2)--(l2);
        \draw[edge] (3)--(l3);
        \draw[edge] (3)--(l4);
    \end{tikzpicture}}}
 \] with an additional condition on the leaf labeling: if leaves $v_i$, $v_{i+1}$ are not adjacent in the underlying graph, then we have $v_i<v_{i+1}$. We leave it to the reader to check that the condition of Theorem~\ref{thm:pbw} is satisfied. Hence, the normal monomials with respect to quadratic relations form a monomial basis of $\Lie^{\forget}$.
 \end{proof}

\subsection{NBC-sets and pairing}\label{subsec:lemma}

In this subsection, we prove that the morphism $\tau\colon \Gerst \twoheadrightarrow H_{\bullet}(\D_2)$ is surjective. To establish the surjection, we construct the perfect pairing between $\Gerst$-monomials and monomials from Orlik-Solomon algebras, which is a direct generalisation of the pairing constructed by Sinha~\cite{sinha2005pairing}.

\noindent \textbf{Step 1:} The morphism of contractads $\tau\colon \Gerst \rightarrow H_{\bullet}(\D_2)$ induces the pairing
\begin{gather*}
   \langle-,-\rangle\colon \Gerst\otimes \OS \rightarrow \mathbb{Z}
   \\
   \langle T,\omega \rangle:=\tau(T)(\omega). 
\end{gather*} The fact that $\tau$ is a contractad morphism ensures the compatibility of this pairing with (co)contractad structures
\[\langle T\circ^{\Gr}_{G} T', \omega \rangle=\langle T\otimes T', \triangle^{\Gr}_{G}(\omega)\rangle.\] Moreover, the fact that $\tau$ is onto is equivalent to the fact  that this pairing is non-degenerate from the right.

\noindent \textbf{Step 2:} Let us describe this pairing explicitly. For a tree monomial $T$ and a non-zero algebraic monomial $\omega_S=\omega_{e_1}\omega_{e_2}\cdots\omega_{e_k}$ in the same component, we define the map $\phi_{T,S}\colon S \rightarrow \Ver(T)$ from the underlying set of edges to the set of vertices that sends each edge $\{v,w\} \in S$ to the vertex at the nadir of the shortest path in $T$ between the leaves with labels $v$ and $w$.

\begin{claim*} For a tree monomial $T$, such that vertices labeled by $m$ are concentrated at the bottom, and a non-zero algebraic monomial $\omega_S$ in the same component, the pairing $\langle T, \omega_S\rangle$ is non-zero if and only if the function $\phi_{T,S}$ provides a bijection between edges from $S$ and vertices of $T$ labeled by $b$. In this case, the pairing is equal to $1$ up to sign.
\end{claim*}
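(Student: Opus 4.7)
The plan is to prove the claim by induction on $|V_{\Gr}|$, using the compatibility of the pairing with the (co)contractad structures. Recall from the proof of Theorem~\ref{thm:gerst} that a tree monomial $T$ with $m$'s concentrated at the bottom decomposes canonically as $T = \mu \circ_I^{\Gr}(T_1,\ldots,T_k)$, where $I = \{G_1,\ldots,G_k\}$ is the partition of $V_{\Gr}$ whose blocks are the leaf-sets of the maximal $b$-subtrees, $\mu = m_{\Gr/I}$ is the unique $\Com$-tree on $\Gr/I$, and each $T_i$ is a $\Lie$-tree on $\Gr|_{G_i}$. Applying the compatibility
\[
\langle T,\omega_S\rangle = \langle \mu\otimes T_1\otimes\cdots\otimes T_k,\; \triangle_I^{\Gr}(\omega_S)\rangle,
\]
and using that $\triangle_I^{\Gr}$ is a ring homomorphism sending each generator $\omega_e$ into a single tensor factor (Proposition~\ref{osmaps}), the product $\omega_S$ factors as $\pm\,\omega_{S_0'}\otimes \omega_{S_1}\otimes\cdots\otimes \omega_{S_k}$, where $S_i = S\cap E(\Gr|_{G_i})$, $S_0$ consists of cross-block edges, and $S_0'$ is its image in $\Gr/I$.

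Since $\mu$ lies in degree $0$ while $\omega_{S_0'}$ has degree $|S_0'|$, the first factor $\langle \mu,\omega_{S_0'}\rangle$ forces $S_0' = \varnothing$, hence $S_0 = \varnothing$: the set $S$ cannot contain cross-block edges. When $S_0 = \varnothing$, $\langle \mu,1\rangle = 1$ (the $\Com$-tree generates the single connected component of $\D_2(\Gr/I)$), and the pairing reduces to $\langle T,\omega_S\rangle = \pm\prod_i\langle T_i,\omega_{S_i}\rangle$. This reduces the claim to the analogous statement for $\Lie$-trees, because the $b$-vertices of $T$ are precisely the vertices of the $T_i$'s, and $\phi_{T,S}|_{S_i}$ agrees with $\phi_{T_i,S_i}$ (LCAs are computed in the same subtree when both endpoints lie inside a single block).

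For $\Lie$-trees, I argue by induction on the number of leaves. Writing $T = b\circ_{\{A,B\}}^{\Gr}(T_1,T_2)$ with root decomposition, the coproduct splits $\omega_S$ using the partition $\{A,B\}$ of $V_{\Gr}$; cross-edges between $A$ and $B$ collapse to the single edge of $\Path_2$. The relation $\omega_e^2 = 0$ in $\OS(\Path_2)$ forces $|S_0|\leq 1$, while the degree constraint $\langle b,1\rangle = 0$ forces $|S_0|\geq 1$. Thus exactly one edge $e_0\in S$ crosses $A$ and $B$; its image $\phi_{T,S}(e_0)$ is precisely the root of $T$. Using $\langle b,\omega_{12}\rangle = 1$, we get $\langle T,\omega_S\rangle = \pm\langle T_1,\omega_{S_1}\rangle\langle T_2,\omega_{S_2}\rangle$. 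By the inductive hypothesis, this is nonzero exactly when each $\phi_{T_i,S_i}$ is a bijection onto $\Ver(T_i)$, which, together with $e_0\mapsto\mathrm{root}$, is equivalent to $\phi_{T,S}$ being a bijection onto $\Ver(T)$; the value is $\pm 1$.

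The main obstacle is the bookkeeping of Koszul signs in $\triangle_I^{\Gr}(\omega_S)$, which depend on the chosen ordering of edges in $S$. Since the claim only asserts the pairing is $\pm 1$, these signs need not be tracked: after fixing an arbitrary total order on edges, each application of compatibility introduces a determined sign, and all that is required is non-vanishing at every step, which is what the combinatorial bijection criterion controls. The remaining ingredients — namely $\langle m_{\Gr},1\rangle = 1$ (from path-connectedness of $\D_2(\Gr)$ for $n\geq 2$) and the base case $\langle b,\omega_{12}\rangle = 1$ (from the definition of $b$ as the dual of $\omega_{12}$) — are immediate.
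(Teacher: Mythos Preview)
Your proof is correct and rests on the same core mechanism as the paper's---the compatibility of the pairing with the (co)contractad structure, reducing to smaller graphs by induction---but you organise the induction differently. The paper peels off a single $b$-corolla from a cherry at the top of the tree: it writes $T=T'\circ^{\Gr}_e b$ for an edge $e\in E_\Gr$, uses $\triangle^{\Gr}_e$ to force $e\in S$, and reduces to $(T',\omega_{S'})$ on the contracted graph $\Gr/e$. One step, one $b$-vertex removed. You instead work from the root: first you strip off the entire $m$-bottom via the full coproduct $\triangle^{\Gr}_I$ along the partition $I$ into maximal $b$-blocks, then on each $\Lie$-subtree you split at the root via $\triangle^{\Gr}_{\{A,B\}}$ and invoke $\omega^2=0$ in $\OS(\Path_2)$ to pin down a unique cross-edge. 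Your route mirrors the normal-monomial decomposition from Theorem~\ref{thm:gerst} and makes the bijection criterion transparent at each stage; the paper's route is more uniform (a single type of inductive step) and slightly shorter. A minor bonus on your side: you state the inductive hypothesis as the full ``if and only if'' and verify both directions explicitly, whereas the paper's write-up focuses on the forward implication and leaves the converse implicit in the same induction.
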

\begin{proof}
The proof is by induction on the number of vertices in $\Gr$. The base $\Gr=\Path_2$ is obvious. Consider a pair $T, \omega_S$ such that pairing $\langle T, \omega_S\rangle$ is non-zero. If $T$ is made up only of $m$-vertices,  the only choice is $\omega_S=1$. In this case, the map $\phi_{T,S}$ is bijective, since both $S$ and $\Ver_{b}(T)$ are empty. Otherwise, the monomial has the form $T=T'\circ^{\Gr}_e b $ for some monomial $T'$. By compatibility, we have
\[
\langle T, \omega_S \rangle= \pm \langle T', \triangle_e^{(1)}(\omega_S) \rangle \langle b, \triangle_e^{(2)}(\omega_S)\rangle,
\] where the signs come from the Koszul rules. Both factors on the right are non-zero since the pairing from the left is non-zero. Hence, we must have $\triangle_e^{(2)}(\omega_S)=b$, or equivalently, the set $S$ contains edge $e$. By Proposition~\ref{osmaps}, we have $\triangle_e^{(1)}(\omega_S)=\omega_{S'}$, where $S'$ is the image of the complement $S\setminus \{e\}$ under the contraction $\Gr \rightarrow \Gr/e$. By the induction assumption, the map $\phi_{T',S'}$ is a bijection between vertices labeled by $b$ in $T'$ and elements of $S'$. Both assertions imply that the map $\phi_{T,S}$ is also bijective. 
\end{proof}

\noindent \textbf{Step 3:} Recall that for each ordered hyperplane arrangement $\mathcal{A}$, there is a monomial basis of the Orlik-Solomon algebra $\OS(\mathcal{A})$ defined as follows. A broken circuit is a circuit (minimal dependent set) with its smallest element deleted. In the case of the graphic arrangement $\B(\Gr)$, a circuit is a cycle with its smallest edge deleted. An \textbf{nbc} set is a set containing no broken circuits. It is known that the collection of monomials $\{\omega_S| S-\textbf{nbc}\}$ forms a basis of the Orlik-Solomon algebra $\OS(\mathcal{A})$~\cite{yuzvinsky2001orlik}.
\begin{figure}[ht]
    \centering
    \[
    \vcenter{\hbox{\begin{tikzpicture}[scale=0.8]
    \fill (0,1) circle (2pt);%1
    \fill (1,1) circle (2pt);%2
    \fill (1,0) circle (2pt);%3
    \fill (0,0) circle (2pt);%4
    \draw[dashed] (1,1)--(0,1);%12
    \draw (0,1)--(0,0);%14
    \draw[dashed] (1,0)--(1,1);%23
    \draw[dashed] (0,0)--(1,1);%24
    \draw (0,0)--(1,0);%34
    \node at (-0.25,1.25) {\footnotesize$1$};
    \node at (1.25,1.25) {\footnotesize$2$};
    \node at (1.25,-0.25) {\footnotesize$3$};
    \node at (-0.25,-0.25) {\footnotesize$4$};
    \end{tikzpicture}}}
    \quad
    \vcenter{\hbox{\begin{tikzpicture}[scale=0.8]
    \fill (0,1) circle (2pt);%1
    \fill (1,1) circle (2pt);%2
    \fill (1,0) circle (2pt);%3
    \fill (0,0) circle (2pt);%4
    \draw[dashed] (1,1)--(0,1);%12
    \draw[dashed] (0,1)--(0,0);%14
    \draw[dashed] (1,0)--(1,1);%23
    \draw (0,0)--(1,1);%24
    \draw (0,0)--(1,0);%34
    \node at (-0.25,1.25) {\footnotesize$1$};
    \node at (1.25,1.25) {\footnotesize$2$};
    \node at (1.25,-0.25) {\footnotesize$3$};
    \node at (-0.25,-0.25) {\footnotesize$4$};
    \end{tikzpicture}}}
    \quad
    \vcenter{\hbox{\begin{tikzpicture}[scale=0.8]
    \fill (0,1) circle (2pt);%1
    \fill (1,1) circle (2pt);%2
    \fill (1,0) circle (2pt);%3
    \fill (0,0) circle (2pt);%4
    \draw[dashed] (1,1)--(0,1);%12
    \draw[dashed] (0,1)--(0,0);%14
    \draw (1,0)--(1,1);%23
    \draw (0,0)--(1,1);%24
    \draw[dashed] (0,0)--(1,0);%34
    \node at (-0.25,1.25) {\footnotesize$1$};
    \node at (1.25,1.25) {\footnotesize$2$};
    \node at (1.25,-0.25) {\footnotesize$3$};
    \node at (-0.25,-0.25) {\footnotesize$4$};
    \end{tikzpicture}}}
    \quad
    \vcenter{\hbox{\begin{tikzpicture}[scale=0.8]
    \fill (0,1) circle (2pt);%1
    \fill (1,1) circle (2pt);%2
    \fill (1,0) circle (2pt);%3
    \fill (0,0) circle (2pt);%4
    \draw[dashed] (1,1)--(0,1);%12
    \draw (0,1)--(0,0);%14
    \draw[dashed] (1,0)--(1,1);%23
    \draw (0,0)--(1,1);%24
    \draw[dashed] (0,0)--(1,0);%34
    \node at (-0.25,1.25) {\footnotesize$1$};
    \node at (1.25,1.25) {\footnotesize$2$};
    \node at (1.25,-0.25) {\footnotesize$3$};
    \node at (-0.25,-0.25) {\footnotesize$4$};
    \end{tikzpicture}}}
    \]
    \caption{List of maximal \textbf{nbc}-subsets for $\K_{(1^2,2)}$. Edges from \textbf{nbc}-subsets are dashed. The order on edges: $(1,2)<(1,4)<(2,3)<(2,4)<(3,4)$.}
    \label{fig:nbc}
\end{figure}
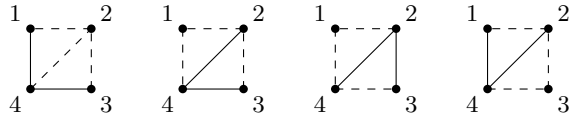

Let $\Gr$ be a graph with an ordered set of edges. For an \textbf{nbc}-subset $S=\{e_1<e_2<\cdots <e_n\}$ of a graph $\Gr$, we define the tree monomial $T(S)$ as follows. Consider the forest $F$ composed of corollas for each edge $e_i$ with leaves labeled with the vertices of this edge. On the $i$-th step, we take the edge $e_{n-i+1}$ and some vertex $v$ from this edge. If there exists a tree from the forest constructed in the previous step that has a leaf labeled by $v$ and whose bottom vertex is labeled by a larger edge, we join the leaf $v$ of the corolla labeled by $e_{n-i+1}$ to the root of such a tree. Do the same for the remaining vertex of $e_{n-i+1}$. At the end of the process, we get a disjoint union $\{T_1,T_2,\cdots T_k\}$ of trees labeled by $b$ and the underlying collection of leaf sets $I=\{L_1,L_2,\cdots,L_k\}$ forms a partition of the graph $\Gr$. Finally, we join the roots of these trees to the corresponding leaves of $\Com$-monomial $m_{\Gr/I}$. We denote the resulting $\Gr$-input monomial by $T(S)$. By the construction of this tree, we see that the associated map 
\[
\phi_{T(S),S}\colon S \rightarrow \Ver_b(T(S))=S
\] is identity. Hence, we have $\langle T(S),\omega_{S}\rangle=\pm 1$.

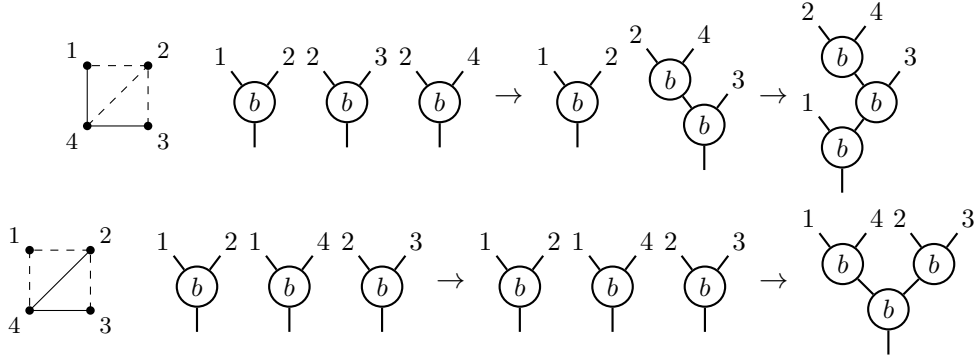
\begin{figure}[ht]
    \centering
    \begin{gather*}
    \vcenter{\hbox{\begin{tikzpicture}[scale=0.8]
    \fill (0,1) circle (2pt);%1
    \fill (1,1) circle (2pt);%2
    \fill (1,0) circle (2pt);%3
    \fill (0,0) circle (2pt);%4
    \draw[dashed] (1,1)--(0,1);%12
    \draw (0,1)--(0,0);%14
    \draw[dashed] (1,0)--(1,1);%23
    \draw[dashed] (0,0)--(1,1);%24
    \draw (0,0)--(1,0);%34
    \node at (-0.25,1.25) {\footnotesize$1$};
    \node at (1.25,1.25) {\footnotesize$2$};
    \node at (1.25,-0.25) {\footnotesize$3$};
    \node at (-0.25,-0.25) {\footnotesize$4$};
    \end{tikzpicture}}}
    \quad
        \vcenter{\hbox{\begin{tikzpicture}[
        scale=0.6,
        vert/.style={inner sep=2.5pt, circle,  draw, thick},
        leaf/.style={inner sep=2pt, rectangle},
        edge/.style={-,black!30!black, thick},
        ]
        \node[vert] (1) at (0,1) {\small $b$};
        \node[leaf] (l1) at (0.75,2) {\small$2$};
        \node[leaf] (l2) at (-0.75,2) {\small$1$};
        \draw[edge] (0,0)--(1);
        \draw[edge] (1)--(l1);
        \draw[edge] (1)--(l2);
    \end{tikzpicture}}}
        \vcenter{\hbox{\begin{tikzpicture}[
        scale=0.6,
        vert/.style={inner sep=2.5pt, circle,  draw, thick},
        leaf/.style={inner sep=2pt, rectangle},
        edge/.style={-,black!30!black, thick},
        ]
        \node[vert] (1) at (0,1) {\small $b$};
        \node[leaf] (l1) at (0.75,2) {\small$3$};
        \node[leaf] (l2) at (-0.75,2) {\small$2$};
        \draw[edge] (0,0)--(1);
        \draw[edge] (1)--(l1);
        \draw[edge] (1)--(l2);
    \end{tikzpicture}}}
    \vcenter{\hbox{\begin{tikzpicture}[
        scale=0.6,
        vert/.style={inner sep=2.5pt, circle,  draw, thick},
        leaf/.style={inner sep=2pt, rectangle},
        edge/.style={-,black!30!black, thick},
        ]
        \node[vert] (1) at (0,1) {\small $b$};
        \node[leaf] (l1) at (0.75,2) {\small$4$};
        \node[leaf] (l2) at (-0.75,2) {\small$2$};
        \draw[edge] (0,0)--(1);
        \draw[edge] (1)--(l1);
        \draw[edge] (1)--(l2);
    \end{tikzpicture}}}
    \to
        \vcenter{\hbox{\begin{tikzpicture}[
        scale=0.6,
        vert/.style={inner sep=2.5pt, circle,  draw, thick},
        leaf/.style={inner sep=2pt, rectangle},
        edge/.style={-,black!30!black, thick},
        ]
        \node[vert] (1) at (0,1) {\small $b$};
        \node[leaf] (l1) at (0.75,2) {\small$2$};
        \node[leaf] (l2) at (-0.75,2) {\small$1$};
        \draw[edge] (0,0)--(1);
        \draw[edge] (1)--(l1);
        \draw[edge] (1)--(l2);
    \end{tikzpicture}}}
    \vcenter{\hbox{\begin{tikzpicture}[
        scale=0.6,
        vert/.style={inner sep=2.5pt, circle,  draw, thick},
        leaf/.style={inner sep=2pt, rectangle},
        edge/.style={-,black!30!black, thick},
        ]
        \node[vert] (1) at (0,1) {\small $b$};
        \node[leaf] (l1) at (0.75,2) {\small$3$};
        \node[vert] (2) at (-0.75,2) {\small$b$};
        \node[leaf] (l2) at (0,3) {\small$4$};
        \node[leaf] (3) at (-1.5,3) {\small$2$};
        \draw[edge] (0,0)--(1);
        \draw[edge] (1)--(2)--(3);
        \draw[edge] (1)--(l1);
        \draw[edge] (2)--(l2);
    \end{tikzpicture}}}
    \to
        \vcenter{\hbox{\begin{tikzpicture}[
        scale=0.6,
        vert/.style={inner sep=2.5pt, circle,  draw, thick},
        leaf/.style={inner sep=2pt, rectangle},
        edge/.style={-,black!30!black, thick},
        ]
        \node[vert] (1) at (-1.5,1) {\small $b$};
        \node[leaf] (l1) at (-2.25,2) {\small$1$};
        \node[vert] (2) at (-0.75,2) {\small$b$};
        \node[leaf] (l2) at (0,3) {\small$3$};
        \node[vert] (3) at (-1.5,3) {\small$b$};
        \node[leaf] (l3) at (-2.25,4) {\small$2$};
        \node[leaf] (l4) at (-0.75,4) {\small$4$};
        \draw[edge] (-1.5,0)--(1);
        \draw[edge] (1)--(2)--(3);
        \draw[edge] (1)--(l1);
        \draw[edge] (2)--(l2);
        \draw[edge] (3)--(l3);
        \draw[edge] (3)--(l4);
    \end{tikzpicture}}}
    \\
        \vcenter{\hbox{\begin{tikzpicture}[scale=0.8]
    \fill (0,1) circle (2pt);%1
    \fill (1,1) circle (2pt);%2
    \fill (1,0) circle (2pt);%3
    \fill (0,0) circle (2pt);%4
    \draw[dashed] (1,1)--(0,1);%12
    \draw[dashed] (0,1)--(0,0);%14
    \draw[dashed] (1,0)--(1,1);%23
    \draw (0,0)--(1,1);%24
    \draw (0,0)--(1,0);%34
    \node at (-0.25,1.25) {\footnotesize$1$};
    \node at (1.25,1.25) {\footnotesize$2$};
    \node at (1.25,-0.25) {\footnotesize$3$};
    \node at (-0.25,-0.25) {\footnotesize$4$};
    \end{tikzpicture}}}
    \quad
        \vcenter{\hbox{\begin{tikzpicture}[
        scale=0.6,
        vert/.style={inner sep=2.5pt, circle,  draw, thick},
        leaf/.style={inner sep=2pt, rectangle},
        edge/.style={-,black!30!black, thick},
        ]
        \node[vert] (1) at (0,1) {\small $b$};
        \node[leaf] (l1) at (0.75,2) {\small$2$};
        \node[leaf] (l2) at (-0.75,2) {\small$1$};
        \draw[edge] (0,0)--(1);
        \draw[edge] (1)--(l1);
        \draw[edge] (1)--(l2);
    \end{tikzpicture}}}
        \vcenter{\hbox{\begin{tikzpicture}[
        scale=0.6,
        vert/.style={inner sep=2.5pt, circle,  draw, thick},
        leaf/.style={inner sep=2pt, rectangle},
        edge/.style={-,black!30!black, thick},
        ]
        \node[vert] (1) at (0,1) {\small $b$};
        \node[leaf] (l1) at (0.75,2) {\small$4$};
        \node[leaf] (l2) at (-0.75,2) {\small$1$};
        \draw[edge] (0,0)--(1);
        \draw[edge] (1)--(l1);
        \draw[edge] (1)--(l2);
    \end{tikzpicture}}}
    \vcenter{\hbox{\begin{tikzpicture}[
        scale=0.6,
        vert/.style={inner sep=2.5pt, circle,  draw, thick},
        leaf/.style={inner sep=2pt, rectangle},
        edge/.style={-,black!30!black, thick},
        ]
        \node[vert] (1) at (0,1) {\small $b$};
        \node[leaf] (l1) at (0.75,2) {\small$3$};
        \node[leaf] (l2) at (-0.75,2) {\small$2$};
        \draw[edge] (0,0)--(1);
        \draw[edge] (1)--(l1);
        \draw[edge] (1)--(l2);
    \end{tikzpicture}}}
    \to
    \vcenter{\hbox{\begin{tikzpicture}[
        scale=0.6,
        vert/.style={inner sep=2.5pt, circle,  draw, thick},
        leaf/.style={inner sep=2pt, rectangle},
        edge/.style={-,black!30!black, thick},
        ]
        \node[vert] (1) at (0,1) {\small $b$};
        \node[leaf] (l1) at (0.75,2) {\small$2$};
        \node[leaf] (l2) at (-0.75,2) {\small$1$};
        \draw[edge] (0,0)--(1);
        \draw[edge] (1)--(l1);
        \draw[edge] (1)--(l2);
    \end{tikzpicture}}}
        \vcenter{\hbox{\begin{tikzpicture}[
        scale=0.6,
        vert/.style={inner sep=2.5pt, circle,  draw, thick},
        leaf/.style={inner sep=2pt, rectangle},
        edge/.style={-,black!30!black, thick},
        ]
        \node[vert] (1) at (0,1) {\small $b$};
        \node[leaf] (l1) at (0.75,2) {\small$4$};
        \node[leaf] (l2) at (-0.75,2) {\small$1$};
        \draw[edge] (0,0)--(1);
        \draw[edge] (1)--(l1);
        \draw[edge] (1)--(l2);
    \end{tikzpicture}}}
        \vcenter{\hbox{\begin{tikzpicture}[
        scale=0.6,
        vert/.style={inner sep=2.5pt, circle,  draw, thick},
        leaf/.style={inner sep=2pt, rectangle},
        edge/.style={-,black!30!black, thick},
        ]
        \node[vert] (1) at (0,1) {\small $b$};
        \node[leaf] (l1) at (0.75,2) {\small$3$};
        \node[leaf] (l2) at (-0.75,2) {\small$2$};
        \draw[edge] (0,0)--(1);
        \draw[edge] (1)--(l1);
        \draw[edge] (1)--(l2);
    \end{tikzpicture}}}
    \to
    \vcenter{\hbox{\begin{tikzpicture}[
        scale=0.6,
        vert/.style={inner sep=2.5pt, circle,  draw, thick},
        leaf/.style={inner sep=2pt, rectangle},
        edge/.style={-,black!30!black, thick},
        ]
    \node[vert] (1) at (0,1) {\small $b$};
    \node[vert] (2) at (-1,2) {\small $b$};
    \node[vert] (3) at (1,2) {\small $b$};
    \node[leaf] (l1) at (-1.75,3) {\small $1$};
    \node[leaf] (l2) at (-0.25,3) {\small $4$};
    \node[leaf] (l3) at (0.25,3) {\small $2$};
    \node[leaf] (l4) at (1.75,3) {\small $3$};
    \draw[edge] (0,0)--(1);
    \draw[edge] (1)--(2);
    \draw[edge] (1)--(3);
    \draw[edge] (2)--(l1);
    \draw[edge] (2)--(l2);
    \draw[edge] (3)--(l3);
    \draw[edge] (3)--(l4);
    \end{tikzpicture}}}
    \end{gather*}
    \caption{Construction of monomials for \textbf{nbc}-subsets from \textbf{Figure}~\ref{fig:nbc}.}
    \label{fig:T(S)}
\end{figure}

\begin{claim*}
For a given graph $\Gr$ and pair of \textbf{nbc}-monomials $\omega_S,\omega_{S'} \in \OS(\Gr)$, the pairing $\langle T(S),\omega_{S'}\rangle$ differs from zero if and only if these monomials coincide. 
\end{claim*}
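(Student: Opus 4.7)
The first claim reduces this to a combinatorial statement: for \textbf{nbc}-subsets $S, S'$, the map $\phi_{T(S), S'}\colon S' \to \Ver_b(T(S))$ is a bijection if and only if $S = S'$. The ``if'' direction was established in the construction of $T(S)$, where $\phi_{T(S), S}$ is the identity under the canonical identification $S \cong \Ver_b(T(S))$. For the converse, my plan is to argue by induction on $|S|$; the base case $|S| = 0$ forces $|S'| = 0$ and $S = S' = \varnothing$ immediately.

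For the inductive step, let $e^{*} = \max S$ with respect to the edge order and let $v^{*} \in T(S)$ be the corresponding $b$-vertex. Since $e^{*}$ is processed first in the construction of $T(S)$ and no edge of $S$ is larger, nothing is attached to the $e^{*}$-corolla, so the two subtrees of $v^{*}$ in $T(S)$ are precisely the two endpoint-leaves of $e^{*}$. Assuming $\phi_{T(S), S'}$ is a bijection, the unique edge $e' \in S'$ with $\phi_{T(S), S'}(e') = v^{*}$ must have its endpoints in these two singleton subtrees, which forces $e' = e^{*}$. In particular $e^{*} \in S'$.

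To close the induction, I will verify that pruning the $v^{*}$-subtree from $T(S)$ yields $T(S \setminus \{e^{*}\})$---this follows from the construction, since removing $e^{*}$ from the input list leaves all subsequent steps unchanged except for the now-absent attachment of the $e^{*}$-corolla at some leaf of a smaller corolla. For any $e' \in S' \setminus \{e^{*}\}$ the endpoints of $e'$ cannot both lie in the $v^{*}$-subtree (which contains only the two endpoints of $e^{*} \neq e'$), so the nadir of its path in $T(S)$ lies outside the pruned subtree and coincides with its nadir in $T(S \setminus \{e^{*}\})$. Hence the restriction of $\phi_{T(S), S'}$ to $S' \setminus \{e^{*}\}$ realises $\phi_{T(S \setminus \{e^{*}\}), S' \setminus \{e^{*}\}}$ and remains a bijection. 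Since $S \setminus \{e^{*}\}$ and $S' \setminus \{e^{*}\}$ are again \textbf{nbc} (subsets of \textbf{nbc}-sets are \textbf{nbc}), the inductive hypothesis gives $S \setminus \{e^{*}\} = S' \setminus \{e^{*}\}$, and hence $S = S'$.

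The main obstacle will be the bookkeeping in the pruning step: one must carefully track how the absent $e^{*}$-corolla affects the top-level partition structure of $T(S \setminus \{e^{*}\})$ (since an endpoint of $e^{*}$ that appears in no other edge of $S$ becomes a singleton of the partition of $V_\Gr$), and confirm that this reshuffling at the $m$-level of the tree does not affect the values of $\phi$ for the remaining $b$-vertices. Once the pruning is formalised, the argument shows that the Gram matrix $(\langle T(S), \omega_{S'}\rangle)$ indexed by \textbf{nbc}-subsets is a signed permutation matrix; combined with the fact that $\{\omega_{S'}\}$ is a basis of $\OS(\Gr)$, this proves that $\tau\colon \Gerst \to H_{\bullet}(\D_2)$ is surjective, completing the proof of \textbf{Lemma}~\ref{onto}.
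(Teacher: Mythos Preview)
Your opening move---that $e^*=\max S$ lies in $S'$ because $v^*$ has exactly the endpoints of $e^*$ as children---is correct. The gap is the pruning claim: removing $e^*$ from $S$ does \emph{not} in general leave the remaining construction steps unchanged. Take $S=\{e_1,e_2,e_3\}$ with $e_1=(1,2)<e_2=(3,4)<e_3=(2,3)=e^*$. In building $T(S)$ the $(2,3)$-corolla attaches to $(3,4)$ via leaf $3$, and then $(1,2)$ attaches to the resulting tree via leaf $2$---a leaf that is present only because of the $e^*$-corolla---producing a single chain with $e_1$ at the bottom. But in $T(S\setminus\{e^*\})$ the $(1,2)$-corolla finds no tree with leaf $2$ and remains separate, so the outcome is two disjoint corollas: the $b$-structure itself has changed, not merely the $m$-level. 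An edge $e'\in S'\setminus\{e^*\}$ with one endpoint in each of those corollas then has its nadir in the $m$-part of $T(S\setminus\{e^*\})$, so your restricted map need not even land in $b$-vertices, and the induction does not close. The nbc hypothesis on $S'$ is what ultimately obstructs such $e'$, but your argument never invokes it at this step.

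The paper avoids induction and argues by direct contradiction. Choose a vertex $e_0$ of $T(S)$ that is maximal (closest to the leaves) among those with $\phi^{-1}(e_0)\neq e_0$, and let $T_0$ be the subtree rooted there with $S_0=\Ver_b(T_0)$. The construction guarantees $e_0=\min S_0$ (the bottom label of any subtree is the minimum of its labels), and $S_0$ is a spanning tree of $\Leav(T_0)$; adding $e_0':=\phi^{-1}(e_0)$ creates a cycle $C$ through both $e_0$ and $e_0'$. Since $S$ is nbc one has $\min C\neq e_0'$, hence $\min C=e_0$, and then $C\setminus\{e_0\}\subset(S_0\setminus\{e_0\})\cup\{e_0'\}\subset S'$ is a broken circuit---contradicting that $S'$ is nbc.
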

\noindent Note that this claim completes the proof of Lemma~\ref{onto}. Indeed, the restriction of the pairing to the linear span $\langle T(S) \rangle$ on the left part is non-degenerate, hence the pairing is non-degenerate from the right. Furthermore, the proof of Theorem~\ref{thm:gerst} guarantees that the monomial collection $\{T(S)\}$ forms a monomial basis of $\Gerst$. 
\begin{proof} Assume the converse. Let $S\neq S'$ be two distinct \textbf{nbc}-monomials such that the pairing $\langle T(S),\omega_{S'}\rangle$ is non-zero. By the previous claim, map $\phi_{T(\omega_S),S'}: S' \rightarrow S$ is bijective. Since $S\neq S'$, there is a subtree $T_0 \subset T(S)$ with a bottom vertex $e_0\in S$, such that $f_{T(S),S'}^{-1}(e_0)=e_0'\neq e_0$, and, for each non-root vertex of the subtree $T_0$, we have $f_{T(\omega_S),S'}^{-1}(e')=e'$ . Hence, we have two distinct \textbf{nbc}-subsets $S_0=\Ver_b(T')\subset S$ and $S'_0 = \{e_1\}\cup (\Ver_b(T')\setminus\{e'_0\}) \subset S'$. Moreover, the union $S_0\cup S'_0 \subset E_{\Gr}$ contains some cycle $\Cyc$ containing edges $e_0,e'_0$. Furthermore, by the construction of the tree $T(S)$, we have condition $e_0=\min S_0$. Therefore, we have $e_1>e_0$ since $S_0\cap \Cyc$ can not be broken circuit. So, the subset $S_0'\cap \Cyc$ is a broken circuit, hence we get a contradiction to the \textbf{nbc}-property of $S'_0$. 
\end{proof}

\begin{cor}
    For a graph $\Gr$, the monomials $T(S)$ ranging over \textbf{nbc}-subsets form a basis of $\Gerst^{\forget}(\Gr)$.
\end{cor}

\subsection*{Acknowledgements} I am grateful to my advisor Anton Khoroshkin for his guidance and inspiration in the process of writing. I would like to thank Vladimir Dotsenko and Adam Keilthy for useful discussions at various stages of the preparation of this paper and comments on its draft. 
\subsection*{Funding} This paper was supported by the grant RSF 22-21-00912 of Russian Science Foundation.
\bibliographystyle{alpha}
\bibliography{biblio.bib}

\end{document}